\newcommand{\Sp}{\mathrm{Sp}}
\renewcommand{\sp}{\mathrm{Sp}}
\newcommand{\md}{\mathrm{Mod}}
\newcommand{\CoInd}{\mathrm{Coind}}
\newcommand{\Coind}{\mathrm{Coind}}
\newcommand{\Res}{\mathrm{Res}}
\newcommand{\fun}{\mathrm{Fun}}
\renewcommand{\hom}{\mathrm{Hom}}
\theoremstyle{definition}
\newtheorem{cons}[thm]{Construction}
\newtheorem*{thmstar}{Theorem}
\newtheorem*{defstar}{Definition}
\newcommand{\res}{\mathrm{Res}}
\newcommand{\gsp}[1]{\mathrm{Sp}_{#1}}
\newcommand{\cato}{\mathrm{Cat}_\infty^{\otimes}}
\newcommand{\bor}[1]{\underline{#1}}
\newcommand{\sFNil}{\sF^{\textrm{Nil}}}
\newcommand{\GSpec}{\mathrm{Sp}_G}
\newcommand{\GS}{\mathcal{S}_G}
\newcommand{\GSpt}{\mathcal{S}_{G*}}
\newcommand{\HSpec}{\mathrm{Sp}_H}
\newcommand{\e}[1]{\mathbb{E}_{#1}}
\renewcommand{\theenumi}{\arabic{enumi}}
\newcommand{\triplearrows}{\begin{smallmatrix} \to \\ \to \\ 
\to \end{smallmatrix} }
\newcommand{\orsg}{\mathrm{OrthSpec}_{G}}
\newcommand{\topgp}[1]{\mathrm{Top}_{\ast, #1}}
\newcommand{\topgpu}[1]{\mathrm{Top}_{ #1}}
\newcommand{\CAlg}{\mathrm{CAlg}}
\newcommand{\clg}{\CAlg}
 \newcommand{\Ator}{\mathcal{C}_{A\mathrm{-tors}}}
\newcommand{\Acp}{\mathcal{C}_{A\mathrm{-cpl}}}
\newcommand{\cbaug}{\mathrm{CB}^\bullet_{\mathrm{aug}}}
\newcommand{\cb}{\mathrm{CB}^\bullet}
\newcommand{\Gbs}{(\GSpec)_{\mathrm{Borel}}}
\newtheorem{hypotheses}[thm]{Hypotheses}
\renewcommand{\p@enumii}{\theenumi.}
\begin{document}
	\title{Nilpotence and descent in equivariant stable homotopy theory}	\author{Akhil Mathew}
	\address{Harvard University, Cambridge, MA 02138}
	\email{amathew@math.harvard.edu}
	\urladdr{http://math.harvard.edu/~amathew/}

	\author{Niko Naumann}
	\address{University of Regensburg\\
	NWF I - Mathematik; Regensburg, Germany}
	\email{Niko.Naumann@mathematik.uni-regensburg.de}
	\urladdr{http://homepages.uni-regensburg.de/~nan25776/}

	\author{Justin Noel}
	\address{University of Regensburg\\
	NWF I - Mathematik; Regensburg, Germany}
	\email{justin.noel@mathematik.uni-regensburg.de}
	\urladdr{http://nullplug.org}
	\thanks{The first author was supported by the NSF Graduate Research Fellowship under grant DGE-110640. The second author was partially supported by the SFB 1085 - Higher Invariants, Regensburg. The third author was partially supported by the DFG grants: NO 1175/1-1 and SFB 1085 - Higher Invariants, Regensburg.}

\date{\today}

\begin{abstract}
 Let $G$ be a finite group and let $\sF$ be a family of subgroups of $G$. We introduce
a class of $G$-equivariant spectra that we call \emph{$\sF$-nilpotent}. 
This definition fits into the general theory of torsion, complete, and
nilpotent objects in a symmetric monoidal stable $\infty$-category, with which we
begin.
We then develop some
of the basic properties of $\sF$-nilpotent $G$-spectra, which are explored further in the sequel to this paper. 

In the rest of the paper,  we prove several general structure theorems for $\infty$-categories of module spectra
over objects such as equivariant real and complex $K$-theory and Borel-equivariant
$MU$.
Using these structure theorems and a technique with the flag variety dating
back to Quillen, 
we
then show that large classes of equivariant cohomology theories for which a
type of complex-orientability holds are nilpotent for the family of
abelian subgroups.  
In particular, we prove that equivariant real and complex $K$-theory, as well
as the Borel-equivariant versions of complex-oriented theories, have this
property.
\end{abstract}

\maketitle

\tableofcontents
\section{Introduction}

\subsection{Quillen's theorem}
This is the first in a series of papers whose goal is to investigate
certain phenomena in equivariant stable homotopy theory revolving around the
categorical notion of \emph{nilpotence}. 
Our starting point is the classical theorem of Quillen \cite{Qui71b} on the
cohomology of a finite group $G$, which describes $H^*(BG; k)$  for $k$ a
field of characteristic $p > 0$  up to a relation called $\mathcal{F}$-isomorphism.

This result is as follows. Given a cohomology
class $x \in H^r(BG; k)$, it determines for each elementary abelian
$p$-subgroup $A \leqslant G$, a cohomology class $x_A \in H^r(BA; k)$ via restriction. These classes $\left\{x_A\right\}_{A \leq G}$ are not arbitrary; they satisfy the following two basic relations:
\begin{enumerate}
	\item If $A, A' \leqslant G$ are a pair of elementary abelian $p$-subgroups
	which are conjugate by an element $g \in G$, then $x_A$ maps to $x_{A'}$
	under the isomorphism $H^*(BA; k) \cong H^*(BA'; k)$ induced by conjugation by $g$.

	\item If $A \leqslant A'$ is an inclusion of elementary abelian subgroups of
	$G$, then $x_{A'}$ maps to $x_A$ under the restriction map $H^*(BA'; k) \to
	H^*(BA; k)$.
\end{enumerate}

Let $E_p(G)$ denote the family of elementary abelian $p$-subgroups of $G$ and consider the subring 
\[R \subseteq \prod_{A \in E_p(G)} H^*(BA; k)\] 
of all tuples $\left\{x_A \in H^*(BA; k)\right\}_{A \in E_p(G)}$
which satisfy the two conditions above.  The product of the restriction maps
lifts to a ring homomorphism
\begin{equation} \label{eq:qmap} 
	H^*(BG; k) \stackrel{\psi}{\to} R, \quad x \mapsto \left\{x_A\right\}_{A \in
	E_p(G)}.  
\end{equation}

Quillen's $\cF$-isomorphism theorem states roughly that \eqref{eq:qmap} is an isomorphism modulo nilpotence.
More precisely:
\begin{thm}[{\cite[Theorem~7.1]{Qui71b}}] \label{thm:quillst}
\label{quillst}
	The map $\psi$ is a uniform $\cF_p$-isomorphism: in other words, there exist integers $m$ and $n$ such that 
	\begin{enumerate}
		\item For every $x\in \ker \psi$, $x^{m}=0$. 
		\item For every $x \in R$, $x^{p^n}$ belongs to the image of $\psi$.
	\end{enumerate}
\end{thm}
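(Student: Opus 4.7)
The plan is to establish both assertions by exploiting the geometry of the complete flag variety together with Frobenius arguments, following Quillen's original strategy. A preliminary reduction lets me assume $G$ is a $p$-group: the transfer makes $H^*(BG;k) \to H^*(BG_{(p)};k)$ split injective for $G_{(p)} \leqslant G$ a Sylow $p$-subgroup, and every elementary abelian $p$-subgroup of $G$ is conjugate into $G_{(p)}$. Choose then a faithful complex representation $\rho\colon G \hookrightarrow U(n)$ and a maximal torus $T \leqslant U(n)$, and consider the fibre bundle
\[
\pi\colon EG \times_G (U(n)/T) \longrightarrow BG
\]
whose fibre is the complete flag variety $U(n)/T$, a connected closed manifold with finite-dimensional mod $p$ cohomology.

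For the nilpotence of $\ker\psi$, I would analyse the $G$-isotropy stratification of $U(n)/T$: the stabilisers are of the form $G \cap gTg^{-1}$, which are abelian $p$-groups. For an abelian $p$-group $B$ with maximal elementary abelian subgroup $A \leqslant B$, the restriction $H^*(BB;k) \to H^*(BA;k)$ has nilpotent kernel, since iterating Frobenius on $H^*(BB;k)$ eventually lands in a subring detected by $H^*(BA;k)$. Combining this with the stratification, a Mayer--Vietoris / spectral-sequence argument shows that
\[
H^*(BG;k) \longrightarrow \prod_{A \in E_p(G)} H^*(BA;k)
\]
factoring through $\psi$ has nilpotent kernel. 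Uniformity of the exponent $m$ comes from the Evens--Venkov theorem: $H^*(BG;k)$ is a finitely generated $k$-algebra, so its nilradical is nilpotent of bounded order.

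For the second assertion, I would combine the Becker--Gottlieb transfer for $\pi$ with the Frobenius on the polynomial rings $H^*(BA;k)$. Given a tuple $\{x_A\} \in R$, the defining compatibilities (conjugation- and restriction-invariance) are exactly what is required to assemble the $x_A$ into a class on the disjoint union of the elementary-abelian-isotropy strata of $EG \times_G (U(n)/T)$; the obstructions to extending across lower-dimensional strata lie in bounded-degree relative cohomology groups and are annihilated by iterated Frobenius after finitely many steps. Transferring the resulting global class along $\pi$ produces an element of $H^*(BG;k)$ whose image in $R$ differs from $\{x_A^{p^n}\}$ by a unit multiple, with $n$ bounded by the length of the stratification.

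The principal obstacle is the uniform control of the exponents $m$ and $n$: pointwise $\cF$-isomorphism falls out of the stratification essentially formally, but degree-independent bounds require both Noetherianity of $H^*(BG;k)$ and finiteness of the stratification of $U(n)/T$. This uniformity is precisely what makes Theorem~\ref{quillst} equivalent to a \emph{nilpotence} statement in the homotopical sense, and it is what the later sections of this paper will capture by showing that the Borel-equivariant mod $p$ cohomology $G$-spectrum is $\sF$-nilpotent for $\sF = E_p(G)$.
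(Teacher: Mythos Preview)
The paper does not supply its own proof of this theorem: it is quoted as Quillen's classical result, with a citation to \cite[Theorem~7.1]{Qui71b}. The paper's contribution is rather to reinterpret the statement as a consequence of a homotopical nilpotence assertion (that the thick $\otimes$-ideal generated by $\{k^{G/A}\}_{A \in E_p(G)}$ in $\fun(BG,\md(k))$ is everything), and it remarks that one can extract \Cref{thm:quillst} from this via a descent spectral sequence, deferring the actual extraction to the sequel \cite{MNN15i} and to \cite[\S 4.2]{galois}. So there is no ``paper's own proof'' to compare against in the strict sense.

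That said, your sketch is essentially Quillen's original argument, and its ingredients (embedding $G$ in a unitary group, using the flag variety $U(n)/T$ as a finite $G$-complex with abelian stabilisers, noetherianity of $H^*(BG;k)$ for uniform bounds) are exactly the ones the paper invokes elsewhere for its own nilpotence results (\Cref{trivflagvar}, \Cref{complexorBorequivabelian}). The weakest point in your outline is the second assertion: the ``assemble on strata, kill obstructions by Frobenius, then transfer'' paragraph is quite vague, and in Quillen's actual proof the surjectivity up to $p$-th powers is not obtained by a Becker--Gottlieb transfer along $\pi$ but by a more delicate sheaf-theoretic argument on the stratification (the transfer along $\pi$ gives multiplication by $\chi(U(n)/T) = n!$, which is zero mod $p$ when $p \le n$, so it does not directly produce preimages). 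If you want to make part 2 honest you should spell out how a compatible tuple $\{x_A\}$ becomes, after sufficiently many Frobenius iterates, the restriction of a single class on $EG \times_G (U(n)/T)$, and then how that class descends to $BG$ via the Leray--Hirsch splitting rather than the transfer.
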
 

\Cref{thm:quillst} establishes the fundamental role of \emph{elementary abelian}
groups in the cohomology of finite groups, and is extremely useful in
calculations, especially since there are large known classes of groups for
which \eqref{eq:qmap} is an injection (or at least an injection when one uses
the larger class of all abelian subgroups); see for instance
\cite[Prop. 3.4, Cor. 3.5]{Qui71e}. 
Since the cohomology of elementary abelian groups is known, \Cref{thm:quillst}
enables one to, for example, determine the \emph{Krull dimension} of $H^*(BG;
k)$ \cite[Theorem~7.7]{Qui71b}.

\subsection{Descent up to nilpotence}
\Cref{thm:quillst} by itself is a computational result about cohomology. However, as the authors
learned from \cite{carlson, balmer}, it can be interpreted as a consequence of
a more precise \emph{homotopical} statement. In the homotopy theory $\fun(BG,
\md(k))$ of $k$-module spectra equipped with a $G$-action (equivalently, the
derived category of $k[G]$-modules), the commutative algebra objects $\{k^{G/A}\}_{A
\in E_p(G)}$ satisfy a type of \emph{descent up to nilpotence:} more precisely,
the thick $\otimes$-ideal they generate is all of $\fun(BG, \md(k))$. 
From this, using a descent type spectral sequence, it is not too difficult to
extract \Cref{thm:quillst} (compare \cite[\S 4.2]{galois}). 
However, the descent-up-to-nilpotence statement is much more precise and has
additional applications. 

The purpose of these two papers is, first, to formulate a general 
categorical definition that encompasses the Carlson-Balmer interpretation of
\Cref{thm:quillst}. Our categorical definition lives in the world of 
\emph{genuine} equivariant stable homotopy theory, and, for a finite
group $G$, isolates a class of
$G$-equivariant spectra for which results such as \Cref{thm:quillst}
hold with respect to a given family of subgroups. 
The use of genuine $G$-equivariant theories allows for additional applications.
For instance, our application to equivariant complex $K$-theory gives a
homotopical lifting of Artin's theorem and gives a categorical explanation of
results of Bojanowska \cite{Boj83, Boj91} and
Bojanowska-Jackowski \cite{BojJac} on equivariant $K$-theory of finite
groups. This application, which relies on an analysis of the descent spectral
sequence, will appear in the second paper \cite{MNN15i}. 
In addition, the methods of $\sF$-nilpotence can be applied to equivariant
versions of algebraic $K$-theory, which leads to Thomason-style descent theorems
in the algebraic $K$-theory of ring spectra. We will return to this in a 
third paper \cite{CMNN15iii}. 
 
We will give numerous examples of equivariant cohomology theories that
fulfill this criterion.
The specialization to Borel-equivariant mod $p$ cohomology 
will recover results such as \Cref{thm:quillst}, as well as versions
of \Cref{thm:quillst} where $k$ is replaced by any complex-oriented theory.
Indeed, the second purpose of these papers is to prove $\mathcal{F}$-isomorphism
theorems generalizing \Cref{thm:quillst}, using a careful analysis of the
relevant descent spectral sequences.

\subsection{$\sF$-nilpotence}

We now summarize the contents of this paper. 
The present paper is almost exclusively theoretical, and the computational
results (i.e., analogs of \Cref{thm:quillst}) will be the focus of the
sequel \cite{MNN15i}, so we refer to the introduction of the sequel for further discussion.

Let $\mathcal{C}$ be a presentable stable $\infty$-category with a compatible symmetric monoidal
structure, i.e., such that $\otimes$ preserves colimits in each variable. 
Given an algebra object $A$ of $\mathcal{C}$, one says, following Bousfield,
that an object of $\mathcal{C}$ is
\emph{$A$-nilpotent} if it belongs to the thick $\otimes$-ideal generated by
$A$. 

The following is 
the main definition of this series of papers. 

\begin{defstar}[See \Cref{Fnildef} below]
Let $G$ be a finite group, and let  $\GSpec$
denote the $\infty$-category of $G$-spectra (see \Cref{def:G_spectra}).
Let $\sF$ be a family of subgroups of $G$. 
We say that $M \in \GSpec$ is \emph{$\sF$-nilpotent}
if it is nilpotent with respect to the algebra object $\prod_{H \in \sF}
F(G/H_+, S^0_G) \in \mathrm{CAlg}( \GSpec)$.
\end{defstar}

We will especially be interested in this definition for a ring $G$-spectrum
$R$ (up to homotopy, not necessarily structured).
In this case, we will see that $R$ is $\sF$-nilpotent if and only if the
geometric
fixed points $\Phi^H R$ are contractible for any subgroup $H \leq G$ which does not
belong to $\sF$ (\Cref{geofixedpointscontractible}). 
In the sequel to this paper, we will show that if $R \in \GSpec$ is a ring
$G$-spectrum which is $\sF$-nilpotent, then the $R$-cohomology of any $G$-space
satisfies an analog of \Cref{thm:quillst} (with the elementary abelian
subgroups replaced by those subgroups in $\sF$).

The first goal of this paper is to develop the theory of nilpotence in an
appropriately general context. We have also taken the opportunity to discuss certain
general features of symmetric monoidal stable $\infty$-categories, such as a
general version of Dwyer-Greenlees theory \cite{DwG02}, due to
Hovey-Palmieri-Strickland \cite{HPS97}, yielding an
equivalence between complete and torsion objects. 
Similar ideas have also been
explored in recent work of Barthel-Heard-Valenzuela \cite{BHV}.
This material is largely expository, but certain aspects (in particular,
decompositions such as \Cref{Acplimit} and \Cref{Afracsquare} below) rely on the theory of
$\infty$-categories and have not always been documented in the classical
literature on triangulated categories. 
Our presentation is intended to make it clear that the notion of $\sF$-nilpotence is a natural
generalization  of a \emph{bounded torsion} condition.

Let $R \in \mathrm{Alg}(\GSpec)$ be an associative algebra, and suppose that
$R$ is $\sF$-nilpotent. A major consequence of $\sF$-nilpotence is an
associated decomposition (\Cref{decompFnilpmodules}) of
the $\infty$-category of $R$-module $G$-spectra. 
\begin{thmstar} 
\label{Decompos}
Suppose $R \in \mathrm{Alg}(\GSpec)$ is $\sF$-nilpotent. 
Let $\mathcal{O}_{\sF}(G)$ be the category of $G$-sets of the form
$G/H$, $H \in \sF$.
Then 
there is an equivalence of $\infty$-categories
\[  \md_{\GSpec}(R) \simeq \varprojlim_{G/H \in \mathcal{O}_{\sF}(G)^{op}}
\md_{\HSpec}( \Res^G_H R).  \]
If $R$ is an $\e{\infty}$-algebra object, then the above equivalence is
(canonically) symmetric monoidal too. 
\end{thmstar}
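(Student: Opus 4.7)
The plan is to realize this as descent along the descendable algebra morphism $R \to A_\sF \otimes R$, where $A_\sF := \prod_{H \in \sF} F(G/H_+, S^0_G)$, and then to unfold the resulting cosimplicial $\infty$-category into the limit over $\mathcal{O}_\sF(G)^{op}$.

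First, the $\sF$-nilpotence of $R$ says precisely that $R$ lies in the thick $\otimes$-ideal of $\GSpec$ generated by $A_\sF$. By the theory of descendable algebras developed earlier in the paper (cf.\ \cite{galois}), this implies---after a transfer from $\GSpec$ to $\md_{\GSpec}(R)$ via the symmetric monoidal functor $-\otimes R$, using that $R$ is a retract of $R \otimes R$ as an $R$-module---that the morphism $R \to A_\sF \otimes R$ is descendable, yielding the effective descent equivalence
\[
   \md_{\GSpec}(R) \simeq \mathrm{Tot}\Bigl(\md_{\GSpec}\bigl(A_\sF^{\otimes \bullet+1} \otimes R\bigr)\Bigr).
\]

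Second, I would unwind the right-hand side by distributing the products. Because $A_\sF$ is a finite product of algebras and each $G/H_+$ is dualizable in $\GSpec$, the tensor powers satisfy
\[
   A_\sF^{\otimes (n+1)} \otimes R \simeq \prod_{(H_0,\dots,H_n)\in\sF^{n+1}} F\bigl((G/H_0 \times \cdots \times G/H_n)_+, R\bigr).
\]
In a stable setting, modules over a finite product of algebras split as a finite product of module $\infty$-categories, so the $n$-th term breaks up correspondingly. Further decomposing each $G$-set $G/H_0 \times \cdots \times G/H_n$ into orbits $\coprod G/K$---whose stabilizers $K$ again belong to $\sF$, since families are subconjugacy-closed---and invoking the Wirthmüller identification $F((G/K)_+, R) \simeq \CoInd^G_K \Res^G_K R$ together with the monadic equivalence $\md_{\GSpec}(\CoInd^G_K \Res^G_K R) \simeq \md_{\KSpec}(\Res^G_K R)$, each factor identifies with $\md_{\KSpec}(\Res^G_K R)$.

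Third, I would recognize the resulting cosimplicial diagram as the standard \v{C}ech-nerve computation of the limit of the functor $G/H \mapsto \md_{\HSpec}(\Res^G_H R)$ over $\mathcal{O}_\sF(G)^{op}$, associated to the ``covering'' $\coprod_{H \in \sF} G/H \to \ast$ in $\mathcal{O}_\sF(G)$; its totalization is the desired limit. The symmetric monoidal refinement when $R$ is $\e{\infty}$ is then automatic, since every identification above can be promoted: the Amitsur complex is cosimplicially $\e{\infty}$, the product-splitting of modules over finite products of $\e{\infty}$-algebras is symmetric monoidal, and the Wirthmüller and monadic identifications respect $\otimes$-products.

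The main obstacle is the bookkeeping in the second and third steps: assembling the various identifications (product distribution, orbit decomposition, Wirthmüller, monadic equivalence) into a natural equivalence of cosimplicial $\infty$-categories, matching the Amitsur cosimplicial structure on the left with the \v{C}ech nerve on the right in a fully coherent way. Each individual identification is standard, but their coherent assembly as a map of cosimplicial diagrams of (symmetric monoidal) $\infty$-categories is the technical heart of the argument.
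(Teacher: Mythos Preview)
Your approach is correct but differs from the paper's in an instructive way. The paper does not run descent directly for $R$-modules. Instead, it first establishes the ambient decomposition
\[
(\GSpec)_{\sF\text{-}\mathrm{cpl}} \;\simeq\; \varprojlim_{G/H \in \mathcal{O}_{\sF}(G)^{op}} \HSpec
\]
as symmetric monoidal $\infty$-categories (this is the content of \Cref{Fcompldecomp}, proved via exactly the \v{C}ech--nerve cofinality argument you sketch, but carried out once and for all at the level of $\GSpec$). It then uses the formal fact that taking module categories over an algebra commutes with limits of symmetric monoidal $\infty$-categories, obtaining
\[
\md_{(\GSpec)_{\sF\text{-}\mathrm{cpl}}}(R) \;\simeq\; \varprojlim_{G/H} \md_{\HSpec}(\Res^G_H R).
\]
Finally, $\sF$-nilpotence of $R$ forces every $R$-module to be $\sF$-complete, so the left side is $\md_{\GSpec}(R)$. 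This modular structure neatly sidesteps the ``main obstacle'' you identify: the coherent matching of cosimplicial diagrams is done once in the ambient category, and the passage to $R$-modules is purely formal.

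One genuine wrinkle in your write-up: you invoke ``the symmetric monoidal functor $-\otimes R$'' from $\GSpec$ to $\md_{\GSpec}(R)$, but for $R$ merely $\e{1}$ the target is not even monoidal, so descendability of $A_\sF \otimes R$ \emph{inside} $\md_{\GSpec}(R)$ is not literally available. You would need to rephrase this via the $\GSpec$-linear action on $\md_{\GSpec}(R)$ (tensoring with $A_\sF$ is still a comonad there, and $A_\sF$-nilpotence of $R$ makes every $R$-module $A_\sF$-complete). The paper's route avoids this subtlety entirely, which is another reason to prefer it.
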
 
We expect the decomposition given above to have future applications, as in
most practical situations where $\sF$-nilpotence arises, it is easier to study
modules in $\HSpec$ over $\Res^G_H R$ (for $H \in \sF$) than to study modules over $R$ itself. 

\subsection{Equivariant module spectra}

In the rest of this paper, we take a somewhat different direction, albeit
with a view towards proving $\sF$-nilpotence results. 
We analyze the structure of \emph{modules}
over certain equivariant ring spectra.  These results  generalize work of Greenlees-Shipley \cite{GS11}
in the rational setting. 

Our first results concern the structure of the $\infty$-category $\fun(BG,
\md(R))$ where $R$ is a complex-oriented $\e{\infty}$-ring and $G$ is a
connected compact
Lie group. The application of these results to $\sF$-nilpotence statements 
will come from embedding a finite group in a unitary group.  In case $G$ is a product of copies of tori or unitary groups (see
\Cref{basicunipex}  for precise conditions), we describe  
$\fun(BG,
\md(R))$
as an $\infty$-category of \emph{complete} modules over a (non-equivariant) ring
spectrum. 
For instance, we prove the following result.

\begin{thm} 
\label{kduality}
Let $R$ be an even periodic $\e{\infty}$-ring. Then we have an equivalence
of symmetric monoidal $\infty$-categories,
\[ \fun(BU(n), \md(R)) \simeq \md( F(BU(n)_+, R))_{\mathrm{cpl}},  \]
where on the right we consider modules complete with respect to the
augmentation ideal in $\pi_0 (F(BU(n)_+, R)) \simeq \pi_0(R)[[c_1, \dots, c_n]]$.
\end{thm}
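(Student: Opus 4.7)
The plan is to realize the equivalence via Morita theory and the $\infty$-categorical Barr-Beck-Lurie theorem. Let $R_{\mathrm{triv}} \in \fun(BU(n), \md(R))$ denote the constant diagram with value $R$, i.e., $R$ equipped with the trivial $U(n)$-action; its endomorphism $\e{\infty}$-ring is $\mathrm{End}(R_{\mathrm{triv}}) \simeq F(BU(n)_+, R)$, which I abbreviate as $C^*$.

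First I would construct the symmetric monoidal adjunction
\[
L \colon \md(C^*) \rightleftarrows \fun(BU(n), \md(R)) \colon G,
\]
with $L(N) = N \otimes_{C^*} R_{\mathrm{triv}}$ and $G(M) = F(BU(n)_+, M)$, the $C^*$-module structure on $G(M)$ coming from multiplication of cochains. Since $BU(n)$ is path-connected, picking a basepoint yields a splitting $BU(n)_+ \simeq S^0 \vee BU(n)$ of pointed spaces, so $G(M) \simeq M \oplus F(BU(n), M)$; in particular $G$ is conservative. As $G$ is a right adjoint between presentable stable $\infty$-categories, it preserves all small limits, so the Barr-Beck-Lurie theorem applies and yields an equivalence between $\fun(BU(n), \md(R))$ and the $\infty$-category of modules over the induced monad $T = G \circ L$ on $\md(C^*)$.

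The heart of the proof is then to identify $T$ with the $I$-adic completion monad for the augmentation ideal $I = (c_1, \dots, c_n) \subset \pi_0 C^*$. Using the torsion/complete framework developed earlier in the paper, it suffices to check that (a) $T$ restricts to the identity on the localizing subcategory generated by $R$ (the $I$-torsion modules), and (b) the essential image of $T$ lies in the subcategory of $I$-complete modules. For (a), the key computation is $T(R) \simeq R$: the Koszul resolution of $R$ over $C^*$, validated by the complex orientation (which guarantees that $(c_1, \dots, c_n)$ is a regular sequence in $\pi_0 C^* \simeq \pi_0(R)[[c_1, \dots, c_n]]$), gives $\pi_*(R \otimes_{C^*} R) \cong \Lambda_{\pi_* R}(\xi_1, \dots, \xi_n)$ with $|\xi_i| = |c_i| + 1$, so $R \otimes_{C^*} R$ is perfect as an $R$-module. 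Perfectness yields $T(R) = F(BU(n)_+, R \otimes_{C^*} R) \simeq C^* \otimes_R (R \otimes_{C^*} R) \simeq R$, and the identification on the whole localizing subcategory generated by $R$ follows by standard colimit and extension arguments. For (b), $F(BU(n)_+, M)$ is $I$-complete because the skeletal tower of $BU(n)$ presents $G(M)$ as a limit of $C^*$-modules built from bounded-complexity pieces, and complete modules are closed under limits.

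The principal obstacle is item (a): extending the identification $T(R) \simeq R$ from the generator $R$ to the whole localizing subcategory in a manner compatible with all higher coherences is nontrivial, since $T = G \circ L$ and $G$ does not preserve colimits in general. Once $T$ is identified with the completion monad $(-)^{\wedge}_I$, the theorem follows immediately from the standard equivalence between $T$-modules and the subcategory $\md(C^*)_{\mathrm{cpl}}$ of $I$-complete modules.
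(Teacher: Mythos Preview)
Your conservativity claim for $G$ is the fatal gap. The right adjoint $G$ sends $M \in \fun(BU(n),\md(R))$ to $\hom(\mathbf{1},M) = M^{hU(n)}$, the homotopy fixed points of the $U(n)$-action on $M$. The splitting $BU(n)_+ \simeq S^0 \vee BU(n)$ only yields $F(BU(n)_+,N) \simeq N \oplus F(BU(n),N)$ when $N$ is a \emph{non-equivariant} spectrum; it does \emph{not} give a splitting $M^{hU(n)} \simeq M \oplus (\text{something})$ for $M$ with nontrivial action. The basepoint furnishes only the restriction map $M^{hU(n)} \to M$, which has no reason to split. In fact, conservativity of $\hom(\mathbf{1},-)$ is equivalent (by the elementary \Cref{generateifcons}) to the unit generating $\fun(BU(n),\md(R))$ as a localizing subcategory, which is essentially the content of the theorem. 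Note that your argument never uses the even-periodic hypothesis, so if it worked it would prove the result for arbitrary $R$, which is false.

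You also flag but do not resolve the identification of the monad $T = G\circ L$ with $I$-adic completion on all of $\md(C^*)$; since $G$ fails to commute with colimits, knowing $T(R)\simeq R$ does not propagate. The paper sidesteps both problems. Rather than invoking Barr--Beck, it proves directly that the coinduced object $A = F(U(n)_+,R) \in \fun(BU(n),\md(R))$ lies in the \emph{thick} subcategory generated by the unit: either via the Euler cofiber sequence $S(V)_+ \to S^0 \to S^V$ together with the complex orientation (which trivializes $R\wedge S^V$ as $\Sigma^{2n}R$) and induction on $n$, or by lifting the Chern classes $c_i$ to $y_i \in \widetilde{R}^*(BU(n))$ and exhibiting $\bigotimes_i R/y_i \simeq F(U(n)_+,R)$. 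Once $A$ is in the thick subcategory of $\mathbf{1}$, an abstract unipotence criterion (\Cref{unipcriterion}) immediately identifies $\fun(BU(n),\md(R))$ with $A_R$-complete $C^*$-modules. This is much closer to your Koszul computation $R\otimes_{C^*} R \simeq \Lambda_R(\xi_1,\dots,\xi_n)$ than to your monadic outline; the point is that perfectness of $R$ over $C^*$ is what gives the thick-subcategory containment, and that containment is what yields both conservativity and the completion identification at once.
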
 

One can think of \Cref{kduality} as a homotopy-theoretic
(complex-orientable) version of the
\emph{Koszul duality} between DG modules over an exterior 
algebra (which is replaced by the group algebra $R \wedge U(n)_+$) and DG modules over a polynomial 
algebra (which is replaced by $F(BU(n)_+, R)$). 
Rationally, these results are due to Greenlees-Shipley \cite{GS11}. 
\Cref{kduality} is useful because it is generally much
easier to work with modules over the non-equivariant ring spectrum $F(BU(n)_+,
R)$
than to analyze $U(n)$-actions directly.

The unitary group is especially well-behaved because its cohomology is
torsion-free.  A more general result (\Cref{polyEMSS} below) runs as follows: 
\begin{thm} \label{polyEMSSintro}
Let $R$ be an $\e{\infty}$-ring  and let $G$ be  a compact, connected
Lie group. Suppose 
$H^*(BG; \pi_0 R)\simeq H^*(BG; \mathbb{Z})\otimes_{\mathbb{Z}}\pi_0 R$ and
that this is a polynomial ring over $\pi_0 R$; suppose furthermore that the
cohomological $R$-based Atiyah-Hirzebruch spectral sequence (AHSS) for $BG$ degenerates (e.g., $\pi_*(R)$ is
torsion-free).
Then there is an equivalence of symmetric monoidal $\infty$-categories between  $\mathrm{Fun}(BG, \md(R))$
and the symmetric monoidal $\infty$-category of $R$-complete $F(BG_+, R)$-modules.
\end{thm}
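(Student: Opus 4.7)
The plan is to adapt the argument of Theorem~\ref{kduality}, which handles the case $G = U(n)$, to the general polynomial setting. Observe that $U(n)$ itself satisfies the present hypotheses, as $H^*(BU(n); \mathbb{Z}) = \mathbb{Z}[c_1, \ldots, c_n]$ is polynomial in even generators and the AHSS collapses for any $R$.

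Let $S := F(BG_+, R)$ and let $R^{\mathrm{triv}} \in \mathrm{Fun}(BG, \md(R))$ denote $R$ with its trivial $G$-action, whose endomorphism algebra in $\mathrm{Fun}(BG, \md(R))$ is canonically $S$. By the hypotheses and AHSS degeneration, $\pi_* S \cong \pi_*R[[x_1, \ldots, x_k]]$ is a power series ring on lifts of the polynomial generators, and in particular the augmentation ideal $I \subset \pi_*S$ is generated by a regular sequence. First I would construct the symmetric monoidal adjunction
\[ \Phi : \md(S) \rightleftarrows \mathrm{Fun}(BG, \md(R)) : \Psi, \]
with $\Psi(M) := M^{hG}$, naturally an $S$-module, and $\Phi(N) := N \otimes_S R^{\mathrm{triv}}$.

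The next step uses the torsion-complete machinery developed in the earlier sections of the paper. Homotopy fixed points are manifestly $I$-complete (equivalently, $R$-complete), so $\Psi$ factors through $\md(S)_{\mathrm{cpl}}$, and the restricted pair $\Phi|_{\md(S)_{\mathrm{cpl}}} \dashv \bar{\Psi}$ is the candidate equivalence. To verify this, one checks (a) that the counit is an equivalence on the generator $R^{\mathrm{triv}}$, which is immediate since $\Phi\Psi(R^{\mathrm{triv}}) = S \otimes_S R^{\mathrm{triv}} \simeq R^{\mathrm{triv}}$; (b) that the unit is an equivalence on $S \in \md(S)_{\mathrm{cpl}}$; and (c) that the essential image of $\Phi|_{\md(S)_{\mathrm{cpl}}}$ is all of $\mathrm{Fun}(BG, \md(R))$, i.e., that $R^{\mathrm{triv}}$ generates the target as a localizing subcategory. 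The symmetric monoidal structure of the equivalence is inherited from tensoring over $S$.

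The main obstacle I anticipate is step (c). The polynomial cohomology hypothesis morally supplies an even-dimensional cell decomposition of $BG$, as it does geometrically for $BU(n)$, and AHSS degeneration guarantees that any $M \in \mathrm{Fun}(BG, \md(R))$ admits a convergent filtration whose successive quotients are shifted copies of $R^{\mathrm{triv}}$. Making this rigorous---extracting the categorical filtration from the degenerate AHSS in the absence of an actual cell structure on $BG$---is the delicate part of the argument, and is precisely where both hypotheses enter jointly.
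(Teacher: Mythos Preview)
Your plan has a genuine gap in how (a), (b), (c) are supposed to assemble. Even granting (c), the counit being an equivalence on $R^{\mathrm{triv}}$ does not propagate to the localizing subcategory it generates, because the right adjoint $\Psi = (-)^{hG}$ does not preserve colimits. What (c) would give you is \emph{weak unipotence} in the paper's sense (Definition~7.1), and the paper explicitly flags (Remark after Definition~\ref{def:unipotent}) that it is not known whether weak unipotence implies unipotence. Your justification for (c) is also off: degeneration of the AHSS for $BG$ is a statement about $R^*(BG)$, not a filtration of an arbitrary $M \in \mathrm{Fun}(BG, \md(R))$ by copies of $R^{\mathrm{triv}}$; there is no ``AHSS for $M$'' in this generality.

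The paper's argument is sharper and different in kind. Rather than aiming for a localizing-subcategory statement, it shows that the coinduced object $F(G_+, R)$ lies in the \emph{thick} subcategory generated by the unit. Concretely: lift the polynomial generators of $H^*(BG;\pi_0 R)$ to classes $y_i \in \widetilde{R}^*(BG)$ using AHSS degeneration; each $y_i$ is a self-map of $R^{\mathrm{triv}}$ that becomes null after composing into $F(G_+, R)$, so one obtains a map
\[
\bigotimes_{i=1}^r R/y_i \;\longrightarrow\; F(G_+, R)
\]
in $\mathrm{Fun}(BG, \md(R))$. This is checked to be an equivalence after base-change to $\pi_0 R$, where a finite Postnikov argument on $G_+ \wedge \mathbb{Z}$ (using only that $G$ is connected, so the action on homotopy is trivial) already gives unipotence (\Cref{intconverge}). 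The Koszul presentation exhibits $F(G_+, R)$ as an iterated cofiber of the unit, hence in its thick subcategory; equivalently, the augmentation module $R$ is \emph{perfect} over $S = F(BG_+, R)$. This perfectness is exactly what makes the unipotence criterion (\Cref{unipcriterion}) fire: it supplies a compact generator on both sides that the left adjoint matches up, bypassing the colimit-preservation problem for $\Psi$ entirely. Your outline never establishes this perfectness, and that is the missing idea.
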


Let $(\mathcal{C}, \otimes, \mathbf{1})$ be a presentable, symmetric monoidal $\infty$-category where
the tensor product commutes with colimits in each variable. Then there is an
adjunction of symmetric monoidal $\infty$-categories between  $\md(
\mathrm{End}( \mathbf{1}))$ and $\mathcal{C}$, and we will discuss general criteria for this
adjunction to be a {localization.} We call such $\mathcal{C}$ \emph{unipotent.}  When applied to $\infty$-categories of
the form $\fun(BG, \md(R))$, these general criteria will recover results such as
\Cref{polyEMSSintro}. 

We will 
then  explain that results such as \Cref{kduality} lead to very quick  and
explicit proofs
(via the flag variety) 
of results including the following: 

\begin{thm} 
Let $R$ be a complex-orientable $\e{\infty}$-ring. Then, if $G$ is a finite
group, the Borel-equivariant $G$-spectrum associated to $R$ is $\sF$-nilpotent for
$\sF$ the family of abelian subgroups. 
\end{thm}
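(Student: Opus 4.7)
Write $\bor{R} \in \GSpec$ for the Borel-equivariant $G$-spectrum associated to $R$. By the definition of $\sF$-nilpotence given in the introduction, combined with the self-duality $F(G/A_+, S^0_G) \simeq G/A_+$ valid for $G$ finite, it suffices to prove that $\bor{R}$ lies in the thick $\otimes$-ideal of $\GSpec$ generated by the orbits $\{G/A_+ : A \leq G \text{ abelian}\}$.

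The strategy is the classical Quillen flag-variety trick. Embed $G \hookrightarrow U(n)$ (for instance by the regular representation) and let $T \leq U(n)$ be a maximal torus. Set $X := U(n)/T$, and view it as a $G$-space by restriction. Its isotropy groups $G \cap g T g^{-1}$ are all abelian, so a $G$-CW decomposition of $X$ expresses $X_+$, and hence its Spanier-Whitehead dual $DX_+ = F(X_+, S^0_G)$, as an object of the thick subcategory of $\GSpec$ generated by $\{G/A_+ : A \text{ abelian}\}$. It therefore suffices to show that the $\bor{R}$-algebra $F(X_+, \bor{R}) = DX_+ \wedge \bor{R}$ is descendable over $\bor{R}$ in $\md_{\GSpec}(\bor{R})$: for then $\bor{R}$ lies in the thick $\otimes$-ideal of $\GSpec$ generated by $DX_+$, and hence in the one generated by the abelian orbits.

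Since $X$ is in fact a $U(n)$-space and restriction $\Res^{U(n)}_G : \Sp_{U(n)} \to \GSpec$ is symmetric monoidal and carries $\bor{R}^{U(n)}$ to $\bor{R}$, it is enough to prove the corresponding descendability of $F(X_+, \bor{R}^{U(n)})$ over $\bor{R}^{U(n)}$ in $\md_{\Sp_{U(n)}}(\bor{R}^{U(n)})$. Both of these objects are Borel $U(n)$-spectra, and the inclusion $\fun(BU(n), \md(R)) \hookrightarrow \md_{\Sp_{U(n)}}(\bor{R}^{U(n)})$ is a fully faithful symmetric monoidal embedding, so we reduce further to proving that $F(U(n)/T_+, R)$ is descendable over the unit $R$ in $\fun(BU(n), \md(R))$. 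Now apply \Cref{polyEMSSintro} (or \Cref{kduality}, which covers the even-periodic case) to obtain a symmetric monoidal equivalence
\[
\fun(BU(n), \md(R)) \simeq \md(F(BU(n)_+, R))_{\mathrm{cpl}}.
\]
This sends the unit to the unit, and by a Shapiro-type computation together with the homotopy equivalence $(U(n)/T)_{hU(n)} \simeq BT$ it sends $F(U(n)/T_+, R)$ to $F(BT_+, R)$.

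We have thus reduced the theorem to the following classical descent input: for complex-orientable $R$ the restriction $F(BU(n)_+, R) \to F(BT_+, R)$ realises the target as a free $F(BU(n)_+, R)$-module of rank $n!$ (the splitting principle, with the Schubert monomials as an explicit basis), and any such finite faithfully flat extension of $\e{\infty}$-rings is descendable. The main obstacle is the Koszul-duality input \Cref{polyEMSSintro} together with the identification of the geometric object $F(U(n)/T_+, R)$ with the algebraic object $F(BT_+, R)$ under the equivalence (and the compatible passage of descent to the completed module category); the remaining ingredients—the $G$-CW structure on the flag variety, compatibility of descent with the symmetric monoidal restriction $\Sp_{U(n)} \to \GSpec$, and the passage between the Borel and genuine equivariant categories—are then formal.
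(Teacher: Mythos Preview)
Your proof is correct and follows essentially the same route as the paper (\Cref{trivflagvar} and \Cref{complexorBorequivabelian}): embed $G$ in $U(n)$, use unipotence of $\fun(BU(n),\md(R))$ together with the freeness of $F(BT_+,R)$ over $F(BU(n)_+,R)$ to control the flag variety, and then invoke the $G$-CW decomposition of $U(n)/T$ with abelian isotropy. The paper's version is marginally more direct in that it observes $R \wedge (U(n)/T)_+$ is actually a \emph{free} module (a sum of shifts of the unit) in $\fun(BU(n),\md(R))$, so the unit is a retract outright; your phrasing via descendability of a finite free extension is equivalent but adds one layer of abstraction, and the detour through genuine $\Sp_{U(n)}$ is unnecessary since the paper stays in the Borel categories throughout and invokes \Cref{perfborelequiv} only at the end to translate to $\GSpec$.
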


We will prove more precise results for particular complex-orientable theories
in the sequel to this paper. 
The main observation is that the (very nontrivial) action of the unitary group
$U(n)$ on the flag variety $F = U(n)/T$ becomes trivialized after
smashing with a complex-oriented theory; the trivialization is a consequence
of \Cref{kduality} (though can also be proved by hand). The use of the flag variety in this setting is of course 
classical, and the argument is essentially due to Quillen (albeit
stated in a slightly different form).

Finally, we shall treat the cases of equivariant real and complex $K$-theory.
Here again, we make a study of their module categories in the case of
compact, connected Lie groups. 
Our main result is that, once again, 
under certain conditions the symmetric monoidal $\infty$-category of modules (in equivariant spectra) over equivariant real and
complex $K$-theory can be identified with 
the symmetric monoidal $\infty$-category of modules over a \emph{non-equivariant}
$\e{\infty}$-ring spectrum.

\begin{thm} 
Let $G$ be a compact, connected Lie group 
with $\pi_1(G)$ torsion-free.  Then the respective symmetric monoidal
$\infty$-categories $\md_{\GSpec}(KU_G)$ and $ \md_{\GSpec}(KO_G)$
are equivalent to the symmetric monoidal $\infty$-categories of modules (in the $\infty$-category of spectra) over the
categorical fixed points of $KU_G$ and $
KO_G$ respectively.
\end{thm}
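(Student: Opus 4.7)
The plan is to deduce both equivalences from the Schwede--Shipley / Lurie Morita theorem: a presentable, stably symmetric monoidal $\infty$-category $\mathcal{C}$ whose unit is a compact generator is symmetric monoidally equivalent to $\md(\mathrm{End}_{\mathcal{C}}(\mathbf{1}))$. Applied to $\mathcal{C}=\md_{\GSpec}(KU_G)$ with $\mathbf{1}=KU_G$, the identification of endomorphisms with the categorical fixed points $KU_G^G$ is tautological, so the task is to show $KU_G$ is a compact generator. Compactness is immediate from compactness of $S^0_G$ in $\GSpec$ together with the fact that base change along the unit $S^0_G\to KU_G$ preserves compact objects. The content is therefore generation: equivalently, $(-)^G$ is conservative on $\md_{\GSpec}(KU_G)$, which in turn reduces to showing that $KU_G\wedge(G/H)_+$ lies in the thick subcategory $\langle KU_G\rangle$ of $\md_{\GSpec}(KU_G)$ generated by $KU_G$ itself, for every closed subgroup $H\leq G$.

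To verify generation I would reduce first to the case of a maximal torus $T\leq G$. The hypothesis that $\pi_1(G)$ is torsion-free brings in the Pittie--Steinberg theorem: $R(T)$ is a free $R(G)$-module of rank $|W|$, where $W$ is the Weyl group. Combined with complex-orientability of $KU_G$, this forces the equivariant Atiyah--Hirzebruch spectral sequence computing $KU_G^{\ast}(G/T)$ to degenerate and exhibits $KU_G\wedge(G/T)_+$ as a finite free $KU_G$-module, hence a member of $\langle KU_G\rangle$. For a general closed $H\leq G$, choose a maximal torus $T_H$ of $H$ conjugate into $T$; the Becker--Gottlieb transfer along the fiber bundle $H/T_H\to G/T_H\to G/H$, whose fiber has nonzero Euler characteristic $|W(H)|$, displays $KU_G\wedge(G/H)_+$ as a retract of $KU_G\wedge(G/T_H)_+$, which lies in $\langle KU_G\rangle$ by the same torus argument applied to the inclusion $T_H\leq T\leq G$. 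This completes the generation step.

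The $KO_G$ case follows by Galois descent: complex conjugation defines a $C_2$-action on $KU_G$ whose homotopy fixed points are $KO_G$, the extension is faithful and descendable in the sense of the descent framework developed earlier in the paper, and descent transports the module-category equivalence for $KU_G$ to one for $KO_G$. The principal obstacle is the torus reduction, in particular the Pittie--Steinberg freeness and the subsequent collapse of the Atiyah--Hirzebruch spectral sequence; both use the torsion-freeness of $\pi_1(G)$ essentially, and when $\pi_1(G)$ has torsion the naive uncompleted equivalence fails and survives only in the completed Borel-equivariant form appearing in \Cref{kduality} and \Cref{polyEMSSintro}.
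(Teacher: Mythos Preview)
Your overall framework is right---compactness of the unit is clear and the content is conservativity of $(-)^G$---but the Becker--Gottlieb step is a genuine gap. For a closed subgroup $H\leq G$ with maximal torus $T_H$, the composite of the transfer $\Sigma^\infty_+(G/H)\to\Sigma^\infty_+(G/T_H)$ with the projection is multiplication by $\chi(H/T_H)$, and this integer need not be a unit in $R(G)$: for instance if $H\leq U(n)$ is finite nonabelian then $T_H$ is trivial and $\chi(H/T_H)=|H|$. So $KU_G\wedge(G/H)_+$ is \emph{not} displayed as a retract of $KU_G\wedge(G/T_H)_+$, and your subgroup-by-subgroup approach breaks down. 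Relatedly, your justification of step 3 is too thin: Pittie--Steinberg only gives freeness at the level of $\pi_*^G$, whereas freeness of $KU_G\wedge\mathbb{D}(G/T)_+$ as a $KU_G$-module in $\GSpec$ requires checking $\pi_*^H$ for \emph{every} closed $H$, and this is precisely the K\"unneth theorem of Hodgkin--Snaith--McLeod (that $KU_G^*(X)\otimes_{KU_G^*}KU_G^*(G/T)\to KU_T^*(X)$ is an isomorphism for all finite $G$-complexes $X$). The paper takes this as the key input. Once you have it, step 4 is unnecessary: freeness gives $i_T^*N\simeq\bigoplus i_G^*N$ for every $KU_G$-module $N$, so $i_G^*M=0$ implies $i_T^*M=0$, hence $\Res^G_T M=0$ by the torus case; then $\Res^G_T(M\wedge X_+)=0$ for every $G$-space $X$, whence $i_T^*(M\wedge X_+)=0$ and therefore $i_G^*(M\wedge X_+)=0$ by freeness again, so $\pi_*^H M=0$ for all $H$.

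Your $KO_G$ argument is circular as the paper is organized. The identifications $KO_G\simeq KU_G^{hC_2}$ and the faithful $C_2$-Galois property of $KO_G\to KU_G$ are \emph{deduced} from the unipotence of $\md_{\GSpec}(KO_{U(n)})$ (together with the affineness machinery over $M_{FG}$), not used to prove it. The paper's route is instead an equivariant Wood theorem: one shows directly that $KO_G\wedge\Sigma^{-2}\mathbb{CP}^2\simeq KU_G$ as $KO_G$-modules in $\GSpec$, by computing the complexification map $\widetilde{KO}_G^*(\mathbb{CP}^2)\to\widetilde{KU}_G^*(\mathbb{CP}^2)$ via Segal's description of $KO_G^*$ on trivial $G$-spaces and the real/complex/quaternionic trichotomy of irreducibles. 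Since $\Sigma^{-2}\mathbb{CP}^2$ thickly generates finite spectra by the nilpotence of $\eta$, conservativity of $(-)^G$ on $KU_G$-modules immediately transfers to $KO_G$-modules.
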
 

For equivariant complex $K$-theory, these results use (and give a modern
perspective on) the theory of
K\"unneth spectral sequences in equivariant $K$-theory developed by Hodgkin 
\cite{Hod75}, Snaith \cite{Sna72}, and McLeod \cite{Mc78}. 
By embedding a finite group in a unitary group, one obtains a quick proof that
equivariant $K$-theory  is nilpotent for the family of abelian subgroups; in the sequel
we shall see that it is actually nilpotent for the family of cyclic subgroups. 
The condition that $\pi_1(G)$ should be torsion-free does not rule out
torsion in $H^*(G; \mathbb{Z})$ (e.g., $G = \mathrm{Spin}(n)$) and that the
conclusion holds in these cases is a special feature of $K$-theory. 

To obtain the result for equivariant \emph{real} $K$-theory, we prove a version of the 
theorem of Wood $KO \wedge \Sigma^{-2} \mathbb{CP}^2 \simeq KU$ in the equivariant
setting  (\Cref{equivwood}) below. 

\begin{thm} 
Let $G$ be any compact Lie group. Then there is an equivalence of
$KO_G$-modules $KO_G \wedge \Sigma^{-2} \mathbb{CP}^2 \simeq KU_G$. 
\end{thm}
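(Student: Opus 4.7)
My plan is to lift the classical proof of Wood's theorem to the equivariant setting by constructing a canonical $KO_G$-module map from the equivariant complexification, and then verifying that it is an equivalence on all categorical fixed-point spectra via the equivariant Bott--Wood sequence.

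I would begin by unwinding the left-hand side using the cell structure $\mathbb{CP}^2 = S^2 \cup_\eta e^4$, which gives the non-equivariant identification $\Sigma^{-2}\mathbb{CP}^2 \simeq \mathrm{cofib}(\eta\colon S^1 \to S^0)$. Viewing this cofiber sequence as one of $G$-spectra with trivial action and smashing with $KO_G$ yields
\[
KO_G \wedge \Sigma^{-2}\mathbb{CP}^2 \;\simeq\; KO_G/\eta \;:=\; \mathrm{cofib}\bigl(\eta\colon \Sigma KO_G \to KO_G\bigr),
\]
where $\eta$ denotes the image of the non-equivariant Hopf class under $\pi_1(KO) \to \pi_1^G(KO_G)$. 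The equivariant complexification $c\colon KO_G \to KU_G$, which is an $\e{\infty}$-ring map for any compact Lie $G$, provides the desired extension: by naturality along the sphere units, $c(\eta) \in \pi_1^G(KU_G)$ is pulled back from $c(\eta) \in \pi_1(KU) = 0$, so $c(\eta) = 0$, and hence $c$ descends canonically to a $KO_G$-module map $\bar c\colon KO_G/\eta \to KU_G$.

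To show $\bar c$ is an equivalence, I would test on each categorical fixed-point spectrum $(-)^H$ for closed $H \leq G$. Because $(-)^H$ preserves cofiber sequences and suspensions, it commutes with smashing against the finite non-equivariant spectrum $\Sigma^{-2}\mathbb{CP}^2$, so $\bar c^H$ becomes the non-equivariant map $(KO_G)^H/\eta \to (KU_G)^H$ induced by complexification. Its effect on homotopy groups sits inside the classical equivariant Bott--Wood long exact sequence
\[
\cdots \to KO^{n-1}_H \xrightarrow{\,\eta\,} KO^n_H \xrightarrow{\,c\,} KU^n_H \xrightarrow{\,\delta\,} KO^{n+1}_H \to \cdots,
\]
valid for every compact Lie $H$, which shows $\bar c^H$ is a $\pi_*$-isomorphism and hence an equivalence. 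Since this holds for all closed $H \leq G$, $\bar c$ is an equivalence of $G$-spectra, giving the claimed $KO_G$-module equivalence.

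The principal obstacle is verifying the equivariant Bott--Wood sequence, equivalently the cofiber sequence $\Sigma KO_H \xrightarrow{\eta} KO_H \xrightarrow{c} KU_H$, for every compact Lie group $H$. This is classical in equivariant $K$-theory, following from equivariant Bott periodicity and the explicit structure of $RO(H)$-graded equivariant $K$-theory of a point (cf.\ work of Atiyah, Karoubi, and Seymour). If one prefers not to cite it as a black box, one can verify the cofiber sequence directly from the equivariant realification $r\colon KU_G \to KO_G$, using the relations $rc = 2$ and $cr = 1 + \psi^{-1}$ together with the known homotopy groups of the equivariant $K$-theory spectra.
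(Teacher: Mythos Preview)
Your approach is genuinely different from the paper's, and the difference matters. You reduce the theorem to the equivariant Bott--Wood cofiber sequence $\Sigma KO_H \xrightarrow{\eta} KO_H \xrightarrow{c} KU_H$ (equivalently, the long exact sequence on $\pi_*^H$) for every closed subgroup $H \leq G$. But this cofiber sequence is precisely the statement $KO_H \wedge \Sigma^{-2}\mathbb{CP}^2 \simeq KU_H$ read on homotopy groups --- it is the theorem you are trying to prove, applied to each subgroup (including $H = G$). Citing it as ``classical'' with vague attributions to Atiyah, Karoubi, and Seymour does not discharge the obligation; the relations $rc = 2$ and $cr = 1 + \psi^{-1}$ alone do not yield exactness without further input. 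The paper's authors are explicit about this concern: they write that they ``do \emph{not} want to assume an equivalence of the form $KO_G \simeq KU_G^{hC_2}$'' (which is essentially equivalent to your Bott sequence), and instead derive it as a corollary after the fact.

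The paper's route avoids this circularity by a direct computation. Using Segal's decomposition of $KO_G^*(X)$ for trivial-action $X$ into real, complex, and quaternionic pieces, they compute the complexification map $\widetilde{KO}_G^*(\mathbb{CP}^2) \to \widetilde{KU}_G^*(\mathbb{CP}^2)$ explicitly and show it is injective with image exactly the $C_2$-invariants. They then use the free decomposition $KU_G \wedge \mathbb{D}(\mathbb{CP}^2) \simeq KU_G \vee KU_G$ (classified by classes $x,y$ swapped by conjugation) and the projection $f$ sending $x \mapsto 1$, $y \mapsto 0$ to build the equivalence by hand. Your construction of the map $\bar c$ is cleaner and more conceptual, but to make your argument non-circular you would need to supply an independent proof of the equivariant Bott sequence --- for instance via equivariant Fredholm models or Clifford algebras --- rather than treating it as known.
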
 
Here
$\mathbb{CP}^2$ is considered  as a pointed space with trivial $G$-action.
We then develop an analogous $\mathbb{Z}/2$-Galois descent picture from
equivariant complex to real $K$-theory (which is due to Rognes \cite{Rog08}
for $G = 1$). In particular, we show that (for any compact Lie group $G$) the
map $KO_G \to KU_G$ of $\e{\infty}$-algebras in $\GSpec$ is  a faithful
$\mathbb{Z}/2$-Galois extension. The Galois descent or homotopy fixed point
spectral sequence is carefully analyzed; here the trichotomy of irreducible
representations into real, complex, and quaternionic plays a fundamental role.

\subsection*{Notation}
We will freely use the theory of $\infty$-categories (quasi-categories)
as treated in \cite{Lur09} and the theory of symmetric monoidal
$\infty$-categories, as well as that of rings and modules in them,  developed in \cite{Lur14}. 
Note that we will identify the $\e{1}$ and associative $\infty$-operads. 
In a symmetric monoidal $\infty$-category, we will let $\mathbb{D}A$ denote the
dual of a dualizable object $A$. We refer to \cite[\S 4.6.1]{Lur14} for a
treatment of duality and dualizable objects. 
Homotopy limits and colimits in an $\infty$-category will be written as $\varprojlim$ and $\varinjlim$.
We will abuse notation and often identify an ordinary category $\mathcal{C}$
with the associated quasi-category $N(\mathcal{C})$.
In addition, we will frequently identify abelian groups (resp.~commutative rings) with their
associated Eilenberg-MacLane spectra when confusion is unlikely to arise. 

Throughout, we will write $\mathcal{S}$ for the $\infty$-category of spaces and
$\Sp$ for the $\infty$-category of spectra. 
For $G$ a compact Lie group, the $G$-equivariant analogs will be denoted
$\mathcal{S}_G$ and $\GSpec$.
We will also write $BG$ for both the classifying space of $G$ and its
associated $\infty$-category ($\infty$-groupoid), so that, for an
$\infty$-category $\mathcal{C}$, $\fun(BG, \mathcal{C})$ denotes the
$\infty$-category of objects in $\mathcal{C}$ equipped with a $G$-action.
\subsection*{Acknowledgments}
We would like to thank Clark Barwick, John Greenlees, Mike Hopkins, Srikanth
Iyengar, Jacob Lurie, Peter May,
and David Treumann for helpful discussions.
We would especially like to thank Paul Balmer, whose preprint \cite{balmer} made us
aware of the connections between Quillen stratification and thick subcategory
methods. We thank the referee for a very careful reading and catching numerous typos.

\part{Generalities on symmetric monoidal $\infty$-categories}

\section{Complete objects}
\label{sec:axiomatic}

Consider the $\infty$-category $\md(\mathbb{Z})$ of modules over the
Eilenberg-MacLane spectrum $H\mathbb{Z}$, or equivalently the (unbounded) derived
$\infty$-category \cite[\S 1.3.5]{Lur14} of the category of abelian groups. 
Fix a prime number $p$. 
Then there are four stable subcategories of $\md(\mathbb{Z})$ that one can define. 
\begin{enumerate}
\item The subcategory $(\md(\mathbb{Z}))_{p-\mathrm{tors}}$ of $p$-torsion
$\mathbb{Z}$-modules: that is, the smallest localizing\footnote{Recall that a subcategory
of a presentable stable $\infty$-category is said to be \emph{localizing} if it
is a stable subcategory closed under colimits.} subcategory of $\md(\mathbb{Z})$
containing $\mathbb{Z}/p$. An object of $\md(\mathbb{Z})$ belongs to 
$(\md(\mathbb{Z}))_{p-\mathrm{tors}}$ if and only if all of its homotopy
groups are $p$-power torsion.
\item 
The subcategory $\md_{\mathbb{Z}[p^{-1}]}$ of $\mathbb{Z}[p^{-1}]$-modules:
that is, those objects $M  \in \md(\mathbb{Z})$ such that $M \otimes N $ is
contractible for every $N \in (\md(\mathbb{Z}))_{p-\mathrm{tors}}$.
This subcategory is closed under both arbitrary limits and colimits. 
\item The subcategory 
$(\md(\mathbb{Z}))_{p-\mathrm{cpl}}$ of $p$-complete $\mathbb{Z}$-modules:
that is, those $M$ such that for any $N \in \md_{\mathbb{Z}[p^{-1}]}$, the
space of maps $\hom_{\md(\mathbb{Z})}(N, M)$ is contractible.
This subcategory is closed under arbitrary limits and $\aleph_1$-filtered
colimits  (but not all colimits).
\item The subcategory 
$(\md(\mathbb{Z}))_{p-\mathrm{nil}}$ consisting of those $M \in
\md(\mathbb{Z})$ such that some power of $p$ annihilates $M$: that is, such
that $1_M \in \pi_0 \hom_{\md(\mathbb{Z})}(M, M)$ is $p$-power torsion.
This subcategory is only closed under \emph{finite} limits and colimits, as
well as retracts.
\end{enumerate}

The first three subcategories satisfy a number of well-known relationships. 
For
instance:
\begin{enumerate}
\item There is a
\emph{completion} (or $\mathbb{Z}/p$-localization) functor 
$\md(\mathbb{Z}) \to
(\md(\mathbb{Z}))_{p-\mathrm{cpl}}$.
\item  
There is an  
\emph{acyclization} or \emph{colocalization} functor 
$\md(\mathbb{Z}) \to
(\md(\mathbb{Z}))_{p-\mathrm{tors}}$. 
\item 
There is a \emph{localization} functor $L\colon \md(\mathbb{Z}) \to
\md_{\mathbb{Z}[p^{-1}]}$
\end{enumerate}

Dwyer-Greenlees theory \cite{DwG02} implies that $p$-adic completion
induces an equivalence 
\[   (\md(\mathbb{Z}))_{p-\mathrm{tors}} \simeq 
(\md(\mathbb{Z}))_{p-\mathrm{cpl}}.    \]
Moreover, there is an \emph{arithmetic
square} for building any object $X$ of $\md(\mathbb{Z})$ from $X[p^{-1}]$, the
$p$-adic completion $\widehat{X}_p$, and a compatibility map. 
Namely, any $X \in \md(\mathbb{Z})$
fits into a pullback square
\begin{equation} \label{arithmeticsquare} \xymatrix{
X \ar[d] \ar[r] & \widehat{X}_p  \ar[d] \\
X[p^{-1}]   \ar[r] &  (\widehat{X}_p)[p^{-1}].
}\end{equation}

This picture and its generalizations (for instance, its version in chromatic
homotopy theory) are often extremely useful in understanding how 
to build objects. The fourth subcategory 
$(\md(\mathbb{Z}))_{p-\mathrm{nil}}$ does not fit into such a functorial
picture, but every object here is both $p$-torsion and $p$-complete, and the
$p$-torsion is \emph{bounded}.

Let $(\mathcal{C}, \otimes, \mathbf{1})$ be a presentable, symmetric monoidal,
stable $\infty$-category where the tensor product commutes with colimits in
each variable. 
Let $A$ be an associative algebra object in $\mathcal{C}$. 
In the next two sections, we briefly review an axiomatic version of the above picture in $\mathcal{C}$ with
respect to $A$.
The main focus of this paper is the fourth subcategory of nilpotent objects in
equivariant stable homotopy theory. 
We emphasize that these ideas  are by no means new, and have been developed
by several authors, including \cite{Bou79,  Mil92, HPS97, DwG02,
Tateaxiomatic, DAGXII, BHV}.

\subsection{The Adams tower and the cobar construction}
As usual, let $(\mathcal{C}, \otimes, \mathbf{1})$ be a presentable, symmetric 
monoidal stable $\infty$-category where the tensor product commutes with
colimits. Let $A  \in \mathrm{Alg}(\mathcal{C})$ be an associative algebra object of $\mathcal{C}$. 
We begin with a basic construction. 

\begin{cons}[The Adams tower]
\label{adamstowerdef}
Let $M \in \mathcal{C}$. Then we can form 
a tower in $\mathcal{C}$
\begin{equation} \label{eq:atower} \dots \to  T_2(A, M) \to T_1(A, M) \to T_0(A, M) \simeq M  \end{equation}
as follows: 
\begin{enumerate}
\item $T_1(A, M) $ is the fiber of the map $M \to A \otimes M$
induced by the unit $\mathbf{1} \to A$, so that $T_1(A, M)$ maps naturally to $M$.  
\item More generally, $T_i(A, M) :=T_1(A, T_{i-1}(A, M))$ with its
natural map to $T_{i-1}(A, M)$.
\end{enumerate}
Inductively, this defines the functors $T_i$ and the desired tower. 
We will call this the \emph{$A$-Adams tower} of $M$.
Observe that the $A$-Adams tower of $M$ is simply the tensor product of $M$
with the $A$-Adams tower of $\mathbf{1}$.
\end{cons}

We can write the construction of the Adams tower in 
another way. 
Let $I = \mathrm{fib}( \mathbf{1}  \to A)$, so that $I$ is a \emph{nonunital}
associative algebra in $\mathcal{C}$ equipped with a map $I \to \mathbf{1}$. We have a tower
\[ \dots \to I^{\otimes n} \to I^{\otimes (n-1)} \to \dots \to I^{\otimes 2}
\to I \to \mathbf{1},  \]
and this is precisely the $A$-Adams tower $\left\{T_i(A, \mathbf{1})\right\}_{i \geq
0}$. The $A$-Adams tower for $M$ is obtained by tensoring this with $M$.

\begin{example} 
\label{adamstowerZ}
Take $\mathcal{C} = \md(\mathbb{Z})$ and $A = \mathbb{Z}/p$. Then the Adams
tower $\left\{T_i(\mathbb{Z}/p, M)\right\}$ of an object $M \in \md(\mathbb{Z} )$ is given by 
\[ \dots \to M \stackrel{p}{\to}M \stackrel{p}{\to}
M.  \]
\end{example}

The $A$-Adams tower has two basic properties: 
\begin{prop}
\label{Adamstowprop}
\begin{enumerate}
\item For each $i$, the cofiber of $T_{i} (A, M) \to T_{i-1}(A, M)$ admits
the structure of an $A$-module (internal to $\mathcal{C}$).
\item Each map $T_i(A, M) \to T_{i-1}(A, M)$ becomes nullhomotopic after
tensoring with $A$.
\end{enumerate}
\end{prop}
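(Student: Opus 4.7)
The plan is to work with the identification $T_i(A,M) \simeq I^{\otimes i} \otimes M$ (for $I := \mathrm{fib}(\mathbf{1} \to A)$) noted in the excerpt, under which the transition map $T_i(A,M) \to T_{i-1}(A,M)$ is $(\varepsilon \otimes I^{\otimes (i-1)} \otimes M)$, where $\varepsilon\colon I \to \mathbf{1}$ is the defining map. Since the construction is compatible with tensoring with an arbitrary object, both claims reduce to statements about the very first map $\varepsilon \otimes \mathrm{id}\colon I \otimes X \to X$ for $X = I^{\otimes (i-1)} \otimes M$, and in fact to statements about $\varepsilon$ itself.

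For (1), I would invoke the fact that in a stable $\infty$-category a fiber sequence is the same as a cofiber sequence. The defining sequence $I \to \mathbf{1} \xrightarrow{\eta} A$ is thus also a cofiber sequence; tensoring with $T_{i-1}(A,M)$ (which preserves cofiber sequences since $\otimes$ is exact in each variable) identifies
\[ \mathrm{cofib}\bigl(T_i(A,M) \to T_{i-1}(A,M)\bigr) \simeq A \otimes T_{i-1}(A,M), \]
and the target carries an evident $A$-module structure via multiplication in the first factor.

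For (2), tensoring with $A$ on the left it suffices to show that $\mathrm{id}_A \otimes \varepsilon\colon A \otimes I \to A$ is null, since the transition map in question is obtained from $\varepsilon$ by tensoring on the right with $I^{\otimes (i-1)} \otimes M$. The key input is that the unit axiom makes $\eta$ split after tensoring with $A$: the composite
\[ A \xrightarrow{\mathrm{id}_A \otimes \eta} A \otimes A \xrightarrow{\mu} A \]
is the identity, so $i := \mathrm{id}_A \otimes \eta$ is a split monomorphism with retraction $\mu$. Writing $f\colon A \otimes I \to A$ for the fiber inclusion in the tensored fiber sequence $A \otimes I \xrightarrow{f} A \xrightarrow{i} A \otimes A$, the composite $i \circ f$ is nullhomotopic (being two successive maps in a fiber sequence), hence
\[ f \simeq \mu \circ i \circ f \simeq \mu \circ 0 \simeq 0, \]
as desired.

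The only subtlety worth flagging is the implicit appeal to the stable identification of fibers and cofibers and to the exactness of $\otimes$ in each variable; both are standard in the presentable stable symmetric monoidal setting assumed throughout the section, so I do not anticipate a genuine obstacle. No computation of the tower is required beyond the inductive identification $T_i(A,M) = I^{\otimes i} \otimes M$.
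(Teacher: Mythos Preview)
Your proof is correct and follows essentially the same approach as the paper: identify the cofiber of the $i$th transition map as $A \otimes T_{i-1}(A,M)$ (giving the $A$-module structure), and for nullhomotopy after tensoring with $A$ use that the unit map $A \to A\otimes A$ is split by multiplication, so the fiber map $A\otimes I \to A$ is null. The only cosmetic difference is that the paper phrases the reduction via the inductive formula $T_i(A,M)=T_1(A,T_{i-1}(A,M))$ rather than via $T_i(A,M)\simeq I^{\otimes i}\otimes M$.
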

\begin{proof}
Suppose $i = 1$. In this case,  the cofiber of $T_1(A, M) \to M$ is precisely
$A \otimes M$ by construction. 
We have a cofiber sequence
\[ T_1(A, M) \to M \to A \otimes M,  \]
and the last map admits a section after tensoring with $A$. Therefore, the map $T_1(A, M)
\to M$ must become nullhomotopic after tensoring with $A$. 
Since $T_i(A, M) = T_1(A, T_{i-1}(A, M))$, the general case follows. 
\end{proof}

\begin{corollary} 
\label{adamstowernullmodule}
Suppose $M \in \mathcal{C}$ is an $A$-module up to
homotopy. Then the
successive maps $T_i(A, M) \to T_{i-1}(A, M)$ in the Adams tower are 
nullhomotopic.
\end{corollary}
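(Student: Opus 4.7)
The plan is to combine Proposition~\ref{Adamstowprop}(2) with the observation that a homotopy $A$-module is a retract of its tensor product with $A$, exploiting the naturality of the $A$-Adams tower in its second argument.

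First, I would reformulate the construction to make naturality transparent. Let $I = \mathrm{fib}(\mathbf{1} \to A)$ as in the excerpt. Since $\otimes$ preserves fiber sequences in each variable, $T_1(A, N) \simeq I \otimes N$ for any $N$, and inductively $T_i(A, N) \simeq I^{\otimes i} \otimes N$. The structure map $T_i(A,N) \to T_{i-1}(A,N)$ is $(I \to \mathbf{1}) \otimes \mathrm{id}_{I^{\otimes(i-1)} \otimes N}$; in particular, both $T_i(A,-)$ and the tower maps assemble into a natural transformation of functors $\mathcal{C} \to \mathcal{C}$.

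Next, since $M$ is an $A$-module up to homotopy, the unit $\eta\colon \mathbf{1} \to A$ and multiplication give maps $\eta_M \colon M \to A \otimes M$ and $\mu_M \colon A \otimes M \to M$ with $\mu_M \circ \eta_M \simeq \mathrm{id}_M$, so $M$ is a retract of $A \otimes M$. Applying $T_i(A,-) \to T_{i-1}(A,-)$ to this retract diagram and using naturality yields a commutative diagram
\[
\xymatrix{
T_i(A,M) \ar[r] \ar[d] & T_i(A, A \otimes M) \ar[r] \ar[d] & T_i(A,M) \ar[d] \\
T_{i-1}(A,M) \ar[r] & T_{i-1}(A, A \otimes M) \ar[r] & T_{i-1}(A,M)
}
\]
in which each row composes (up to homotopy) to the identity. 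Hence the left-hand vertical map is a retract of the middle vertical map.

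Finally, under the equivalences $T_j(A, A \otimes M) \simeq I^{\otimes j} \otimes A \otimes M \simeq A \otimes T_j(A, M)$, the middle vertical map corresponds to $\mathrm{id}_A \otimes (T_i(A,M) \to T_{i-1}(A,M))$, which is nullhomotopic by Proposition~\ref{Adamstowprop}(2). A retract of a nullhomotopic map is nullhomotopic, which gives the claim. The only subtle point is keeping the naturality and the retract diagram consistent with the identification $T_i(A, A \otimes M) \simeq A \otimes T_i(A,M)$; once the Adams tower is rewritten in the form $I^{\otimes i} \otimes (-)$, this amounts to a straightforward unwinding, so I do not expect any genuine obstacle.
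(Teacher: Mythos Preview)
Your proof is correct and follows essentially the same approach as the paper: both observe that $M$ is a retract of $A \otimes M$, that the Adams tower maps for $A \otimes M$ are nullhomotopic by Proposition~\ref{Adamstowprop}(2), and that a retract of a null map is null. You have simply spelled out the naturality and the identification $T_i(A,A\otimes M)\simeq A\otimes T_i(A,M)$ more explicitly than the paper's two-line argument.
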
 
\begin{proof} 
If $M = A \otimes N$, then we just saw that each of the maps in the Adams tower
is nullhomotopic. If $M$ is an $A$-module up to homotopy, then $M$ is a
\emph{retract} (in $\mathcal{C}$) of $A \otimes M$, so each of the maps in the
Adams tower $\left\{T_i(A, M)\right\}$ must be nullhomotopic as well. 
\end{proof}

The construction of the Adams tower can be carried out even if
$A$ is only an algebra object in the \emph{homotopy category} of $\mathcal{C}$:
that is, one does not need the full strength of the associative algebra structure in
$\mathcal{C}$. 
However, we will also need the following construction that does use this extra (homotopy
coherent) structure.

\begin{cons}
\label{cobarconst}
Given $A \in \mathrm{Alg}(\mathcal{C})$, we can form 
a cosimplicial object in $\mathcal{C}$,
\[ \cb(A) = \left\{A \rightrightarrows A \otimes A \triplearrows \dots
\right\} \in \mathrm{Fun} ( \Delta, \mathcal{C}) , \]
called the \emph{cobar construction} on $A$. The cobar construction extends to
an augmented cosimplicial object $$\cbaug(A)\colon N( \Delta_+) \to
\mathcal{C},$$
(where $\Delta_+$ is the augmented simplex category of finite ordered sets), where the augmentation is from
the unit object $\mathbf{1}$. The augmented
cosimplicial object $\cbaug(A)$ admits a \emph{splitting} 
\cite[\S 4.7.3]{Lur14}
after tensoring
with $A$: that is, the augmented cosimplicial object $\cbaug(A) \otimes A$ is
split. 
\end{cons}

Although the cobar construction in the 1-categorical context is classical,
for precision
we 
spell out the details of how one may extract the cobar construction using the
formalism of \cite[\S 2.1-2.2]{Lur14}. 
By definition, since $\mathcal{C}$ is a symmetric monoidal $\infty$-category,
one has an $\infty$-category $\mathcal{C}^{\otimes}$ together with a
cocartesian fibration $\mathcal{C}^{\otimes }
\to N( \mathrm{Fin}_*)$ where $\mathrm{Fin}_*$ is the category of pointed finite sets. 
The underlying $\infty$-category $\mathcal{C}$ is obtained as the fiber over the pointed finite set
$\left\{0\right\} \cup \left\{\ast\right\}$.
The associative operad has an operadic nerve $N^{\otimes}( \mathbf{E}_1)$
 which maps to $N(   \mathrm{Fin}_*)$, and the algebra object $A$ defines a
 morphism
$ \phi_A\colon N^{\otimes}( \mathbf{E}_1) \to \mathcal{C}^{\otimes}$ over $N(
\mathrm{Fin}_*)$. The crucial point is that we have a functor
$N(\Delta_+) \to N^{\otimes}( \mathbf{E}_1)$ whose definition we will now recall. 

To understand this functor, recall that the 
operadic nerve
$N^{\otimes}( \mathbf{E}_1)$ (recall that we identify $\mathbf{E}_1$ and the
associative operad)
comes from the (ordinary) category described as follows: 
\begin{enumerate}
\item The objects are finite pointed sets $S \in \mathrm{Fin}_*$. 
\item Given $S, T$, to give a morphism $S \to T$ in $N^{\otimes}( \mathbf{E}_1)$ amounts to giving a morphism
$\rho\colon S \to T$ in $\mathrm{Fin}_*$ 
and an \emph{ordering} on each of the sets $\rho^{-1}(t) $
for $t \in T \setminus \left\{\ast\right\}$.
\end{enumerate}
We now obtain a functor $N(\Delta_+) \to N^{\otimes}( \mathbf{E}_1)$ which sends a finite
ordered set $S$ to $S \sqcup \left\{\ast\right\}$; a morphism $S \to
T$ in $\Delta_+$ clearly induces a morphism $N^{\otimes}(\mathbf{E}_1)$ (using the induced
ordering on the preimages). 
Composing, we obtain a functor
\[ \psi_A\colon N(\Delta_+ )\to N^{\otimes}( \mathbf{E}_1) \stackrel{\phi_A}{\to } \mathcal{C}^{\otimes}.  \]

Now, let $\mathrm{Fin}_*^{\mathrm{ac}} \subset \mathrm{Fin}_*$ be the (non-full)
subcategory with the same objects, but such that morphisms of pointed sets
$\rho\colon S \to T$ are
required to be \emph{active}, i.e., such that $\rho^{-1}(\ast) = \ast$. 
Observe that $\psi_A$ factors (canonically) over $\mathcal{C}^{\otimes}
\times_{N( \mathrm{Fin}_*)} N( \mathrm{Fin}_*^{\mathrm{ac}})$.

Finally, since $\mathcal{C}$ is a symmetric monoidal $\infty$-category, we have
a functor $$\bigotimes\colon
\mathcal{C}^{\otimes}
\times_{N( \mathrm{Fin}_*)} N( \mathrm{Fin}_*^{\mathrm{ac}})
\to \mathcal{C}$$ that, informally, tensors
together a tuple of objects.  
To obtain this, observe that for any object $S \in
\mathrm{Fin}_*^{\mathrm{ac}}$, there is a
\emph{natural} map $f_S\colon S \to \left\{0\right\} \cup \left\{\ast\right\}$ such that 
$f_S^{-1}(0) = S \setminus \left\{\ast\right\}$. 
(The naturality holds on $\mathrm{Fin}_*^{\mathrm{ac}}$, not on the larger 
category  $\mathrm{Fin}_*$.)
The 
functor $\bigotimes$ is the coCartesian lift of this natural transformation. 
Now, the (augmented) cobar construction 
is the composition
\[ \cbaug(A) \colon N(\Delta_+) \stackrel{\psi_A}{\to}  \mathcal{C}^{\otimes}
\times_{N( \mathrm{Fin}_*)} N( \mathrm{Fin}_*^{\mathrm{ac}})
\stackrel{\bigotimes}{\to}\mathcal{C}.\]

Our first goal is to demonstrate the connection between the Adams tower and the
cobar construction. 
Given a functor $X^\bullet\colon \Delta \to \mathcal{C}$, we recall that
$\mathrm{Tot}_n(X^\bullet)$ is defined to be the homotopy limit of
$X^\bullet|_{\Delta^{\leq n}}$ for $\Delta^{\leq n} \subset \Delta$ the full
subcategory spanned by $\left\{[0], [1], \dots, [n]\right\}$.
We will need the following important result. The notion of stability is
self-dual, so we have dualized the statement in the cited reference.  

\begin{thm}[{Lurie~\cite[Th.~1.2.4.1]{Lur14}}; $\infty$-categorical Dold-Kan
correspondence] 
Let $\mathcal{C}$ be a stable $\infty$-category. Then the functor
\[ \mathrm{Fun}( \Delta , \mathcal{C}) \to \mathrm{Fun}(\mathbb{Z}_{\geq
0}^{op}, \mathcal{C}), \quad X^\bullet \mapsto
\left\{\mathrm{Tot}_n(X^\bullet)\right\}_{n \geq 0}\]
establishes an equivalence between cosimplicial objects in $\mathcal{C}$ and
towers in $\mathcal{C}$.
\end{thm}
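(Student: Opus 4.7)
The plan is to exhibit an inverse to the functor $F \colon X^\bullet \mapsto \{\mathrm{Tot}_n X^\bullet\}$ via a dual $\infty$-categorical Dold--Kan construction. Writing $\iota_n \colon \Delta^{\leq n} \hookrightarrow \Delta$ for the inclusion, I would analyze the restriction functors $r_n \colon \fun(\Delta^{\leq n}, \mathcal{C}) \to \fun(\Delta^{\leq n-1}, \mathcal{C})$. Concretely, extending a functor from $\Delta^{\leq n-1}$ to $\Delta^{\leq n}$ is governed by a choice of object $X^n \in \mathcal{C}$ together with a map $X^n \to M^n$ to the $n$-th matching object $M^n = \varprojlim X^\bullet$ over active face maps into $[n]$. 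Iterating and passing to the limit presents $\fun(\Delta, \mathcal{C}) \simeq \varprojlim_n \fun(\Delta^{\leq n}, \mathcal{C})$ as built from $\mathcal{C}$ by successive pullbacks.

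On the tower side, a tower $\cdots \to Y_2 \to Y_1 \to Y_0$ is similarly assembled from its successive fibers $C_n = \mathrm{fib}(Y_n \to Y_{n-1})$ together with gluing maps. The key computation identifies these two pictures: one should have $\mathrm{fib}(\mathrm{Tot}_n X^\bullet \to \mathrm{Tot}_{n-1} X^\bullet) \simeq \Omega^n N^n X^\bullet$, where $N^n X^\bullet = \mathrm{fib}(X^n \to M^n)$ is the $n$-th normalized cochain object. This follows from the fact that attaching $\Delta^{\leq n}$ to $\Delta^{\leq n-1}$ behaves like an $n$-cell attachment with attaching map determined by the face maps; stability (i.e.\ the coincidence of fiber and cofiber up to shift) is precisely what converts this combinatorial cell attachment into a loop. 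Assembling these equivalences exhibits both sides as the $\infty$-category of sequences $\{N^n\}_{n \geq 0}$ of objects of $\mathcal{C}$ equipped with compatible gluing data.

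The main obstacle is organizing all of this into a homotopy-coherent equivalence of $\infty$-categories, rather than only a correspondence on objects or at some finite truncation; the inductive gluing of cells across all levels must be handled simultaneously. A clean way to bypass the explicit combinatorial bookkeeping is a universality argument: both $\fun(\Delta, -)$ and $\fun(\bZ_{\geq 0}^{op}, -)$ are functorial on stable $\infty$-categories with exact functors, and $F$ assembles into a natural transformation, so it suffices to verify the equivalence in a universal presentable stable $\infty$-category such as $\Sp$. There the claim reduces, via the Eilenberg--MacLane embedding of chain complexes, to the classical cosimplicial Dold--Kan correspondence; alternatively, one computes partial totalizations of cosimplicial spectra directly through their normalized Moore complexes.
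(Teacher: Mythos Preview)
The paper does not prove this theorem; it is stated with a citation to Lurie \cite[Th.~1.2.4.1]{Lur14} and used as a black box (the paper only remarks that the statement has been dualized from the simplicial version in the reference). So there is no proof in the paper to compare against.

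On your proposal itself: the first two paragraphs sketch a reasonable outline that is in the spirit of Lurie's actual argument, which proceeds by induction on $n$, analyzing how $\fun(\Delta^{\leq n},\mathcal{C})$ is built from $\fun(\Delta^{\leq n-1},\mathcal{C})$ and matching this with the tower side. The identification $\mathrm{fib}(\mathrm{Tot}_n \to \mathrm{Tot}_{n-1}) \simeq \Omega^n N^n X^\bullet$ is indeed the key computation.

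The third paragraph, however, has a genuine gap. The claim that ``it suffices to verify the equivalence in a universal presentable stable $\infty$-category such as $\Sp$'' is not justified: there is no single universal stable $\infty$-category through which such a reduction goes. One could attempt a spectral Yoneda embedding $\mathcal{C}\hookrightarrow \fun(\mathcal{C}^{op},\Sp)$ and argue pointwise, but that manoeuvre must be spelled out, and it still leaves you needing the result for $\Sp$. You then propose to get the $\Sp$ case from ``the classical cosimplicial Dold--Kan correspondence,'' but the classical statement is an equivalence between (co)simplicial objects in an abelian category and (co)chain complexes, not between cosimplicial objects and \emph{towers}; passing from complexes to filtered objects is an additional step that already uses stability, and in $\Sp$ this is not literally the classical result. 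So the shortcut does not actually bypass the inductive combinatorics of your first two paragraphs; that is where the real content lies, and it is what Lurie carries out directly.
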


In particular, we will show (\Cref{adamscobar}) that under the
$\infty$-categorical Dold-Kan correspondence, the cobar construction and the
Adams tower correspond to one another. 
This result is  certainly not new, but we have
included it for lack of a convenient reference. 

\begin{definition} 
Given a finite nonempty set $S$, we will let $\mathcal{P}(S)$ denote the partially ordered
set of nonempty subsets of $S$ ordered by inclusion. 
We will let $\mathcal{P}^+(S)$ denote the partially ordered set of \emph{all}
subsets of $S$ ordered by inclusion. 
\end{definition} 
\begin{cons}\label{tensorfunctor}
Suppose given morphisms $f_s \colon X_s \to Y_s \in \mathcal{C}$ for each $s \in S$. 
Then we obtain a functor 
\[ F^+(\left\{f_s\right\}) \colon \mathcal{P}^+(S) \to \mathcal{C}  \]
whose value on a subset $S' \subset S$ is given by $$F^+(
\left\{f_s\right\})(S')  = \bigotimes_{s_1
\notin S'} X_{s_1}
\otimes \bigotimes_{s_2 \in S'} Y_{s_2}.$$
We will let $F( \left\{f_s\right\}) \colon \mathcal{P}(S) \to \mathcal{C}$
denote the restriction of 
$F^+( \left\{f_s\right\}) $.
\end{cons}

Our first goal is to give a formula 
for the inverse limit of these functors $F( \left\{f_s\right\})$.
This will be important in determining the partial totalizations of the Adams
tower (\Cref{adamscobar} below).

\begin{prop} 
\label{tensorlimit}
Let $S$ be a finite nonempty set and suppose given morphisms $f_s\colon X_s \to Y_s$
in $\mathcal{C}$ for each $s \in S$.
Form a functor $F( \left\{f_s\right\})\colon \mathcal{P}(S) \to \mathcal{C}$ as in 
\Cref{tensorfunctor}. Then there is an identification, functorial in $\prod_{s
\in S} \fun(\Delta^1, \mathcal{C})$,
\[ \varprojlim_{\mathcal{P}(S)}  F ( \left\{f_s\right\})\simeq \mathrm{cofib}
\left( \bigotimes_{s \in S} \mathrm{fib}(X_s\to Y_s) \to \bigotimes_{s \in S}
X_s \right)
.\]
\end{prop}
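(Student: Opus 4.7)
The plan is to view $F^+(\{f_s\})$ as an $|S|$-dimensional cube in $\mathcal{C}$ and compute $\varprojlim_{\mathcal{P}(S)} F$ as the totalization of this cube in the usual sense. The key input is the identification $\mathcal{P}^+(S) \simeq \prod_{s \in S} \mathcal{P}^+(\{s\}) = \prod_{s \in S} \Delta^1$, under which $F^+(\{f_s\})$ factors as an external tensor product of the one-dimensional arrows $f_s \colon X_s \to Y_s$. In particular, since $\emptyset \in \mathcal{P}^+(S)$ is initial, one has $\varprojlim_{\mathcal{P}^+(S)} F^+ = F^+(\emptyset) = \bigotimes_s X_s$, and there is a canonical restriction map
\[
\bigotimes_s X_s \;\longrightarrow\; \varprojlim_{\mathcal{P}(S)} F(\{f_s\})
\]
whose fiber is the \emph{total fiber} of the cube $F^+$. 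Since $\mathcal{C}$ is stable, rewriting this fiber sequence as a cofiber sequence immediately gives the claimed identification, provided we can identify the total fiber with $\bigotimes_s \mathrm{fib}(f_s)$.

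The main step is therefore to show that the total fiber of the cube $F^+(\{f_s\})$ is $\bigotimes_s \mathrm{fib}(f_s)$. I would do this by induction on $|S|$. The base case $|S| = 1$ is trivial: $\mathcal{P}(\{s\})$ is a point, $\varprojlim F = Y_s$, and indeed $Y_s \simeq \mathrm{cofib}(\mathrm{fib}(f_s) \to X_s)$ by stability. For the inductive step, pick $s_0 \in S$ and set $S' = S \setminus \{s_0\}$. The product decomposition $\mathcal{P}^+(S) = \mathcal{P}^+(S') \times \Delta^1$ exhibits $F^+(\{f_s\})$ as a morphism in cubes over $\mathcal{P}^+(S')$, namely $F^+(\{f_s\}_{s \in S'}) \otimes f_{s_0}$; here the source is obtained by tensoring each vertex with $X_{s_0}$ and the target by tensoring with $Y_{s_0}$. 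Total fiber is computed iteratively (as a limit of limits), so the total fiber of the $S$-cube equals the fiber of the morphism between the total fibers of the two $S'$-subcubes. By the inductive hypothesis these are $\bigotimes_{s' \in S'} \mathrm{fib}(f_{s'}) \otimes X_{s_0}$ and $\bigotimes_{s' \in S'} \mathrm{fib}(f_{s'}) \otimes Y_{s_0}$, and the map between them is $\mathrm{id} \otimes f_{s_0}$. Using that $\otimes$ preserves fibers in each variable (by the colimit-preservation hypothesis together with stability), its fiber is $\bigotimes_{s' \in S'} \mathrm{fib}(f_{s'}) \otimes \mathrm{fib}(f_{s_0}) = \bigotimes_{s \in S} \mathrm{fib}(f_s)$, completing the induction.

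Functoriality in $\prod_s \fun(\Delta^1, \mathcal{C})$ is automatic from the construction, since every step (forming $F^+$, taking the limit over $\mathcal{P}(S)$, and identifying the fiber of $\bigotimes X_s \to \varprojlim F$ with an iterated tensor of fibers) is built from functors of $\infty$-categories applied to the tuple $\{f_s\}$. The main obstacle, and the only step with real content, is the iterated-limit argument identifying the total fiber of a tensor cube with the tensor of the fibers; everything else is bookkeeping on the poset $\mathcal{P}^+(S)$ and an appeal to stability of $\mathcal{C}$.
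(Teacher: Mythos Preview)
Your proof is correct and takes a genuinely different route from the paper's. You compute the total fiber of the cube $F^+$ directly by induction on $|S|$, using the product decomposition $\mathcal{P}^+(S) \simeq \mathcal{P}^+(S') \times \Delta^1$ and the fact that total fibers iterate; this is the standard ``total fiber of an external tensor cube equals the tensor of the fibers'' argument, combined with exactness of $\otimes$ in each variable. The paper instead first constructs the comparison map via a natural transformation of $\mathcal{P}^+(S)$-diagrams, arising from the cofiber sequence $(\mathrm{fib}(f_s) \to 0) \to (X_s \to Y_s) \to (Y_s \xrightarrow{\mathrm{id}} Y_s)$ in $\mathrm{Fun}(\Delta^1,\mathcal{C})$ applied in each variable, and then proves it is an equivalence by a different induction: on the number of $s$ with $Y_s$ noncontractible, invoking a lemma of Lurie to handle the case where one $f_s$ is an equivalence. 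Your approach is more elementary in that it avoids this external lemma and works purely with the cubical structure; the paper's approach has the mild advantage of making the natural comparison map explicit as a map of diagrams before checking it is an equivalence, which is convenient for tracking functoriality precisely.
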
 
\begin{proof} 
We first explain the map. 
Let $\mathrm{Fun}(\Delta^1, \mathcal{C})$ denote the $\infty$-category of
arrows in $\mathcal{C}$; it is itself a stable $\infty$-category. 
Observe that any object $X \to Y$ of 
$\mathrm{Fun}(\Delta^1, \mathcal{C})$ fits into a cofiber sequence
\begin{equation}  \label{arrowcof} ( \mathrm{fib}(X \to Y) \to 0) \to  (X \to Y) \to (Y \to
Y)  . \end{equation}
Given an $S$-indexed family of objects $\left\{f_s\colon X_s \to Y_s\right\}$ of
$\mathrm{Fun}(\Delta^1, \mathcal{C})$, we have associated an object $F^+(
\left\{f_s\right\}) \in \mathrm{Fun}(\mathcal{P}^+(S), \mathcal{C})$. 
We obtain a functor
\[ \prod_{s \in S}  \mathrm{Fun}(\Delta^1, \mathcal{C}) \to 
\mathrm{Fun}(\mathcal{P}^+(S), \mathcal{C})
\]
which is exact in each variable. 
Therefore, using \eqref{arrowcof}, we obtain a natural morphism
\[  F^+( \left\{\mathrm{fib}(X_s \to Y_s ) \to 0 \right\})  \to 
 F^+( \left\{f_s\right\} ),
\]
in $\mathrm{Fun}( \mathcal{P}^+(S), \mathcal{C})$.
Taking the cofiber of this morphism 
yields an object of 
$\mathrm{Fun}( \mathcal{P}^+(S), \mathcal{C})$ where the initial vertex is mapped
precisely to 
$ \mathrm{cofib}
\left( \bigotimes_{s \in S} \mathrm{fib}(X_s\to Y_s) \to \bigotimes_{s \in S}
X_s \right)$ 
and whose restriction to $\mathcal{P}(S)$ is identified with $F(
\left\{f_s\right\})$.

By the universal property of the homotopy limit (since $\mathcal{P}^+(S)$ is
the \emph{cone} on $\mathcal{P}(S)$), this gives a natural morphism 
\begin{equation} \label{colimmap}\mathrm{cofib}
\left( \bigotimes_{s \in S}\mathrm{fib}(X_s\to Y_s) \to \bigotimes_{s \in S}
X_s \right)
\to \varprojlim_{\mathcal{P}(S)} F( \left\{f_s\right\}) \in \mathcal{C}.\end{equation}

We need to argue that this morphism \eqref{colimmap} is an equivalence. We first claim that 
if one of the morphisms $f_s\colon X_s \to Y_s$ is an equivalence, 
then \eqref{colimmap} is an equivalence, i.e., that
$$F^+(\left\{f_s\right\}) \colon \mathcal{P}^+(S) \to \mathcal{C}$$ is a limit diagram. However, this follows from the dual of 
\cite[Lem.~1.2.4.15]{Lur14} applied to $K = \mathcal{P}(S \setminus
\left\{s\right\})$ as $K^{\lhd} = \mathcal{P}^+(S\setminus\left\{s\right\})$ and $ K^{\lhd} \times \Delta^1
= \mathcal{P}^+(S)$; the fiber of the natural map of diagrams $K^{\lhd}
\to \mathcal{C}$ thus obtained is contractible. Here $K^{\lhd}$ is the
\emph{left cone} over $K$ \cite[Notation 1.2.8.4]{Lur09}. 

Now, to show that \eqref{colimmap} is an equivalence, we 
observe that 
both sides are exact functors in each $\mathrm{Fun}(\Delta^1,
\mathcal{C})$ variable.  We use induction on the number of $f_s\colon X_s \to Y_s$
with $Y_s$ noncontractible. If all the $Y_s  = 0$, both sides of \eqref{colimmap}
are contractible. Now suppose $n$ of the $Y_s$'s are not zero, and choose $s_1
\in S$ with $Y_{s_1} \neq 0$. In this case, we use the cofiber sequence
\eqref{arrowcof}. In order to show that \eqref{colimmap}
is an equivalence, it suffices to show that \eqref{colimmap}
becomes an equivalence after we replace $f_{s_1}$ either by $Y_{s_1}
\stackrel{\mathrm{id}}{\to} Y_{s_1}$ or $ \mathrm{fib}(X_{s_1} \to Y_{s_1}) \to
0$. We have treated the first case in the previous paragraph, and the second
case follows by the inductive hypothesis. 
\end{proof}

\begin{prop} 
\label{adamscobar}
The tower associated (via the Dold-Kan correspondence) to the 
cosimplicial object $\cb(A)$ is precisely the tower $$\{  \mathrm{cofib}(
T_{n+1}(A,
\mathbf{1}) \to \mathbf{1})\}.$$ In other words, we have equivalences $\mathrm{Tot}_n( \cb(A)) \simeq
\mathrm{cofib}( I^{\otimes (n+1) } \to \mathbf{1})$.
\end{prop}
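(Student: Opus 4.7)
My plan is to realize both $\mathrm{Tot}_n(\cb(A))$ and $\mathrm{cofib}(I^{\otimes(n+1)} \to \mathbf{1})$ as the limit of a common punctured $(n+1)$-cube diagram, thereby invoking Proposition \ref{tensorlimit} directly.

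Set $S = \{1, \dots, n+1\}$ and apply Construction \ref{tensorfunctor} to the constant family of $n+1$ copies of the unit map $f\colon \mathbf{1} \to A$. This produces a functor $F\colon \mathcal{P}(S) \to \mathcal{C}$ with $F(T) = A^{\otimes |T|}$, whose structure maps insert units $\mathbf{1} \to A$ in the coordinates of $T' \setminus T$. Proposition \ref{tensorlimit} then gives
\[ \varprojlim_{\mathcal{P}(S)} F \simeq \mathrm{cofib}\!\left( I^{\otimes(n+1)} \to \mathbf{1} \right), \]
and the problem reduces to identifying this cube limit with $\mathrm{Tot}_n(\cb(A))$.

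To do so, I would construct a functor $\Phi\colon \mathcal{P}(S) \to \Delta^{\leq n}$ sending a nonempty subset $T \subseteq S$ to $[|T|-1]$ (identified with $T$ via its inherited ordering), and a subset inclusion $T \hookrightarrow T'$ to the corresponding injection $[|T|-1] \hookrightarrow [|T'|-1]$. Unwinding the operadic description of $\cb(A)$ from Construction \ref{cobarconst}, the composite $\cb(A) \circ \Phi$ is naturally equivalent to $F$: both take value $A^{\otimes |T|}$ at $T$, with the same unit-insertion maps. It then remains to verify that $\Phi$ is coinitial, so that the two limits coincide.

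This coinitial argument is the main obstacle, as it must exploit the specific structure of the cobar construction. I would split it into two steps: first, show that $\Delta^{\leq n}_{\mathrm{inj}} \hookrightarrow \Delta^{\leq n}$ becomes coinitial after restriction against $\cb(A)$, using that $\cbaug(A) \otimes A$ is a split augmented cosimplicial object so that $\cb(A)$ is Reedy-fibrant enough for restricted and full partial totalizations to agree; second, verify via the dual of \cite[Thm.~4.1.3.1]{Lur09} that $\Phi\colon \mathcal{P}(S) \to \Delta^{\leq n}_{\mathrm{inj}}$ is coinitial by exhibiting the relevant slice categories as posets with terminal objects. Together with the naturality of $\Phi$ in $n$, these steps identify the full tower $\{\mathrm{Tot}_n(\cb(A))\}$ with $\{\mathrm{cofib}(I^{\otimes(n+1)} \to \mathbf{1})\}$ under the $\infty$-categorical Dold-Kan correspondence, yielding the claimed equivalence.
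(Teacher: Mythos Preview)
Your overall strategy is exactly the paper's: construct the cubical diagram $F$ from $n+1$ copies of the unit map $\mathbf{1}\to A$, identify it as the restriction of $\cb(A)$ along a functor $\Phi\colon \mathcal{P}([n])\to\Delta^{\leq n}$, and then apply Proposition~\ref{tensorlimit}. The only substantive difference is in how you justify that $\Phi$ is coinitial (equivalently, right cofinal after passing to opposites). The paper simply cites \cite[Lem.~1.2.4.17]{Lur14}, which is precisely the statement that this functor from the poset of nonempty subsets of $[n]$ into $\Delta^{\leq n}$ is right cofinal; no further argument is needed.

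Your proposed two-step verification introduces a genuine gap. In Step~1 you invoke the fact that $\cbaug(A)\otimes A$ is split to argue that $\cb(A)$ is ``Reedy-fibrant enough'' for the limits over $\Delta^{\leq n}_{\mathrm{inj}}$ and $\Delta^{\leq n}$ to agree. But the splitting holds only \emph{after} tensoring with $A$; it says nothing about $\cb(A)$ itself, and there is no reason the degeneracies of $\cb(A)$ should be negligible for computing $\mathrm{Tot}_n$. Coinitiality is a property of the functor between indexing categories, not of any particular diagram, and the inclusion $\Delta^{\leq n}_{\mathrm{inj}}\hookrightarrow\Delta^{\leq n}$ is not coinitial. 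The correct route is to work directly with $\Phi\colon\mathcal{P}([n])\to\Delta^{\leq n}$ (not factoring through the injective subcategory) and invoke Lurie's lemma, exactly as the paper does.
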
 
\begin{proof} 
We compute $\mathrm{Tot}_n(\cb(A))$. 
For this, 
we let $\mathcal{P}([n])$ denote the partially ordered set of nonempty 
subsets of $[n]$. There is  a natural functor
\[ \mathcal{P}([n]) \to \Delta^{\leq n}  \]
which is right cofinal by \cite[Lem.~1.2.4.17]{Lur14}. 
We can describe the composite functor $$\mathcal{P}([n]) \to \Delta^{\leq n}
\stackrel{\cb(A)}{\longrightarrow} 
\mathcal{C}$$ as follows: it is obtained 
by
considering the unit maps $f_s\colon \mathbf{1} \to A$
for each $s \in [n]$ and forming $F( \left\{f_s\right\})$ as in 
\Cref{tensorfunctor}. Now, the homotopy limit is thus computed by 
\Cref{tensorlimit} and it is as desired. The maps in the tower, too, are seen to be the
natural ones. 
\end{proof}

\subsection{Complete objects}
We review rudiments of
the theory of Bousfield localization \cite{Bou79} in our setting.
As before, $(\mathcal{C}, \otimes, \mathbf{1})$ is a presentable,
symmetric monoidal stable 
$\infty$-category where $\otimes$ commutes with colimits in each variable, and $A \in
\mathrm{Alg}(\mathcal{C})$.

\begin{definition} 
We say that an object $X \in \mathcal{C}$ is \emph{$A$-complete} or
\emph{$A$-local}
if, for any $Y \in \mathcal{C}$ with $Y \otimes A \simeq 0$, the space of maps
$\hom_{\mathcal{C}}(Y, X)$ is contractible. 
The $A$-complete objects of $\mathcal{C}$ span a full  subcategory
$\Acp \subset \mathcal{C}$. 
\end{definition} 

\begin{example} 
A motivating example to keep in mind throughout is $\mathcal{C} =
\md(\mathbb{Z})$ and $A = \mathbb{Z}/p$. Here, the $A$-complete objects of
$\md(\mathbb{Z})$ are referred to as
\emph{$p$-adically complete.}
\end{example} 

\begin{example} 
\label{everythingAcomplete}
Suppose $A$ has the property
that tensoring with $A$ is conservative. For instance, a duality argument
shows that this holds if $A$ is dualizable (cf. \cite[\S 4.6.1]{Lur14}) and $\mathbb{D} A $ generates $\mathcal{C}$ 
as a localizing subcategory. Then every object of $\mathcal{C}$ is $A$-complete.
\end{example} 

\begin{example} 
\label{ex:modcomplete}
Suppose $M \in \mathcal{C}$ admits the structure of an $A$-module. Then
$M$ is $A$-complete. In fact, suppose $X \in \mathcal{C}$ is such that 
$A \otimes X $ is contractible. 
Then 
\[ \hom_{\mathcal{C}}(X, M) \simeq \hom_{\md_{\mathcal{C}}(A)}(A \otimes X, M)
\simeq
\hom_{\md_{\mathcal{C}}(A)}(0, M) \simeq 0.
\]
\end{example} 

It follows formally from the definitions that $\Acp$ is closed under all limits
in $\mathcal{C}$. 
The subcategory  $\Acp$ can equivalently be described as consisting of those objects $X \in \mathcal{C}$ such
that if $Y \to Y'$ is a map that becomes an equivalence after tensoring with
$A$, then $\hom_{\mathcal{C}}(Y', X) \to \hom_{\mathcal{C}}(Y, X)$ is an
equivalence.

We invoke here the theory of Bousfield localization in the $\infty$-categorical 
context \cite[\S 5.5.4]{Lur09}. 
In particular, we 
let $S$ be the collection of morphisms $Y \to Y'$ in $\mathcal{C}$ which become
an equivalence after tensoring with $A$. By \cite[Prop.~5.5.4.16]{Lur09}, this
class $S$, as a strongly saturated class (\cite[Def.~5.5.4.5]{Lur09}) is of small
generation. 
We now invoke the basic existence result
\cite[Prop.~5.5.4.15]{Lur09}, which implies
that $\Acp$ is a presentable $\infty$-category, and that 
the inclusion $\Acp \subset \mathcal{C}$
has a left adjoint. 

\begin{definition} 
We will let $L_A\colon \mathcal{C} \to \Acp$ denote the left adjoint to the
inclusion $\Acp \subset \mathcal{C}$ and refer to $L_A$ as \emph{$A$-completion.} 
We will also abuse notation and use $L_A$ to denote the composition $\mathcal{C}
\stackrel{L_A}{\to} \Acp \subset \mathcal{C}$ when confusion is unlikely to
arise. 
\end{definition}

When regarded as a functor $L_A\colon \mathcal{C} \to \mathcal{C}$ (as $\Acp \subset
\mathcal{C}$ is a full subcategory), we have a natural transformation $X \to
L_A X$ for any $X$, with the properties:
\begin{enumerate}
\item The map $X \to L_A X $ becomes an equivalence after tensoring with $A$.  
\item The object $L_A X$ is $A$-complete.
\end{enumerate}

\begin{remark} 
\label{AcptensorAcocont}
We recall that colimits in $\Acp$ are computed by first computing
the colimit in $\mathcal{C}$ and then applying localization $L_A$ again. In
particular, while the inclusion $\Acp \subset \mathcal{C}$ need not preserve
colimits, the composition $\Acp \subset \mathcal{C} \stackrel{\otimes A }{\to}
\mathcal{C}$ does.
\end{remark} 

Suppose $\phi\colon X \to Y$ is a map in $\mathcal{C}$ such that $\phi
\otimes {1_A}\colon X \otimes A \to Y
\otimes A$ is an equivalence. Then for any $Z$, the map $\phi \otimes 1_Z$ has
the same property. 
In view of \cite[Prop.~2.2.1.9]{Lur14}, $\Acp$ inherits the structure of a
symmetric monoidal $\infty$-category such that the functor $L_A\colon \mathcal{C}
\to \Acp$ is symmetric monoidal.

In this subsection, we will review several characterizations of 
$A$-complete objects, and describe the subcategory of complete objects as a
homotopy limit of presentable $\infty$-categories. 
 Throughout, the assumption that $A$ is dualizable will be critical
as it implies that tensoring with $A$ commutes with
homotopy limits. 
The first basic result is as follows.

\begin{prop} \label{Acompl:tot} Suppose $A$ is dualizable.
For any object $M \in \mathcal{C}$, 
the map $M \to \mathrm{Tot}( M \otimes \cb(A))$ exhibits the target as the $A$-completion of $M$.
\end{prop}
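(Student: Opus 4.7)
The plan is to verify the two properties characterizing the $A$-completion: that $\mathrm{Tot}(M \otimes \cb(A))$ is itself $A$-complete, and that the natural map $M \to \mathrm{Tot}(M \otimes \cb(A))$ becomes an equivalence after tensoring with $A$.

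For the first property, I would observe that each cosimplicial term $M \otimes A^{\otimes(n+1)}$ of $M \otimes \cb(A)$ carries a natural $A$-module structure (e.g., from the outermost tensor factor of $A$). By \Cref{ex:modcomplete}, every object admitting an $A$-module structure is $A$-complete. Since the full subcategory $\Acp \subset \mathcal{C}$ is closed under arbitrary limits in $\mathcal{C}$ (which is immediate from the defining mapping-space condition), the totalization $\mathrm{Tot}(M \otimes \cb(A))$, being a homotopy limit of $A$-complete objects, is itself $A$-complete. Note that dualizability is \emph{not} needed for this half.

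For the second property, I would use the dualizability assumption on $A$ decisively. Because $A$ is dualizable, the endofunctor $(-)\otimes A$ admits the right adjoint $(-)\otimes \mathbb{D} A$ (and is itself a right adjoint), so in particular it commutes with the homotopy limit $\mathrm{Tot}$. This gives
\[ A \otimes M \otimes \mathrm{Tot}(\cb(A)) \simeq \mathrm{Tot}(A \otimes M \otimes \cb(A)). \]
Now the key input is the final assertion of \Cref{cobarconst}: the augmented cobar construction $\cbaug(A)$ becomes split after tensoring with $A$. Tensoring further with $M$ preserves splitness, so $A \otimes M \otimes \cbaug(A)$ is a split augmented cosimplicial object. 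By the basic fact that split augmented simplicial (dually, cosimplicial) objects are colimit (dually, limit) diagrams, the augmentation $A \otimes M \to \mathrm{Tot}(A \otimes M \otimes \cb(A))$ is an equivalence. Combining with the display above, the map $M \to \mathrm{Tot}(M \otimes \cb(A))$ becomes an equivalence after tensoring with $A$.

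Given the universal property characterizing the completion functor $L_A$, these two properties together identify $\mathrm{Tot}(M \otimes \cb(A))$ with $L_A M$ via the canonical map from $M$. The only nontrivial ingredient is the commutation of $(-)\otimes A$ with $\mathrm{Tot}$, which is precisely what the dualizability hypothesis on $A$ is invoked to guarantee; everything else reduces to the formal statements about $\Acp$ and the splitting property already packaged into \Cref{cobarconst}.
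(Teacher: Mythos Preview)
Your proposal is correct and follows essentially the same approach as the paper: both verify that the totalization is $A$-complete (each cosimplicial term is an $A$-module, hence $A$-complete by \Cref{ex:modcomplete}, and $\Acp$ is closed under limits) and that the map becomes an equivalence after tensoring with $A$ (using dualizability of $A$ to commute $\otimes A$ past $\mathrm{Tot}$, together with the splitting of $\cbaug(A)\otimes A$ from \Cref{cobarconst}). Your write-up is slightly more detailed but the argument is identical.
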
 
\begin{proof} 
In fact, the map 
$M \to \mathrm{Tot}( M \otimes \cb(A))$ becomes an equivalence after tensoring
with $A$. 
This follows because $M \otimes \cbaug(A)$ becomes a split augmented
cosimplicial object after tensoring with $A$. In addition, we use 
the fact that tensoring with $A$ commutes with arbitrary homotopy
limits (as $A$ is dualizable).
Moreover, 
$\mathrm{Tot}( M \otimes \cb(A))$ is $A$-complete as it is the homotopy limit
of a diagram of objects, each of which is an $A$-module and therefore
$A$-complete (\Cref{ex:modcomplete}).
\end{proof} 

In view of \Cref{adamscobar}, we find (with $I = \mathrm{fib}( \mathbf{1} \to
A)$) an equivalence
\begin{equation} 
L_A M  \simeq \varprojlim_n \left[ \mathrm{cofib}(I^{\otimes n+1} \to \mathbf{1})
\otimes M\right] .
\end{equation} 
This recovers the familiar formula for $p$-adic completion in
$\md(\mathbb{Z})$, for example. 

We now obtain the following criteria for $A$-completeness.
\begin{prop} 
The following are equivalent for an object $M \in \mathcal{C}$ and for $A \in
\mathrm{Alg}(\mathcal{C})$, assumed dualizable in $\mathcal{C}$. 
\begin{enumerate}
\item The object $M$ is $A$-complete.  
\item The homotopy limit of the Adams tower $\left\{T_i(A, M)\right\}_{i \geq
0}$ is contractible. 
\item 
The augmented cosimplicial object $\cbaug(A) \otimes M$ is a limit diagram. 
\end{enumerate}
\end{prop}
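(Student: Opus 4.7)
The proof should essentially just assemble results already established earlier in the excerpt: \Cref{Acompl:tot} identifies $\mathrm{Tot}(\cb(A) \otimes M)$ with the $A$-completion $L_A M$, and \Cref{adamscobar} identifies partial totalizations of $\cb(A)$ with cofibers of the Adams tower. Since $\mathcal{C}$ is stable, cofiber and fiber sequences coincide, and this lets one pivot between the two descriptions.

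\textbf{Step 1: (1) $\Leftrightarrow$ (3).} By \Cref{Acompl:tot} (which is where we use that $A$ is dualizable), the canonical map $M \to \mathrm{Tot}(\cb(A) \otimes M)$ exhibits the target as $L_A M$. By definition $M$ is $A$-complete iff the unit $M \to L_A M$ is an equivalence, which is exactly the statement that $\cbaug(A) \otimes M$ is a limit diagram.

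\textbf{Step 2: (2) $\Leftrightarrow$ (3).} By \Cref{adamscobar} applied to the Adams tower of $M$ (which is the tensor product with $M$ of the Adams tower of $\mathbf{1}$, as observed in \Cref{adamstowerdef}), we have
\[
\mathrm{Tot}_n(\cb(A) \otimes M) \simeq \mathrm{cofib}\bigl(T_{n+1}(A, M) \to M\bigr).
\]
Passing to the homotopy limit over $n$, we get
\[
\mathrm{Tot}(\cb(A) \otimes M) \simeq \varprojlim_n \mathrm{cofib}\bigl(T_{n+1}(A, M) \to M\bigr).
\]
Since $\mathcal{C}$ is stable, each cofiber sequence $T_{n+1}(A, M) \to M \to \mathrm{cofib}(T_{n+1}(A, M) \to M)$ is also a fiber sequence, and homotopy limits of fiber sequences are fiber sequences (limits commute with limits); thus one obtains a fiber sequence
\[
\varprojlim_n T_{n+1}(A, M) \longrightarrow M \longrightarrow \mathrm{Tot}(\cb(A) \otimes M),
\]
where the middle map is the natural one. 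It follows immediately that the natural map $M \to \mathrm{Tot}(\cb(A) \otimes M)$ is an equivalence if and only if $\varprojlim_n T_{n+1}(A, M) \simeq 0$, i.e., the homotopy limit of the Adams tower is contractible.

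\textbf{Remark on obstacles.} There is no serious obstacle; the content has been front-loaded into \Cref{Acompl:tot} and \Cref{adamscobar}. The only point that requires a moment's care is the compatibility between the sequence of cofibers produced by Dold-Kan and the Adams tower fiber sequences — but since we are in a stable setting, this is automatic. One should remember that dualizability of $A$ enters only through \Cref{Acompl:tot} (to ensure that tensoring with $A$ commutes with the totalization), which is why it appears in the hypotheses of the proposition.
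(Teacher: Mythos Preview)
Your proof is correct and follows essentially the same approach as the paper's: both invoke \Cref{Acompl:tot} for $(1)\Leftrightarrow(3)$ and the Adams tower/cobar comparison (\Cref{adamscobar}) for the remaining equivalence. The paper's argument is in fact much terser than yours—it simply says to use \Cref{adamscobar} ``and conclude,'' leaving the fiber-sequence manipulation you spelled out in Step 2 to the reader.
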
 
\begin{proof}
$(1) \Leftrightarrow (3)$. This follows 
from \Cref{Acompl:tot}. 

To see that  $(1) \Leftrightarrow(2)$, one 
can use the comparison between the Adams tower
and the cobar construction (\Cref{adamscobar}) and conclude.
One can also argue directly; we leave this to the reader. 

\end{proof} 

\begin{corollary} 
\label{dualtensorcompl}
If $X \in \mathcal{C}$ is $A$-complete and $Y \in \mathcal{C}$ is dualizable,
then $X \otimes Y$ is $A$-complete.
\end{corollary}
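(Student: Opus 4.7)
The plan is to prove this directly from the definition of $A$-completeness using the dualizability of $Y$. Let $Z \in \mathcal{C}$ be an object with $Z \otimes A \simeq 0$. I need to show that $\hom_{\mathcal{C}}(Z, X \otimes Y)$ is contractible.

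Since $Y$ is dualizable with dual $\mathbb{D} Y$, the standard hom-tensor adjunction for dualizable objects gives a natural equivalence
\[
\hom_{\mathcal{C}}(Z, X \otimes Y) \simeq \hom_{\mathcal{C}}(Z \otimes \mathbb{D} Y, X).
\]
The right-hand side vanishes once we check that $Z \otimes \mathbb{D} Y$ is annihilated by $A$, and this is immediate: $(Z \otimes \mathbb{D} Y) \otimes A \simeq (Z \otimes A) \otimes \mathbb{D} Y \simeq 0 \otimes \mathbb{D} Y \simeq 0$, using symmetry of the tensor product and the hypothesis on $Z$. Since $X$ is $A$-complete, $\hom_{\mathcal{C}}(Z \otimes \mathbb{D} Y, X)$ is contractible, so $\hom_{\mathcal{C}}(Z, X \otimes Y)$ is as well. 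As $Z$ was an arbitrary $A$-acyclic object, $X \otimes Y$ is $A$-complete.

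There are no serious obstacles here; the proof is a one-step reduction once one notices that dualizability of $Y$ lets one transfer $\mathbb{D} Y$ across the hom-tensor adjunction and that the class of $A$-acyclic objects is closed under tensoring with any object of $\mathcal{C}$. Alternatively, one could argue via the characterization $X \simeq \mathrm{Tot}(X \otimes \cb(A))$ from \Cref{Acompl:tot}: since $Y$ is dualizable, tensoring with $Y$ commutes with the totalization, so $X \otimes Y \simeq \mathrm{Tot}(X \otimes Y \otimes \cb(A))$, which exhibits $X \otimes Y$ as a limit of $A$-modules and hence $A$-complete by \Cref{ex:modcomplete}. The first argument is shorter and is the one I would record.
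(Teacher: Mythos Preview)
Your proof is correct. The paper states this corollary without proof, treating it as immediate from the preceding proposition's characterizations of $A$-completeness (via the Adams tower or the cobar construction, both of which are preserved by tensoring with a dualizable object since such tensoring commutes with limits); your alternative argument is precisely this, while your primary argument via the definition and duality is an equally valid and slightly more direct route.
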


\begin{cons}
\label{dualisamodule}
The object $\mathbb{D}A \in \mathcal{C}$ admits the structure
of an $A$-module. 
In fact, we have a map $A \otimes \mathbb{D}A \to \mathbb{D}A$ which is
(doubly) adjoint
to the multiplication map $A \otimes A \to A$ which makes $A$ into an
$A$-module. 
Alternatively, the module structure on $\mathbb{D} A$ comes from applying the
\emph{right} adjoint $\hom_{\mathcal{C}}(A, \cdot )$ of the forgetful functor $\md_{\mathcal{C}}(A) \to
\mathcal{C} $ to $\mathbf{1} \in \mathcal{C}$. 
\end{cons}

 It will now be convenient to make the following further
hypotheses on $\mathcal{C}$ and $A$, which will be in effect until the end of the
subsection.

\begin{hypotheses}
\label{hypothesis}
$(\mathcal{C}, \otimes, \mathbf{1})$ is a presentable, symmetric monoidal
stable $\infty$-category where $\otimes$ commutes with colimits in each
variable. We assume furthermore
that:
\begin{enumerate}
\item The unit $\mathbf{1}$ is compact.
\item The object $A$ is dualizable (as already assumed).
\item The $\infty$-category $\mathcal{C}$ is generated as a localizing subcategory by dualizable objects.
\end{enumerate}
\end{hypotheses}

Recall that in this setting, compactness of the unit implies compactness of all dualizable
objects.

\begin{prop} 
\label{cptgencompl}
Let $\mathcal{D}$ be a family of of dualizable generators for $\mathcal{C}$.
Then the objects $\{\mathbb{D}A \otimes X\}_{X \in \mathcal{D}}$ form a system of
compact generators for $\Acp$.
\end{prop}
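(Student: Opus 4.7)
The plan is to verify three things about the objects $\mathbb{D}A \otimes X$ for $X \in \mathcal{D}$: that they land in $\Acp$, that they are compact in $\Acp$, and that they jointly detect the zero object.

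First, I would observe that each $\mathbb{D}A \otimes X$ lies in $\Acp$. By \Cref{dualisamodule}, $\mathbb{D}A$ is canonically an $A$-module in $\mathcal{C}$, hence is $A$-complete by \Cref{ex:modcomplete}. Since $X$ is dualizable, \Cref{dualtensorcompl} gives that $\mathbb{D}A \otimes X \in \Acp$.

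Next, I would establish compactness. Because $\Acp \subset \mathcal{C}$ is a full subcategory, for $Y \in \Acp$ we have $\hom_{\Acp}(\mathbb{D}A \otimes X, Y) \simeq \hom_{\mathcal{C}}(\mathbb{D}A \otimes X, Y)$. Since $\mathbb{D}A \otimes X$ is dualizable in $\mathcal{C}$ with dual $A \otimes \mathbb{D}X$, this rewrites as
\[ \hom_{\mathcal{C}}(\mathbb{D}A \otimes X, Y) \simeq \hom_{\mathcal{C}}(\mathbf{1}, A \otimes \mathbb{D}X \otimes Y). \]
Now I would take a filtered colimit $Y \simeq \varinjlim^{\Acp} Y_\alpha$ in $\Acp$ and use \Cref{AcptensorAcocont}, which says that tensoring with $A$ takes colimits in $\Acp$ to colimits in $\mathcal{C}$. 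Thus $A \otimes \mathbb{D}X \otimes Y \simeq \varinjlim^{\mathcal{C}} (A \otimes \mathbb{D}X \otimes Y_\alpha)$, and since $\mathbf{1}$ is compact in $\mathcal{C}$, mapping out of $\mathbf{1}$ commutes with this filtered colimit, yielding the desired identification.

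Finally, I would verify that the family $\{\mathbb{D}A \otimes X\}_{X \in \mathcal{D}}$ generates $\Acp$. Suppose $Y \in \Acp$ satisfies $\hom_{\mathcal{C}}(\mathbb{D}A \otimes X, Y) \simeq 0$ for every $X \in \mathcal{D}$. Applying the duality identification from the previous step gives $\hom_{\mathcal{C}}(X, A \otimes Y) \simeq 0$ for every $X \in \mathcal{D}$. Since $\mathcal{D}$ generates $\mathcal{C}$ as a localizing subcategory, this forces $A \otimes Y \simeq 0$. But then $Y$ itself belongs to the class of objects that are killed by $A$; taking $Z = Y$ in the very definition of $A$-completeness, we obtain $\hom_{\mathcal{C}}(Y, Y) \simeq 0$, and hence $Y \simeq 0$.

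The argument is essentially formal once the three inputs (duality, compactness of the unit, and compatibility of $A \otimes -$ with colimits in $\Acp$) are in place; the only subtle point is the last one, where one must remember that while the inclusion $\Acp \subset \mathcal{C}$ does not preserve colimits, the composite with $\otimes A$ does, precisely because the completion unit map becomes an equivalence after tensoring with $A$.
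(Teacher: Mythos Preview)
Your proof is correct and follows essentially the same approach as the paper's: establish membership in $\Acp$ via the $A$-module structure on $\mathbb{D}A$, use duality together with \Cref{AcptensorAcocont} to reduce compactness to compactness of a dualizable object in $\mathcal{C}$, and deduce generation by showing that the vanishing of $\hom_{\mathcal{C}}(X, A\otimes Y)$ for all $X\in\mathcal{D}$ forces $A\otimes Y\simeq 0$ and hence $Y\simeq 0$ by $A$-completeness. The only cosmetic differences are that the paper observes directly that $\mathbb{D}A\otimes X$ is an $A$-module (rather than invoking \Cref{dualtensorcompl}) and dualizes to $\hom_{\mathcal{C}}(X, A\otimes Y)$ using compactness of $X$ rather than passing all the way to $\mathbf{1}$.
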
 
\begin{proof} 
Fix $X \in \mathcal{D}$.
By \Cref{dualisamodule}, $\mathbb{D}A \otimes X$ belongs to $\Acp$ as it is
an $A$-module. We show that $\mathbb{D}A \otimes X$ is compact in $\Acp$. Indeed, 
\[ \hom_{\mathcal{C}}( \mathbb{D}A \otimes X, Y) \simeq \hom_{\mathcal{C}}(
X, A \otimes Y),  \]
and we observe that the functor $\Acp \to \mathcal{C}$, $Y \mapsto A \otimes
Y$, commutes with colimits
(\Cref{AcptensorAcocont}). Since $X$ is compact in $\mathcal{C}$, we can now
conclude that $\mathbb{D}A \otimes X$ is compact in $\Acp$. 

To show that the $\{\mathbb{D}A \otimes X\}_{X \in \mathcal{D}}$ generate
$\Acp$, it suffices (\Cref{generateifcons}) to show
that if $Y \in \Acp$ is arbitrary and 
$\hom_{\mathcal{C}}( \mathbb{D}A \otimes X, Y)$ is contractible for all $X
\in \mathcal{D}$, then $Y$ is
contractible. But this means  
that 
$\hom_{\mathcal{C}}(
X, A \otimes Y)$ is contractible for all $X \in \mathcal{D}$. Thus $A  \otimes Y$ is
contractible, so $Y$ in turn is contractible by $A$-completeness.
\end{proof}

Next, we include a result that describes complete objects for a tensor product
of algebras. This result (and its variants for torsion and nilpotent objects)
will be useful in the sequel. 

\begin{prop} 
\label{complAB}
Suppose $A, B \in \mathrm{Alg}(\mathcal{C})$ are dualizable in $\mathcal{C}$.
Then an object $X \in \mathcal{C}$ is $(A \otimes B)$-complete if and only if
$X$ is both $A$-complete and $B$-complete.
\end{prop}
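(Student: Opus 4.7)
My plan is to prove the two implications separately. The forward direction is essentially formal from the definitions. If $Y \otimes A \simeq 0$ then $Y \otimes A \otimes B \simeq 0$, so every $A$-acyclic object is $(A \otimes B)$-acyclic (and symmetrically for $B$-acyclic objects). Passing to right orthogonals, this means any $(A \otimes B)$-complete object is both $A$-complete and $B$-complete.

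For the converse, suppose $X$ is both $A$-complete and $B$-complete, and let $Y \in \mathcal{C}$ satisfy $Y \otimes A \otimes B \simeq 0$. I want to show $\hom_{\mathcal{C}}(Y, X) \simeq 0$. The key move is to replace $Y$ by its $A$-completion: since the fiber of $Y \to L_A Y$ is $A$-acyclic and $X$ is $A$-complete, we have
\[ \hom_{\mathcal{C}}(Y, X) \simeq \hom_{\mathcal{C}}(L_A Y, X). \]
By \Cref{Acompl:tot} we have $L_A Y \simeq \mathrm{Tot}(Y \otimes \cb(A))$, and I claim that $L_A Y$ is $B$-acyclic. Because $B$ is dualizable, tensoring with $B$ commutes with totalization, so
\[ (L_A Y) \otimes B \simeq \mathrm{Tot}(Y \otimes \cb(A) \otimes B), \]
and at cosimplicial level $n$ the term is $Y \otimes A^{\otimes n+1} \otimes B$; using the symmetric structure to rewrite this as $A^{\otimes n} \otimes (Y \otimes A \otimes B)$, the hypothesis makes each term contractible. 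Hence $L_A Y$ is $B$-acyclic, and by $B$-completeness of $X$ we conclude $\hom_{\mathcal{C}}(L_A Y, X) \simeq 0$, completing the proof.

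The main technical point is that $B$ commutes with the totalization defining $L_A Y$; this is precisely where dualizability of $B$ is used, and is the only nontrivial input beyond the cobar-theoretic characterization of completion already established. Note the argument is symmetric in $A$ and $B$: one could equally well complete at $B$ first and then use $A$-completeness of $X$ to finish.
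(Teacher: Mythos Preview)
Your proof is correct but takes a different route from the paper's. For the forward direction, you argue directly that $A$-acyclic implies $(A\otimes B)$-acyclic and pass to right orthogonals; the paper instead uses \Cref{Acompl:tot} to write an $(A\otimes B)$-complete $X$ as $\mathrm{Tot}(X\otimes\cb(A\otimes B))$, a limit of $A$-modules (hence $A$-complete), and symmetrically for $B$. Your version is more elementary here. For the converse, the paper works on the side of $X$: it forms the bicosimplicial object $X\otimes\cb(A)\otimes\cb(B)$, uses \Cref{dualtensorcompl} to see that each $A^{\otimes k}\otimes X$ is still $B$-complete, and computes the double totalization one variable at a time to recover $X$ as a limit of $(A\otimes B)$-modules. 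You instead work on the side of the test object $Y$, replacing it by $L_A Y$ and showing the latter is $B$-acyclic. Both arguments pivot on the same dualizability input (commuting $\otimes$ with a totalization), but yours packages it as a statement about acyclics while the paper packages it as an explicit presentation of $X$ by $(A\otimes B)$-modules. One small remark: your claim that dualizability of $B$ is ``the only nontrivial input'' slightly undersells things, since invoking \Cref{Acompl:tot} for $L_A Y$ already uses dualizability of $A$.
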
 
\begin{proof} 
Suppose $X$ is $(A \otimes B)$-complete. Then 
\( X \simeq \mathrm{Tot}( X \otimes \cb(A \otimes B)).  \)
Each term in this totalization is an $A$-module, and therefore
$A$-complete. Thus, the homotopy limit $X$ is $A$-complete too. Similarly, $X$
is $B$-complete.

Conversely, suppose $X$ is both $A$-complete and $B$-complete. 
Then consider the bicosimplicial diagram 
\begin{equation} \label{bicosimp} X \otimes \cb(A) \otimes \cb(B) \colon \Delta \times
\Delta \to \mathcal{C}.  \end{equation}
Since $A^{\otimes k} \otimes X$ is $B$-complete for any $k$
(\Cref{dualtensorcompl}), and since $X$ is $A$-complete, one sees that the
homotopy limit of the bicosimplicial diagram  
\eqref{bicosimp} is $X$ itself: indeed, one computes the bitotalization one
factor at a time. However, every term in the bicosimplicial
diagram \eqref{bicosimp} 
is an $(A \otimes B)$-module and thus $(A \otimes B)$-complete. Thus, $X$ is
$(A \otimes B)$-complete itself. 
\end{proof}

The final goal of this section is to describe the $\infty$-category $\Acp$ as a
homotopy limit, via descent theory, when $A$ is actually a \emph{commutative}
algebra object, so that the cobar construction takes values in commutative
algebra objects.\footnote{The cobar construction on an associative algebra
object does not live in the $\infty$-category of algebra objects.} This is the one part of the present
section where the language of $\infty$-categories is necessary, and  the result will be useful to us in the sequel. 

Consider the augmented cobar construction $\cbaug(A) \colon \Delta^+ \to
\mathrm{CAlg}(\mathcal{C})$. 
Taking module $\infty$-categories everywhere, we obtain a cosimplicial diagram
of symmetric monoidal stable $\infty$-categories $$ \md_{\mathcal{C}}(A) 
\rightrightarrows \md_{\mathcal{C}}(A \otimes A) \triplearrows \dots $$
receiving an augmentation  from $\mathcal{C}$.

\begin{thm} 
\label{Acplimit}
If $A \in \mathrm{CAlg}(\mathcal{C})$ is dualizable in $\mathcal{C}$, then 
$\Acp $ can be recovered as the homotopy
limit
\[ \Acp \simeq \mathrm{Tot}\left( \md_{\mathcal{C}}(A) 
\rightrightarrows \md_{\mathcal{C}}(A \otimes A) \triplearrows \dots \right)
,\]
in the $\infty$-category of symmetric monoidal $\infty$-categories. 
\end{thm}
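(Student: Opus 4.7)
The plan is to use the cobar construction $\cbaug(A)$, combined with the comonadic form of the Barr--Beck--Lurie theorem, to build the equivalence symmetric-monoidally, relying on \Cref{Acompl:tot} to make the comparison explicit.

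First, passing to module $\infty$-categories along the augmented cosimplicial commutative algebra $\cbaug(A)$ yields an augmented cosimplicial diagram
\[ \mathcal{C} \to \md_{\mathcal{C}}(A) \rightrightarrows \md_{\mathcal{C}}(A \otimes A) \triplearrows \cdots \]
of symmetric monoidal $\infty$-categories, and hence a symmetric monoidal functor $\widetilde F \colon \mathcal{C} \to \mathrm{Tot}(\md_{\mathcal{C}}(A^{\otimes \bullet+1}))$ whose value on $M \in \mathcal{C}$ is the cosimplicial object $M \otimes \cb(A)$ equipped with its evident module structures. Any $X \in \mathcal{C}$ with $X \otimes A \simeq 0$ is sent by $\widetilde F$ to the zero diagram, so by the universal property of the symmetric monoidal localization $\mathcal{C} \to \Acp$ (\cite[Prop.~2.2.1.9]{Lur14}) the functor $\widetilde F$ descends to a symmetric monoidal $F \colon \Acp \to \mathrm{Tot}(\md_{\mathcal{C}}(A^{\otimes \bullet+1}))$.

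To produce an inverse I would invoke the comonadic form of Barr--Beck--Lurie applied to the symmetric monoidal adjunction $L := - \otimes A \colon \Acp \rightleftarrows \md_{\mathcal{C}}(A) : U$, where the forgetful $U$ lands in $\Acp$ because every $A$-module is $A$-complete (\Cref{ex:modcomplete}). The functor $L$ is \emph{conservative} on $\Acp$: if $M \in \Acp$ satisfies $M \otimes A \simeq 0$, then $A$-completeness of $M$ applied to $Y = M$ gives $\hom_{\mathcal{C}}(M, M) \simeq 0$, whence $M \simeq 0$. It is also \emph{limit-preserving} because $A$ is dualizable. Therefore $L$ is comonadic in the symmetric monoidal sense, and $\Acp$ is identified with the $\infty$-category of comodules over the symmetric monoidal comonad $T = LU$ on $\md_{\mathcal{C}}(A)$.

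It remains to match this comodule $\infty$-category with the totalization in the statement. The comonad $T$ on $\md_{\mathcal{C}}(A)$ is given by $N \mapsto N \otimes A$, and its Eilenberg--Moore cosimplicial resolution is precisely $\md(A) \rightrightarrows \md(A \otimes A) \triplearrows \cdots$, with coface and codegeneracy maps obtained by base change along those of $\cb(A)$; its totalization computes $\mathrm{CoMod}_T$, and hence $\Acp$, symmetric-monoidally. As a consistency check, for $M \in \Acp$ the composite $GF(M)$ is $\mathrm{Tot}(M \otimes \cb(A)) \simeq L_A(M) \simeq M$ by \Cref{Acompl:tot}. The main obstacle I expect is a clean formulation of the symmetric monoidal enhancement of comonadic Barr--Beck--Lurie together with the identification of the resulting cobar resolution with the cosimplicial diagram $\md(A^{\otimes \bullet+1})$. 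A more direct route, bypassing Barr--Beck, would instead construct $G$ as $\{N_k\} \mapsto \mathrm{Tot}(N_\bullet)$ and verify $FG \simeq \mathrm{id}$ by showing $\mathrm{Tot}(N_\bullet) \otimes A^{\otimes k+1} \simeq N_k$ via a splitting of $N_\bullet \otimes A$ coming from the descent datum; this seems technically harder to carry out coherently than the monadic approach.
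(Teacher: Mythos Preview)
Your approach is correct and essentially the same as the paper's: both descend the augmentation to $\Acp$, then apply the comonadic Barr--Beck--Lurie theorem to the adjunction $(-\otimes A) \colon \Acp \rightleftarrows \md_{\mathcal{C}}(A)$, using dualizability of $A$ for limit-preservation and $A$-completeness for conservativity. The ``main obstacle'' you flag---the symmetric monoidal identification of the comodule $\infty$-category with the totalization of $\md_{\mathcal{C}}(A^{\otimes\bullet+1})$---is exactly what the paper dispatches by citing \cite[Cor.~4.7.6.3]{Lur14} (with the Beck--Chevalley/left-adjointability hypothesis supplied by \cite[Lem.~6.15]{DAGVII}), so there is no gap, only a missing reference.
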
 
\begin{proof} 
We have an adjunction
\[ (F, G)\colon  \mathcal{C} \rightleftarrows  \md_{\mathcal{C}}(A)  \]
where $F(X) = A \otimes X$ and $G$ forgets the $A$-module structure. This
adjunction descends to a similar adjunction 
$$(F', G') \colon \Acp \rightleftarrows \md_{\mathcal{C}}(A),$$
with the same formulas. 
As a result, the coaugmentation from $\mathcal{C}$ of the cosimplicial 
symmetric monoidal $\infty$-category
$\md_{\mathcal{C}}(
\cb(A))$ descends to a coaugmentation from $\Acp$, leading to the natural
functor
\begin{equation} \label{functorAcptot} \Acp \to\mathrm{Tot}\left( \md_{\mathcal{C}}(A) 
\rightrightarrows \md_{\mathcal{C}}(A \otimes A) \triplearrows \dots \right)
.\end{equation}
We want to see that this functor is an equivalence of symmetric monoidal
$\infty$-categories.
This is a descent argument using the $\infty$-categorical monadicity theorem 
and an identification of the above homotopy limit, which appears in the proof
of \cite[Prop.~6.18]{DAGVII}.

For this, we consider the map \eqref{functorAcptot}
and replace all $\infty$-categories with their opposites to obtain 
a new map 
\begin{equation} \label{functorAcptot2} (\Acp)^{op} \to\mathrm{Tot}\left(
\md_{\mathcal{C}}(A)^{op} 
\rightrightarrows \md_{\mathcal{C}}(A \otimes A)^{op} \triplearrows \dots \right)
.\end{equation}
It suffices to 
show that \eqref{functorAcptot2} is an equivalence.
For this, we invoke \cite[Cor.
4.7.6.3]{Lur14}.
The necessary condition on left adjointability is satisfied in view of
\cite[Lem.~6.15]{DAGVII}. 
In order to apply 
\cite[Cor.
4.7.6.3]{Lur14}, 
it therefore suffices to show that tensoring with $A$, as a functor $\Acp \to
\md_{\mathcal{C}}(A)$, preserves
$A$-split totalizations and is conservative. 
However, since $A$ is dualizable, tensoring with $A$ preserves \emph{all}
limits, and it is conservative on $\Acp$ (since any object $X \in \Acp$ with
$X \otimes A \simeq 0$ must be contractible itself). Therefore, we can apply the
comonadicity theorem and complete the proof.
\end{proof} 

We emphasize that the above argument is standard \cite[\S 6]{DAGVII} in
$\infty$-categorical descent theory. The main use of it here is to identify an
$\infty$-category of \emph{complete} objects with respect to a dualizable
algebra object.

\begin{example} 
Suppose $A$ has the property that tensoring with $A$ is \emph{conservative} on
$\mathcal{C}$. In this case, one sees easily that $\Acp = \mathcal{C}$ and the
above result, \Cref{Acplimit}, is a descent theorem for $\mathcal{C}$ itself as
a homotopy limit of modules over the tensor powers $\{A^{\otimes (n+1)}\}_{n
\geq 0}$. In fact, by \cite[Th.~3.36]{galois}, 
the commutative algebra object $A$ is \emph{descendable} in $\mathcal{C}$,
i.e., the thick $\otimes$-ideal it generates is all of $\mathcal{C}$. In
particular, this descent theorem is \cite[Prop.~3.21]{galois}. 
While the decomposition of $\mathcal{C}$ as a homotopy limit does not
require compactness of the unit, the additional conclusion of descendability of
$A$ does. 
\end{example}

\section{$A$-torsion objects and $A^{-1}$-local objects}
In this section, we describe the theory of \emph{torsion} objects
with respect to the algebra object $A \in \mathrm{Alg}(\mathcal{C})$, and the
dual theory of $A^{-1}$-local objects.
The main results are a general version (\Cref{ourdwg}) of the Dwyer-Greenlees \cite{DwG02} equivalence 
between complete and torsion objects, which is due to Hovey-Palmieri-Strickland \cite[Th.
3.3.5]{HPS97} and a version of the arithmetic square
(\Cref{Afracsquare}).
We continue to 
work under \Cref{hypothesis}.

\subsection{Torsion objects}
\begin{definition} 
\label{def:torsionobj}
The subcategory $\Ator$ of \emph{$A$-torsion} objects in $\mathcal{C}$ is the
smallest localizing subcategory of $\mathcal{C}$ containing $A \otimes
X$, for $X \in \mathcal{C}$ dualizable. \end{definition} 

As with $\Acp$, our first goal is to make $\Ator$ explicit. 
\newcommand{\AC}{\mathrm{AC}}

\begin{cons}
By \cite[Cor.~1.4.4.2]{Lur14}, $\Ator$ is a presentable $\infty$-category. In particular, by the adjoint
functor theorem \cite[Cor.~5.5.2.9]{Lur09}, the fully faithful inclusion $\Ator \subset \mathcal{C} $ is a left adjoint and admits a
right adjoint 
\[ \AC_A\colon \mathcal{C} \to \Ator,  \]
which is called the \emph{$A$-acyclization functor.}
For any object $X \in \mathcal{C}$, there is a natural (counit) map $\AC_A(X)
\to X$ in $\mathcal{C}$.
\end{cons}

Our first goal is to get a handle on $\AC_A$. 
We begin by showing that $\Ator$ is a $\otimes$-ideal (\Cref{thickdef}).

\begin{prop} \label{Atorideal}
If $Y \in \Ator$ and $X \in \mathcal{C}$, then $X \otimes Y \in \Ator$.
\end{prop}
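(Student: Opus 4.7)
The plan is a standard two-stage ``spread out from dualizable generators'' argument, using that $\Ator$ is localizing by definition together with the compatibility of $\otimes$ with colimits and the generation hypothesis in \Cref{hypothesis}.

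First I would fix $X \in \mathcal{C}$ and consider the full subcategory
\[ \mathcal{D}_X = \{ Y \in \mathcal{C} : X \otimes Y \in \Ator \} \subset \mathcal{C}. \]
Since tensoring with $X$ is an exact, colimit-preserving endofunctor of $\mathcal{C}$ and $\Ator$ is stable under colimits and equivalences, $\mathcal{D}_X$ is a localizing subcategory. Hence in order to show $\Ator \subset \mathcal{D}_X$ it suffices, by the definition of $\Ator$ (\Cref{def:torsionobj}), to verify that $A \otimes Z \in \mathcal{D}_X$ for every dualizable $Z \in \mathcal{C}$, i.e., that $X \otimes A \otimes Z \in \Ator$ for every $X \in \mathcal{C}$ and every dualizable $Z$.

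Next I would fix a dualizable $Z$ and swap the roles of the two variables by considering
\[ \mathcal{E}_Z = \{ X \in \mathcal{C} : X \otimes A \otimes Z \in \Ator \}. \]
The same argument shows $\mathcal{E}_Z$ is a localizing subcategory of $\mathcal{C}$. If $W \in \mathcal{C}$ is itself dualizable, then $W \otimes Z$ is dualizable (the tensor product of dualizable objects is dualizable), and therefore $W \otimes A \otimes Z \simeq A \otimes (W \otimes Z)$ lies in $\Ator$ by the defining clause of \Cref{def:torsionobj}. Hence $\mathcal{E}_Z$ contains every dualizable object of $\mathcal{C}$.

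Finally I would invoke the hypothesis that $\mathcal{C}$ is generated as a localizing subcategory by its dualizable objects (\Cref{hypothesis}(3)): since $\mathcal{E}_Z$ is localizing and contains these generators, $\mathcal{E}_Z = \mathcal{C}$. Feeding this back into the first step gives $\Ator \subset \mathcal{D}_X$ for every $X \in \mathcal{C}$, which is exactly the claim. The proof is essentially formal; there is no real obstacle, only the minor point that one needs to apply the ``localizing subcategory'' principle in both variables separately, since the generators of $\Ator$ are already of the form $A \otimes (\text{dualizable})$ and not arbitrary tensor products with $A$.
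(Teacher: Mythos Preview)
Your argument is correct and is essentially the same two-stage reduction as the paper's proof: both use that the relevant subcategories are localizing and that $\mathcal{C}$ is generated by dualizable objects to reduce to the case $X$ dualizable and $Y = A \otimes Z$ with $Z$ dualizable, where the result is immediate. The only cosmetic difference is the order of the reductions---the paper first reduces $X$ to a dualizable object and then $Y$ to a generator of $\Ator$, whereas you do it the other way around.
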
 
\begin{proof} 
Consider the collection of $X \in \mathcal{C}$ such that $X \otimes Y \in
\Ator$. By definition, this collection is localizing,
so to show that it is all of $\mathcal{C}$, it suffices to show that it
contains all $X$ \emph{dualizable.} So, we may assume that $X$ is  dualizable.

Fix a dualizable $X$. Consider the collection of all $Y' \in \Ator$ such that $X
\otimes Y' \in \Ator$. The collection of such $Y'$ is localizing, so
to show that it is all of $\Ator$, it suffices to consider the case where $Y' =
A \otimes Y''$ for $Y'' \in \mathcal{C}$ dualizable. But in this case $X \otimes Y' = X
\otimes A \otimes Y''$ clearly belongs
to $\Ator$: in fact, it is one of the generating objects. 
\end{proof} 

As a result, we find that $\Ator$ is also the localizing $\otimes$-ideal
generated by $A$.
We will now write down an explicit formula for $\AC_A(X)$.

\begin{cons}
\label{U_iseq}
Recall the Adams tower
$$ \dots \to T_2(A, \mathbf{1}) \to T_1(A, \mathbf{1}) \to T_0(A, \mathbf{1})
\simeq \mathbf{1} $$ 
of the unit object $\mathbf{1}$. As $A$ is dualizable, each of the objects in
this tower is dualizable, so we can
form the dual tower
\[ \mathbf{1} \to U_1 \to U_2 \to \dots,  \]
where $U_i := \mathbb{D}( T_i(A, \mathbf{1}))$.
We define $U_A = \varinjlim U_i$ and let $V_A$ be the fiber of $\mathbf{1}\
\to U_A$.

Equivalently, let $\cb(A) \colon \Delta \to \mathcal{C}$ denote the cobar
construction on $A$ and form the pointwise dual 
\[ \mathbb{D}(\cb(A))  \colon \Delta^{op} \to \mathcal{C},\]
which maps via an augmentation to $\mathbb{D}(\mathbf{1})\simeq\mathbf{1}$. Then $V_A = |
\mathbb{D}(\cb(A))|$.
\end{cons}

\begin{prop} 
\label{VAacyclization}
For any $X \in \mathcal{C}$, we have a natural equivalence $\AC_A(X) \simeq V_A \otimes X $.
Therefore,  $X \in \Ator$ if and only if the natural map
$V_A \otimes X \to X$ is an equivalence. 
\end{prop}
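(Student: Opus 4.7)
The plan is to verify the universal property of the counit directly: I will show that $V_A \otimes X \to X$ has source in $\Ator$ and cofiber right-orthogonal to $\Ator$, meaning $\hom_{\mathcal{C}}(Y, U_A \otimes X) \simeq 0$ for every $Y \in \Ator$. These two properties characterize the counit of the adjunction $\Ator \hookrightarrow \mathcal{C}$ and so identify $V_A \otimes X$ naturally with $\AC_A(X)$.

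For the first point, I use the presentation $V_A = |\mathbb{D}(\cb(A))|$. Each term $\mathbb{D}(A^{\otimes(n+1)})$ is dualizable and carries an $A$-module structure, obtained by pointwise duality from the free $A$-module $A^{\otimes(n+1)}$ as in Construction~\ref{dualisamodule}. Any $A$-module is a retract of the free $A$-module on itself, which lies in $\Ator$ by Proposition~\ref{Atorideal}, so since $\Ator$ is localizing each $\mathbb{D}(A^{\otimes(n+1)})$ lies in $\Ator$. Closure under colimits gives $V_A \in \Ator$, and the $\otimes$-ideal property (Proposition~\ref{Atorideal}) then yields $V_A \otimes X \in \Ator$.

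For the right-orthogonality, the class of $Y \in \mathcal{C}$ with $\hom_{\mathcal{C}}(Y, U_A \otimes X) \simeq 0$ is stable under colimits and retracts, so by Definition~\ref{def:torsionobj} it suffices to treat the generators $Y = A \otimes Z$ with $Z$ dualizable. Duality of $A$ rewrites
\[ \hom_{\mathcal{C}}(A \otimes Z, U_A \otimes X) \simeq \hom_{\mathcal{C}}(Z, \mathbb{D}A \otimes U_A \otimes X), \]
so it is enough to establish $\mathbb{D}A \otimes U_A \simeq 0$, equivalently that $V_A \otimes \mathbb{D}A \to \mathbb{D}A$ is an equivalence. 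The key input is that $\cbaug(A) \otimes A$ is a split augmented cosimplicial diagram (Construction~\ref{cobarconst}), hence an absolute limit diagram consisting of dualizable objects. Pointwise dualization converts absolute limits of dualizable objects into absolute colimits, so $\mathbb{D}(\cbaug(A)) \otimes \mathbb{D}A$ is an augmented simplicial colimit diagram whose colimit is the augmentation term $\mathbb{D}(\mathbf{1} \otimes A) = \mathbb{D}A$; since $\mathbb{D}A$ is dualizable, tensoring commutes with the geometric realization, giving $V_A \otimes \mathbb{D}A \simeq \mathbb{D}A$ as required.

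The second assertion follows quickly: if $X \in \Ator$, then applying right-orthogonality with $Y = U_A \otimes X$ (which lies in $\Ator$ by the ideal property) forces $U_A \otimes X \simeq 0$, so $V_A \otimes X \to X$ is an equivalence; conversely, if this map is an equivalence, then $X$ inherits membership in $\Ator$ from $V_A \otimes X$. The main obstacle is the duality step in the third paragraph, where one must carefully verify that pointwise dualization on a diagram of dualizable objects interchanges absolute limits and absolute colimits, applied to the split augmented cobar construction.
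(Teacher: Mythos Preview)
Your proof is correct and follows the same two-step strategy as the paper: show $V_A \otimes X \in \Ator$, then show $U_A \otimes X$ is right-orthogonal to $\Ator$, reducing the latter to $\mathbb{D}A \otimes U_A \simeq 0$. The only real difference is in how this last vanishing is established. The paper argues via the sequential tower: by Proposition~\ref{Adamstowprop} each map in the Adams tower becomes null after tensoring with $A$, so dually each $U_i \otimes \mathbb{D}A \to U_{i+1} \otimes \mathbb{D}A$ is null, and a sequential colimit along null maps is contractible. You instead dualize the split structure on $\cbaug(A) \otimes A$ directly to obtain a split augmented simplicial object with realization $\mathbb{D}A$. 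The duality step you flag is indeed fine: restricted to dualizable objects, $\mathbb{D}$ is an equivalence $\mathcal{C}^{\mathrm{dual}} \simeq (\mathcal{C}^{\mathrm{dual}})^{op}$, and split (co)simplicial diagrams are absolute, so the splitting transports along any functor (in particular a contravariant one) to give a split, hence absolute, colimit diagram in $\mathcal{C}^{\mathrm{dual}}$ and therefore in $\mathcal{C}$. Similarly, for the first step the paper uses the tower of cofibers of $\mathbf{1}\to U_i$ where you use the simplicial presentation $V_A = |\mathbb{D}(\cb(A))|$; these are equivalent via Proposition~\ref{adamscobar}. The paper's route is a touch more elementary (no appeal to absolute colimits), yours a touch more conceptual; neither buys anything the other does not. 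One minor remark: in your last sentence you invoke dualizability of $\mathbb{D}A$ to commute $\otimes$ with geometric realization, but this holds for any object since $\otimes$ preserves colimits by standing hypothesis.
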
 
\begin{proof}
We have a natural map $V_A \otimes X \to X$. In order to show that $V_A \otimes
X$ is identified with $\AC_A(X)$, we need to show two things: 
\begin{enumerate}
\item $V_A \otimes X$ belongs to $\Ator$. 
\item For any $Y \in \Ator$, we have that $\hom_{\mathcal{C}}(Y, U_A \otimes X)
$ is contractible.
\end{enumerate}
The latter condition comes from the natural cofiber sequence $V_A \otimes X \to
X \to U_A \otimes X$.
We now prove these claims.
\begin{enumerate}
\item It suffices to show that $V_A \in \Ator$, by \Cref{Atorideal}. 
For this, it suffices to show that the cofiber of each map $\mathbf{1} \to U_i$
belongs to $\Ator$. By induction and the octahedral axiom, it suffices to show that for each $i \geq 0$,
the cofiber of $U_i \to U_{i+1}$ belongs to $\Ator$. 
But this map is the dual to the map $T_{i+1}(A, \mathbf{1}) \to T_{i}(A,
\mathbf{1})$, and the fiber of this map is of the form $A \otimes M$ for a
dualizable object $M$. Now any object of the form $\mathbb{D}(A \otimes
M) \simeq \mathbb{D}A \otimes
\mathbb{D} M$ belongs to $\Ator$ as $\mathbb{D}A$ is an $A$-module
(\Cref{dualisamodule}) and thus a  retract of $A \otimes
\mathbb{D}A$. Thus, the cofiber of $U_i \to U_{i+1}$ belongs to $\Ator$.
\item  Fix $X$ arbitrary. The collection of $Y$ for which $\hom_{\mathcal{C}}
(Y, U_A \otimes X)$
is contractible is localizing, so it suffices to prove
the claim for $Y = A \otimes Y'$ with $Y'$ dualizable. In this case, we have
\[  \hom_{\mathcal{C}}
(Y, U_A \otimes X)
\simeq \hom_{\mathcal{C}}(Y', \mathbb{D}A \otimes U_A \otimes X).
\]
Now the tower $\{ A \otimes T_i(A, \mathbf{1})\}$ has the property that every
map is null (\Cref{Adamstowprop}), so by duality, every map $U_i \otimes \mathbb{D}A \to U_{i+1} \otimes
\mathbb{D}A$ is nullhomotopic. In particular, $\mathbb{D}A \otimes U_A$ is
contractible, which proves the claim. 
\end{enumerate}
\end{proof}

\begin{example} 
We consider $\mathcal{C} = \md(\mathbb{Z})$, $A = \mathbb{Z}/p$. 
In this case, the sequence $1 \to U_1 \to U_2 \to \dots $ becomes the sequence
\[ \mathbb{Z} \stackrel{p}{\to}  \mathbb{Z} \stackrel{p}{\to} \dots,  \]
so that $U_{\mathbb{Z}/p} = \mathbb{Z}[p^{-1}]$ and $V_{\mathbb{Z}/p} =
\Sigma^{-1} ( \mathbb{Z}[p^{-1}]/\mathbb{Z})$.
\end{example}

\begin{prop} 
\label{cptgentor}
Let $\mathcal{D}$ be a collection of dualizable generators for $\mathcal{C}$.
Then the objects $\{\mathbb{D}A \otimes X\}_{X \in \mathcal{D}}$ form a system of compact
generators for $\Ator$.
\end{prop}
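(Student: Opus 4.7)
The plan is to mimic the structure of \Cref{cptgencompl}: we must verify (a) each $\mathbb{D}A \otimes X$ with $X \in \mathcal{D}$ is compact in $\Ator$, and (b) the family $\{\mathbb{D}A \otimes X\}_{X \in \mathcal{D}}$ jointly detects zero objects in $\Ator$. The main ingredient, just as in \Cref{cptgencompl}, will be the duality adjunction $\hom_{\mathcal{C}}(\mathbb{D}A \otimes X, Y) \simeq \hom_{\mathcal{C}}(X, A \otimes Y)$.

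First I would show membership and compactness. By \Cref{dualisamodule}, $\mathbb{D}A$ is an $A$-module and hence is a retract of $A \otimes \mathbb{D}A$; since $\Ator$ is a $\otimes$-ideal (\Cref{Atorideal}), it follows that $\mathbb{D}A \otimes X \in \Ator$. For compactness, I would use the displayed adjunction: since $\mathbf{1}$ is compact (\Cref{hypothesis}) and $X$ is dualizable, $X$ is compact in $\mathcal{C}$, and tensoring with $A$ preserves arbitrary colimits. Colimits in $\Ator$ are computed in $\mathcal{C}$ because $\Ator$ is localizing, so the composite functor $Y \mapsto \hom_{\mathcal{C}}(X, A \otimes Y)$ commutes with filtered colimits on $\Ator$. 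This yields compactness.

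Next I would verify generation. Given \Cref{generateifcons}, it suffices to show that if $Y \in \Ator$ and $\hom_{\mathcal{C}}(\mathbb{D}A \otimes X, Y) \simeq 0$ for all $X \in \mathcal{D}$, then $Y \simeq 0$. The adjunction converts this hypothesis into $\hom_{\mathcal{C}}(X, A \otimes Y) \simeq 0$ for all $X \in \mathcal{D}$; since $\mathcal{D}$ generates $\mathcal{C}$, we conclude $A \otimes Y \simeq 0$. The final step is to pass from this to $Y \simeq 0$ using membership in $\Ator$: by \Cref{VAacyclization}, $Y \simeq V_A \otimes Y$, so it suffices to show $V_A \otimes Y \simeq 0$. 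I would argue that the full subcategory of $\mathcal{C}$ spanned by those $Z$ with $Z \otimes Y \simeq 0$ is a localizing $\otimes$-ideal (closed under colimits since $\otimes$ is cocontinuous, and absorbing tensor products) which contains $A$. Hence it contains the localizing $\otimes$-ideal generated by $A$, which is $\Ator$, and in particular it contains $V_A$.

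I do not expect any serious obstacle; the proof runs almost verbatim as the complete-object analog \Cref{cptgencompl}. The only step requiring a small additional observation beyond that template is the passage from $A \otimes Y \simeq 0$ to $Y \simeq 0$, which is where \Cref{VAacyclization} and the $\otimes$-ideal structure of $\Ator$ must be combined; this is the natural point to be careful in writing out the argument.
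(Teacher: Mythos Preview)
Your proposal is correct and follows essentially the same approach as the paper. The only minor difference is in the compactness step: the paper observes more directly that since $\mathbb{D}A \otimes X$ is dualizable (hence compact) in $\mathcal{C}$ and the inclusion $\Ator \subset \mathcal{C}$ preserves colimits (being a left adjoint), compactness in $\Ator$ is immediate; your adjunction argument reaches the same conclusion but is slightly more roundabout.
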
 
\begin{proof} 
These objects are $A$-modules by \Cref{dualisamodule}, so they belong to $\Ator$; since they
are compact in $\mathcal{C}$, they are compact in $\Ator$. It suffices to show
that they are generators. Here the argument proceeds as in \Cref{cptgencompl}: 
it reduces to showing that if $Y \in \Ator$ and $A \otimes Y$ is contractible, then $Y$ is
contractible. 
But in this case, $Q \otimes Y$ is contractible for any $A$-module $Q$. 
It follows that $Q' \otimes Y$ is contractible for any $Q' \in \Ator$, in
particular, for $Q' = V_A$, so that $V_A \otimes Y$ is contractible. But $V_A
\otimes Y \simeq Y$ as $Y \in \Ator$. Therefore, $Y$ is contractible. 
\end{proof} 

Next, we include an analog of 
\Cref{complAB} for torsion objects. 

\begin{prop} 
\label{torsAB}
Let $A, B \in \mathrm{Alg}(\mathcal{C})$ be dualizable in $\mathcal{C}$. Then
an object $X \in \mathcal{C} $ is $(A \otimes B)$-torsion if and only if it is
both $A$-torsion and $B$-torsion.
\end{prop}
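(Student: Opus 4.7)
The plan is to use the explicit acyclization $V_A$ from \Cref{U_iseq} and the characterization of torsion from \Cref{VAacyclization}, combined with the ``filter by generators'' argument used in the proof of \Cref{Atorideal}. The whole proof is parallel in spirit to \Cref{complAB}, with $V_A \otimes(-)$ playing the role that $\cb(A)$ played in the completion case.

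For the forward direction, one observes that $\mathcal{C}_{A\text{-tors}} \cap \mathcal{C}_{B\text{-tors}}$ is a localizing subcategory of $\mathcal{C}$. The generating objects of $\mathcal{C}_{(A \otimes B)\text{-tors}}$ are the $(A \otimes B) \otimes Y$ for $Y$ dualizable; writing this as $A \otimes (B \otimes Y)$ or $B \otimes (A \otimes Y)$, each such generator lies in both $\Ator$ and $\mathcal{C}_{B\text{-tors}}$ by \Cref{Atorideal}. Hence $\mathcal{C}_{(A \otimes B)\text{-tors}} \subseteq \Ator \cap \mathcal{C}_{B\text{-tors}}$.

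For the reverse direction, suppose $X$ is both $A$-torsion and $B$-torsion. By \Cref{VAacyclization}, $V_A \otimes X \simeq X$ and $V_B \otimes X \simeq X$, hence $V_A \otimes V_B \otimes X \simeq X$. It therefore suffices to show that $V_A \otimes V_B \in \mathcal{C}_{(A \otimes B)\text{-tors}}$, because $\mathcal{C}_{(A \otimes B)\text{-tors}}$ is a $\otimes$-ideal (the analog of \Cref{Atorideal}), so tensoring $V_A \otimes V_B$ with $X$ stays inside it.

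To show $V_A \otimes V_B \in \mathcal{C}_{(A \otimes B)\text{-tors}}$, I would proceed in two filtering steps. First, consider the collection of $W \in \mathcal{C}$ with $W \otimes B \in \mathcal{C}_{(A \otimes B)\text{-tors}}$; since tensoring preserves colimits, this is a localizing subcategory, and it contains every $A \otimes Y$ with $Y$ dualizable because $A \otimes Y \otimes B \simeq (A \otimes B) \otimes Y$. Hence it contains all of $\Ator$, and in particular $V_A \otimes B \in \mathcal{C}_{(A \otimes B)\text{-tors}}$, and more generally $V_A \otimes B \otimes Y' \in \mathcal{C}_{(A \otimes B)\text{-tors}}$ for every dualizable $Y'$. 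Second, consider the collection of $Z \in \mathcal{C}$ with $V_A \otimes Z \in \mathcal{C}_{(A \otimes B)\text{-tors}}$; this is again localizing and contains all generators $B \otimes Y'$ of $\mathcal{C}_{B\text{-tors}}$ by the first step, hence contains all of $\mathcal{C}_{B\text{-tors}}$. Applying this with $Z = V_B$ gives $V_A \otimes V_B \in \mathcal{C}_{(A \otimes B)\text{-tors}}$, completing the proof. The main (mild) obstacle is recognizing that the natural symmetric argument via the cobar construction used for \Cref{complAB} must be dualized: one works with the explicit acyclization $V_A$ instead of $\cb(A)$, and the two-step localizing-subcategory filtration replaces the bicosimplicial totalization.
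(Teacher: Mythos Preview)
Your proof is correct, and its overall architecture matches the paper's: the forward direction is the same localizing-subcategory observation, and for the reverse direction you reduce, exactly as the paper does, to showing $V_A \otimes V_B$ is $(A \otimes B)$-torsion via the equivalences $V_A \otimes X \simeq X$ and $V_B \otimes X \simeq X$.

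Where you diverge is in the final step. The paper argues concretely using the dual Adams towers: it writes $V_A$ and $V_B$ as colimits along the sequences $\mathbf{1} \to U_1^{(A)} \to \dots$ and $\mathbf{1} \to U_1^{(B)} \to \dots$, observes that each graded piece $\mathrm{cofib}(U_i^{(A)} \to U_{i+1}^{(A)}) \otimes \mathrm{cofib}(U_j^{(B)} \to U_{j+1}^{(B)})$ is an $(A \otimes B)$-module (hence $(A \otimes B)$-torsion), and then inducts. Your two-step localizing-subcategory filtration is a cleaner abstraction of the same idea: rather than unpacking the filtration of $V_A$ and $V_B$ explicitly, you use only that $V_A \in \Ator$ and $V_B \in \mathcal{C}_{B\text{-tors}}$ and let the localizing closure do the work. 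Your argument is arguably more economical and shows that no special feature of $V_A, V_B$ beyond membership in the torsion subcategories is needed; the paper's argument, by contrast, makes the module-theoretic reason for $(A \otimes B)$-torsion more visible at the level of associated graded pieces.
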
 
\begin{proof} 
If $X$ is $(A \otimes B)$-torsion, then we know that $X$ belongs to the
localizing $\otimes$-ideal (see \Cref{thickdef}) generated by 
$A \otimes B$. Therefore, $X$ belongs to the localizing $\otimes$-ideal
generated by $A$ and is consequently $A$-torsion. Similarly, $X$ must be
$B$-torsion. 

Suppose now that $X$ is both $A$-torsion and $B$-torsion. 
Then $V_A \otimes X \simeq X$ and $V_B \otimes X \simeq X$, so $V_A \otimes V_B
\otimes X \simeq X$. It suffices to show, as a result, that $V_A \otimes V_B$
is $(A \otimes B)$-torsion. 
For this, we construct sequences $$\mathbf{1} \to U_1^{(A)} \to U_2^{(A)} \to
\dots
 \quad \text{and} \quad \mathbf{1} \to U_1^{(B)} \to U_2^{(B)} \to \dots $$
as in \Cref{U_iseq}, such that $V_A \simeq \mathrm{fib}\left( \mathbf{1} \to
\varinjlim U_i^{(A)} \right)$ and $V_B \simeq \mathrm{fib}\left( \mathbf{1} \to
\varinjlim U_i^{(B)} \right)$. 
To show that $V_A \otimes V_B$ is $(A \otimes B)$-torsion, we first observe that, for each $i,j$, $\mathrm{cofib}\left(U_i^{(A)}
\to U_{i+1}^{(A)} \right) \otimes 
\mathrm{cofib}\left(U_j^{(B)}
\to U_{j+1}^{(B)}\right)$ is an $(A \otimes B)$-module and hence $(A \otimes B)$-torsion. The claim for $V_A\otimes V_B$ now follows by induction.  
\end{proof}

We now state and briefly prove a version of \cite[Th. 2.1]{DwG02}, due to
Hovey-Palmieri-Strickland in our
context.  
\begin{thm}[{Cf. \cite[Th. 3.3.5]{HPS97}}] 
\label{ourdwg}
The functor of $A$-completion establishes an equivalence of $\infty$-categories $L_A\colon \Ator
\simeq \Acp$ (whose inverse is given by $\AC_A$).
\end{thm}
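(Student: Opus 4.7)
The plan is to verify directly that the two composites $\AC_A \circ L_A$ and $L_A \circ \AC_A$ are naturally equivalent to the identity on $\Ator$ and $\Acp$ respectively. Using the explicit formula $\AC_A(Z) \simeq V_A \otimes Z$ from \Cref{VAacyclization}, this reduces to proving (a) that $V_A \otimes L_A Y \simeq Y$ for every $Y \in \Ator$, and (b) that $L_A(V_A \otimes X) \simeq X$ for every $X \in \Acp$.

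For (a), I would first establish the stronger claim that the map $V_A \otimes Y \to V_A \otimes L_A Y$ is an equivalence for \emph{arbitrary} $Y \in \mathcal{C}$. Fix $Y$ and let $\mathcal{T} \subseteq \mathcal{C}$ denote the full subcategory consisting of those $W$ for which $W \otimes Y \to W \otimes L_A Y$ is an equivalence; because $\otimes$ preserves colimits in each variable, $\mathcal{T}$ is a localizing subcategory of $\mathcal{C}$. By the defining property of $L_A$, the map $Y \to L_A Y$ becomes an equivalence after tensoring with $A$, so $A \otimes X \in \mathcal{T}$ for every dualizable $X \in \mathcal{C}$. Hence $\Ator \subseteq \mathcal{T}$ by \Cref{def:torsionobj}, and in particular $V_A \in \mathcal{T}$. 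Specializing now to $Y \in \Ator$ and invoking the identity $V_A \otimes Y \simeq Y$ from \Cref{VAacyclization} gives $\AC_A(L_A Y) \simeq V_A \otimes L_A Y \simeq V_A \otimes Y \simeq Y$, as desired.

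For (b), the key intermediate observation is that $A \otimes U_A \simeq 0$. To see this, I would tensor the defining cofiber sequence $V_A \to \mathbf{1} \to U_A$ with $A$: since $A \simeq A \otimes \mathbf{1} \in \Ator$ by \Cref{def:torsionobj} applied with $X = \mathbf{1}$, \Cref{VAacyclization} gives $V_A \otimes A \simeq \AC_A(A) \simeq A$, forcing $U_A \otimes A \simeq 0$. Now fix $X \in \Acp$ and apply $L_A$ to the cofiber sequence $V_A \otimes X \to X \to U_A \otimes X$; because $L_A X \simeq X$, it suffices to verify that $M := L_A(U_A \otimes X) \simeq 0$. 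By construction $M$ is $A$-complete, and the $A$-equivalence $U_A \otimes X \to M$ yields $A \otimes M \simeq A \otimes U_A \otimes X \simeq 0$. Applying the defining property of $A$-completeness of $M$ to $M$ itself (which satisfies $M \otimes A \simeq 0$) forces $\hom_{\mathcal{C}}(M, M) \simeq 0$, so $M \simeq 0$ and (b) follows.

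The one substantive point is the final step: that an object which is simultaneously $A$-complete and satisfies $A \otimes M \simeq 0$ must vanish. This is the hinge of the equivalence, expressing how the two orthogonal conditions---being ``local'' and being ``acyclic'' for $A$---interlock, and it is essentially the only place in the argument where completeness and torsion interact beyond the formal yoga of localizing subcategories and cofiber sequences.
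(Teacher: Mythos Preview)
Your proof is correct, but it proceeds quite differently from the paper's. The paper argues via compact generators: using \Cref{cptgencompl} and \Cref{cptgentor}, it identifies the common family $\{\mathbb{D}A \otimes X\}_{X \text{ dualizable}}$ as compact generators for both $\Ator$ and $\Acp$, observes that $L_A$ fixes these objects (they are $A$-modules, hence $A$-complete), and then concludes by a formal $\mathrm{Ind}$-completion argument. Your approach bypasses the compact-generator machinery entirely, instead checking directly that the unit and counit maps $V_A \otimes Y \to V_A \otimes L_A Y$ and $L_A(V_A \otimes X) \to L_A X$ are equivalences, using only the explicit description $\AC_A \simeq V_A \otimes (-)$ from \Cref{VAacyclization} together with elementary localizing-subcategory arguments. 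Your route is more hands-on and in fact appears to use less of \Cref{hypothesis}: you never invoke compactness of the unit or the existence of dualizable generators for $\mathcal{C}$, only the dualizability of $A$ (needed to form $V_A$ and $U_A$). On the other hand, the paper's argument simultaneously exhibits both $\Ator$ and $\Acp$ as $\mathrm{Ind}(\mathcal{C}'_A)$ for the same small category $\mathcal{C}'_A$, which is structurally informative in its own right.
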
 
\begin{proof} 
Let $X \in \mathcal{C}$ be a dualizable object. Then $\mathbb{D}A \otimes X \in
\mathcal{C}$ belongs to both the subcategories $\Ator$ and $\Acp$. Moreover,
the $\mathbb{D}A \otimes X$ form a family of compact generators (as $X$ ranges over the
dualizable objects) for both subcategories $\Ator, \Acp$, in view of
\Cref{cptgencompl} and \Cref{cptgentor}. Since $L_A$ carries
$\mathbb{D}A \otimes X $ to itself (as any $A$-module is $A$-complete), it follows formally that $L_A$ induces an equivalence
as stated. 

In more detail, we let $\mathcal{C}'_A \subset \mathcal{C}$ denote the thick
subcategory (\Cref{thickdef}) generated by the $\left\{\mathbb{D} A \otimes X\right\}$ for the $X
\in \mathcal{C}$ dualizable. Then $\mathcal{C}'_A \subset \Ator \cap \Acp$ and
identifies with a system of compact generators of each. Therefore, we have
equivalences $\Ator \simeq \mathrm{Ind} (\mathcal{C}'_A), 
\Acp \simeq \mathrm{Ind} (\mathcal{C}'_A)$ 
by \cite[Prop.~5.3.5.11]{Lur09}. The functor $L_A\colon \Ator \to \Acp$
preserves colimits, as the composition of the inclusion $\Ator \subset
\mathcal{C}$ and $L_A\colon \mathcal{C} \to \Acp$.  It also takes the
compact generators $\mathbb{D}A \otimes X$ to compact objects of $\Acp$. It is
therefore induced by left Kan extension
of the identity $\mathcal{C}'_A \to \mathcal{C}'_A \subset
\mathrm{Ind}(\mathcal{C}'_A)$ \cite[Lem.~5.3.5.8]{Lur09}
and is therefore an equivalence.
\end{proof}

\subsection{$A^{-1}$-local objects and fracture squares}
\label{sec:A-1local}

We keep the notation of the previous subsection.

\begin{definition} 
We say that an object $X \in \mathcal{C}$ is $A^{-1}$-local if, for any object
$Y \in \Ator$, we have $\hom_{\mathcal{C}}(Y, X) \simeq 0$.
We let $\mathcal{C}[A^{-1}] \subset \mathcal{C}$ denote the full subcategory
spanned by the $A^{-1}$-local objects. 
\end{definition} 

This condition (for a fixed $X$) is preserved under colimits in $Y$. It follows
that: 

\begin{prop} 
An object $X \in \mathcal{C}$ is $A^{-1}$-local if and only if $A \otimes X$ is
contractible.
\end{prop}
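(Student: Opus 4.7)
The plan is to prove both directions using only two pieces of structure: the dualizability of $A$ (Hypotheses \ref{hypothesis}) and the fact that $\Ator$ is a localizing $\otimes$-ideal (\Cref{Atorideal}). The duality of $A$ lets us convert the condition ``$A \otimes X \simeq 0$'' into a condition involving $\mathbb{D} A$ via the fact (\Cref{dualisamodule}) that $\mathbb{D} A$ is an $A$-module, hence a retract of $A \otimes \mathbb{D} A$.

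For the easy direction, assume $A \otimes X \simeq 0$. I first upgrade this to $\mathbb{D} A \otimes X \simeq 0$: since $\mathbb{D} A$ is a retract of $A \otimes \mathbb{D} A$, tensoring with $X$ exhibits $\mathbb{D} A \otimes X$ as a retract of $\mathbb{D} A \otimes (A \otimes X) \simeq 0$. Now I want to show that $\hom_{\mathcal{C}}(Y, X) \simeq 0$ for every $Y \in \Ator$. The class of such $Y$ is a localizing subcategory of $\mathcal{C}$ (mapping spaces turn colimits in the source into limits), so by definition of $\Ator$ it suffices to verify the vanishing on the generators $A \otimes Z$ with $Z$ dualizable. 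For these,
\[ \hom_{\mathcal{C}}(A \otimes Z, X) \simeq \hom_{\mathcal{C}}(\mathbf{1}, \mathbb{D} Z \otimes \mathbb{D} A \otimes X) \simeq 0,\]
using dualizability of $A \otimes Z$ and the vanishing of $\mathbb{D} A \otimes X$.

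For the converse, assume $X$ is $A^{-1}$-local. Note that $A \otimes X$ lies in $\Ator$, since $\Ator$ is a $\otimes$-ideal containing $A$. The key observation is that $A \otimes X$ is \emph{also} $A^{-1}$-local: for any $Y \in \Ator$, dualizability of $A$ gives
\[ \hom_{\mathcal{C}}(Y, A \otimes X) \simeq \hom_{\mathcal{C}}(Y \otimes \mathbb{D} A, X),\]
and $Y \otimes \mathbb{D} A \in \Ator$ by the ideal property of \Cref{Atorideal}, so this mapping space is contractible by the hypothesis on $X$. Hence $A \otimes X \in \Ator \cap \mathcal{C}[A^{-1}]$, which forces $\hom_{\mathcal{C}}(A \otimes X, A \otimes X) \simeq 0$. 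The identity map is thus nullhomotopic, so $A \otimes X \simeq 0$.

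There is no real obstacle here; the argument is formal given the framework already assembled. The only point that requires a moment's care is the ``$A \otimes X$ is itself $A^{-1}$-local'' step in the converse, which is what ultimately forces the identity of $A \otimes X$ to be null. Aside from this, the whole statement can be viewed as a direct consequence of the ideal structure of $\Ator$ together with the self-duality trick for $A$-modules via $\mathbb{D} A$.
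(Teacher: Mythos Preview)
Your proof is correct. The paper compresses both directions into a single chain of equivalences: $A \otimes X \simeq 0$ iff $\hom_{\mathcal{C}}(Y, A \otimes X) \simeq 0$ for all dualizable $Y$ (since dualizables generate $\mathcal{C}$ under Hypotheses~\ref{hypothesis}), iff $\hom_{\mathcal{C}}(\mathbb{D}A \otimes Y, X) \simeq 0$ for all such $Y$, and this last condition is exactly $A^{-1}$-locality because the objects $\mathbb{D}A \otimes Y$ generate $\Ator$ as a localizing subcategory. Your forward direction is essentially this argument unpacked. Your reverse direction takes a slightly different route: rather than reading the chain of equivalences backward, you show directly that $A \otimes X$ lies in $\Ator \cap \mathcal{C}[A^{-1}]$, forcing its identity map to be null. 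This is a pleasant variation; it relies only on the $\otimes$-ideal property of $\Ator$ (\Cref{Atorideal}) and does not need to invoke, even implicitly, that the $\mathbb{D}A \otimes Y$ form a generating set for $\Ator$.
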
 
\begin{proof} 
This follows easily from duality. 
In fact, $A \otimes X$ is contractible if and only if 
$\hom_{\mathcal{C}}(Y, A \otimes X)$ is contractible for all dualizable $Y$,
and this holds if and only if $\hom_{\mathcal{C}}(Y \otimes \mathbb{D}A, X)$ is
contractible for such $Y$. This is equivalent to the condition that $X$ be
$A^{-1}$-local.
\end{proof}

In particular, an object is $A^{-1}$-local if and only if it is $S$-local as
$S$ ranges over the collection of maps $\mathbb{D} A \otimes X \to 0$ for $X$
a dualizable object of $\mathcal{C}$. It follows by general theory that one can
construct an $A^{-1}$-localization of any object in $\mathcal{C}$. 
However, we can do so directly: 

\begin{cons}
Recall (\Cref{U_iseq}) the sequence $\mathbf{1} \to U_1 \to U_2 \to \dots $ and
the cofiber sequence 
\[ V_A \to \mathbf{1} \to U_A,  \]
with $U_A = \varinjlim U_i$.
Recall also that the cofiber of each $U_i \to U_{i+1}$ admits the structure of an
$A$-module. 

For any $X \in \mathcal{C}$, we consider the morphism $X \to X[A^{-1}]
:=X \otimes U_A$.
As shown in the proof of \Cref{VAacyclization}, 
the object $X[A^{-1}]$ is indeed $A^{-1}$-local.
Moreover, if $Y$ is any $A^{-1}$-local object, the natural map
$\hom_{\mathcal{C}}(X[A^{-1}], Y) \to \hom_{\mathcal{C}}(X, Y)$ is an
equivalence; this follows because the fiber is $\hom_{\mathcal{C}}(V_A \otimes
X, Y)$ and $V_A \otimes X$ belongs to $\Ator$ (compare \Cref{VAacyclization}). 

It follows from this that $X \to X[A^{-1}]$ is precisely
$A^{-1}$-localization, i.e., the left adjoint to the inclusion
$\mathcal{C}[A^{-1}] \subset \mathcal{C}$.
Note that, unlike $A$-completion, $A^{-1}$-localization is \emph{smashing}: it
is given by tensoring with $\mathbf{1}[A^{-1}] \simeq U_A$.
\end{cons}

\begin{example} 
If $\mathcal{C} = \md(\mathbb{Z})$ and $A = \mathbb{Z}/p$, then
$A^{-1}$-localization is precisely $p^{-1}$-localization, i.e., tensoring with
$\mathbb{Z}[p^{-1}]$.
\end{example} 

\begin{remark} 
These types of localizations are called \emph{finite localizations} in
\cite{Mil92}; here we are localizing away from the compact objects
$\mathbb{D}A  \otimes X$ for $X$ dualizable.
We refer also to \cite[\S 3.3]{HPS97} for a discussion of finite localizations. 
\end{remark} 

Our final goal in this section is to develop the theory of fracture squares,
and to show that $\mathcal{C}$ can be described using a combination of the
$A$-complete and the $A^{-1}$-local categories.
We begin by checking that equivalences can be detected after tensoring with $A$ and after
$A^{-1}$-localization. 

\begin{prop} 
\label{checkequiv}
Let $f\colon X \to Y$ be a morphism in $\mathcal{C}$. Then $f$ is an equivalence if
and only if both $1_A \otimes f\colon A \otimes X \to A \otimes Y$ and $
f[A^{-1}]\colon X[A^{-1}] \to Y[A^{-1}]$ are equivalences.
\end{prop}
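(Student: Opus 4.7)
The plan is to reduce the problem to showing that an object which is simultaneously $A$-torsion and $A^{-1}$-local must be contractible, and then apply this to the cofiber of $f$.

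First I would dispose of the trivial direction: if $f$ is an equivalence, then tensoring with $A$ (which is a functor) and applying the $A^{-1}$-localization functor both yield equivalences. For the nontrivial direction, assume that $1_A \otimes f$ and $f[A^{-1}]$ are equivalences. Let $C = \mathrm{cofib}(f)$. Since tensoring with $A$ is exact, $A \otimes C \simeq 0$, which is equivalent to saying $C$ is $A^{-1}$-local. Similarly, since $A^{-1}$-localization is smashing (it is tensoring with $U_A$) and hence exact, $C[A^{-1}] \simeq U_A \otimes C \simeq 0$.

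Next I would invoke the fiber sequence $V_A \otimes C \to C \to U_A \otimes C$ from the construction of $A^{-1}$-localization. Since $U_A \otimes C \simeq 0$, this forces $V_A \otimes C \xrightarrow{\simeq} C$, and by \Cref{VAacyclization} this is exactly the condition that $C \in \Ator$.

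Finally I would combine the two facts. On the one hand $C$ is $A^{-1}$-local, so by definition $\hom_{\mathcal{C}}(Y, C) \simeq 0$ for every $Y \in \Ator$. On the other hand $C$ itself lies in $\Ator$, so we may take $Y = C$ to obtain $\hom_{\mathcal{C}}(C, C) \simeq 0$. This forces $1_C \simeq 0$ and hence $C \simeq 0$, so $f$ is an equivalence. The only step requiring any thought is the identification of $C$ as $A$-torsion from the vanishing of $C[A^{-1}]$, which follows directly from the fiber sequence defining $V_A$ and $U_A$; all other steps are formal.
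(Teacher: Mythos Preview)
Your proof is correct. The approach, however, differs from the paper's. The paper considers the localizing subcategory $\mathcal{A}$ of all $Z \in \mathcal{C}$ for which $1_Z \otimes f$ is an equivalence, observes that $\mathcal{A}$ contains $A$ and $U_A$ by hypothesis, and then argues that $V_A \in \mathcal{A}$ (since $V_A$ lies in the localizing subcategory generated by objects of the form $A \otimes X$); the cofiber sequence $V_A \to \mathbf{1} \to U_A$ then forces $\mathbf{1} \in \mathcal{A}$. Your argument instead passes immediately to the cofiber $C$ of $f$, identifies $C$ as simultaneously $A^{-1}$-local and $A$-torsion, and concludes $C \simeq 0$ from $\hom_{\mathcal{C}}(C,C) \simeq 0$. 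Both arguments ultimately rest on the same cofiber sequence $V_A \to \mathbf{1} \to U_A$, but yours packages the conclusion more conceptually (an object that is both torsion and local vanishes), while the paper's localizing-subcategory argument is slightly more hands-on but avoids invoking \Cref{VAacyclization} explicitly.
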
 
\begin{proof} 
We prove the non-obvious implication. 
Let $f\colon X \to Y$ be a morphism such that $1_A \otimes f$ and $f[A^{-1}]$ are
equivalences. Consider the localizing subcategory $\mathcal{A}$ of all $Z \in \mathcal{C}$ such that $1_Z
\otimes f$ is an equivalence. By hypothesis, $\mathcal{A}$ contains $A$ and
$U_A$. To show that it contains $\mathbf{1}$ (which is what we want), it
suffices to show that $V_A \in \mathcal{A}$. 
But  $V_A$
belongs to the smallest localizing subcategory containing the $A \otimes X$ for $X$
dualizable. The hypotheses imply that $A \otimes X \in \mathcal{A}$ for any $X
\in \mathcal{C}$, so that $V_A \in \mathcal{A}$ as desired. 
\end{proof} 

We are now ready to set up the arithmetic square.
Compare \cite[Prop. 4.13]{DwG02}. 
\begin{cons}
For any $X \in \mathcal{C}$, we have a commutative square
\begin{equation} \label{fracsquare} \xymatrix{
X \ar[d] \ar[r] & L_A X \ar[d] \\
 X[A^{-1}] \ar[r] &  (L_A X)[A^{-1}].
}\end{equation}
We will call this the \emph{$A$-arithmetic fracture square} of $X$.
\end{cons}

\begin{prop}\label{prop:frac_square_cart}
The fracture square
\eqref{fracsquare}
is cartesian.
\end{prop}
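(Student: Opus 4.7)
The plan is to apply the detection criterion \Cref{checkequiv}: a map in $\mathcal{C}$ is an equivalence if and only if it becomes one after tensoring with $A$ and after $A^{-1}$-localization. Concretely, form the pullback
\[ P := L_A X \times_{(L_A X)[A^{-1}]} X[A^{-1}], \]
and note that the square produces a canonical comparison map $\phi\colon X \to P$. It then suffices to show that $A \otimes \phi$ and $\phi[A^{-1}]$ are equivalences.

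The key observation that makes this work is that both $A \otimes (-)$ and $(-)[A^{-1}]$ are exact functors on $\mathcal{C}$: tensoring with anything commutes with finite limits in a stable $\infty$-category, and $(-)[A^{-1}]$ is smashing (it is tensoring with $U_A$, in the notation of \Cref{U_iseq}). Hence both functors preserve the pullback defining $P$, and I can compute the two localized pullbacks directly.

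After tensoring with $A$, the objects $X[A^{-1}]$ and $(L_A X)[A^{-1}]$ are annihilated (any $A^{-1}$-local object $Y$ satisfies $A \otimes Y \simeq 0$). The pullback square therefore degenerates to $A \otimes P \simeq A \otimes L_A X$. Since $X \to L_A X$ is by definition an $A$-equivalence, we get $A \otimes \phi \colon A \otimes X \xrightarrow{\sim} A \otimes L_A X \simeq A \otimes P$. After $A^{-1}$-localization, both $X[A^{-1}]$ and $(L_A X)[A^{-1}]$ are already $A^{-1}$-local, so the vertical arrows of the localized square become identities; hence $P[A^{-1}] \simeq X[A^{-1}]$ and $\phi[A^{-1}]$ is the identity.

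I do not anticipate a serious obstacle: the argument is essentially formal, with the only subtlety being that $L_A$ itself need not preserve pullbacks, which is precisely why one detects the equivalence via the exact functors $A \otimes (-)$ and $(-)[A^{-1}]$ rather than by comparing the two squares directly.
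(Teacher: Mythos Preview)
Your proposal is correct and takes essentially the same approach as the paper: both arguments invoke \Cref{checkequiv} and verify that the square becomes cartesian after tensoring with $A$ (where the bottom row vanishes) and after $A^{-1}$-localization (where the vertical maps become identities). You have simply spelled out in more detail what the paper compresses into two sentences.
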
 
\begin{proof} 
 In fact, one checks that the square is cartesian after
tensoring with $A$ (which annihilates the domain and codomain of the
bottom horizontal arrow, both of which are $A^{-1}$-local), and one checks
that the square is cartesian after applying $A^{-1}$-localization. Thus, by \Cref{checkequiv} we find
that \eqref{fracsquare}
is cartesian.

\end{proof} 

In particular, any object $X \in \mathcal{C}$ can be recovered from the
$A$-localization $L_A X$, the $A^{-1}$-localization $X[A^{-1}]$, and the
morphism $X[A^{-1}] \to (L_A X)[A^{-1}]$. Our next goal is to promote this to
an equivalence of stable $\infty$-categories.

\begin{cons}
Let $\mathrm{FracSquare}_A$ be the stable $\infty$-category defined by 
the homotopy fiber product
$\mathrm{FracSquare}_A = \mathrm{Fun}( \Delta^1, \mathcal{C}[A^{-1}])
\times_{\mathcal{C}[A^{-1}]} \Acp$.
Here:
\begin{enumerate}
\item The functor $\mathrm{Fun}( \Delta^1, \mathcal{C}[A^{-1}])
\to \mathcal{C}[A^{-1}]$ is given by evaluation at the vertex $1$.
\item The functor $\Acp \to \mathcal{C}[A^{-1}]$ is given by applying
$A^{-1}$-localization.
\end{enumerate}
In other words, to give an object in 
$\mathrm{FracSquare}_A$ amounts to giving a map of $A^{-1}$-local objects $X_1
\to X_2$, an $A$-complete object $X_0$, and an equivalence $X_2 \simeq
X_0[A^{-1}]$.

We thus obtain  a functor $\mathcal{C} \to \mathrm{FracSquare}_A$ sending $X$
to the associated fracture square.
\end{cons}

\begin{thm} 
\label{Afracsquare}
The functor $\mathcal{C} \to \mathrm{FracSquare}_A$ that sends $X \in
\mathcal{C}$ to the associated arithmetic square is an equivalence of
$\infty$-categories.
\end{thm}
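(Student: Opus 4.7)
The plan is to construct an explicit inverse functor $G \colon \mathrm{FracSquare}_A \to \mathcal{C}$ and verify both round-trip compositions. Given a triple $(X_0, f \colon X_1 \to X_2, \alpha \colon X_2 \simeq X_0[A^{-1}])$, I would set
\[ G(X_0, f, \alpha) := X_0 \times_{X_2} X_1, \]
where the map $X_0 \to X_2$ is the composition $X_0 \to X_0[A^{-1}] \stackrel{\alpha}{\simeq} X_2$. This fiber product is naturally an object of $\mathcal{C}$, and the construction is functorial since the pullback defines an exact functor from the relevant $\infty$-category of diagrams.

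The round trip $G \circ F \simeq \mathrm{id}_{\mathcal{C}}$ is precisely the content of \Cref{prop:frac_square_cart}: the fracture square \eqref{fracsquare} is cartesian, so $X$ is recovered from its associated fracture datum.

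For the other composition $F \circ G \simeq \mathrm{id}$, fix a datum $(X_0, f, \alpha)$ and let $X := X_0 \times_{X_2} X_1$. I would check separately the two components of $F(X)$:
\begin{enumerate}
\item \emph{Identifying $X[A^{-1}]$ with $X_1$.} Since $A^{-1}$-localization is smashing (given by tensoring with $U_A$), it preserves finite limits. Thus
\[ X[A^{-1}] \simeq X_0[A^{-1}] \times_{X_2[A^{-1}]} X_1[A^{-1}]. \]
But $X_1$ and $X_2$ are already $A^{-1}$-local, and $\alpha$ identifies $X_0[A^{-1}]$ with $X_2$, so the right-hand side simplifies to $X_2 \times_{X_2} X_1 \simeq X_1$.
\item \emph{Identifying $L_A X$ with $X_0$.} I would analyze the projection $p\colon X \to X_0$. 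Its fiber coincides with the fiber of $f\colon X_1 \to X_2$, which is a map between $A^{-1}$-local objects; hence $\mathrm{fib}(p)$ is $A^{-1}$-local, so $A \otimes \mathrm{fib}(p) \simeq 0$. Thus $p$ becomes an equivalence after tensoring with $A$, and since $X_0 \in \Acp$ is already $A$-complete, $p$ exhibits $X_0$ as the $A$-completion of $X$.
\end{enumerate}
Finally, one checks that the comparison map $X[A^{-1}] \to (L_A X)[A^{-1}]$ produced by $F(X)$ coincides (under the identifications above and $\alpha$) with the original $f$; this is immediate from the construction of $X$ as the pullback.

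The main obstacle, though it is rather mild, is keeping track of the coherences: the functor $G$ must be defined at the level of $\infty$-categories rather than pointwise, and the identifications in $F \circ G \simeq \mathrm{id}$ must be promoted to a natural equivalence of functors. This is handled by observing that the pullback functor on $\mathrm{Fun}(\Delta^1 \times \Delta^1, \mathcal{C})$ and the completion/localization functors are all morphisms of presentable stable $\infty$-categories, so the equivalences constructed pointwise are automatically natural.
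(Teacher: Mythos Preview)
Your proposal is correct and computes exactly the same identifications as the paper, but the paper packages the argument differently via an adjunction rather than by exhibiting an explicit two-sided inverse. The paper first computes mapping spaces in $\mathrm{FracSquare}_A$ directly and observes that the pullback functor $G$ is \emph{right adjoint} to the fracture-square functor $F$; then \Cref{prop:frac_square_cart} gives $G\circ F\simeq\mathrm{id}$, so $F$ is fully faithful (a colocalization), and it remains only to check that $G$ is conservative. The conservativity check is precisely your items (1) and (2): one recovers $X_1$, $X_0$, and $X_2$ from the pullback by applying $(-)[A^{-1}]$, $L_A(-)$, and $\big(L_A(-)\big)[A^{-1}]$ respectively.

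What the paper's route buys is exactly the point you flag as the ``main obstacle'': once one knows $(F,G)$ is an adjunction, no further coherence bookkeeping is needed---the unit is already a natural transformation, and checking it is a pointwise equivalence suffices. Your route is equally valid, but to promote the identifications in $F\circ G\simeq\mathrm{id}$ to a natural equivalence of functors you must argue (as you do) that all the constructions involved are functors between presentable $\infty$-categories; the adjunction framework sidesteps this entirely.
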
 
\begin{proof} 
We first check full faithfulness.
For ease of notation, we will write $X_{tA} :=(L_A
X)[A^{-1}]$. 
Consider the triple $(X[A^{-1}] \to X_{tA}, L_A X, \mathrm{id}\colon (L_A X)[A^{-1}]\simeq X_{tA}) \in \mathrm{FracSquare}_A$
associated to $X \in \mathcal{C}$.

Fix a triple $(Y_1 \to Y_2, Y_0, \phi\colon Y_0[A^{-1}]\simeq Y_2 )$ in
$\mathrm{FracSquare}_A$ and an object $X \in
\mathcal{C}$.
Then the space of maps between the object of
$\mathrm{FracSquare}_A$ associated to $X$ and this triple is computed as 
the homotopy fiber product
\[ \left(  \hom_{\mathcal{C}}(X[A^{-1}], Y_1)
\times_{\hom_{\mathcal{C}}(X[A^{-1}], Y_2)} \hom_{\mathcal{C}}(X_{tA}, Y_2) \right)
\times_{\hom_{\mathcal{C}}(X_{tA}, Y_2 )}
\hom_{\mathcal{C}}(L_A X, Y_0).
\]
Using the identifications $\hom_{\mathcal{C}}(X[A^{-1}], Y_i) \simeq
\hom_{\mathcal{C}}(X, Y_i)$ for $i=1,2$ and $\hom_{\mathcal{C}}(L_A X, Y_0) \simeq
\hom_{\mathcal{C}}(X, Y_0)$, we identify this fiber product with 
\[ \hom_{\mathcal{C}}( X, Y_1) \times_{\hom_{\mathcal{C}}( X, Y_{2})}
\hom_{\mathcal{C}}(X, Y_0) \simeq \hom_{\mathcal{C}}(X, Y_1 \times_{Y_2} Y_0),  \]

It follows that the functor $\mathcal{C} \to \mathrm{FracSquare}_A$ sending $X
\in \mathcal{C}$ to the associated arithmetic square admits a right adjoint $G$
that sends a 
triple $(Y_1 \to Y_2, Y_0, Y_0[A^{-1}] \simeq Y_2)$ to the pullback $Y_1
\times_{Y_2} Y_0$. As the composition $\mathcal{C} \to  \mathrm{FracSquare}_A \to
\mathcal{C}$ is homotopic to the identity by \Cref{prop:frac_square_cart}, we find that 
the left adjoint $\mathcal{C} \to  \mathrm{FracSquare}_A$ is fully faithful. In
order to show that we have an equivalence of $\infty$-categories, it therefore
suffices to show that the right adjoint is conservative, since we have a
\emph{colocalization}. This is checked as
follows: 
given $(Y_1 \to Y_2, Y_0, Y_0[A^{-1}] \simeq Y_2)$, we find that 
\[ (Y_1 \times_{Y_2} Y_0)[A^{-1}] \simeq Y_1, \quad L_A (Y_1 \times_{Y_2}
Y_0) \simeq Y_0, \quad 
Y_2 \simeq \left( L_A (Y_1 \times_{Y_2}
Y_0)\right)[A^{-1}].
\]
In particular, $Y_0, Y_1, Y_2$ can be recovered from the 
pullback, which implies that $G$ is conservative as desired. 
\end{proof} 

\begin{remark} 
We have tacitly used the following two standard facts about $\infty$-categories. Given a
(homotopy)
fiber product $\mathcal{A} = \mathcal{A}_1 \times_{\mathcal{A}_3} \mathcal{A}_2$, then we can
compute mapping spaces in $\mathcal{A}$ as the homotopy 
fiber product of mapping spaces in $\mathcal{A}_1, \mathcal{A}_2,
\mathcal{A}_3$. Second, in $\fun( \Delta^1, \mathcal{B})$ for an
$\infty$-category $\mathcal{B}$, we can compute
maps between a pair of objects $(X_1 \to X_2), (Y_1 \to Y_2)$ via
the homotopy fiber product $\hom_{\mathcal{B}}(X_2, Y_2)
\times_{\hom_{\mathcal{B}}(X_1, Y_2)} \hom_{\mathcal{B}}(X_1, Y_1).$
\end{remark} 

\section{Nilpotence}

Let $\mathcal{C}$ be a symmetric monoidal, stable $\infty$-category 
whose tensor product functor is exact in each variable.  Let $A \in \mathcal{C}$ be an algebra object. 
In this subsection, we develop the theory of nilpotence: that is, the generalization to
our setting of those objects in $\md(\mathbb{Z})$ annihilated by a power of
the prime number $p$. For the moment, we do \emph{not} assume anything as
strong as \Cref{hypothesis}. 

Recall that: 

\begin{definition} 
\label{thickdef}
A full stable subcategory $\mathcal{C}' \subset \mathcal{C}$ is called
\emph{thick} if $\mathcal{C}'$ is also idempotent-complete, i.e., every
idempotent endomorphism induces a splitting. 
A subcategory $\mathcal{I} \subset \mathcal{C}$ is called a \emph{$\otimes$-ideal} if
whenever $X \in \mathcal{I}$ and $ Y \in \mathcal{C}$, we have $X \otimes Y \in
\mathcal{I}$. One then obtains the notion of a \emph{thick $\otimes$-ideal}, which
is a full subcategory
that is both a thick subcategory and a $\otimes$-ideal.
\end{definition}

We list some common sources of thick $\otimes$-ideals. 
\begin{example} 
If $Z \in \mathcal{C}$, the collection of $X \in \mathcal{C}$ such that $X
\otimes Z$ is contractible is a thick $\otimes$-ideal.
Of course, in this case the collection is actually a localizing
$\otimes$-ideal too.
\end{example} 

\begin{example} 
\label{smashnilpideal}
Let $f\colon B \to C$ be a morphism. Consider the collection of all $X \in
\mathcal{C}$ such that $1_X \otimes f^{\otimes n} \colon X \otimes B^{\otimes n} \to
X \otimes C^{\otimes n}$ is nullhomotopic for $n \gg 0$. This is a thick
$\otimes$-ideal (which is generally not a localizing subcategory).
\end{example}

We can now make the main definition of this section. 
\begin{definition} 
\label{descent2}
Let $\mathcal{C}$ be as above and let $A \in \mathrm{Alg}(\mathcal{C})$. 
\begin{enumerate}
\item 
We will say that an object of $\mathcal{C}$ is \emph{$A$-nilpotent}
(\cite[Def. 3.7]{Bou79}) if it
belongs to the thick $\otimes$-ideal generated by $A$ (i.e., the smallest thick
$\otimes$-ideal containing $A$). 
We will let $\mathrm{Nil}_A \subset \mathcal{C}$ be the full subcategory
spanned by the $A$-nilpotent objects. 
\item 
 We will say that  $A
$ is \emph{descendable} (see \cite{balmer} and
\cite[\S 3]{galois}) if the thick $\otimes$-ideal
generated by $A$ is all of $\mathcal{C}$. 
\end{enumerate}
\end{definition} 

\begin{example} 
$A$ is descendable if and only if the unit object $\mathbf{1}$ is
$A$-nilpotent.
\end{example} 

\begin{example} 
\label{modulenilpotent}
Let $M $ be an $A$-module in $\mathcal{C}$. Then $M$ is $A$-nilpotent.
In fact, $M$ is a retract (in $\mathcal{C}$) of $A \otimes M$. 
\end{example}

We now give an important characterization of $A$-nilpotence 
in terms of the Adams tower. 
\begin{prop} \label{properties:nilpotent}
The following are equivalent for an object $M \in \mathcal{C}$ and $A \in
\mathrm{Alg}(\mathcal{C})$: 
\begin{enumerate}
\item $M$ is  $A$-nilpotent.
\item For all $N \gg
0$, the maps $T_N(A, M) \to M$ in the Adams tower are nullhomotopic.
\item There exists a finite tower in $\mathcal{C}$
\[ T'_N \to \dots \to T'_2 \to T'_1 \to T'_0 \simeq M,  \]
with the properties that: 
\begin{itemize}
\item For each $i$, the cofiber of $T'_i \to T'_{i-1}$ admits the structure of an $A$-module
object in $\mathcal{C}$.
\item The composite map $T'_N \to T'_0$ is nullhomotopic (in $\mathcal{C}$).
\end{itemize}
\end{enumerate}
\end{prop}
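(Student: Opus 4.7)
The plan is to prove the cyclic chain $(2) \Rightarrow (3) \Rightarrow (1) \Rightarrow (2)$. The implication $(2) \Rightarrow (3)$ requires no new work: I would take $T'_i := T_i(A, M)$ so that \Cref{Adamstowprop}(1) supplies the required $A$-module structures on the successive cofibers, and the hypothesis is precisely the nullity of the composite $T_N(A, M) \to T_0(A, M) \simeq M$.

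For $(3) \Rightarrow (1)$, let $\mathrm{Nil}_A$ denote the thick $\otimes$-ideal generated by $A$. I would show by induction on $i$ that $\mathrm{cofib}(T'_i \to T'_0) \in \mathrm{Nil}_A$. The base case $\mathrm{cofib}(T'_1 \to T'_0) \simeq C_1$ is an $A$-module and hence $A$-nilpotent by \Cref{modulenilpotent}. The inductive step is an octahedral argument applied to the composable pair $T'_i \to T'_{i-1} \to T'_0$, producing a cofiber sequence
\[ \mathrm{cofib}(T'_{i-1} \to T'_0) \to \mathrm{cofib}(T'_i \to T'_0) \to C_i, \]
where $C_i$ is an $A$-module. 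Setting $i = N$ and using that $T'_N \to T'_0 \simeq M$ is null, one sees $M$ is a retract of $\mathrm{cofib}(T'_N \to T'_0) \simeq M \oplus \Sigma T'_N$, placing $M$ in $\mathrm{Nil}_A$.

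The main content is $(1) \Rightarrow (2)$. The idea is to apply \Cref{smashnilpideal} to the canonical map $f\colon I \to \mathbf{1}$, where $I = \mathrm{fib}(\mathbf{1} \to A)$: this yields a thick $\otimes$-ideal $\mathcal{I} \subset \mathcal{C}$ consisting of those $X$ for which $f^{\otimes n} \otimes 1_X \colon I^{\otimes n} \otimes X \to X$ is null for $n \gg 0$. Since the Adams tower has already been identified with $T_n(A, M) \simeq I^{\otimes n} \otimes M$ with transition map induced by $f$, condition (2) is equivalent to $M \in \mathcal{I}$. It therefore suffices to verify $A \in \mathcal{I}$, since then $\mathrm{Nil}_A \subseteq \mathcal{I}$. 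For this, tensor the defining cofiber sequence $I \to \mathbf{1} \to A$ with $A$: the resulting map $A \to A \otimes A$ admits the multiplication $A \otimes A \to A$ as a retraction, so the fiber map $f \otimes 1_A\colon I \otimes A \to A$ is nullhomotopic. Hence $A \in \mathcal{I}$ with $n = 1$, and every $M \in \mathrm{Nil}_A$ satisfies (2).

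The main obstacle I anticipate is organizational rather than analytical: making precise the identification of the Adams tower transition maps with the iterated tensor powers of $f$, so that the single splitting of $A \to A \otimes A$ leverages through the machinery of thick $\otimes$-ideals to produce nullity at arbitrarily high levels of the Adams tower for every $A$-nilpotent object.
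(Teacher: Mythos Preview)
Your proposal is correct and follows essentially the same route as the paper: the same cyclic chain of implications, the same use of the Adams tower for $(2)\Rightarrow(3)$, the same octahedral induction for $(3)\Rightarrow(1)$, and the same thick $\otimes$-ideal argument for $(1)\Rightarrow(2)$ reducing to the nullity of $I\otimes A\to A$ via the splitting of $A\to A\otimes A$. Your invocation of \Cref{smashnilpideal} is a mild streamlining over the paper, which instead re-verifies in place that the class of $M$ satisfying (2) is a thick $\otimes$-ideal.
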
 
If we write $I = \mathrm{fib}(\mathbf{1} \to A)$ as before, then the conditions state
that $M$ is $A$-nilpotent if and only if for some $N$, the map $M \otimes
I^{\otimes N} \to M$ is nullhomotopic. 
Note that fits into \Cref{smashnilpideal}. 
\begin{proof} 
We will prove the implications cyclically. 

$(1) \implies (2)$. Suppose $M$ is $A$-nilpotent;
then we want to show that the maps $T_N(A, M) \to M$ are nullhomotopic for $N
\gg 0$. Observe that the collection of $M \in \mathcal{C}$ for which this
satisfied is a $\otimes$-ideal, since we have natural isomorphisms $T_i(A, M )
\simeq T_i(A, \mathbf{1}) \otimes M$. It is easy to see that the collection of
such $M$ is in addition thick, since the passage from $M$ to its Adams tower
commutes with cofiber sequences and retracts. 
Therefore, the collection of $M$
with the desired property is a thick $\otimes$-ideal. To show that it contains
every $A$-nilpotent object, it suffices to show that it contains $A$ itself. 
In other words, we need to show that the $A$-based Adams tower has this
property for $A$. But in this case, the map $T_1(A, A) \to T_0(A, A)$ is
already nullhomotopic as it is the fiber of the map $A \to A \otimes A$
(which has a section). This completes the proof
that $(1) \implies (2)$.

$(2) \implies (3)$. We can take the Adams tower, in view of \Cref{Adamstowprop}
and the hypotheses.

$(3) \implies (1)$.
In this case, the cofiber of each map $T'_i \to T'_{i-1}$ is $A$-nilpotent since it
admits the structure of an $A$-module (\Cref{modulenilpotent}).
Using the octahedral axiom and induction, it follows that 
the cofiber of $T'_N \to T'_0$ is $A$-nilpotent, since the class of
$A$-nilpotent objects is closed under cofiber sequences. 
Since this map is nullhomotopic, it follows that $M \simeq T'_0$ is a retract of the
cofiber of $T'_N \to T'_0$ and is thus $A$-nilpotent itself. 
\end{proof}

The above result enables one to quantify the notion of $A$-nilpotence.
\begin{definition} 
Suppose $M$ is $A$-nilpotent. We will write $\exp_A(M)$ for the smallest
integer $N \geq 0$
such that $T_N(A, M) = I^{\otimes N} \otimes M \to M$ is nullhomotopic and
call it the \emph{$A$-exponent of $M$}. \end{definition}

Using the axioms and \Cref{adamscobar}, one sees easily the following result (whose proof we leave to
the reader). 
\begin{prop}\label{prop:basiconexp} 
\begin{enumerate}
\item If $M$ is $A$-nilpotent and $M'$ is a retract of $M$, then $\exp_A(M') \leq \exp_A(M)$. 
\item If $M' \to M \to M''$ is a cofiber sequence of $A$-nilpotent objects,
then $\exp_A(M) \leq \exp_A(M') + \exp_A(M'')$. 
\item The exponent $\exp_A(M)$ is the smallest choice of $N$ that 
one can take in the second (or third) condition of \Cref{properties:nilpotent}.
\item The exponent $\exp_A(M)$ is the smallest $N \geq 0$ such that the map  $M
\to \mathrm{Tot}_{N-1}(M \otimes \cb(A))$ admits a retraction.
\end{enumerate}
\end{prop}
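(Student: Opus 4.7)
The plan is to treat each of the four assertions in turn, in every case reducing the statement to a manipulation of the natural map $I^{\otimes N}\otimes M\to M$, where $I=\mathrm{fib}(\mathbf{1}\to A)$, by way of the identification $T_N(A,M)\simeq I^{\otimes N}\otimes M$.

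For (1), if $M'\to M\to M'$ exhibits $M'$ as a retract of $M$, then tensoring with $I^{\otimes N}$ gives a compatible retraction $I^{\otimes N}\otimes M'\to I^{\otimes N}\otimes M\to I^{\otimes N}\otimes M'$. A nullhomotopy of $I^{\otimes N}\otimes M\to M$ therefore restricts to a nullhomotopy of $I^{\otimes N}\otimes M'\to M'$, so $\exp_A(M')\leq \exp_A(M)$. For (2), write $m=\exp_A(M')$ and $n=\exp_A(M'')$. The composite $I^{\otimes n}\otimes M\to M\to M''$ agrees with $I^{\otimes n}\otimes M\to I^{\otimes n}\otimes M''\to M''$, which is nullhomotopic by choice of $n$, so the natural map $I^{\otimes n}\otimes M\to M$ factors (not uniquely) as $I^{\otimes n}\otimes M\xrightarrow{\gamma} M'\to M$. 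Tensoring this factorization with $I^{\otimes m}$ and precomposing with the nullhomotopic map $I^{\otimes m}\otimes M'\to M'$ on the outer side shows that $I^{\otimes(m+n)}\otimes M\to M$ factors through a null map, yielding $\exp_A(M)\leq m+n$.

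For (3), the implication $\exp_A(M)\leq N$ whenever condition (2) of \Cref{properties:nilpotent} holds with that $N$ is immediate from the definition, and the Adams tower itself shows that some tower as in condition (3) of length $\exp_A(M)$ exists. The substantive content is the reverse: given $T'_N\to\cdots\to T'_0\simeq M$ with each cofiber $C_i$ an $A$-module and the composite $T'_N\to T'_0$ nullhomotopic, we must produce a nullhomotopy of $I^{\otimes N}\otimes M\to M$. I would build by induction on $k$ a lift $\alpha_k\colon I^{\otimes k}\otimes M\to T'_k$ of the natural map $I^{\otimes k}\otimes M\to M$ along $T'_k\to T'_0=M$. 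The base case $k=0$ is trivial, and the inductive step uses that the map $I\otimes T'_k\to T'_k\to C_k$ is nullhomotopic (since $I\otimes C_k\to C_k$ is null, as $C_k$ is an $A$-module by \Cref{adamstowernullmodule}), so $I\otimes T'_k\to T'_k$ factors through $T'_{k+1}$; tensoring $\alpha_k$ with $I$ and composing gives $\alpha_{k+1}$. At $k=N$, composing $\alpha_N$ with the null map $T'_N\to M$ gives the required nullhomotopy, so $\exp_A(M)\leq N$. The main obstacle is bookkeeping that the lift $\alpha_k$ actually sits over the canonical map $f^{\otimes k}\otimes\mathrm{id}_M$ at each stage, which comes down to naturality of $f\otimes(-)$.

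For (4), combine \Cref{adamscobar} with the exactness of $M\otimes(-)$ to identify $\mathrm{Tot}_{N-1}(M\otimes\cb(A))\simeq \mathrm{cofib}(I^{\otimes N}\otimes M\to M)$ naturally, so that the coaugmentation $M\to\mathrm{Tot}_{N-1}(M\otimes\cb(A))$ becomes the canonical map $g\colon M\to \mathrm{cofib}(I^{\otimes N}\otimes M\to M)$. In any stable $\infty$-category, a cofiber sequence $X\xrightarrow{f} Y\xrightarrow{g} Z$ has the property that $g$ admits a retraction $r$ if and only if $f$ is nullhomotopic: given $r$, the identity $f=r\circ g\circ f=r\circ 0=0$ forces $f\simeq 0$, and conversely a null $f$ gives a splitting $Z\simeq Y\oplus \Sigma X$ with $g$ the canonical inclusion. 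Applied here, this identifies the smallest $N$ for which $g$ admits a retraction with the smallest $N$ for which $I^{\otimes N}\otimes M\to M$ is nullhomotopic, namely $\exp_A(M)$.
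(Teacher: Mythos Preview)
The paper explicitly leaves this proof to the reader, so there is no argument to compare against; your proof correctly supplies the omitted details. All four parts are sound: the retract and cofiber-sequence arguments in (1) and (2) are the expected ones, the inductive lifting in (3) is the right idea (modulo a harmless off-by-one in the indexing of $C_k$, which should be the cofiber of $T'_{k+1}\to T'_k$ rather than of $T'_k\to T'_{k-1}$), and the use of \Cref{adamscobar} together with the splitting criterion for cofiber sequences in (4) is exactly what is needed.
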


\begin{remark} 
\label{expisolderidea}
The quantification of nilpotence in this way is a special case of older ideas. 
For example, it can be obtained as a special case of the
discussion in \cite[Sec. 2]{ABIM}. 
In fact, we see that if $\mathcal{A}$ is the full subcategory of $\mathcal{C}$
consisting of objects of the form $ A \otimes X, X \in \mathcal{C}$, then 
$\exp_A(M)$ is precisely the \emph{$\mathcal{A}$-level} \cite[Def. 2.3]{ABIM}
of $M$.  

The idea also appears (earlier) in \cite{Chr98} as follows. 
When $A$ is dualizable, one has a \emph{stable projective class} (cf.
\cite[Sec. 2.3]{Chr98}) given by the pair $(\mathcal{A}, \mathcal{I})$ where
$\mathcal{A}$ is in the previous paragraph and $\mathcal{I}$ consists of all maps $X \to Y$ such that $X \otimes A \to Y
\otimes A $ is null. To see this, we observe that for any $X \in
\mathcal{A}$, 
we have that $\mathbb{D} A \otimes X \in \mathcal{A}$ and 
the map $\mathbb{D}A \otimes X
\to X$ splits after tensoring with $A$, as one sees by dualizing the fact that $1 \to
A$ splits after tensoring with $A$.   
Using duality, one sees also that for any $X \in \mathcal{A}$ and $f\colon Y \to Z$
in $\mathcal{I}$, the map $\pi_* \hom_{\mathcal{A}}(X, Y) \to \pi_*
\hom_{\mathcal{A}}(X, Z)$ is zero. From this, it is easy to verify that one
has a projective class, via \cite[Lem. 3.2]{Chr98}. 
The collection of objects of $A$-nilpotence at most $n$ is precisely
$\mathcal{A}_n$ in the sense of \cite[Sec. 3.2]{Chr98}.  
\end{remark} 

\begin{example} 
Consider the usual test example of $\mathcal{C} = \md(\mathbb{Z})$ and $A =
\mathbb{Z}/p$. In this case, an object $X$ is $A$-nilpotent if and only if
multiplication by $p^n$ annihilates it for some $n$: that is, if 
$p^n\colon X \to X$ is nullhomotopic for some $n$. This follows from
\Cref{properties:nilpotent} in view of the explicit description of the Adams
tower in this case (\Cref{adamstowerZ}). 
Moreover, one sees that $\exp_A(M)$ is the smallest $n$ such that $p^n$
annihilates $M$.  
Note that in this case, the Adams spectral sequence is precisely the Bockstein
spectral sequence. 
\end{example}

The theory of {exponents} leads to an exhaustive 
\emph{filtration} of the $\infty$-category $\mathrm{Nil}_A \subset \mathcal{C}$
of $A$-nilpotent objects, 
\[ \mathrm{Nil}_A^{(0)} \subset  \mathrm{Nil}_A^{(1)} \subset
\mathrm{Nil}_A^{(2)} \subset \dots  \subset \bigcup_i
\mathrm{Nil}_A^{(i)} =\mathrm{Nil}_A, \]
where an object $X \in \mathrm{Nil}_A$ belongs to $\mathrm{Nil}_A^{(i)}$
if and only if $\exp_A(X) \leq i$. 
It is easy to see that $\mathrm{Nil}_A^{(i)}\subset \mathcal{C}$ is a stable full subcategory which is closed under retracts, 
arbitrary direct sums (although $\mathrm{Nil}_A$ is only closed under
finite direct sums), and tensoring with arbitrary elements of $\mathcal{C}$. For example, $\mathrm{Nil}_A^{(0)} = \left\{0\right\}$ and
$\mathrm{Nil}_A^{(1)}$ consists of the retracts of $A$-modules. 
Moreover, the cofiber of a map from an object in $\mathrm{Nil}_A^{(i)}$ to an
object in $\mathrm{Nil}_A^{(j)}$ belongs to $\mathrm{Nil}_A^{(i+ j)}$.

\begin{corollary} 
\label{thicksubcatmodulesisnilpotent}
Given $\mathcal{C}$ and $A$ as above, the collection of $A$-nilpotent objects
is the thick subcategory generated by those objects in $\mathcal{C}$ which
admit the structure of an $A$-module.
\end{corollary}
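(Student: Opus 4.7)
The plan is to prove the two inclusions separately, using \Cref{properties:nilpotent} as the key input.

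First I would show the easy containment: any $A$-module $M$ is $A$-nilpotent by \Cref{modulenilpotent}, so the thick subcategory generated by $A$-modules sits inside $\mathrm{Nil}_A$.

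For the reverse inclusion, let $\mathcal{T} \subset \mathcal{C}$ denote the thick subcategory generated by objects admitting an $A$-module structure. Given an $A$-nilpotent $M$, I would invoke condition (3) of \Cref{properties:nilpotent} to produce a finite tower
\[ T'_N \to T'_{N-1} \to \cdots \to T'_1 \to T'_0 \simeq M \]
in which every successive cofiber $\mathrm{cofib}(T'_i \to T'_{i-1})$ carries an $A$-module structure, and such that the composite $T'_N \to T'_0$ is nullhomotopic. A straightforward induction on $i$, using that $\mathcal{T}$ is closed under cofiber sequences, shows that the cofiber $C$ of $T'_N \to T'_0$ lies in $\mathcal{T}$: indeed, climbing the tower one step at a time, each partial cofiber is obtained by a cofiber sequence whose outer terms already lie in $\mathcal{T}$. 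Because $T'_N \to T'_0 \simeq M$ is null, $M$ is a retract of $C$, and thick subcategories are closed under retracts, so $M \in \mathcal{T}$.

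No serious obstacle is expected here; the content of the corollary is entirely packaged into \Cref{properties:nilpotent}. The only minor point to check is that the inductive construction of $C$ really lands in $\mathcal{T}$, but this follows immediately from the two-out-of-three property for cofiber sequences in a thick subcategory. Observe also that since $A \otimes X$ is an $A$-module for every $X \in \mathcal{C}$, the thick subcategory $\mathcal{T}$ is automatically a $\otimes$-ideal, consistently with the equality $\mathcal{T} = \mathrm{Nil}_A$.
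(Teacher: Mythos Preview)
Your proof is correct and is precisely the argument the paper has in mind: the paper's proof is the single sentence ``This follows from the third property in \Cref{properties:nilpotent},'' and your writeup simply unpacks that reference (indeed, your inductive argument with the tower is the same as the $(3)\Rightarrow(1)$ step in the proof of \Cref{properties:nilpotent}, applied with $\mathcal{T}$ in place of $\mathrm{Nil}_A$).
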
 
\begin{proof} 
This follows from the third property in \Cref{properties:nilpotent}. 
\end{proof} 

\begin{corollary} 
\label{Anilplaxmonoidal}
Let $(\mathcal{C}, \otimes, \mathbf{1}) $ and $(\mathcal{D}, \otimes,
\mathbf{1})$ be symmetric monoidal stable $\infty$-categories where the
tensor structure is compatible with colimits. Let $F\colon \mathcal{C} \to
\mathcal{D}$ be a lax symmetric monoidal, exact functor. 
Let $A \in \mathrm{Alg}(\mathcal{C})$ and let $M \in \mathcal{C}$. Then if $M$
is $A$-nilpotent, $F(M)$ is $F(A)$-nilpotent, and in fact, one has
$\exp_{F(A)}(F(M))\leq\exp_A(M)$.
\end{corollary}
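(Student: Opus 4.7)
The plan is to use characterization (3) of $A$-nilpotence from \Cref{properties:nilpotent}, since it transfers cleanly under any exact lax symmetric monoidal functor. Set $N := \exp_A(M)$. By \Cref{prop:basiconexp}(3) we may pick a tower
\[ T'_N \to T'_{N-1} \to \dots \to T'_1 \to T'_0 \simeq M \]
in $\mathcal{C}$ whose successive cofibers $C_i := \mathrm{cofib}(T'_i \to T'_{i-1})$ admit $A$-module structures and whose total composite $T'_N \to T'_0$ is nullhomotopic.

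Now apply $F$ to this tower. Since $F$ is exact, we get a tower
\[ F(T'_N) \to F(T'_{N-1}) \to \dots \to F(T'_0) \simeq F(M) \]
in $\mathcal{D}$ whose successive cofibers are $F(C_i)$. The total composite $F(T'_N) \to F(T'_0)$ is nullhomotopic because $F$ carries null maps to null maps. It remains to promote each $F(C_i)$ to an $F(A)$-module.

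This is where the lax symmetric monoidal structure is used: for an algebra object $A \in \mathrm{Alg}(\mathcal{C})$, the object $F(A) \in \mathcal{D}$ inherits an algebra structure via the lax structure maps $F(A) \otimes F(A) \to F(A \otimes A) \to F(A)$ and $\mathbf{1}_{\mathcal{D}} \to F(\mathbf{1}_{\mathcal{C}}) \to F(A)$. Similarly, if $C$ is an $A$-module in $\mathcal{C}$, then $F(C)$ becomes an $F(A)$-module via the composite
\[ F(A) \otimes F(C) \to F(A \otimes C) \to F(C), \]
with associativity and unit axioms following formally from the lax monoidal coherences. Hence each $F(C_i)$ is an $F(A)$-module. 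Applying the third characterization in \Cref{properties:nilpotent} to this length-$N$ tower in $\mathcal{D}$ shows that $F(M)$ is $F(A)$-nilpotent, and applying \Cref{prop:basiconexp}(3) in $\mathcal{D}$ yields
\[ \exp_{F(A)}(F(M)) \leq N = \exp_A(M). \]

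There is no real obstacle here; the only mild point of care is that the lax symmetric monoidal structure on $F$ is genuinely needed (not just an oplax or unstructured functor), specifically to transport the algebra and module structures along $F$. Everything else is an immediate consequence of exactness and the existing characterizations.
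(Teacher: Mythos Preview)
Your proof is correct and takes essentially the same approach as the paper: both argue that characterization (3) of \Cref{properties:nilpotent} is preserved by an exact lax symmetric monoidal functor, and then invoke \Cref{prop:basiconexp}(3) to get the exponent bound. You simply spell out in more detail why the lax monoidal structure transports algebra and module structures, which the paper leaves implicit.
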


\begin{proof} 
The first assertion follows from the third condition of \Cref{properties:nilpotent}, since
that condition is preserved under any exact lax symmetric monoidal functor. The second assertion
follows using \Cref{prop:basiconexp}.3.
\end{proof}

We now prove another characterization of $A$-nilpotence. In classical terms,
this characterization states the $A$-Adams filtration on
$\hom_{\mathcal{C}}(\cdot, M)$
should have a uniform bound (namely, the $A$-exponent of $M$). 
\begin{prop} 
\label{compositenilpotent}
The following are equivalent for $M \in \mathcal{C}$: 
\begin{enumerate}
\item $M$ is $A$-nilpotent.  
\item There exists an integer $N$ such that given any sequence 
in $\mathcal{C}$
\[ M_N \stackrel{\phi_N}{\to} M_{N-1} \stackrel{\phi_{N-1}}{\to} \dots
\stackrel{\phi_2}{\to} M_1  \stackrel{\phi_1}{\to} M,  \]
such that each $\phi_i$ becomes nullhomotopic after tensoring with $A$, the
composition $\phi_1 \circ \dots \circ \phi_N\colon M_N \to M$ is nullhomotopic.
\end{enumerate}
If $M$ is $A$-nilpotent, the smallest $N$ satisfying condition 2.~is $\exp_A(M)$.
\end{prop}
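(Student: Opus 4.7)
The plan is to leverage the fact, essentially built into the definition of the Adams tower, that a map $\phi\colon X \to Y$ becomes nullhomotopic after tensoring with $A$ if and only if it factors through the natural map $I \otimes Y \to Y$, where $I = \mathrm{fib}(\mathbf{1} \to A)$. Indeed, the composite $X \to Y \to A \otimes Y$ is null precisely when $\phi$ lifts along the fiber sequence $I \otimes Y \to Y \to A \otimes Y$.

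For the implication $(1) \Rightarrow (2)$, suppose $\exp_A(M) \leq N$, so that the Adams tower map $I^{\otimes N} \otimes M \to M$ is nullhomotopic (\Cref{properties:nilpotent}). Given a composable sequence
\[ M_N \stackrel{\phi_N}{\to} M_{N-1} \to \dots \to M_1 \stackrel{\phi_1}{\to} M \]
with each $\phi_i$ null after tensoring with $A$, I would inductively build lifts
\[ \widetilde{\phi_1 \circ \dots \circ \phi_k}\colon M_k \to I^{\otimes k} \otimes M \]
of the partial composites. For $k=1$, the above observation produces a lift $\widetilde{\phi_1}\colon M_1 \to I \otimes M$. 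For the inductive step, once the lift through $I^{\otimes k} \otimes M$ is built, composing with $\phi_{k+1}$ gives a map $M_{k+1} \to I^{\otimes k} \otimes M$ which, since $\phi_{k+1}$ is null after $\otimes A$, is itself null after $\otimes A$ (tensoring with $I^{\otimes k}$ commutes with tensoring with $A$), hence lifts to $I^{\otimes (k+1)} \otimes M$. After $N$ steps the total composite factors through $I^{\otimes N} \otimes M \to M$, which is null. This also shows the composite is null whenever the number of factors is at least $\exp_A(M)$.

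For $(2) \Rightarrow (1)$, I would apply the hypothesized condition to the Adams tower itself:
\[ I^{\otimes N} \otimes M \to I^{\otimes (N-1)} \otimes M \to \dots \to I \otimes M \to M. \]
By the second part of \Cref{Adamstowprop}, each map in this sequence becomes nullhomotopic after tensoring with $A$. The hypothesis then forces the total composite $T_N(A, M) \to M$ to be nullhomotopic, which is precisely condition (2) of \Cref{properties:nilpotent}, so $M$ is $A$-nilpotent with $\exp_A(M) \leq N$. The two directions together identify the smallest valid $N$ with $\exp_A(M)$.

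The only genuine content is the iterative factorization in the first direction; the main subtlety to be careful about is ensuring the lifts can be chosen compatibly, which is immediate once one notes that the property ``null after $\otimes A$'' is stable under post-composition with any map and hence under post-composition with the lift built so far.
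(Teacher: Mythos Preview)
Your proof is correct and follows essentially the same approach as the paper's: both directions use the Adams tower, with $(2)\Rightarrow(1)$ applying the hypothesis to the tower itself, and $(1)\Rightarrow(2)$ iteratively lifting the partial composites $\phi_1\circ\dots\circ\phi_k$ through $T_k(A,M)=I^{\otimes k}\otimes M$ via the fiber sequence $I\otimes Y\to Y\to A\otimes Y$. Your explicit isolation of the lifting principle and the observation that ``null after $\otimes A$'' is preserved under post-composition makes the inductive step slightly cleaner than the paper's more ad hoc phrasing, but the content is the same.
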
 
\begin{proof} 
$(2) \implies (1)$. Take the Adams tower of $M$, $\left\{T_i(A, M)\right\}$.
Each successive map in this tower becomes null after tensoring with $A$. Therefore,
there is a composition $T_N(A, M) \to M$ for $N \gg 0$ which is nullhomotopic,
which by \Cref{properties:nilpotent} implies that $M$ is $A$-nilpotent. 

$(1) \implies (2)$. Consider the map $M_1 \stackrel{\phi_1}{\to} M \to A
\otimes M$. It fits into a commutative square
\[ \xymatrix{
M_1 \ar[d] \ar[r]^{\phi_1} & M \ar[d] \\
A \otimes M_1  \ar[r]^{1_A \otimes \phi_1} &  A \otimes M.
}\]
By assumption, the bottom horizontal map is nullhomotopic. It follows that the
composition $M_1 \to A \otimes M$ is nullhomotopic, and in particular we have a
commutative diagram
\[ \xymatrix{
 & T_1(A, M) \ar[d]  \\
 M_1 \ar[r]^{\phi_1}  \ar[ru]^{\overline{\phi_1}} &  M,
}\]
i.e., $\phi_1$ lifts to $T_1(A, M)$. 

Now, it follows that $\phi_1 \circ \phi_2\colon M_2 \to M$ lifts to $T_1(A, M)$ as
well, via $\overline{\phi_1} \circ \phi_2$. The map $\overline{\phi_1} \circ
\phi_2$ becomes nullhomotopic after tensoring with $A$
since $\phi_2$ does. Therefore, $\phi_1 \circ \phi_2$ lifts to $T_2(A, M)$.
Similarly, $\phi_1 \circ \dots \circ \phi_k$ lifts to $T_k(A, M)$. If $k \gg
0$, the map $T_k(A, M) \to M$ is nullhomotopic, so $\phi_1 \circ \dots \circ
\phi_k$ must be nullhomotopic. 

We leave the identification of the smallest such choice of $N$ and $\exp_A(M)$
to the reader.
\end{proof}

\begin{cor} 
\label{nil:closedundercotensor}
Suppose $\mathcal{C}$ is a closed symmetric monoidal $\infty$-category.
	Suppose $M \in \mathcal{C}$ is $A$-nilpotent and $X \in \mathcal{C}$ is arbitrary. Then
	the internal mapping object $\underline{\mathrm{Hom}}(X, M)$ is $A$-nilpotent. That
	is, $\mathrm{Nil}_A$ is closed under cotensors. 
\end{cor}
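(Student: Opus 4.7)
My plan is to use the Adams-tower characterization of nilpotence from \Cref{properties:nilpotent}, combined with the adjunction $(- \otimes X) \dashv \underline{\mathrm{Hom}}(X, -)$. The key observation is that the Adams tower of any object is obtained by tensoring the Adams tower of the unit with that object, so the Adams tower of $\underline{\mathrm{Hom}}(X, M)$ has $n$-th term $I^{\otimes n} \otimes \underline{\mathrm{Hom}}(X, M)$, where $I = \mathrm{fib}(\mathbf{1} \to A)$. So it suffices to prove that if $I^{\otimes N} \otimes M \to M$ is nullhomotopic, then so is $I^{\otimes N} \otimes \underline{\mathrm{Hom}}(X, M) \to \underline{\mathrm{Hom}}(X, M)$.

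The main step is an adjunction computation. By the definition of internal hom, the map $I^{\otimes N} \otimes \underline{\mathrm{Hom}}(X, M) \to \underline{\mathrm{Hom}}(X, M)$ induced by $I^{\otimes N} \to \mathbf{1}$ is adjoint to the composite
\[
I^{\otimes N} \otimes \underline{\mathrm{Hom}}(X, M) \otimes X \xrightarrow{\ \mathrm{id} \otimes \mathrm{ev}\ } I^{\otimes N} \otimes M \longrightarrow M,
\]
where $\mathrm{ev}$ is the evaluation (counit) map and the second map is induced by $I^{\otimes N} \to \mathbf{1}$. By hypothesis on $M$ and \Cref{properties:nilpotent}, the second map is nullhomotopic for $N = \exp_A(M)$, so the whole composite is nullhomotopic, and its adjoint vanishes. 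Applying \Cref{properties:nilpotent} in the other direction, $\underline{\mathrm{Hom}}(X, M)$ is $A$-nilpotent, and in fact $\exp_A(\underline{\mathrm{Hom}}(X, M)) \leq \exp_A(M)$.

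I do not anticipate a serious obstacle here: the result is essentially formal once one observes that the Adams tower construction $M \mapsto T_n(A, M) = I^{\otimes n} \otimes M$ factors through tensoring with the fixed object $I^{\otimes n}$ and therefore interacts well with any functor admitting a tensor left adjoint. The only mild subtlety is that one uses the existence of the internal hom (a closed symmetric monoidal structure), which is hypothesized, so that the adjunction identification above is available.
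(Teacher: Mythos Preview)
Your proof is correct and complete; the adjunction identification you use is exactly right, and it immediately yields the exponent bound $\exp_A(\underline{\mathrm{Hom}}(X,M)) \leq \exp_A(M)$.

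However, your route differs from the paper's. The paper does not unwind the adjunction directly but instead invokes \Cref{thicksubcatmodulesisnilpotent}: since the $A$-nilpotent objects form the thick subcategory generated by $A$-modules, it suffices to treat the case where $M$ is an $A$-module, and then $\underline{\mathrm{Hom}}(X,M)$ inherits an $A$-module structure (via the closed monoidal structure), hence is $A$-nilpotent. The exponent bound is then read off from the third characterization in \Cref{properties:nilpotent} by applying the exact functor $\underline{\mathrm{Hom}}(X,-)$ to a length-$N$ tower for $M$ whose successive cofibers are $A$-modules. Your argument is a direct computation with the Adams tower of the unit and the tensor--hom adjunction; the paper's is a structural reduction to the module case. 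Both are short, and both yield the same exponent inequality; the paper's version has the mild advantage of making transparent \emph{why} the cotensor is nilpotent (it is built from $A$-modules), while yours is self-contained and does not require the intermediate \Cref{thicksubcatmodulesisnilpotent}.
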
 
\begin{proof} 
By \Cref{thicksubcatmodulesisnilpotent}, it suffices to consider the case where $M$ is an $A$-module. But in this case, the internal  mapping object
$\underline{\mathrm{Hom}}(X, M)$ also inherits the structure of an $A$-module
and is therefore $A$-nilpotent. (In fact, this argument shows that $\exp_A
\underline{\mathrm{Hom}}(X, M) \leq \exp_A(M)$; when $X=M$ this implies that the $A$-exponent of $M$ is equal to the $A$-exponent of its endomorphism algebra.)
\end{proof} 

Next, we show that whether or not an $A$-nilpotent object belongs to a thick
$\otimes$-ideal can be tested after tensoring with $A$: a sort of descent property.

\begin{prop} 
\label{tensorideal:desc}
Let $M \in \mathcal{C}$ be $A$-nilpotent and let $\mathcal{I} \subset
\mathcal{C}$ be any thick $\otimes$-ideal. Then we have $A \otimes M \in \mathcal{I}$
if and only if $M \in \mathcal{I}$.
\end{prop}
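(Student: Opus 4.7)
The forward direction is immediate: if $M \in \mathcal{I}$, then $A \otimes M \in \mathcal{I}$ by definition of a $\otimes$-ideal. For the reverse direction, suppose $A \otimes M \in \mathcal{I}$, and the plan is to exploit the Adams tower characterization of $A$-nilpotence (\Cref{properties:nilpotent}).

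Since $M$ is $A$-nilpotent, there exists $N$ such that the composite
\[ T_N(A, M) = I^{\otimes N} \otimes M \to T_0(A, M) = M \]
is nullhomotopic, where $I = \mathrm{fib}(\mathbf{1} \to A)$. Let $C$ denote the cofiber of this composite map. Because the map is nullhomotopic, we have a splitting $C \simeq M \oplus \Sigma T_N(A, M)$, so $M$ is a retract of $C$. Since $\mathcal{I}$ is thick, it suffices to show that $C \in \mathcal{I}$.

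To see this, recall (from the discussion after \Cref{adamstowerdef}) that for each $i$, the cofiber of $T_i(A, M) \to T_{i-1}(A, M)$ is $A \otimes I^{\otimes(i-1)} \otimes M$. Each of these cofibers lies in $\mathcal{I}$, since $A \otimes M \in \mathcal{I}$ and $\mathcal{I}$ is a $\otimes$-ideal (so tensoring with $I^{\otimes(i-1)}$ stays in $\mathcal{I}$). Now iterated application of the octahedral axiom expresses $C$ as a finite iterated extension of the objects $A \otimes I^{\otimes(i-1)} \otimes M$ for $1 \le i \le N$; more precisely, by downward induction on $i$ the cofiber of $T_i(A,M) \to M$ lies in $\mathcal{I}$, starting from the case $i = 1$ (where the cofiber is literally $A \otimes M$) and using at each step the cofiber sequence
\[ T_i(A,M) \to T_{i-1}(A, M) \to A \otimes I^{\otimes(i-1)} \otimes M \]
together with closure of $\mathcal{I}$ under cofiber sequences. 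Taking $i = N$ gives $C \in \mathcal{I}$, whence $M \in \mathcal{I}$ as desired.

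There is no real obstacle here: once one has the Adams tower with nullhomotopic composite, the argument is a standard thick-subcategory filtration. The only point requiring a little care is the bookkeeping of which extensions to take, and the observation that a nullhomotopic map realizes its target as a retract of the cofiber, which is what lets us conclude membership in $\mathcal{I}$ from membership of $C$.
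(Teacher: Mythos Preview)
Your proof is correct and follows essentially the same approach as the paper: both arguments use the Adams tower, observe that each successive cofiber $\mathrm{cofib}(T_i(A,M) \to T_{i-1}(A,M)) \simeq A \otimes I^{\otimes(i-1)} \otimes M$ lies in $\mathcal{I}$ (as it is of the form $A$-module tensor $M$), and then use the octahedral axiom inductively to conclude that the cofiber of $T_N(A,M) \to M$ lies in $\mathcal{I}$, whence $M$ does as a retract. The only quibble is that your induction on $i$ is upward (from $i=1$ to $i=N$), not ``downward'' as you wrote.
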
 
\begin{proof} 
We will prove the non-trivial implication. The hypotheses imply that $Q\otimes M \in \mathcal{I}$ for any $A$-module object $Q$.
Consider 
the Adams tower $\left\{T_i(A, M)\right\} \simeq \left\{T_i (A,
\mathbf{1}) \otimes M\right\}$. The cofiber of each map $T_i(A, M) \to
T_{i-1}(A, M) $ is of the form $Q \otimes M$ where
$Q$ admits the structure of an $A$-module. In particular, it belongs to
$\mathcal{I}$. Since $\mathcal{I}$ is a thick subcategory, it follows that the cofiber of each $T_k(A, M) \to
M$ belongs to $\mathcal{I}$ and, since these maps are null for $k \gg 0$, it
follows that $M \in \mathcal{I}$ too.
\end{proof}

We now include the analog of \Cref{complAB,torsAB} in the nilpotent
case. 
\begin{prop} 
\label{nilpAB}
Let $A, B \in \mathrm{Alg}(\mathcal{C})$ be algebra objects and let $
X \in \mathcal{C}$. Then $X$ is $(A \otimes B)$-nilpotent if and only if it is
both $A$-nilpotent and $B$-nilpotent.
\end{prop}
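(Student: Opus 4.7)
The plan is to prove the two implications separately; only the reverse direction has real content. For the forward implication, I note that $\mathrm{Nil}_A$ is a thick $\otimes$-ideal containing $A$, so it contains $A \otimes B$, and hence the thick $\otimes$-ideal $\mathrm{Nil}_{A \otimes B}$ generated by $A \otimes B$. The symmetric argument shows $\mathrm{Nil}_{A \otimes B} \subseteq \mathrm{Nil}_B$. Hence any $(A \otimes B)$-nilpotent object is both $A$- and $B$-nilpotent.

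For the reverse implication, suppose $X$ is both $A$-nilpotent and $B$-nilpotent. My strategy is to apply \Cref{tensorideal:desc} twice, each time with $\mathcal{I} = \mathrm{Nil}_{A \otimes B}$, which is itself a thick $\otimes$-ideal by definition. First, since $X$ is $A$-nilpotent, \Cref{tensorideal:desc} gives
\[ X \in \mathrm{Nil}_{A \otimes B} \iff A \otimes X \in \mathrm{Nil}_{A \otimes B}. \]
Now $A \otimes X$ remains $B$-nilpotent (as $\mathrm{Nil}_B$ is a $\otimes$-ideal), so a second application of \Cref{tensorideal:desc}, this time with $B$ in place of $A$, yields
\[ A \otimes X \in \mathrm{Nil}_{A \otimes B} \iff (A \otimes B) \otimes X \in \mathrm{Nil}_{A \otimes B}. \]
The right-hand side is automatic by \Cref{modulenilpotent}, since $(A \otimes B) \otimes X$ carries an $(A \otimes B)$-module structure. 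Chaining the two equivalences delivers the claim.

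I do not anticipate substantive obstacles: the key tool \Cref{tensorideal:desc} is already available, and the proof is a tidy symmetric double application. A constructive alternative would interleave the $A$- and $B$-Adams filtrations of $X$ to realize $X$ as a retract of a finite extension of $(A \otimes B)$-modules of the form $A \otimes I_A^{\otimes i} \otimes B \otimes I_B^{\otimes j} \otimes X$ with $i<\exp_A(X)$ and $j<\exp_B(X)$; this would yield the quantitative bound $\exp_{A \otimes B}(X) \leq \exp_A(X)\,\exp_B(X)$, but such precision is not required by the qualitative statement of the proposition.
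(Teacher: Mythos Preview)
Your proof is correct and follows essentially the same argument as the paper: two applications of \Cref{tensorideal:desc} with $\mathcal{I} = \mathrm{Nil}_{A \otimes B}$ (the paper applies it first for $B$ then for $A$, whereas you do $A$ then $B$, which is immaterial by symmetry). Your forward direction is exactly what the paper leaves to the reader, and your remark about the quantitative bound $\exp_{A \otimes B}(X) \le \exp_A(X)\,\exp_B(X)$ is a nice extra observation not stated in the paper.
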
 
\begin{proof} 
Suppose $X$ is both $A$-nilpotent and $B$-nilpotent. Then we want to show that $
X$ is $(A \otimes B)$-nilpotent.
This is a straightforward application of \Cref{tensorideal:desc} with
$\mathcal{I} = \mathrm{Nil}_{A \otimes B}$. 
Indeed, since $X$ is $B$-nilpotent, we find that $X \in
\mathrm{Nil}_{A \otimes B}$ if and only if $B \otimes X \in \mathrm{Nil}_{A
\otimes B}$. Since $B \otimes X$
is $A$-nilpotent, we find that $B \otimes X \in \mathrm{Nil}_{A
\otimes B}$ if and only if $A
\otimes B \otimes X \in \mathrm{Nil}_{A
\otimes B}$. But $A \otimes B \otimes X$ is an $(A
\otimes B)$-module and clearly belongs to $\mathrm{Nil}_{A
\otimes B}$. This proves that $X$
is $(A \otimes B)$-nilpotent. We leave the other (easier) direction to the
reader. 
\end{proof}

We now prove some (slightly) less formal results about $A$-nilpotence. In 
the rest of this section, the compactness and dualizability hypotheses will become
important. In particular, for the rest
of this section, we assume \Cref{hypothesis}. 

\begin{prop} Let $\mathcal{C}, A$ satisfy \Cref{hypothesis} (so that in
particular, $\mathcal{C} $ is presentable). 
Let $M \in \mathcal{C}$ be compact. Then the following are equivalent: 
\begin{enumerate}
\item $M $ is $A$-nilpotent.   
\item $M$ is $A$-torsion.
\end{enumerate}
\end{prop}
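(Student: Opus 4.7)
The implication $(1)\Rightarrow(2)$ is formal: the thick $\otimes$-ideal generated by $A$ is contained in the localizing $\otimes$-ideal generated by $A$, which is precisely $\Ator$ (by \Cref{Atorideal} and the definition of $\Ator$). No compactness assumption on $M$ is needed here.

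For the nontrivial direction $(2)\Rightarrow(1)$, the plan is to exploit the compact generation of $\Ator$ established in \Cref{cptgentor}. First I would observe that since $\Ator \subset \mathcal{C}$ is a localizing subcategory, filtered colimits in $\Ator$ are computed in $\mathcal{C}$; consequently, any object that is compact in $\mathcal{C}$ and lies in $\Ator$ is also compact in $\Ator$. So assume $M$ is compact in $\mathcal{C}$ and $M \in \Ator$; then $M$ is a compact object of $\Ator$.

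Next, by \Cref{cptgentor}, the objects $\{\mathbb{D}A \otimes X\}_{X \in \mathcal{D}}$ form a set of compact generators of $\Ator$, where $\mathcal{D}$ is any collection of dualizable generators of $\mathcal{C}$. By the standard description of compact objects in a compactly generated stable $\infty$-category (see \cite[Lem.~5.3.5.8, Prop.~5.3.5.11]{Lur09}), the compact objects of $\Ator$ coincide with the thick subcategory generated by $\{\mathbb{D}A \otimes X\}_{X \in \mathcal{D}}$. Thus $M$ lies in the thick subcategory of $\mathcal{C}$ generated by these objects. But each $\mathbb{D}A \otimes X$ is an $A$-module by \Cref{dualisamodule}, so by \Cref{thicksubcatmodulesisnilpotent} each $\mathbb{D}A \otimes X$ is $A$-nilpotent, and hence $M$ is $A$-nilpotent as well.

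The main subtlety is the passage from the external thick $\otimes$-ideal generated by $A$ in $\mathcal{C}$ to the internal thick subcategory generated by the compact generators in $\Ator$; this is bridged cleanly by \Cref{thicksubcatmodulesisnilpotent}, which identifies the $A$-nilpotent objects with the thick subcategory (not $\otimes$-ideal) generated by $A$-modules. All the compactness/dualizability hypotheses of \Cref{hypothesis} enter precisely in order to ensure that $\Ator$ is compactly generated by $A$-modules of the form $\mathbb{D}A \otimes X$.
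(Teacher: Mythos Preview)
Your proof is correct and follows essentially the same approach as the paper: both arguments observe that a compact object of $\mathcal{C}$ lying in $\Ator$ is compact in $\Ator$, and hence lies in the thick subcategory generated by the compact generators $\mathbb{D}A \otimes X$, which are $A$-modules and thus $A$-nilpotent. Your write-up is slightly more explicit in citing \Cref{thicksubcatmodulesisnilpotent} and justifying why compactness passes to $\Ator$, but the strategy is identical.
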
 
\begin{proof} 
If $M$ is $A$-torsion, then 
$M$ is a compact object of $\Ator$ (since $M$ is compact in $\mathcal{C}$),
which means that it belongs to the thick subcategory generated by the compact
generators $\mathbb{D}A \otimes X$
for $X \in \mathcal{C}$ dualizable:
in fact, $\Ator$ is equivalent to the $\mathrm{Ind}$-completion of precisely
this thick subcategory.
This implies that $M$ is $A$-nilpotent.
The other direction is evident.
\end{proof} 

\begin{thm} 
\label{ringobjectnilp} Suppose $\mathcal{C}, A$ satisfy \Cref{hypothesis}.
Let $X \in \mathcal{C}$ be such that $X$ admits 
a unital multiplication in $\mathrm{Ho}(\mathcal{C})$. Then the following are equivalent: 
\begin{enumerate}
\item $X$ is $A$-nilpotent. 
\item $X$ is $A$-torsion (equivalently, $X[A^{-1}]$ is contractible).
\end{enumerate}
\end{thm}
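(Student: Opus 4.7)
The implication $(1) \Rightarrow (2)$ is formal and requires no use of the ring structure on $X$: the subcategory $\Ator$ is a localizing $\otimes$-ideal containing $A$, hence in particular a thick $\otimes$-ideal containing $A$, so it contains $\mathrm{Nil}_A$.

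For the nontrivial direction $(2) \Rightarrow (1)$, the plan is to exhibit $X$ as a retract (in $\mathrm{Ho}(\mathcal{C})$, hence in $\mathcal{C}$ since $\mathcal{C}$ is idempotent complete) of an object that is patently $A$-nilpotent. The starting point is \Cref{VAacyclization}: under assumption $(2)$, the natural map $V_A \otimes X \to X$ is an equivalence. I would then write $V_A$ as a filtered colimit of $A$-nilpotent objects. Specifically, set $V_A^{(n)} := \mathrm{fib}(\mathbf{1} \to U_n)$, so that $V_A \simeq \varinjlim V_A^{(n)}$. Dualizing the cofiber sequence $T_n(A,\mathbf{1}) \to \mathbf{1} \to C_n$ (where $C_n$ is the cofiber) identifies $V_A^{(n)} \simeq \mathbb{D}(C_n)$. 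The object $C_n$ is $A$-nilpotent by \Cref{properties:nilpotent}, being built by iterated extensions of the $A$-modules $A \otimes T_{i-1}(A,\mathbf{1})$; since all objects involved are dualizable and since $\mathbb{D}A$ is itself an $A$-module by \Cref{dualisamodule}, the dual $\mathbb{D}(C_n)$ is again built from $A$-modules and is therefore $A$-nilpotent. Consequently $X \simeq V_A \otimes X \simeq \varinjlim_n (V_A^{(n)} \otimes X)$ is a filtered colimit of $A$-nilpotent objects.

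At this point the homotopy ring structure enters. Let $\eta\colon \mathbf{1} \to X$ be a unit and $\mu\colon X \otimes X \to X$ a multiplication with $\mu \circ (\eta \otimes 1_X) \simeq 1_X$ in $\mathrm{Ho}(\mathcal{C})$. Because the unit object $\mathbf{1}$ is compact (\Cref{hypothesis}), the canonical identification
\[ \pi_0 \hom_{\mathcal{C}}(\mathbf{1}, X) \simeq \varinjlim_n \pi_0 \hom_{\mathcal{C}}(\mathbf{1}, V_A^{(n)} \otimes X)  \]
lets us lift $\eta$ to some $\tilde{\eta}\colon \mathbf{1} \to V_A^{(N)} \otimes X$ whose postcomposition with the structure map $V_A^{(N)} \otimes X \to X$ recovers $\eta$. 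Using $\tilde{\eta}$ and $\mu$, form the composite
\[ X \xrightarrow{\tilde{\eta} \otimes 1_X} V_A^{(N)} \otimes X \otimes X \xrightarrow{1 \otimes \mu} V_A^{(N)} \otimes X. \]
Postcomposing with $V_A^{(N)} \otimes X \to X$ and using naturality of the structure map $V_A^{(N)} \to \mathbf{1}$ yields $\mu \circ (\eta \otimes 1_X) \simeq 1_X$. Thus $1_X$ factors through $V_A^{(N)} \otimes X$, exhibiting $X$ as a retract of the $A$-nilpotent object $V_A^{(N)} \otimes X$. Since $\mathrm{Nil}_A$ is a thick subcategory, $X$ is $A$-nilpotent.

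The main obstacle is really the first step after setting up $V_A$: one must check that the finite-stage approximations $V_A^{(n)}$ are genuinely $A$-nilpotent, which relies on the compatibility of duality with the thick $\otimes$-ideal generated by $A$ and in particular on the fact (\Cref{dualisamodule}) that $\mathbb{D}A$ is itself an $A$-module. Once this is in hand, the retract argument via compactness of $\mathbf{1}$ and unitality of $X$ is routine and quite parallel to the standard trick for passing from torsion to nilpotence for ring objects.
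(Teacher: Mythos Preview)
Your proof is correct and follows essentially the same strategy as the paper's. The only cosmetic difference is that the paper works on the $U$-side of the cofiber sequence $V_A^{(N)} \to \mathbf{1} \to U_N$: it uses compactness of $\mathbf{1}$ to show that $\mathbf{1} \to U_N \otimes X$ is null for large $N$, then uses the unital multiplication to deduce that $X \to U_N \otimes X$ is null, yielding the same retraction of $X$ off $V_A^{(N)} \otimes X$ that you construct directly by lifting $\eta$.
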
 
\begin{proof} 
We establish the non-obvious implication.
Since $X$ is $A$-torsion, the sequence $\mathbf{1}
\simeq U_0 \to U_1 \to \dots $ with colimit $U_A$ has the property that $U_A
\otimes X \simeq 0$, so that in particular, the map $\mathbf{1} \to X \to U_A \otimes X$ is nullhomotopic. 
Since $\mathbf{1}$ is compact, this means that the
composition $\phi_N \colon \mathbf{1} \to U_N  \to U_N \otimes X$ is nullhomotopic for $N \gg 0$.
As a result, the map $\psi_N\colon X \to U_N \otimes X$ must be nullhomotopic too, for such
$N$, as one sees using the unitary multiplication on $X$.
So $X$ is a retract of $V_N\otimes X$, which is $A$-nilpotent.

\end{proof}

\begin{remark} 
\Cref{ringobjectnilp} corresponds to the following simple observation: given a
(discrete) unital ring $R$ which is all $p$-power torsion, there exists a
uniform $n$ such that $p^n R = 0$ (namely, we can take $n$ so large that
$p^n . 1 = 0 \in R$). 
\end{remark} 

We now include a result that describes thick $\otimes$-ideals generated by a
single dualizable object in terms of nilpotence.

\begin{prop} 
Let $Y \in \mathcal{C}$ be dualizable and let $X \in \mathcal{C}$ be arbitrary.
Let $R =Y \otimes \mathbb{D} Y$
be the internal ring of endomorphisms of $Y$, so $R \in \mathrm{Alg}
(\mathcal{C})$.
Then $X$ belongs to the thick $\otimes$-ideal generated by $Y$ if and only if $X$
is $R$-nilpotent.
\end{prop}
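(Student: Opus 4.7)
The plan is to show that the thick $\otimes$-ideal generated by $Y$ coincides with the thick $\otimes$-ideal generated by $R$; since the latter is by definition (\Cref{descent2}) the collection of $R$-nilpotent objects, this will give the result.

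One inclusion is immediate from the fact that any thick $\otimes$-ideal is closed under tensoring with arbitrary objects of $\mathcal{C}$: the object $R = Y \otimes \mathbb{D}Y$ lies in the thick $\otimes$-ideal generated by $Y$, so the thick $\otimes$-ideal generated by $R$ is contained in that generated by $Y$.

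For the reverse inclusion, it suffices to show that $Y$ itself is $R$-nilpotent, since then the thick $\otimes$-ideal generated by $Y$ is contained in $\mathrm{Nil}_R$. The key observation is that $Y$ is a retract of $R \otimes Y$ in $\mathcal{C}$: the composition
\[
Y \simeq \mathbf{1} \otimes Y \xrightarrow{\mathrm{coev}_Y \otimes 1_Y} Y \otimes \mathbb{D}Y \otimes Y = R \otimes Y \xrightarrow{1_Y \otimes \mathrm{ev}_Y} Y \otimes \mathbf{1} \simeq Y
\]
equals $\mathrm{id}_Y$ by one of the triangle identities for the duality datum $(Y, \mathbb{D}Y)$ (see \cite[\S 4.6.1]{Lur14}). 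The object $R \otimes Y$ is a free $R$-module and hence $R$-nilpotent by \Cref{modulenilpotent}; since the $R$-nilpotent objects are closed under retracts, $Y$ itself is $R$-nilpotent.

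No real obstacle arises here: the argument is a formal manipulation that reduces to the interplay between the zigzag identities for a dualizable object and the closure properties of thick $\otimes$-ideals. The only point to be careful about is to invoke \Cref{modulenilpotent} only after noting that $R \otimes Y$ carries the obvious free $R$-module structure (via left multiplication on the $R$-factor), so that its $R$-nilpotence, together with retract-closure of $\mathrm{Nil}_R$, yields the conclusion for $Y$.
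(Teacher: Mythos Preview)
Your proof is correct and follows essentially the same approach as the paper: both arguments show that the thick $\otimes$-ideal generated by $Y$ equals $\mathrm{Nil}_R$ by checking $R \in \langle Y\rangle^{\otimes}$ and $Y \in \mathrm{Nil}_R$. The only difference is cosmetic: where you exhibit the retraction $Y \hookrightarrow R\otimes Y \twoheadrightarrow Y$ explicitly via the triangle identity, the paper simply remarks that $Y$ is an $R$-module (via the evaluation map $Y \otimes \mathbb{D}Y \otimes Y \to Y$) and invokes \Cref{modulenilpotent}, which amounts to the same retraction.
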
 
\begin{proof} 
Let $\mathcal{I}$ be the thick $\otimes$-ideal generated by $Y$. We want to show
that an object belongs to $\mathcal{I}$ if and only if it is $R$-nilpotent. To
show that $Z \in \mathcal{I}$ implies $Z$ is $R$-nilpotent, 
it suffices to consider the case 
$Z = Y$. In this case, $Y$ is an $R$-module and hence $R$-nilpotent.

Finally, to show that every $R$-nilpotent object belongs to 
$\mathcal{I}$, it suffices to show that $R$ does. But $R \simeq \mathbb{D}Y
\otimes Y$ and thus clearly belongs to $\mathcal{I}$.
\end{proof}

Finally, we treat a special class of examples, where \emph{every} torsion
object is nilpotent. We will encounter this in the sequel to this paper when we
discuss $\sF$-nilpotence results. 
The main result is that such phenomena only arise when one has an idempotent
splitting of the unit, so that the $\infty$-category itself decomposes as a
product.
\begin{prop} 
\label{idempotentsplitnilp}
The following are equivalent for a dualizable algebra object $A \in
\mathrm{Alg}(\mathcal{C})$:
\begin{enumerate}
\item We have an equality $\Ator = \Acp$ of subcategories of $\mathcal{C}$
(i.e., an object is $A$-torsion if and only if it is $A$-complete).
\item The inclusion $\mathrm{Nil}_A \subset \Ator$ is an equality.
\item The inclusion $\mathrm{Nil}_A \subset \Acp$ is an equality.
\item The map $\mathbf{1} \to L_A \mathbf{1} \times \mathbf{1}[A^{-1}]$ is an
equivalence and the symmetric monoidal $\infty$-category $\mathcal{C}$ decomposes as a
product $\mathcal{C} \simeq \Acp \times \mathcal{C}[A^{-1}]$. 
\item The localization $(L_A \mathbf{1})[A^{-1}]$ is contractible.
\end{enumerate}
\end{prop}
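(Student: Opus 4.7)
The plan is to organize the equivalences into three groups: first establish $(4)\Leftrightarrow(5)$ via the fracture square, then $(1)\Leftrightarrow(5)$ via the resulting idempotent splitting, and finally reel in $(2)$ and $(3)$ using the ring-object criterion from \Cref{ringobjectnilp}.

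The equivalence $(4)\Leftrightarrow(5)$ is essentially immediate from \Cref{Afracsquare}: the pullback $\mathbf{1}\simeq L_A\mathbf{1}\times_{(L_A\mathbf{1})[A^{-1}]}\mathbf{1}[A^{-1}]$ is a direct product exactly when the glue term $(L_A\mathbf{1})[A^{-1}]$ vanishes, since in a stable $\infty$-category a fiber product $X\times_Z Y$ is the direct product iff $Z\simeq 0$. When this happens, $\mathbf{1}$ becomes a product of two orthogonal commutative algebras, and the fracture equivalence then gives the symmetric monoidal product decomposition $\mathcal{C}\simeq\Acp\times\mathcal{C}[A^{-1}]$. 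The implication $(1)\Rightarrow(5)$ is immediate since then $L_A\mathbf{1}\in\Acp=\Ator$ and $A^{-1}$-localization annihilates $\Ator$. Conversely, $(5)\Rightarrow(1)$ follows from the splitting: the projection to the first factor $\Acp$ is a symmetric monoidal left adjoint to the inclusion, which must be $L_A$, so $L_A$ becomes smashing with $L_AX\simeq L_A\mathbf{1}\otimes X$. Since $L_A\mathbf{1}\in\Ator$ by (5) and $\Ator$ is a $\otimes$-ideal, every $X\in\Acp$ lies in $\Ator$; conversely, any $X\in\Ator$ satisfies $X[A^{-1}]\simeq 0$, and the splitting forces $X\simeq L_A\mathbf{1}\otimes X\in\Acp$.

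To bring in $(2)$ and $(3)$, the crucial observation is $\mathrm{Nil}_A\subset\Ator\cap\Acp$ (an $A$-nilpotent object is built from $A$-modules, which are both $A$-torsion and $A$-complete), so only the reverse inclusions carry content. For $(1)\Rightarrow(2),(3)$: under the splitting, $L_A\mathbf{1}$ is a commutative algebra object lying in $\Ator$, hence $A$-torsion; since $\mathbf{1}$ is compact by \Cref{hypothesis}, \Cref{ringobjectnilp} upgrades this to $L_A\mathbf{1}\in\mathrm{Nil}_A$. Then any $X\in\Acp=\Ator$ equals $L_A\mathbf{1}\otimes X$ and lies in the thick $\otimes$-ideal $\mathrm{Nil}_A$. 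For $(3)\Rightarrow(5)$: $L_A\mathbf{1}\in\Acp=\mathrm{Nil}_A$ is a ring object, so \Cref{ringobjectnilp} shows it is $A$-torsion, i.e.\ $(L_A\mathbf{1})[A^{-1}]\simeq 0$. For $(2)\Rightarrow(5)$: $V_A=\AC_A\mathbf{1}\in\Ator=\mathrm{Nil}_A$ is $A$-nilpotent hence $A$-complete, so $V_A\simeq L_AV_A\simeq L_A\mathbf{1}$ (the second equivalence because the cofiber $U_A$ of $V_A\to\mathbf{1}$ is $A^{-1}$-local and therefore has contractible $A$-completion). Thus $L_A\mathbf{1}\in\Ator$ and $(L_A\mathbf{1})[A^{-1}]\simeq 0$.

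The main obstacle is the use of \Cref{ringobjectnilp}, which is the only place where the compactness hypothesis on the unit (\Cref{hypothesis}) enters nontrivially; this is what allows torsion ring objects to be recognized as nilpotent. Without it, conditions $(2)$ and $(3)$ could in principle be strictly stronger than $(1),(4),(5)$. A secondary point requiring care is keeping track of $\Ator$ and $\Acp$ as distinct full subcategories of $\mathcal{C}$ (they are only abstractly equivalent by Dwyer--Greenlees, \Cref{ourdwg}), and verifying that $A$-completion really does become smashing under the splitting in (4), which follows because the decomposition is symmetric monoidal.
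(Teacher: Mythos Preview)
Your proof is correct and follows essentially the same strategy as the paper: both pivot on the fracture square and on \Cref{ringobjectnilp} to pass between ``torsion'' and ``nilpotent'' for the ring object $L_A\mathbf{1}$. Two small remarks: in your $(3)\Rightarrow(5)$ step the citation of \Cref{ringobjectnilp} is superfluous, since $\mathrm{Nil}_A\subset\Ator$ already gives $L_A\mathbf{1}\in\Ator$; and your $(2)\Rightarrow(5)$ argument via $V_A\simeq L_A V_A\simeq L_A\mathbf{1}$ is a slightly different (and clean) route compared to the paper, which instead proves $(2)\Rightarrow(1)$ directly using the Dwyer--Greenlees equivalence \Cref{ourdwg}.
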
 
\begin{proof} 
We first show that $(2)$ implies $(1)$. Every torsion object is $A$-complete, since nilpotent
objects are complete. To see that conversely, an $A$-complete object $X$ is $A$-torsion, recall the equivalence
$L_A:\Ator\cong\Acp$ to write $X\cong L_A(L_A^{-1}(X))\cong L_A^{-1}(X)$, where the second equivalence follows
because the torsion object $L_A^{-1}(X)$ is nilpotent, and $L_A$ acts as the identity on nilpotent objects.

Again via the equivalence $L_A$ between $\Ator$ and $\Acp$ (which acts as the identity of
$\mathrm{Nil}_A$), one sees easily that $(2)$ and $(3)$ are equivalent. 
Moreover, $(3)$ implies that $L_A \mathbf{1}$ is $A$-torsion, so that $(5)$
holds; if conversely $(5)$ holds, the algebra $L_A \mathbf{1}$ is $A$-torsion and therefore 
$A$-nilpotent by \Cref{ringobjectnilp}, and any $A$-complete object, as a
module over $L_A \mathbf{1}$, is also $A$-nilpotent, so that $(3)$ holds.

We now show that $(3)$ implies $(4)$.
Consider the natural map
$\phi\colon \mathbf{1} \to L_A \mathbf{1} \times \mathbf{1}[A^{-1}]$. 
Observe that $L_A \mathbf{1}$ is $A$-complete by definition, so it is
$A$-torsion by assumption. Thus, 
$\phi$ becomes an equivalence after $A^{-1}$-localizing since the first factor
on the right-hand side has trivial $A^{-1}$-localization. Moreover, $\phi$ also becomes
an equivalence after tensoring with $A$ (this does not use any of our
assumptions). As a result, $\phi$ is an equivalence by \Cref{checkequiv}. It
follows easily that one gets a decomposition of $\mathcal{C}$ as desired. 
(Note that the above decomposition of $\mathcal{C}$ also follows from
\Cref{Afracsquare}.) 

To see that $(4)$ implies $(1)$, observe that $X\in\mathcal{C}$ is $A$-torsion if and only if
its image in $\mathcal{C}[A^{-1}]$ is contractible, if and only if, by the decomposition in $(4)$,
$X$ is $A$-complete.

Finally, $(1)$ implies $(5)$ because $L_A\mathbf{1}$ is $A$-complete, hence $A$-torsion,
so that $(L_A\mathbf{1})[A^{-1}]\cong *$.


\end{proof}

\part{$G$-equivariant spectra and $\sF$-nilpotence}\label{part:equi}
\section{$G$-spectra}

Let $G$ be a compact Lie group. 
In this section, we quickly review the basic facts about the
homotopy theory 
$\GSpec$ of (genuine) $G$-spectra, which we will treat as an $\infty$-category. 
Since a full exposition of $\GSpec$ using $\infty$-categories rather than
model categories has not yet appeared in the literature, we
have included a discussion, beginning with a review of the relationship between
model and $\infty$-categories.
This is by no means intended to be a treatment of the classical theory and we refer to 
sources such as
\cite{May96, LMSM86, Die87a, MaM02, Ada84} for introductions to equivariant stable
homotopy theory.

For our purposes, we will take $\GSpec$ to be the $\infty$-category associated
to the symmetric monoidal model category of \emph{orthogonal $G$-spectra}. 
Although it is possible to construct $\GSpec$ purely $\infty$-categorically via
the theory of \emph{spectral Mackey functors}
(cf.\ \cite{Barwick}), we will need to use the existence of models of certain $\e{\infty}$-algebras in $\GSpec$ (namely,
equivariant real and complex $K$-theory), even though an
$\infty$-categorical treatment (and new construction) is to appear
in Lurie's forthcoming work on elliptic cohomology (see \cite{Lur09b} for a survey).

\subsection{Model categories and $\infty$-categories}

In this subsection, we begin by recalling how one passes from (symmetric monoidal) model categories to
(symmetric monoidal) $\infty$-categories. Suppose that $\mathcal{C}$ is a model category with weak
equivalences $\mathcal{W}$. 
\newcommand{\p}[1]{\underline{#1}}
\begin{cons}
Let $\mathcal{C}^{c} \subset \mathcal{C}$  
be the full subcategory
spanned by the cofibrant objects.  
The model category $\mathcal{C}$ \emph{presents} an $\infty$-category
$\p{\mathcal{C}}$
which, by definition, is the $\infty$-categorical localization
$\p{\mathcal{C}} :=\mathcal{C}^c[\mathcal{W}^{-1}]$ \cite[Def.\ 1.3.4.15]{Lur14}. 
\end{cons}

In case $\mathcal{C}$ is a \emph{simplicial} model category, one knows that the
localization 
$\p{\mathcal{C}} = \mathcal{C}^c[\mathcal{W}^{-1}]$ can also be described as the homotopy
coherent nerve of the fibrant \emph{simplicial} category spanned by the
cofibrant-fibrant objects of $\mathcal{C}$ \cite[Th.\ 1.3.4.20]{Lur14}. 
Given  a Quillen equivalence $(F, G)\colon \mathcal{C} \rightleftarrows \mathcal{D}$
between model categories admitting \emph{functorial} cofibrant and fibrant
replacements, the induced functor $\p{F}\colon \p{\mathcal{C}} \to \p{\mathcal{D}}$
(obtained via universal properties) is an equivalence of $\infty$-categories in
view of \cite[Lem.\ 1.3.4.21]{Lur14}. Note that the cited theorem assumes that $\mathcal{C}$ and
$\mathcal{D}$ are in addition combinatorial, but only uses the functorial
fibrant and cofibrant replacement functors. 
We recall that if $\mathcal{C}, \mathcal{D}$ are cofibrantly generated, then
the existence of functorial factorizations is well-known. In fact, such factorizations are so fundamental to the theory they are sometimes part of the definition of a model category \cite{Hov99}. For a modern
reference, see \cite[\S 12.1]{Rie14}.

\begin{example} 
Let $\mathrm{Top}$ denote the model category of compactly generated  topological
spaces (i.e., weak Hausdorff $k$-spaces) with the Quillen model structure (where weak 
equivalences are weak homotopy equivalences and fibrations are Serre
fibrations). Then $\p{\mathrm{Top}}$ is the $\infty$-category $\mathcal{S}$ of
spaces. 
\end{example}

We next recall the construction of \emph{symmetric monoidal}
$\infty$-categories from symmetric monoidal model categories. 
We begin with some preliminaries.
\begin{definition}[{\cite{ScS00}}]
Let $(\mathcal{C}, \otimes, \mathbf{1})$ be a model category with a closed symmetric monoidal structure. Suppose the unit
is cofibrant and  that for cofibrations $c \to c', d \to d'$, the natural
pushout-product map
\[ c \otimes d' \sqcup_{c \otimes d} c' \otimes d \to c' \otimes d'\]
is a cofibration, and a weak equivalence if either of the maps $c \to c'$ or
$d \to d'$ is a weak equivalence.
In this case, $(\mathcal{C}, \otimes, \mathbf{1})$ is called a \emph{symmetric
monoidal model category.}
\end{definition}

This definition appears in \cite{ScS00}, which 
replaces cofibrancy of the unit with a slightly weaker condition.  
In the case where $\mathcal{C}$ has a symmetric monoidal model structure, this
can be used to construct symmetric monoidal $\infty$-categories.

\begin{cons}[{\cite[Prop.~4.1.3.4]{Lur14}}]
\label{firstmodeltoinfinity}
Suppose $(\mathcal{C}, \otimes, \mathbf{1})$ is a symmetric monoidal model
category. As before, let $\mathcal{C}^c \subset \mathcal{C}$ be the full subcategory
spanned by the cofibrant objects (so that $\mathcal{C}^c$ is a monoidal
subcategory) and $\mathcal{W}$ the class of weak equivalences in
$\mathcal{C}^{c}$. Then, since the class $\mathcal{W}$ is compatible with the
symmetric monoidal structure on $\mathcal{C}^c$,  the $\infty$-categorical localization
$\underline{\mathcal{C}} = \mathcal{C}^c[\mathcal{W}^{-1}]$ inherits a symmetric monoidal structure such
that $\mathcal{C}^c \to \underline{\mathcal{C}}$ is symmetric
monoidal.\footnote{In practice, $\mathcal{C}$ is frequently a \emph{simplicial}
model category. However, in this construction, one considers $\mathcal{C}^c$ as a discrete
category with weak equivalences, and ignoring the simplicial structure.} 
\end{cons}

In case $\mathcal{C}$ is a \emph{simplicial} symmetric monoidal category, there
is an equivalent version of this construction that is often easier to work with. 

\begin{cons}
\label{secondmodeltoinfinity}
Suppose $(\mathcal{C}, \otimes, \mathbf{1})$ is a simplicial symmetric
monoidal model category. 
Let $\mathcal{C}^{cf} \subset \mathcal{C}$ denote the full subcategory spanned
by the cofibrant-fibrant
objects. Consider the colored operad in simplicial sets 
whose objects are ordered tuples of the objects of $\mathcal{C}^{cf}$ and such that the morphisms
between $\left\{X_1, \dots, X_n\right\} \in \mathcal{C}^{cf}$ and $Y \in
\mathcal{C}^{cf}$ are given by the simplicial mapping object 
$\mathrm{Map}_{\mathcal{C}}(X_1 \otimes \dots \otimes X_n, Y)$. The associated 
$\infty$-operad defines a symmetric monoidal structure on the
homotopy coherent nerve of $\mathcal{C}^{cf}$ which is canonically equivalent to the symmetric monoidal structure on $\underline{\mathcal{C}} = \mathcal{C}^c[\mathcal{W}^{-1}]$ from \Cref{firstmodeltoinfinity} \cite[Prop.~4.1.3.10, Cor.~4.1.3.16]{Lur14}.
\end{cons}


We note also that this construction is functorial 
in symmetric monoidal Quillen adjunctions. 
If $(\mathcal{C}, \otimes, \mathbf{1}_{\mathcal{C}})$ and 
$(\mathcal{D}, \otimes, \mathbf{1}_{\mathcal{D}})$
are symmetric monoidal model categories  and 
if $F\colon \mathcal{C} \to \mathcal{D}$ is a symmetric monoidal left Quillen functor, then $F$
induces a symmetric monoidal functor 
of $\infty$-categories
\[  \underline{F}\colon\underline{\mathcal{C}} \to \underline{\mathcal{D}}, \]
by the universal property of localization.

\newcommand{\topp}{\mathrm{Top}_*}

\begin{example} 
Let $\topp$ denote the model category of \emph{pointed} compactly generated topological spaces with the
usual 
Quillen model structure. 
Then $\topp$ is a symmetric monoidal model category with the smash product.
We have a symmetric monoidal equivalence $\mathcal{S}_*
\simeq \underline{\topp} $. 
\end{example}

\subsection{$G$-spaces and $G$-spectra}

We will now review the $\infty$-categories of $G$-spaces and $G$-spectra, and
some of the basic functoriality in $G$ that they possess.

\begin{cons} 
\label{topG}
Let $G$ be a compact Lie group. 
Let $\topgp{G}$ denote the category of pointed
compactly generated
topological spaces equipped with a $G$-action (fixing the basepoint). 
We regard $\topgp{G}$ as a model category where a morphism $X \to Y$ is a weak
equivalence (resp. fibration) if and only if for each closed subgroup $H \leq G$, $X^H
\to Y^H$ is a weak homotopy equivalence (resp.\ Serre fibration). The generating
cofibrations in $\topgp{G}$ are the morphisms $(G/H \times S^{n-1})_+ \to (G/H
\times D^n)_+$ for $n \geq 0$. 
Via the smash product of pointed $G$-spaces, this is a symmetric monoidal model
category (with unit given by $S^0$). We refer to \cite[III.1]{MaM02} for a
treatment of this model category. 

Similarly, there is a model category $\topgpu{G}$ of (unpointed) compactly
generated topological spaces equipped with a $G$-action, where the weak
equivalences and fibrations are detected on fixed points for closed subgroups. 
Via the cartesian  product of $G$-spaces, this is a symmetric monoidal model category.
\end{cons} 

\begin{definition} 
\label{def:Gspaces}
The $\infty$-category
$\GS$ of \emph{$G$-spaces} is the symmetric monoidal $\infty$-category
associated
to the symmetric monoidal model category $\topgpu{G}$ of 
\Cref{topG}. We define $\GSpt$ similarly from $\topgp{G}$ and call it the
$\infty$-category of \emph{pointed $G$-spaces.}
\end{definition} 

We now discuss the analog for $G$-spectra.

\begin{example} 
\label{orsg}
The category $\orsg$  of orthogonal $G$-spectra \cite{MaM02}, equipped with the stable model
structure and the smash product, is an example of a symmetric monoidal model category.
The pushout-product axiom is \cite[III.7.5]{MaM02}, and the unit is cofibrant (``$q$-cofibrant'') as well. 
\end{example}

\begin{definition}\label{def:G_spectra}
The symmetric monoidal $\infty$-category $\GSpec$ of \emph{$G$-spectra} is the
symmetric
monoidal $\infty$-category associated to 
the symmetric monoidal model category $\orsg$ of \Cref{orsg}. 
As is customary, we will denote the monoidal product by $\wedge$ and the unit by either $S^0$ or
$S^0_G$ (depending on whether the group is clear from the context). 
We will also write $F(X, Y)$ for the internal mapping object for $X, Y \in
\GSpec $ (i.e., the function spectrum). 
\end{definition}

One has a symmetric monoidal left Quillen functor
\[ \Sigma^\infty\colon \topgp{G} \to \orsg,  \]
and as a result one obtains a symmetric monoidal left adjoint functor
\[ \Sigma^\infty\colon \GSpt  \to \GSpec,  \]
with right adjoint $\Omega^\infty$.

\begin{example} 
For $H \leq G$ a closed subgroup, we consider the $G$-space $G/H$ and the
pointed $G$-space $G/H_+$. The suspension spectra $\{\Sigma^\infty_+ G/H \in
\GSpec\}_{H \leq G}$ form a system of compact generators of $\GSpec$ as a
localizing subcategory. This is the
assertion (or definition) that a $G$-spectrum $M$ is weakly contractible if and only if its
$H$-homotopy groups 
$\pi_*^H (M) := \pi_* \hom_{\GSpec}(\Sigma^\infty_+ G/H
, M)$
(for $H \leq G$ an arbitrary closed subgroup)
all vanish, and that these homotopy groups commute with arbitrary wedges. 
For simplicity, we will often write $G/H_+$ for $\Sigma^\infty_+ G/H$. 
\end{example} 

\begin{remark}
	For convenience we remark that the above $\infty$-categories are presentable and hence admit presentations by combinatorial model categories. In the case of $\GS$ and $\GSpt$ this follows from the well known fact that the transitive orbit spaces form a set of compact projective generators for these categories and \cite[Prop.\ 5.5.8.25]{Lur09}. The same argument applies to $\GSpec$, but one can more easily apply \cite[Cor.\ 1.4.4.2]{Lur14}.
\end{remark}

Next, we review the interaction between $\GSpec$ and the $\infty$-categories
of genuine equivariant spectra for subgroups. Let $G$ be a compact Lie group.
Let $H \leq G$ be a closed subgroup. 
There is a symmetric monoidal, colimit-preserving functor
 \begin{equation} \Res^G_H\colon \GSpec \to \HSpec \label{resspectra}
\end{equation} 
given by \emph{restriction}.
This arises from a symmetric monoidal, left adjoint functor of restriction on the category of
equivariant orthogonal spectra.

\newcommand{\HSpt}{\mathcal{S}_{H \ast}}

We will use the following properties of restriction and its adjoints. 
 	\begin{prop}\label{prop:ind-coind}
	Let $H \leq G$ be a closed subgroup.
 		The restriction functors \[\Res_H^G  \colon \GSpec\rightarrow  \HSpec\] admit left adjoints $\Ind_H^G$ and right adjoints $\Coind_H^G$. For a sequence of subgroup inclusions $K\leq H \leq G$ there are natural equivalences $\Ind_H^G\circ\Ind_K^H\cong \Ind_K^G$ and $\Coind_H^G\circ\Coind_K^H\cong \Coind_K^G$.

 		Moreover, for $X\in  \GSpec$, $Y\in  \HSpec$, there are natural equivalences:
 		\begin{align}
 			(\Coind_H^G Y)\wedge X & \simeq \Coind_H^G(Y\wedge \Res_H^G X)\label{eq:frob}\\
(\Ind_H^G Y)\wedge X & \simeq \Ind_H^G(Y\wedge \Res_H^G X)\label{eq:frob2}\\
 		 	\label{ind:sphere}		\Ind_H^G S^0_H & \simeq \Sigma^\infty_+ G/H.
 		\end{align}
		If $G$ is finite, then we have a natural equivalence
		\begin{equation} 
 			\Ind_H^G Y \simeq \Coind_H^G Y \label{eq:wirthmuller}.
		\end{equation} 
 	\end{prop}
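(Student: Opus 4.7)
The approach is to deduce all six assertions from corresponding statements at the level of the symmetric monoidal model category $\orsg$ of orthogonal $G$-spectra, passing to associated $\infty$-categories via \Cref{firstmodeltoinfinity}.

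For the existence of adjoints, recall that restriction on orthogonal spectra participates in two Quillen adjunctions,
\[ \bigl(G_+ \wedge_H (-),\, \Res_H^G\bigr) \quad \text{and} \quad \bigl(\Res_H^G,\, F_H(G_+, -)\bigr), \]
see for instance \cite[V.2]{MaM02}. Deriving these yields the asserted adjoint triple $\Ind_H^G \dashv \Res_H^G \dashv \Coind_H^G$ at the $\infty$-categorical level, with $\Res_H^G$ symmetric monoidal. The transitivity $\Ind_H^G \circ \Ind_K^H \simeq \Ind_K^G$ and $\Coind_H^G \circ \Coind_K^H \simeq \Coind_K^G$ is then purely formal, from $\Res_K^H \circ \Res_H^G = \Res_K^G$ and the uniqueness of adjoints. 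The identification $\Ind_H^G S^0_H \simeq \Sigma^\infty_+ G/H$ reduces to the point-set statement $G_+ \wedge_H S^0_H \cong G/H_+$.

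For the $\Ind$-projection formula \eqref{eq:frob2}, the canonical comparison
\[ \Ind_H^G(Y \wedge \Res_H^G X) \to \Ind_H^G Y \wedge X \]
is obtained by transposing the evident map $Y \wedge \Res_H^G X \to (\Res_H^G \Ind_H^G Y) \wedge \Res_H^G X \simeq \Res_H^G(\Ind_H^G Y \wedge X)$ built from the unit of $\Ind \dashv \Res$ and the symmetric monoidality of $\Res$. Both sides are exact and colimit-preserving in $Y$, so using that the $\Sigma^\infty_+ H/K$ generate $\HSpec$ as a localizing subcategory one reduces to $Y = \Sigma^\infty_+ H/K$; both sides are then exact and colimit-preserving in $X$, so one further reduces to $X = \Sigma^\infty_+ G/L$. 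The remaining claim becomes the point-set identity $\Sigma^\infty_+\bigl(G \times_H (H/K \times G/L)\bigr) \simeq \Sigma^\infty_+(G/K \times G/L)$, which follows from the standard $G$-set isomorphism $G \times_H(A \times B) \cong (G \times_H A) \times B$ valid whenever $B$ already carries a $G$-action extending the $H$-action. The $\Coind$-projection formula \eqref{eq:frob} follows by the analogous adjunction argument once $\Coind_H^G$ is known to preserve colimits, which is a direct consequence of the Wirthm{\"u}ller equivalence below.

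The only genuinely nonformal step, and thus the main obstacle, is the Wirthm{\"u}ller isomorphism \eqref{eq:wirthmuller} for finite $G$. We import it from the classical equivariant theory \cite[V.11]{LMSM86}, \cite[III.5.2]{MaM02}: for any compact Lie group $G$ and any closed subgroup $H$ there is a natural equivalence $\Coind_H^G Y \simeq \Ind_H^G(Y \wedge S^L)$, where $L$ denotes the $H$-representation on the tangent space $T_{eH}(G/H)$. When $G$ is finite, $G/H$ is zero-dimensional, so $L = 0$ and $S^L \simeq S^0$, yielding the asserted equivalence $\Ind_H^G Y \simeq \Coind_H^G Y$.
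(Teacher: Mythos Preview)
Your argument is correct and reaches the same conclusions, but the route differs from the paper's in emphasis. The paper works almost entirely at the $\infty$-categorical level: the existence of $\Ind_H^G$ and $\Coind_H^G$ is deduced from the abstract criterion \cite[Prop.~1.4.4.1]{Lur14} (an exact functor between presentable stable $\infty$-categories admits a left, resp.\ right, adjoint iff it preserves products, resp.\ coproducts, which can be checked on the homotopy category); the projection formulas are handled by writing down the comparison map $\infty$-categorically and then simply invoking the classical homotopy-category result; and the Wirthm\"uller transformation is built carefully from universal properties, starting from the single identification $\Ind_H^G S^0_H \simeq \Coind_H^G S^0_H$ and propagating via \eqref{eq:frob2} and the $(\Res,\Coind)$ counit to produce the natural map \eqref{eq:wmap}. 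You instead pass through the model category $\orsg$: adjoints arise from explicit Quillen pairs, the $\Ind$-projection formula is reduced to a $G$-set identity on generating orbits, and Wirthm\"uller is imported wholesale. Both approaches are sound; yours is more concrete but leans on model-categorical infrastructure, while the paper's is more explicit about where the natural transformations live as $\infty$-categorical data.

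One point worth tightening: your argument for \eqref{eq:frob} requires $\Coind_H^G$ to preserve colimits, which you deduce from ``the Wirthm\"uller equivalence below.'' Since \eqref{eq:frob} is asserted for arbitrary compact Lie $G$, you must invoke the \emph{twisted} Wirthm\"uller isomorphism $\Coind_H^G Y \simeq \Ind_H^G(Y \wedge S^L)$ that you cite, not just the untwisted finite-group version. Once you do, \eqref{eq:frob} in fact follows immediately from \eqref{eq:frob2} by inserting the invertible twist $-\wedge S^L$, so no separate reduction to generators is needed there.
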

\begin{proof}

At the level of the \emph{homotopy} category, these properties are classical. 
We briefly describe how to upgrade them to $\infty$-categorical equivalences,
although knowing this is not critical for the rest of the paper.
Since we are working with presentable \emph{stable} $\infty$-categories, 
the existence of a left or right adjoint to an exact functor can
be checked at the level of the \emph{homotopy category:} the condition is that
said functor should preserve arbitrary coproducts (resp.\ products)
\cite[Prop.\ 1.4.4.1]{Lur14}. So,
$\Ind_H^G, \CoInd_H^G$ exist at the $\infty$-categorical
level for purely
abstract reasons, once one knows about their existence at the  homotopy
category level (although one can write down strict models  for these as well;
see
\cite[\S 9.2]{equivariantsymm} for the finite group case).
The property \eqref{eq:frob} comes from a natural map (from left to right) at the level of
$\infty$-categories. Checking this map is an equivalence is done at the level of homotopy categories. The remaining claims are checked in the same way. 

Finally, \eqref{eq:wirthmuller} is  a special case of the \emph{Wirthm\"uller
isomorphism} (for compact Lie groups, see \cite[II.6]{LMSM86}).
Once again, 
the map arises from universal properties, as explained in \cite{wirthmuller}.
Namely, once we know that $\Ind_H^G(S^0_H) \simeq \CoInd_H^G(S^0_H)$, then 
we get  a natural map in $\GSpec$,
\[ S^0_G \to \CoInd_H^G(S^0_H) \simeq \Ind_H^G(S^0_H).  \]
Thus, for any $Z \in \GSpec$, we have natural transformations
\begin{equation} 
Z \simeq  Z \wedge S^0_G \to Z \wedge \Ind_H^G(S^0_H) \simeq
\Ind_H^G( \Res^G_H (Z)),
\end{equation} 
where we used the projection formula \eqref{eq:frob2} in the last step.
Taking $Z = \CoInd_H^G(Y)$ for $Y \in \HSpec$, we get a natural map
\begin{equation} 
\label{eq:wmap}
\CoInd_H^G( Y) \to \Ind_H^G( \Res_H^G ( \CoInd_H^G(Y))) \to \Ind_H^G(Y),
\end{equation} 
where the last map comes from the adjunction $(\Res_H^G, \CoInd_H^G)$.
The map \eqref{eq:wmap} is the natural transformation that implements the
Wirthm\"uller isomorphism.
This is explained in \cite{wirthmuller} at the level of homotopy categories,
but it makes sense at the $\infty$-categorical level. 
\end{proof}

\subsection{Restriction as base-change}
Let $G, H$ be finite groups. 
We will now present another point of view on the restriction functor $\res^G_H\colon
\GSpec \to \HSpec$, as a sort of base change. This point of view is due (albeit
in the setting of the homotopy category) to Balmer \cite{Balmerstack} and
Balmer-Dell'Ambrogio-Sanders \cite{BAS}.

We begin with some generalities. 
Let $(\mathcal{C}, \otimes, 1_{\mathcal{C}}), (\mathcal{D}, \otimes,
1_{\mathcal{D}})$ be presentable, symmetric monoidal
 stable $\infty$-categories where the tensor structures commute with
 colimits, and let $L\colon \mathcal{C} \to \mathcal{D}$ be a
 cocontinuous, symmetric monoidal functor.
 Then its right adjoint $R$ is
 naturally \emph{lax} symmetric monoidal. 
Since $L $ (resp.\ $R$) is symmetric monoidal (resp.\ lax symmetric monoidal), we
obtain induced functors
at the level of commutative algebra objects
\begin{equation} \label{adjalg}L\colon \CAlg(\mathcal{C}) \to \CAlg(\mathcal{D}), \quad R\colon \CAlg(\mathcal{D})
\to \CAlg(\mathcal{C}). 
\end{equation}

\begin{prop}
The induced pair of functors 
\eqref{adjalg}
form an adjoint pair. (This would work for any $\infty$-operad replacing the
commutative one.)
\end{prop}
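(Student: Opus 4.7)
The plan is to deduce this from the standard machinery of relative adjunctions over $N(\mathrm{Fin}_*)$ developed in \cite[\S 7.3.2]{Lur14}. Recall that the symmetric monoidal $\infty$-categories $\mathcal{C}, \mathcal{D}$ are encoded as cocartesian fibrations $\mathcal{C}^{\otimes}, \mathcal{D}^{\otimes} \to N(\mathrm{Fin}_*)$. Since $L$ is symmetric monoidal, it lifts to a functor $L^{\otimes}\colon \mathcal{C}^{\otimes} \to \mathcal{D}^{\otimes}$ over $N(\mathrm{Fin}_*)$ which preserves \emph{all} cocartesian edges. The commutative algebra $\infty$-category $\CAlg(\mathcal{C})$ is, by definition, the full subcategory of $\fun_{N(\mathrm{Fin}_*)}(N(\mathrm{Fin}_*), \mathcal{C}^{\otimes})$ spanned by sections that send inert morphisms to cocartesian edges, and similarly for $\mathcal{D}$.

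First I would check that $L^{\otimes}$ admits a relative right adjoint $R^{\otimes}\colon \mathcal{D}^{\otimes} \to \mathcal{C}^{\otimes}$ over $N(\mathrm{Fin}_*)$ and that $R^{\otimes}$ automatically preserves inert morphisms (though not all cocartesian edges). The existence of $R^{\otimes}$ follows from a fiberwise application of the adjoint functor theorem combined with the fact that on the fiber over $\langle n\rangle$, both sides decompose as an $n$-fold product and the induced functor on each factor is $L$ itself. The fact that $R^{\otimes}$ preserves inert edges expresses exactly that $R$ is lax symmetric monoidal, and is the content of \cite[Cor.~7.3.2.7]{Lur14}.

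With these inputs in hand, post-composition with $L^{\otimes}$ and $R^{\otimes}$ induces an adjunction on the $\infty$-categories of sections
\[ \fun_{N(\mathrm{Fin}_*)}(N(\mathrm{Fin}_*), \mathcal{C}^{\otimes}) \rightleftarrows \fun_{N(\mathrm{Fin}_*)}(N(\mathrm{Fin}_*), \mathcal{D}^{\otimes}), \]
with unit and counit obtained by post-composition from those of $L^{\otimes} \dashv R^{\otimes}$. Because both $L^{\otimes}$ and $R^{\otimes}$ preserve inert morphisms, this adjunction restricts to the full subcategories of inert-preserving sections, yielding the desired adjunction between $\CAlg(\mathcal{C})$ and $\CAlg(\mathcal{D})$. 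The same argument applies verbatim with $N(\mathrm{Fin}_*)$ replaced by the operadic nerve $N^{\otimes}(\mathcal{O})$ of any $\infty$-operad $\mathcal{O}$, which yields the parenthetical remark.

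The main (mild) obstacle is verifying that the relative right adjoint $R^{\otimes}$ exists and preserves inert edges; once this is known, the rest is a formal consequence of the description of algebra objects as inert-preserving sections and the functoriality of adjunctions under post-composition. In practice one simply cites \cite[Cor.~7.3.2.7]{Lur14}, which packages exactly this statement.
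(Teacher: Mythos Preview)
Your proposal is correct and essentially unpacks the argument that the paper only cites: the paper simply refers to \cite[Prop.~A.5.11]{GeH} (whose proof proceeds exactly via the relative-adjunction machinery of \cite[\S 7.3.2]{Lur14} that you invoke). The key step you isolate, that $R^{\otimes}$ exists and preserves inert edges by \cite[Cor.~7.3.2.7]{Lur14}, is precisely what makes the cited result go through, and your passage to sections and restriction to inert-preserving sections is the standard way to finish.
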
 
\begin{proof}
This is the analog of \cite[Prop.\ A.5.11]{GeH} for symmetric $\infty$-operads. Their proof applies essentially without change here.
\end{proof}


We will now derive a new adjunction from the above data.

\begin{cons}
\label{Derivedadj}
 From \eqref{adjalg}, we obtain a commutative algebra structure on $R( 1_{\mathcal{D}})
 \in \mathcal{C}$
 which, thanks to the lax symmetric monoidal structure on $R$, naturally acts
 on $R(Y)$ for any $Y \in \mathcal{D}$. 
Thus, we get a functor $\overline{R}\colon \mathcal{D} \to
 \md_{\mathcal{C}}(R(1_{\mathcal{D}}))$,
 fitting into a commuting square
 \[ \xymatrix{
  & \md_{\mathcal{C}}( R( 1_{\mathcal{D}}))\ar[d]  \\
 \mathcal{D} \ar[ru]^{\overline{R}} \ar[r]^{{R}} & \mathcal{C}.
 }\]
 
 The functor $\overline{R}$ is limit-preserving, and we see that the composite
 functor
 \[ \overline{L}\colon  \md_{\mathcal{C}} (R( {1}_{\mathcal{D}})) 
 \stackrel{L}{\to} \md_{\mathcal{D}}(L R( {1}_{\mathcal{D}}))  \xrightarrow{\otimes_{\left(L R({1}_{\mathcal{D}})\right)} 1_{\mathcal{D}}} \mathcal{D}  
 \]
 is left adjoint to $\overline{R}$ by inspection.
 The first functor here (written $L$ as well) takes a $R(1_{\mathcal{D}})$-module (in
 $\mathcal{C}$) and
 applies $L$ to obtain a $LR (1_{\mathcal{D}})$-module in $\mathcal{D}$. 
The second functor is base-change along the morphism of commutative algebra
objects $LR(1_{\mathcal{D}}) \to 1_{\mathcal{D}}$ in $\mathcal{D}$.
Note that this composite functor $\overline{L}$ is symmetric monoidal (as the
 composite of symmetric monoidal functors), and 
$\overline{R}$ therefore acquires a lax symmetric monoidal structure.

We thus get a new adjunction 
\begin{equation} \label{newadj} (\overline{L}, \overline{R})\colon
\md_{\mathcal{C}}(R(1_{\mathcal{D}}))
\rightleftarrows \mathcal{D.}\end{equation}

\end{cons}

\begin{example} 
 For instance, we see that $\overline{L}$ carries the ``induced''
 $R(1_{\mathcal{D}})$-module $R(1_{\mathcal{D}}) \otimes X$, for $X \in
 \mathcal{C}$, to 
\begin{equation} \label{lFforinduced}
\overline{L} \left( R(1_{\mathcal{D}}) \otimes X\right)  \simeq
1_{\mathcal{D}} \otimes_{L R(1_{\mathcal{D}})}L(  R(1_{\mathcal{D}}) \otimes X)
\simeq L(X) \in \mathcal{D}.\end{equation}

\end{example}

Our first goal in this subsection is to give a simple set of criteria for when
the adjunction $(\overline{L}, \overline{R})$ is an equivalence.
\begin{definition} 
We say that the adjunction $(L, R)$ \emph{satisfies the projection formula} if,
for $X \in \mathcal{C}, Y \in \mathcal{D}$, the natural map
\begin{equation} \label{projformmap}  R(Y) \otimes X \to R( Y \otimes
L(X)) ,  \end{equation}
adjoint to the map $$L ( R( Y) \otimes X) \simeq L R(Y) \otimes L(X)
\xrightarrow{{\mathrm{counit} \otimes 1_{L(X)}} } Y
\otimes L(X),$$
is an equivalence in $\mathcal{C}$.
\end{definition} 

\begin{prop} \label{overadjunctionequiv}
Suppose we have an adjunction $(L,R) \colon \mathcal{C} \rightleftarrows
\mathcal{D}$; here as above $\mathcal{C}, \mathcal{D}$ are presentable, symmetric
monoidal stable $\infty$-categories such that the tensor structure on each
commutes with colimits in each variable, and $L$ is a symmetric monoidal
functor. 
Suppose the adjunction has the following three properties:
\begin{enumerate}
\item The adjunction  $(L, R)$ satisfies the projection formula. 
\item The right adjoint $R$ commutes with arbitrary colimits. 
\item The right adjoint $R$ is conservative.
\end{enumerate}
Then the new adjunction $(\overline{L}, \overline{R}) \colon
\md_{\mathcal{C}}(R(1_{\mathcal{D}})) \rightleftarrows \mathcal{D}$ 
of \Cref{Derivedadj}
is an inverse
equivalence of symmetric monoidal $\infty$-categories.
\end{prop}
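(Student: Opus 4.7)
The plan is to invoke the $\infty$-categorical Barr-Beck-Lurie monadicity theorem \cite[Thm.~4.7.3.5]{Lur14} for the adjunction $(L, R)$, and then identify the resulting monad as tensoring with the algebra $R(\mathbf{1}_{\mathcal{D}})$ by means of the projection formula.

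First, I would verify the hypotheses of monadicity for $R$. Conservativity is assumed, and since $R$ is assumed to commute with arbitrary colimits, it preserves in particular those of $R$-split simplicial objects (which exist in $\mathcal{D}$ by presentability). Therefore $R$ is monadic: the canonical comparison functor gives an equivalence of $\infty$-categories $\mathcal{D} \xrightarrow{\simeq} \md_{\mathcal{C}}(T)$, where $T = R \circ L$ is viewed as a monad (i.e., an associative algebra in endofunctors of $\mathcal{C}$).

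Next, I would use the projection formula to identify $T$ as an $\mathcal{E}_1$-algebra with the endofunctor $X \mapsto R(\mathbf{1}_{\mathcal{D}}) \otimes X$. Setting $Y = \mathbf{1}_{\mathcal{D}}$ in \eqref{projformmap} yields a natural equivalence $R(\mathbf{1}_{\mathcal{D}}) \otimes X \xrightarrow{\simeq} R(L(X))$. The main obstacle of the proof, which I anticipate requires the most care, is promoting this pointwise equivalence to an equivalence of monads, so that algebras over $T$ are identified with $R(\mathbf{1}_{\mathcal{D}})$-modules in $\mathcal{C}$. This amounts to tracking the unit and multiplication: the unit $\mathrm{id}_{\mathcal{C}} \to RL$ corresponds under the projection formula to the map $\mathrm{id}_{\mathcal{C}} \to R(\mathbf{1}_{\mathcal{D}}) \otimes (-)$ induced by the unit of the algebra $R(\mathbf{1}_{\mathcal{D}})$, while the multiplication $RLRL \to RL$ corresponds to the multiplication on $R(\mathbf{1}_{\mathcal{D}})$ (obtained from the lax symmetric monoidal structure on $R$ applied to $\mathbf{1}_{\mathcal{D}} \otimes \mathbf{1}_{\mathcal{D}} \simeq \mathbf{1}_{\mathcal{D}}$). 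Under this identification of monads, the comparison functor is precisely $\overline{R}$, with left adjoint $\overline{L}$.

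Finally, I would upgrade the equivalence of underlying $\infty$-categories to a symmetric monoidal equivalence. By the construction in \Cref{Derivedadj}, $\overline{L}$ is symmetric monoidal as a composite of two symmetric monoidal functors (the functor $L\colon \md_{\mathcal{C}}(R(\mathbf{1}_{\mathcal{D}})) \to \md_{\mathcal{D}}(LR(\mathbf{1}_{\mathcal{D}}))$, and base change along the counit $LR(\mathbf{1}_{\mathcal{D}}) \to \mathbf{1}_{\mathcal{D}}$). Since we have just shown that $\overline{L}$ is an equivalence of the underlying $\infty$-categories, it is automatically an equivalence of symmetric monoidal $\infty$-categories: its inverse $\overline{R}$ inherits a symmetric monoidal structure uniquely.
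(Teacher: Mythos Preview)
Your approach via Barr--Beck--Lurie monadicity is correct and is genuinely different from the paper's argument. The paper proceeds more directly: it verifies by hand that the unit and counit of the adjunction $(\overline{L}, \overline{R})$ are equivalences. For the unit, it checks that $X \to \overline{R}\,\overline{L}(X)$ is an equivalence first on the generators $X = R(\mathbf{1}_{\mathcal{D}}) \otimes X'$ (where this follows from \eqref{lFforinduced} and the projection formula), and then extends to all $X$ using that $\overline{R}\,\overline{L}$ commutes with colimits (hypothesis~2). Thus $\overline{L}$ is fully faithful, hence a colocalization; conservativity of $R$ (hypothesis~3) then forces the counit to be an equivalence as well.

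The trade-off is exactly where you locate it. Your route packages the argument conceptually, but the step you flag as ``the main obstacle''---upgrading the projection-formula equivalence $R(\mathbf{1}_{\mathcal{D}}) \otimes (-) \simeq RL$ from a pointwise identification to an equivalence of monads---is a genuine $\infty$-categorical coherence issue: checking compatibility with the unit and multiplication at the level of homotopy is not sufficient, and one must appeal to a structured argument (e.g., producing a map of monads from the universal property of the free $R(\mathbf{1}_{\mathcal{D}})$-module monad and then observing it is an equivalence on underlying functors). The paper's approach sidesteps this entirely: by working with the explicitly constructed adjunction $(\overline{L}, \overline{R})$ of \Cref{Derivedadj} and checking the unit and counit directly, it never needs to compare two monads, and the projection formula is invoked only as a pointwise equivalence. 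Your approach is more structural; the paper's is more elementary and self-contained.
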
 
\begin{proof} 
Consider the collection of objects $X \in \md_{\mathcal{C}}(R(1_{\mathcal{D}}))
$ such that $X \to \overline{R} \overline{L} 
(X)$ is an equivalence. We would like to show that this collection contains 
\emph{every} object of 
$\md_{\mathcal{C}}( R(1_{\mathcal{D}}))$. 
By hypothesis 2, this class of objects forms a localizing subcategory, so it suffices to show that these maps are
equivalences
for the generators $X = R(1_{\mathcal{D}}) \otimes X'$, $X' \in \mathcal{C}$. 
In this case, we have
by \eqref{lFforinduced}, a map
\[ R(1_{\mathcal{D}}) \otimes X' \to \overline{R} \overline{L}(X) \simeq
\overline{R}(  L(X')) 
\simeq R(1_{\mathcal{D}}) \otimes X', 
\]
using the projection formula. The composite map is an equivalence, and it follows that the unit map is always
an equivalence. 

It follows from this that the left adjoint $\overline{L}$ is fully faithful. 
In fact, $\overline{L}$ is necessarily
a \emph{colocalization}, and if $Y \in \mathcal{D}$, then the cofiber $C$ of the
map $\overline{L} \overline{R}(Y) \to Y$ has the property that the space of maps
$\hom_{\mathcal{D}}( \overline{L}X, C) $ is contractible for any $X \in 
\md_{\mathcal{C}}(R(1_{\mathcal{D}})) 
$. Therefore, $\overline{R}C$ is contractible and hence $RC$ is contractible.
By assumption $R$ is conservative, so $C$ is contractible. In particular, the counit maps of the
adjunction $(\overline{L}, \overline{R})$ are also equivalences.
\end{proof}

We now specialize to the case of interest, that of the adjunction $(\Res^G_H,
\CoInd_H^G)\colon \GSpec \rightleftarrows \HSpec$ for a \emph{finite} group $G$ and a
subgroup $H \leq G$.
Recall that $\Res^G_H$ is a symmetric monoidal functor, so that this does fit
into the preceding discussion. We begin by identifying the relevant commutative
algebra object $R(1_{\mathcal{D}})$.

\begin{cons}
Given any finite $G$-set $T$, the function spectrum $F(T_+, S^0_G)$
inherits a natural commutative algebra structure in $\GSpec$ since $T$ is tautologically
a \emph{commutative coalgebra} in $G$-spaces. This construction sends finite coproducts of
$G$-sets to \emph{products} in $\CAlg(\GSpec)$, and it carries a point to the
unit $S^0_G$. It is also compatible with restriction to subgroups.
Note that as $G$-spectra, $F(T_+, S^0_G) \simeq T_+$.

In case $T = G/H_+$, we would like to identify $F(G/H_+, S^0_G) \in \CAlg(\GSpec)$ with
$\CoInd_H^G(S^0_H)$ (which acquires a commutative algebra structure since
$\CoInd_H^G$ is lax symmetric monoidal). To do this, recall that giving a map of commutative
algebra objects $F(G/H_+, S^0_G) \to \CoInd_H^G(S^0_H)$ in $\CAlg(\GSpec)$ amounts to
giving a map $\Res^G_H( F(G/H_{+}, S^0_G)) \to S^0_H$ in $\CAlg(\HSpec)$. 
But this map can be obtained by using the fact that $G/H$ has a natural $H$-fixed
point, which gives (as in the previous paragraph) a decomposition of
$\Res^G_H F(G/H_+, S^0_G)$ as a product of $S^0_H$ and
another commutative algebra object. The map
$\Res^G_H( F(G/H_{+}, S^0_G)) \to S^0_H$ is the projection onto the $S^0_H$ piece. 
One now checks (at the level of underlying equivariant spectra) that the
adjoint map gives an equivalence
\begin{equation} \label{GHplus}  F(G/H_+, S^0_G) \simeq \CoInd_H^G(S^0_H) \in \CAlg(
\GSpec). \end{equation}
\end{cons}

The functor $\CoInd_H^G\colon \HSpec \to \GSpec$ is lax symmetric monoidal, 
and therefore there is a natural lax symmetric monoidal lifting
\[ \xymatrix{ 
 & \md_{ \GSpec}(F(G/H_+, S^0_G)) \ar[d]  \\
 \HSpec \ar[r]^{\CoInd_H^G} \ar[ru] &  \GSpec
}\]
where 
$\md_{ \GSpec}(F(G/H_+, S^0_G)) $ denotes the symmetric monoidal $\infty$-category
of modules in $\GSpec$ over $F(G/H_+, S^0_G) \simeq \CoInd_H^G(S^0_H)$.

\begin{thm}[Cf. Balmer-Dell'Ambrogio-Sanders \cite{BAS}]
\label{restensoring}
The functor $\HSpec \to \md_{\GSpec}(F(G/H_+, S^0_G))$ is an equivalence of symmetric monoidal
$\infty$-categories.
\end{thm}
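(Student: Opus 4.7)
The strategy is to apply \Cref{overadjunctionequiv} to the adjunction $(\Res^G_H, \CoInd_H^G) \colon \GSpec \rightleftarrows \HSpec$, in which $L = \Res^G_H$ is symmetric monoidal with lax symmetric monoidal right adjoint $R = \CoInd_H^G$. By \eqref{GHplus}, the commutative algebra $R(1_{\HSpec}) = \CoInd_H^G(S^0_H)$ is naturally identified with $F(G/H_+, S^0_G)$, so the induced adjunction $(\overline{L}, \overline{R})$ produced by \Cref{Derivedadj} is precisely the adjunction whose right adjoint is the functor $\HSpec \to \md_{\GSpec}(F(G/H_+, S^0_G))$ appearing in the statement. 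Thus it suffices to verify the three hypotheses of \Cref{overadjunctionequiv}.

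The projection formula is already recorded as the natural equivalence \eqref{eq:frob} of \Cref{prop:ind-coind}. For the claim that $\CoInd_H^G$ preserves all colimits, we use that $G$ is finite and invoke the Wirthm\"uller equivalence \eqref{eq:wirthmuller}, which identifies $\CoInd_H^G$ with $\Ind_H^G$; the latter is the left adjoint to $\Res^G_H$ and therefore automatically cocontinuous. For conservativity of $\CoInd_H^G$, it suffices (since $\Res^G_H$ is conservative by the definition of weak equivalences in $\GSpec$) to observe that $Y \in \HSpec$ is retained as a direct summand of $\Res^G_H \CoInd_H^G(Y) \simeq \Res^G_H \Ind_H^G(Y)$. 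Concretely, the double coset decomposition of $G/H$ as an $H$-set yields a wedge decomposition of $\Res^G_H \Ind_H^G(Y)$ indexed by $H\backslash G / H$, and the identity double coset contributes the summand $Y$ itself.

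With the three hypotheses in force, \Cref{overadjunctionequiv} immediately yields that $(\overline{\Res^G_H}, \overline{\CoInd_H^G})$ is an inverse equivalence of symmetric monoidal $\infty$-categories, which is the theorem. The only step with any content beyond assembling results proved earlier in the paper is the conservativity of $\CoInd_H^G$, which I expect to be the main technical point; it is however a direct consequence of the Wirthm\"uller identification \eqref{eq:wirthmuller} together with the classical Mackey/double-coset decomposition for $\Res^G_H \Ind_H^G$ (the identity double coset supplying a retraction onto $Y$).
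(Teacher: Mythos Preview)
Your proof is correct and follows essentially the same route as the paper: apply \Cref{overadjunctionequiv} to $(\Res^G_H,\CoInd_H^G)$, invoking \eqref{eq:frob} for the projection formula, the Wirthm\"uller isomorphism \eqref{eq:wirthmuller} for cocontinuity, and the retract $Y \hookrightarrow \Res^G_H\CoInd_H^G(Y)$ for conservativity. One small quibble: the parenthetical claim that $\Res^G_H$ is conservative is neither needed nor true in general; all the argument uses is that functors send contractible objects to contractible objects, so $\CoInd_H^G(Y)\simeq *$ forces $\Res^G_H\CoInd_H^G(Y)\simeq *$ and hence $Y\simeq *$.
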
 
\begin{proof} 
This is a consequence of \Cref{overadjunctionequiv}:

Firstly, we already observed that our adjunction satisfies the projection formula in \eqref{eq:frob}.

Secondly, by the Wirthm\"uller isomorphism \eqref{eq:wirthmuller}, $\CoInd_H^G \simeq \Ind_H^G$ is both a
left and a right adjoint, so it preserves all limits and colimits. 

Finally, we need to see that $\CoInd_H^G$ is conservative.
Suppose $Y \in \HSpec$ is such that $\CoInd_H^G(Y)$ is contractible. It follows that $\Res_H^G\CoInd_H^G(Y)$ is contractible. Since $G$ is finite, this contains $Y$ as a retract, so $Y$ is contractible.
\end{proof}

\newtheorem*{warning}{Warning}
\begin{warning}
Just as in \cite[Lem.\ 3.3]{BAS} the above argument still goes through for an arbitrary  compact Lie group $G$ if we assume that $H$ is a closed finite index subgroup of $G$. However, for more general $H$ the functor $\CoInd_H^G(\cdot)$ fails to be conservative. 

For example, consider the $C_2$-spectrum $X$ constructed as the 
image of the idempotent $\frac{1  - \tau}{2}$ on $(C_2)_+ \otimes\mathbb{Q}$,
where $\tau \in C_2$ is the nontrivial element.  The $C_2$-fixed point spectrum
of $(KU_{C_2}\wedge X\otimes \bQ)$ is  contractible  while
$(KU_{C_2}\wedge X\otimes \bQ)$ is non-equivariantly noncontractible. If we embed $C_2$ into $U(1)$  we obtain a coinduced $KU_{U(1)}$-module spectrum $\CoInd_{C_2}^{U(1)} (KU_{C_2}\wedge X\otimes \bQ)$ whose $U(1)$-fixed points are contractible. However, it follows from \Cref{KUtorus} that the coinduced spectrum is contractible. Hence $\CoInd_{C_2}^{U(1)}(\cdot)$ is not conservative. 
\end{warning}

\Cref{restensoring} is both philosophically and practically important to us:
it lets us identify the main concern in this paper (descent with respect to
the commutative algebra objects $F(G/H_+, S^0_G) \in \clg(\GSpec)$) as a form of descent with respect
to restriction of subgroups. 
It will also enable us to recast some of our results in
terms of restriction to subgroups.

\section{Completeness, torsion, and descent in $\GSpec$ }\label{sec:thick}

Let $G$ be a \emph{finite} group, and consider the symmetric monoidal, stable
$\infty$-category $\GSpec$ of $G$-spectra. In this section, we will consider
the phenomena of completeness, torsion, and descent (formulated abstractly in
the earlier sections) in $\GSpec$ with respect to commutative algebra objects of the form
$\left\{F(G/H_+, S^0_G)\right\}$ as $H$ ranges over a family (\Cref{family}) of
subgroups of $G$.  We will see that this theory is closely related to the Lewis-May geometric
fixed point functors. 
Next, we treat the decomposition of the $\infty$-categories of $\sF$-complete
spectra as a homotopy limit over the $\sF$-orbit category.
We then make the primary definition (\Cref{Fnildef}) of this
paper by introducing the notion of \emph{nilpotence} with respect to a family
of subgroups. 

\subsection{Families of subgroups and $\sF$-spectra}

\label{ss:familiespaper1}
We now review some further relevant terminology from equivariant 
homotopy theory. Let $G$ be a finite group.
\begin{definition} \label{family}
A \emph{family of subgroups} of $G$ is a nonempty collection $\sF$ of subgroups
of $G$ such that if $H \in \sF$ and if $H' \leq G$ is subconjugate to $H$,
then $H' \in \sF$. 
Given a family $\sF$, we will let $A_{\sF} = \prod_{H \in \sF}
F(G/H_+, S^0_G) \in \CAlg(\GSpec)$.
\end{definition} 

Important examples of families (which will arise in practice) include the
families of $p$-subgroups, abelian subgroups, elementary abelian subgroups, etc.

\begin{definition} 
\label{Fspectra}
Fix a family of subgroups $\sF$. Then: 
\begin{enumerate}
\item  
A $G$-spectrum is \emph{$\sF$-torsion} (or an \emph{$\sF$-spectrum}) if it
belongs to the smallest
localizing subcategory\footnote{In this case, this is equivalent to the smallest localizing $\otimes$-ideal.} of
$\GSpec$ containing the $\left\{F(G/H_+, S^0_G)\simeq G/H_+ \right\}_{H \in
\sF}$. In other words, the $G$-spectrum is $A_{\sF}$-torsion.
\item
A $G$-spectrum is \emph{$\sF$-complete}
 if it is $A_{\sF}$-complete. 
\item 
A $G$-spectrum is \emph{$\sF^{-1}$-local} if it is $A_{\sF}^{-1}$-local. 
\item The \emph{$\sF$-completion, $\sF$-acyclization}, and
\emph{$\sF^{-1}$-localization} functors (on $\GSpec$) respectively are the $A_{\sF}$-completion,
$A_{\sF}$-acyclization, and $A_{\sF}^{-1}$-localization functors.
\end{enumerate}
\end{definition}

\Cref{Fspectra} is certainly not new. 
Before we get to our main goal (\Cref{Fnildef} below), we write down the localization, completion, and acyclization
functors explicitly (compare \cite[II.2, II.9]{LMSM86} and \cite[Def.
IV.6.1]{MaM02}). 
In doing so, the following construction will be useful. 

\begin{cons}
We associate to $\sF$ a $G$-space $E\sF$ and a pointed $G$-space $\wt{E}\sF$
which are determined up to weak equivalence by the following properties \cite[\S V.4]{May96}:
\begin{equation}
	E\sF^K  \simeq \begin{cases} * & \mbox{ if } K\in \sF\\
	 \emptyset & \mbox{otherwise}
  \end{cases}\label{eq:EF-univ-prop}, \quad \quad
 	\wt{E}\sF^K  \simeq \begin{cases} * & \mbox{ if } K\in \sF\\
	 S^0 & \mbox{otherwise.}  \end{cases}
\end{equation}

\end{cons}

The relevance of this definition to us is given in the following proposition.
\begin{prop} \label{EFlocalization}
The $\sF^{-1}$-localization $S^0_G[A_{\sF}^{-1}] \in \GSpec$ is given by the
suspension spectrum $\Sigma^\infty\widetilde{E}\sF $.
The $\sF$-acyclization $\AC_{A_{\sF}}(S^0_G)$ is given by $\Sigma^\infty_+ E \sF$.
\end{prop}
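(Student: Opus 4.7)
The strategy is to exhibit the standard cofiber sequence $\Sigma^\infty_+ E\sF \to S^0_G \to \Sigma^\infty \widetilde{E}\sF$ (induced by collapsing $E\sF$ to a point, which is one of the standard constructions of $\widetilde{E}\sF$) as the arithmetic cofiber sequence $\AC_{A_{\sF}}(S^0_G) \to S^0_G \to S^0_G[A_{\sF}^{-1}]$. By uniqueness of the acyclization/localization sequence, it suffices to check two things: that $\Sigma^\infty_+ E\sF \in \Ator$ for $A = A_{\sF}$ (equivalently, lies in the localizing subcategory generated by the $G/H_+$, $H \in \sF$), and that $\Sigma^\infty \widetilde{E}\sF$ is $A_{\sF}^{-1}$-local.

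For the first claim, I would use that $E\sF$ admits a $G$-CW structure with cells only of orbit types $G/H$ for $H \in \sF$ (this is essentially the defining feature \eqref{eq:EF-univ-prop}). Writing $E\sF$ as a filtered colimit of its skeleta, each obtained by a pushout from copies of $G/H_+ \wedge S^{n-1} \to G/H_+ \wedge D^n$ with $H \in \sF$, we obtain $\Sigma^\infty_+ E\sF$ in the localizing subcategory generated by $\{G/H_+\}_{H \in \sF}$. Since $G$ is finite and $\sF$ is finite, this localizing subcategory agrees with $(\GSpec)_{A_{\sF}\text{-tors}}$: the generator $A_{\sF} = \prod_{H \in \sF} F(G/H_+, S^0_G)$ is, as a $G$-spectrum, equivalent to a finite product (equivalently wedge) of the $G/H_+$, so the two candidate sets of generators coincide.

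For the second claim, fix $H \in \sF$; I need to show $\widetilde{E}\sF \wedge G/H_+ \simeq 0$. By \eqref{eq:frob2} and \eqref{ind:sphere}, there is a natural equivalence $\widetilde{E}\sF \wedge \Sigma^\infty_+ G/H \simeq \Ind_H^G \Res_H^G \widetilde{E}\sF$. Because $\sF$ is a family, every subgroup $K \leq H$ lies in $\sF$, so by \eqref{eq:EF-univ-prop} the underlying $H$-space $\Res_H^G \widetilde{E}\sF$ has $\widetilde{E}\sF^K \simeq \ast$ for all $K \leq H$, hence $\Res_H^G \widetilde{E}\sF \simeq 0$ in $\HSpec$, and thus its induction is contractible. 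As $A_{\sF}$ is a finite product of the $F(G/H_+, S^0_G) \simeq G/H_+$ with $H \in \sF$, we conclude $\widetilde{E}\sF \wedge A_{\sF} \simeq 0$, i.e., $\Sigma^\infty \widetilde{E}\sF$ is $A_{\sF}^{-1}$-local.

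Finally, to close the argument, I would invoke the construction of the fracture cofiber sequence $\AC_A X \to X \to X[A^{-1}]$: combining the acyclization functor of the previous section with $A^{-1}$-localization, this cofiber sequence is characterized up to unique equivalence by the first term lying in $\Ator$ and the third being $A^{-1}$-local. Since both the geometric cofiber sequence $\Sigma^\infty_+ E\sF \to S^0_G \to \Sigma^\infty \widetilde{E}\sF$ and the arithmetic cofiber sequence share this property, the two sequences are canonically equivalent, yielding both identifications simultaneously. The only mildly subtle point is the identification of the two candidate generating classes for $\Ator$ (the $G/H_+$ for $H \in \sF$ versus the single object $A_{\sF}$), which is immediate here because $G$ is finite.
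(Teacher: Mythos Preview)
Your proof is correct and follows essentially the same approach as the paper. Both arguments use the cofiber sequence $\Sigma^\infty_+ E\sF \to S^0_G \to \Sigma^\infty\widetilde{E}\sF$, establish that $\Sigma^\infty_+ E\sF$ is $A_{\sF}$-torsion via the $G$-CW decomposition of $E\sF$, and then verify the remaining condition by restricting to subgroups $H \in \sF$; the only cosmetic difference is that the paper phrases the second step as showing the map $\Sigma^\infty_+ E\sF \to S^0_G$ becomes an equivalence after tensoring with $A_{\sF}$ (invoking \Cref{restensoring} to pass to restrictions), whereas you check directly that $\widetilde{E}\sF \wedge G/H_+ \simeq 0$ via the projection formula---these are the two sides of the same cofiber.
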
 
\begin{proof} 
It suffices to prove the second claim because there is a fiber sequence
of $G$-spectra  $\Sigma^\infty_+E\sF\longrightarrow S^0_G\longrightarrow \Sigma^\infty\widetilde{E}\sF$.
The (unpointed) $G$-space $E \sF$ admits a cell
decomposition with cells of the form $G/H$ for $H \in \sF$, so $\Sigma^\infty_+
E \sF$ is  $A_{\sF}$-torsion.
Now, it suffices to show that the map $\Sigma^\infty_+ E \sF \to S^0_G$ becomes
an equivalence after tensoring with $A_{\sF}$. Equivalently, in view of
\Cref{restensoring}, it should become
an equivalence after \emph{restricting} to any subgroup $H \in \sF$. However, $E
\sF$ is equivariantly contractible after such a restriction, so the claim
follows.
\end{proof} 

Note also that the classical simplicial model  of $E \sF$ as the geometric
realization $|X^{\bullet + 1}|$ for $X =
\bigsqcup_{H \in \sF} G/H$ reproduces the simplicial model of the $A_{\sF}$-acyclization as given in
\Cref{U_iseq}.

 Similarly, one sees easily that:
\begin{prop}\label{prop:completion}
The $\sF$-completion of a $G$-spectrum $X$ is given by the internal mapping
spectrum $F(E\sF_+, X)$.
\end{prop}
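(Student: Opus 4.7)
I will verify directly that $F(E\sF_+, X)$, equipped with the natural map $\eta\colon X \simeq F(S^0, X) \to F(E\sF_+, X)$ induced by the collapse $E\sF_+ \to S^0$ of pointed $G$-spaces, satisfies the defining properties of the $A_{\sF}$-completion of $X$: namely, (a) the target is $A_{\sF}$-complete, and (b) the map $\eta$ becomes an equivalence after smashing with $A_{\sF}$ (equivalently, its fiber is $A_{\sF}^{-1}$-local).

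For (a), I will appeal to \Cref{EFlocalization}, which identifies $\Sigma^\infty_+ E\sF$ with the $A_\sF$-acyclization of $S^0_G$ and in particular places it in the torsion subcategory for $A = A_\sF$. Since torsion forms a $\otimes$-ideal (\Cref{Atorideal}), any $A_\sF$-acyclic spectrum $Z \in \GSpec$ satisfies $Z \wedge E\sF_+ \simeq 0$. The internal adjunction then gives
\[
\hom_{\GSpec}(Z, F(E\sF_+, X)) \simeq \hom_{\GSpec}(Z \wedge E\sF_+, X) \simeq 0,
\]
so that $F(E\sF_+, X)$ is $A_\sF$-complete by definition.

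For (b), the cofiber sequence of pointed $G$-spaces $E\sF_+ \to S^0 \to \widetilde{E}\sF$ (immediate on fixed points from \eqref{eq:EF-univ-prop}) gives a fiber sequence $F(\widetilde{E}\sF, X) \to X \xrightarrow{\eta} F(E\sF_+, X)$, so it remains to show that $F(\widetilde{E}\sF, X)$ is $A_\sF$-acyclic. Since $A_\sF$ is a finite product of the $F(G/H_+, S^0_G) \simeq G/H_+$, this reduces via the projection formula \eqref{eq:frob2} to showing that $\Res^G_H F(\widetilde{E}\sF, X) \simeq 0$ for every $H \in \sF$. Restriction is both a left and a right adjoint and preserves internal hom (combining the projection formula with adjointness), so the vanishing becomes $F(\Res^G_H \widetilde{E}\sF, \Res^G_H X) \simeq 0$. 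Finally, because $\sF$ is a family, every subgroup $K \leq H$ lies in $\sF$, so by \eqref{eq:EF-univ-prop} each fixed-point space $\widetilde{E}\sF^K$ is a point and $\Res^G_H \widetilde{E}\sF$ is $H$-equivariantly contractible.

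The only step requiring a moment of care is the translation between $A_\sF$-acyclicity and vanishing upon restriction to the subgroups in $\sF$; this is clean once one combines \Cref{restensoring} with the projection formula, so no serious obstacle arises.
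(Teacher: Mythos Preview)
Your proof is correct and follows precisely the approach the paper has in mind: the paper does not spell out a proof but merely writes ``Similarly, one sees easily that'' after \Cref{EFlocalization}, and your two-step verification (completeness of $F(E\sF_+,X)$ via the torsion property of $E\sF_+$, and $A_{\sF}$-equivalence of $X\to F(E\sF_+,X)$ via restriction to subgroups in $\sF$) is exactly the argument dual to that of \Cref{EFlocalization}. Your care in justifying that restriction commutes with internal hom (via the projection formula and the $(\Ind,\Res)$ adjunction) and that $\Ind_H^G$ is conservative is more detail than the paper provides, but entirely in its spirit.
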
 

By \Cref{ourdwg}, we have an equivalence of $\infty$-categories between
$\sF$-torsion spectra and $\sF$-complete spectra (given by $\sF$-completion). 
\subsection{Geometric fixed points}

The purpose of this subsection is to review the relationship between the
Lewis-May \emph{geometric fixed points} functor and the theory of
$A^{-1}$-localization. 
Recall first that $\GSpec$ is a presentable, symmetric monoidal, stable
$\infty$-category. As such, it receives a canonical symmetric monoidal,
colimit-preserving functor
\[ i_* \colon \Sp \to \GSpec.  \]

\begin{definition} 
The lax symmetric monoidal functor $\GSpec \to \Sp$ right adjoint to $i_*$ is called the functor of
\emph{categorical fixed points,} and will be denoted by $X \mapsto X^G$ or $i_G^* X$.
More generally, given a subgroup $H \leq G$, 
the composition $\GSpec \xrightarrow{\Res^G_H} \HSpec \xrightarrow{(\cdot)^H}\Sp$ is called the
functor of \emph{categorical $H$-fixed points,} and is denoted $X \mapsto X^H$ or $i_H^* X$.
\end{definition} 

\begin{remark} 
The same notation is used when $G$ is a compact Lie group (so that $H$ is now
required to be a closed
subgroup). 
\end{remark} 

The fixed point functors $i_H^*$, for $H \leq G$, are corepresented by $G/H_+$;  for $H
= G$ this follows because $S^0_G$ is the unit, and in general it follows by the
adjunction between induction and restriction. 
Since all $\infty$-categories in question are compactly generated, and $i_*$ preserves
compact objects (the sphere is compact in $\HSpec$ too), it follows that
the functor of categorical fixed points (for any subgroup $H \leq G$)
preserves colimits. As right adjoints, the functors $\{i^*_H\}_{H \leq G}$ of course preserve
limits. We note also the relation
\begin{equation} \label{coindfixed} ( \Coind_H^G X)^G \simeq X^H, \quad X \in
\HSpec,  \end{equation}
which follows easily by universal properties.

The functor $i_G^* = (\cdot)^G$ is only lax symmetric monoidal, and has a
nontrivial value on $G_+$ (in fact, by \eqref{coindfixed}, one sees
easily $(G_{+})^G
\simeq S^0$). To obtain a fixed point functor with the expected geometric properties, we first force the non-trivial orbits to be contractible via localization before taking categorical fixed points.

\begin{cons} 
Let $\mathcal{P}_G$ denote the family of proper subgroups of $G$.
Consider as before $A_{\mathcal{P}_G} = \prod_{H \lneq G} F(G/H_+, S^0_G) \in \CAlg( \GSpec)$ and form the
$A_{\mathcal{P}_G}^{-1}$-localization $\GSpec[{\mathcal{P}_G}^{-1}]$, which receives a symmetric monoidal functor
$\GSpec \to \GSpec[{\mathcal{P}_G}^{-1}]$ that annihilates the $G/H_+$ for $H\lneq G$. 

\end{cons} 

An important result in the theory \cite[Cor. II.9.6]{LMSM86} is that the localization 
$\GSpec[{\mathcal{P}_G}^{-1}]$ recovers the ordinary $\infty$-category of non-equivariant
spectra. 
\begin{thm}[] 
\label{geometricfixedptsthm}
The composite functor $\sp \xrightarrow{i_*} \GSpec \xrightarrow{(\cdot)[A_{\mathcal{P}_G}^{-1}]} \GSpec[{\mathcal{P}_G}^{-1}]$ is an equivalence of
symmetric monoidal $\infty$-categories, with inverse given by the fully
faithful embedding $\GSpec[{\mathcal{P}_G}^{-1}]\subset \GSpec$ followed by categorical
fixed points.
\end{thm}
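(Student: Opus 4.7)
The plan is to exhibit the composite $F\colon \sp \to \GSpec[{\mathcal{P}_G}^{-1}]$ as a cocontinuous, symmetric monoidal functor whose right adjoint $R$ is conservative and for which the unit $\mathrm{id} \to RF$ is an equivalence on the sphere; these three properties together force $F$ to be an equivalence with inverse $R$. That $F$ is symmetric monoidal and cocontinuous is automatic: $i_*$ is symmetric monoidal and cocontinuous by construction, and $A_{\mathcal{P}_G}^{-1}$-localization enjoys the same properties because $A^{-1}$-localization is smashing (\Cref{sec:A-1local}). Passing to right adjoints identifies $R$ with the composite $\GSpec[{\mathcal{P}_G}^{-1}] \hookrightarrow \GSpec \xrightarrow{(\cdot)^G} \sp$, which is exactly the functor claimed as the inverse.

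Next, we show $R$ is conservative. Let $X \in \GSpec[{\mathcal{P}_G}^{-1}]$ with $X^G \simeq 0$. For each proper $H \lneq G$, the object $G/H_+$ is $A_{\mathcal{P}_G}$-torsion by construction, and since $X$ is $A_{\mathcal{P}_G}^{-1}$-local we get $X^H \simeq \hom_{\GSpec}(G/H_+, X) \simeq 0$. Together with $X^G \simeq 0$, all categorical fixed-point spectra of $X$ vanish, so $X \simeq 0$.

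It remains to verify that $F$ is fully faithful. Because $F$ is cocontinuous and takes compact objects to compact objects (smashing localization of a compactly generated stable $\infty$-category preserves compactness, so $F(S) \simeq \Sigma^\infty \widetilde{E}\mathcal{P}_G$ is compact in the target), the right adjoint $R$ commutes with filtered colimits and hence, by stability, with all colimits. Thus $RF$ is a cocontinuous endofunctor of $\sp$, and it suffices to check that the unit $S \to RF(S)$ is an equivalence. By \Cref{EFlocalization}, this is the map $S \to (\Sigma^\infty \widetilde{E}\mathcal{P}_G)^G$, and its being an equivalence is precisely the classical result \cite[Cor.\ II.9.6]{LMSM86} that the geometric fixed points of the equivariant sphere recover the non-equivariant sphere; this is the single non-formal ingredient and the main obstacle. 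With fully faithfulness and conservativity of $R$ established, standard formalism (a fully faithful left adjoint is a localization onto its essential image, which must be the whole target when the right adjoint is conservative) yields that $F$ is a symmetric monoidal equivalence with inverse $R$.
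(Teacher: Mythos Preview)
Your proof is correct and reduces to the same key ingredient as the paper's: the identification $(\Sigma^\infty\widetilde{E}\mathcal{P}_G)^G \simeq S^0$. The organizational difference is that the paper phrases the reduction via the symmetric monoidal Schwede--Shipley theorem \cite[Prop.~7.1.2.7]{Lur14} (the localized category is generated by its compact unit, hence is modules over its endomorphism ring), whereas you phrase it as checking that the unit of the adjunction $(F,R)$ is an equivalence on $S$ together with conservativity of $R$. These two packagings are equivalent; your conservativity argument (that $G/H_+$ for $H\lneq G$ is $A_{\mathcal{P}_G}$-torsion, hence maps trivially into any $A_{\mathcal{P}_G}^{-1}$-local object) is exactly dual to the paper's observation that all proper orbits become zero in the localization so the unit alone generates.

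The one substantive difference is that the paper \emph{proves} the identification $S^0\simeq (\Sigma^\infty\widetilde{E}\mathcal{P}_G)^G$ directly, by computing
\[
\Omega^\infty i_G^*\,\Sigma^\infty\widetilde{E}\mathcal{P}_G \simeq \varinjlim_V \hom_{\GSpt}(S^V,\Sigma^V\widetilde{E}\mathcal{P}_G) \simeq \varinjlim_V \hom_{\mathcal{S}_*}(S^{V^G},S^{V^G}),
\]
using that $\widetilde{E}\mathcal{P}_G$ has contractible $H$-fixed points for $H\lneq G$, whereas you defer this to \cite[Cor.~II.9.6]{LMSM86}. Since the theorem is itself attributed to that reference, your citation is not circular, but if the intent is a self-contained argument in the $\infty$-categorical framework you should supply this short computation rather than invoke it.
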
 

\begin{proof} 
Observe that, as a localization of $\GSpec$, the $\infty$-category
$\GSpec[{\mathcal{P}_G}^{-1}]$ is generated as a localizing subcategory by
the images of $\left\{G/H_+\right\}_{H \leq G}$. By definition, however, we have
forced the non-trivial orbits to be contractible.
In other words, $\GSpec[{\mathcal{P}_G}^{-1}]$ is generated as a localizing
subcategory by
the unit $\Sigma^\infty\widetilde{E}\mathcal{P}_G$, which is the
$A_{\mathcal{P}_G}^{-1}$-localization of the equivariant sphere by \Cref{EFlocalization}.
In particular, $\GSpec[{\mathcal{P}_G}^{-1}]$ is a symmetric monoidal, stable
$\infty$-category where the unit object is a compact generator. 

By the symmetric monoidal version of the Schwede-Shipley theorem
\cite[Prop.~7.1.2.7]{Lur14}, 
it suffices to show that the categorical fixed points of $\Sigma^\infty
\widetilde{E}
\mathcal{P}_G$  are $S^0$. Here we use that
$\Sigma^\infty\widetilde{E}\mathcal{P}_G$ is the suspension spectrum of a
space, so $i^*_G \widetilde{E}\mathcal{P}_G$ is connective and we have
equivalences of spaces
\[ \Omega^\infty i^*_G \widetilde{E}\mathcal{P}_G = \hom_{\GSpec}(
S^0_G,\Sigma^\infty\widetilde{E}\mathcal{P}_G)
\simeq \varinjlim_V\hom_{\GSpt}(S^V, \Sigma^V\widetilde{E}\mathcal{P}_G
),\]
where $V$ ranges over finite-dimensional orthogonal representations of $G$ and $S^V$ denotes the
one-point compactification of $V$. 
However, for any $V$, with fixed vectors $V_0  = V^G$, we have homotopy equivalences
of mapping spaces
\[  \hom_{\GSpt}(S^V, \Sigma^V\widetilde{E}\mathcal{P}_G)
\simeq 
\hom_{\GSpt}(S^{V_0}, \Sigma^V\widetilde{E}\mathcal{P}_G)
\simeq
\hom_{\GSpt}(S^{V_0}, \Sigma^{V_0}\widetilde{E}\mathcal{P}_G)
= \hom_{\mathcal{S}_{\ast}}(S^{V_0}, S^{V_0})
, \]
 as the pointed $G$-space $\widetilde{E}\mathcal{P}_G$ has
 contractible $H$-fixed points 
 for $H \neq G$. In particular, we find that 
 the natural map of spectra $S^0 \to i^*_G
 \widetilde{E} \mathcal{P}_G$ is an equivalence.
\end{proof} 

\begin{definition} 
 The composition $$\GSpec \xrightarrow{(\cdot)[A_{\mathcal{P}_G^{-1}}]}\GSpec[\mathcal{P}_G^{-1}] 
\subseteq \Sp_G\xrightarrow{(\cdot)^G}\Sp,$$ is called the 
\emph{geometric fixed points functor} and is denoted $\Phi^G$. 

By construction,
$\Phi^G$ is a symmetric monoidal, colimit-preserving functor $\Phi^G \colon
\GSpec \to \Sp$. 
More generally, for $H \leq G$, we define a symmetric monoidal functor $\GSpec \xrightarrow{\Res^G_H}
\HSpec \xrightarrow{\Phi^H} \sp$, which we will write as $\Phi^H$ and call the 
\emph{geometric $H$-fixed points functor.}
\end{definition}

We thus recover the following classical result. 
\begin{prop} 
A $G$-spectrum $M$ is contractible if and only if $\Phi^H M \in \sp$ is
contractible for every subgroup $H \leq G$.
\end{prop}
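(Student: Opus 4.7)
The plan is to prove the non-trivial direction (if $\Phi^H M$ is contractible for all $H \leq G$, then $M \simeq 0$) by induction on $|G|$. The base case $G = \{e\}$ is immediate since $\Phi^{\{e\}}$ is the identity on $\mathrm{Sp}$. For the inductive step, I would first handle all \emph{proper} subgroups via restriction, and then exploit the fracture-type decomposition coming from the family $\sP_G$ of proper subgroups to handle $G$ itself.

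For the reduction to proper subgroups, fix $K \lneq G$ and consider $\Res^G_K M \in \KSpec$. For any $H \leq K$, one has $\Phi^H(\Res^G_K M) \simeq \Phi^H M \simeq 0$ directly from the definition of $\Phi^H$ (which factors through restriction). The inductive hypothesis applied to the smaller group $K$ then gives $\Res^G_K M \simeq 0$ for every proper $K \lneq G$. By the Wirthm\"uller isomorphism \eqref{eq:wirthmuller} and \eqref{ind:sphere}, this yields $G/K_+ \wedge M \simeq \Ind_K^G \Res^G_K M \simeq 0$ for every proper $K$.

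Next, consider the cofiber sequence of $G$-spectra
\[ \Sigma^\infty_+ E\sP_G \wedge M \longrightarrow M \longrightarrow \Sigma^\infty \widetilde{E}\sP_G \wedge M \]
coming from \Cref{EFlocalization}. The left-hand term $\Sigma^\infty_+ E\sP_G \wedge M$ lies in the localizing $\otimes$-ideal generated by $\{G/K_+ \wedge M\}_{K \lneq G}$, each of which is contractible by the previous step; hence this term is contractible, and the second map is an equivalence $M \simeq \Sigma^\infty \widetilde{E}\sP_G \wedge M$.

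Finally, $\Sigma^\infty \widetilde{E}\sP_G \wedge M$ is precisely the $A_{\sP_G}^{-1}$-localization of $M$ (again by \Cref{EFlocalization}), so by \Cref{geometricfixedptsthm} it corresponds under the equivalence $\Sp \simeq \GSpec[\sP_G^{-1}]$ to the non-equivariant spectrum $\Phi^G M$, which vanishes by hypothesis. Therefore $M \simeq 0$, completing the induction. There is no serious obstacle here; the only point requiring care is the interplay between geometric fixed points and restriction, which is built into the definition of $\Phi^H$ as $\Phi^H = \Phi^H \circ \Res^G_H$ for $H \leq G$, so that the inductive hypothesis can be applied cleanly to restrictions to proper subgroups.
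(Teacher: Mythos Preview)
Your proof is correct and follows essentially the same approach as the paper: induction on $|G|$, reducing to contractibility of restrictions to proper subgroups, and then using the $\mathcal{P}_G$-fracture decomposition together with \Cref{geometricfixedptsthm} to conclude. The only cosmetic difference is that the paper phrases the final step via \Cref{checkequiv} (a morphism is an equivalence iff it is after tensoring with $A_{\mathcal{P}_G}$ and after $A_{\mathcal{P}_G}^{-1}$-localization), whereas you argue directly with the cofiber sequence $\Sigma^\infty_+ E\mathcal{P}_G \wedge M \to M \to \Sigma^\infty \widetilde{E}\mathcal{P}_G \wedge M$; these are equivalent packagings of the same idea.
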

\begin{proof} 
Suppose $\Phi^H M $ is contractible for each $H \leq G$. By induction on
$G$, we may assume that $\res^G_H M$ is contractible for every $H\lneq G$. 
If we let $A_{\mathcal{P}_G} = \prod_{H \lneq G} F(G/H_+, S^0_G)$ as before,
then we have that $M \wedge A_{\mathcal{P}_G} \in \GSpec$
is contractible, using \Cref{restensoring}. Our assumption $\Phi^G M\cong *$ implies 
via \Cref{geometricfixedptsthm} that $M[A_{\mathcal{P}_G}^{-1}]$ is
contractible, so that $M$ must be contractible too (\Cref{checkequiv}). 
\end{proof} 

\subsection{Example: Borel-equivariant spectra} \label{sec:Borel}

Our next goal is to identify the $\infty$-categories of $\sF$-\emph{complete}
objects as a certain limit.
We begin with the most basic case, when the family consists only of the
trivial subgroup. 

\begin{definition} 
A $G$-spectrum $M \in \GSpec$ is said to be \emph{Borel-equivariant} (or
\emph{Borel-complete} or \emph{cofree}) if it is complete with
respect to the trivial family $\sF = \left\{\left\{1\right\}\right\}$, i.e., complete for the algebra object
$F(G_+, S^0_G)
\in \GSpec$. Equivalently, by \Cref{prop:completion}, $M$ is Borel-equivariant if and only if the natural
map
\[ M \to F(EG_+, M)  \]
is an equivalence in $\GSpec$. The Borel-equivariant $G$-spectra span a full
subcategory $\Gbs \subset \GSpec$.
\end{definition}

\begin{prop}\label{prop:characterizeborel}
A $G$-spectrum $M \in \GSpec$ is Borel-equivariant if and only if for every $X \in \GSpec$
which is nonequivariantly contractible,\footnote{That is, such that
$\res^G_{\left\{1\right\}}
X \in \Sp$ is contractible.} we have $\hom_{\GSpec}(X, M) \simeq
\ast$.
\end{prop}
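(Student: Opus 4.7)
The plan is to directly unpack the definition of Borel-equivariant in terms of $A$-completeness for $A=F(G_+,S^0_G)$, and then translate the condition ``$X\otimes A\simeq 0$'' into the condition ``$X$ is nonequivariantly contractible.''

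By definition, $M$ is Borel-equivariant iff it is $A$-complete for $A=F(G_+,S^0_G)$. By the definition of $A$-completeness, this is equivalent to requiring $\hom_{\GSpec}(X,M)\simeq \ast$ for every $X$ with $X\wedge A\simeq 0$. So the entire content reduces to showing that the class of $X$ with $X\wedge F(G_+,S^0_G)\simeq 0$ coincides with the class of nonequivariantly contractible $G$-spectra.

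For this identification, I would observe first that $F(G_+,S^0_G)\simeq G_+$ in $\GSpec$ since $G$ is a finite set, and that under the identification $F(G_+,S^0_G)\simeq \CoInd_{\{1\}}^G(S^0)$ from \eqref{GHplus} together with the Wirthm\"uller isomorphism \eqref{eq:wirthmuller}, one has $F(G_+,S^0_G)\simeq \Ind_{\{1\}}^G(S^0)$. The projection formula \eqref{eq:frob2} then gives
\[
X\wedge F(G_+,S^0_G)\;\simeq\;X\wedge \Ind_{\{1\}}^G(S^0)\;\simeq\;\Ind_{\{1\}}^G\bigl(\Res^G_{\{1\}}(X)\bigr).
\]
Thus $X\wedge A\simeq 0$ iff $\Ind_{\{1\}}^G(\Res^G_{\{1\}}X)\simeq 0$, and since the underlying nonequivariant spectrum of $\Ind_{\{1\}}^G(Z)$ is $\bigvee_{g\in G} Z$ (which contains $Z$ as a retract), this holds iff $\Res^G_{\{1\}}(X)\simeq 0$, i.e., iff $X$ is nonequivariantly contractible.

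There is no real obstacle; the main point to be careful about is just the identification $F(G_+,S^0_G)\simeq \Ind_{\{1\}}^G(S^0)$ and the application of the projection formula to rewrite smashing with $A$ as induction of the restriction. Combining the two equivalences gives the claim.
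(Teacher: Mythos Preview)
Your proof is correct and follows essentially the same approach as the paper: both reduce to showing that $X\wedge F(G_+,S^0_G)\simeq 0$ if and only if $\Res^G_{\{1\}}X\simeq 0$. The only difference is packaging: the paper cites the full monadic equivalence $\Sp\simeq \md_{\GSpec}(F(G_+,S^0_G))$ of \Cref{restensoring} (under which tensoring with $A$ becomes restriction), whereas you unpack this directly via the projection formula and the conservativity of $\Ind_{\{1\}}^G$, which are precisely the ingredients used to prove \Cref{restensoring}.
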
 
\begin{proof} 
Indeed, for $X \in \GSpec$ being nonequivariantly contractible is equivalent to
the condition that $G_+ \wedge X \in \GSpec$ is contractible
in view of \Cref{restensoring}. As a result, the condition that 
$\hom_{\GSpec}(X, M) \simeq
\ast$
for every nonequivariantly contractible $X$ 
is \emph{precisely}
the condition of $A_{\left\{1\right\}}=F(G_+, S^0_G)$-completeness. 
\end{proof} 

\begin{prop} 
Suppose $M \in \GSpec$ is Borel-equivariant. Then for all $H \leq G$,
$\res^G_H M \in \HSpec$ is Borel-equivariant.
\end{prop}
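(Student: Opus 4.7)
The plan is to use the intrinsic characterization of Borel-equivariance from \Cref{prop:characterizeborel}, namely that $N \in \HSpec$ is Borel-equivariant if and only if $\hom_{\HSpec}(Y, N) \simeq \ast$ for every $Y \in \HSpec$ that is nonequivariantly contractible. So I would fix such a $Y$ and try to show that $\hom_{\HSpec}(Y, \res^G_H M) \simeq \ast$.

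The first step is to transport the mapping space back up to $\GSpec$ via the induction/restriction adjunction $(\Ind_H^G, \res^G_H)$ from \Cref{prop:ind-coind}, giving
\[ \hom_{\HSpec}(Y, \res^G_H M) \simeq \hom_{\GSpec}(\Ind_H^G Y, M). \]
Applying the characterization of \Cref{prop:characterizeborel} to $M$, it then suffices to verify that $\Ind_H^G Y$ is itself nonequivariantly contractible.

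The second step is to verify this contractibility claim. Since $G$ is finite and $H \leq G$, the Wirthmüller equivalence \eqref{eq:wirthmuller} identifies $\Ind_H^G Y \simeq \Coind_H^G Y$; restricting back along $\{1\} \leq G$ and using the standard double-coset formula (or directly, the explicit model of $\Ind_H^G Y$ as $G_+ \wedge_H Y$), one sees that $\res^G_{\{1\}} \Ind_H^G Y$ is the finite wedge $\bigvee_{G/H} \res^H_{\{1\}} Y$, hence contractible as soon as $\res^H_{\{1\}} Y$ is.

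The two steps together conclude the proof. There is really no serious obstacle here; the only technical point to handle carefully is confirming that induction of a nonequivariantly contractible $H$-spectrum is nonequivariantly contractible as a $G$-spectrum, but for finite groups this is immediate from the finite-wedge description of $\res^G_{\{1\}} \circ \Ind_H^G$. A stylistic alternative would have been to argue directly at the space level using $\res^G_H EG \simeq EH$ (both satisfy the universal property \eqref{eq:EF-univ-prop} for the trivial family of $H$) together with \Cref{prop:completion}, but the adjunction argument above is shorter and avoids checking that restriction commutes with the function spectrum $F(EG_+, -)$.
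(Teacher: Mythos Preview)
Your proof is correct and follows essentially the same route as the paper: both use \Cref{prop:characterizeborel}, the $(\Ind_H^G,\Res^G_H)$ adjunction, and the observation that $\Res^G_{\{1\}}\Ind_H^G Y$ is a finite wedge of copies of $\Res^H_{\{1\}} Y$. The invocation of the Wirthm\"uller isomorphism is unnecessary (the wedge description of the underlying spectrum of $\Ind_H^G Y$ is immediate), but harmless.
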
 
\begin{proof} 

Using \Cref{prop:characterizeborel} we need to show that for every $X\in\HSpec$ with 
$\Res^H_{\{1\}} X\cong *$, the mapping space $\hom_{\HSpec}(X,\Res^G_H M)$ is contractible.

This mapping space is always equivalent to $\hom_{\GSpec}(\Ind_H^G X,M)$ and since 
$\Res^G_{\{1\}}\Ind_H^G X$ is a wedge of copies of $\Res^H_{\{1\}} X$, and hence contractible, 
the lastly displayed mapping space is indeed contractible, using the assumption that $M$
is Borel-equivariant and \Cref{prop:characterizeborel} again.
\end{proof} 

The main result is that the $\infty$-category of Borel-equivariant spectra can be described as the $\infty$-category of spectra with a $G$-action.
\begin{prop} 
\label{Borelequiv}
We have a canonical equivalence of symmetric monoidal $\infty$-categories $\Gbs \simeq \fun(BG, \sp)$.
\end{prop}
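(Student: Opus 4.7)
The plan is to apply the descent-theoretic decomposition of $\Acp$ from \Cref{Acplimit}, with $A = F(G_+, S^0_G) \in \CAlg(\GSpec)$ governing Borel completeness, and to identify the resulting totalization with $\fun(BG, \Sp)$ via the bar construction. First I would verify that $A$ is dualizable in $\GSpec$: by \eqref{GHplus} and the Wirthm\"uller equivalence \eqref{eq:wirthmuller}, $A \simeq \Coind_1^G(S^0) \simeq \Ind_1^G(S^0) \simeq \Sigma^\infty_+ G$, which is the suspension spectrum of the compact $G$-space $G$ and hence dualizable. This lets me invoke \Cref{Acplimit} to obtain
\[ \Gbs \simeq \mathrm{Tot}\left(\md_{\GSpec}(A) \rightrightarrows \md_{\GSpec}(A^{\otimes 2}) \triplearrows \cdots\right) \]
as symmetric monoidal $\infty$-categories.

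Next I would identify each level of this cosimplicial diagram. Since the assignment $T \mapsto F(T_+, S^0_G)$ takes cartesian products of finite $G$-sets to tensor products in $\CAlg(\GSpec)$, one has $A^{\otimes(n+1)} \simeq F((G^{n+1})_+, S^0_G)$, where $G^{n+1}$ carries the diagonal $G$-action. This action is free with orbit space $G^n$ via $(g_0,\dots,g_n)\mapsto(g_0^{-1}g_1,\dots,g_0^{-1}g_n)$, so $(G^{n+1})_+ \simeq \bigoplus_{G^n} G_+$ as $G$-spectra. Combining this orbit decomposition (which turns algebra into product algebras and hence module categories into products) with \Cref{restensoring} applied to $H=\{1\}$ gives
\[ \md_{\GSpec}(A^{\otimes(n+1)}) \simeq \prod_{G^n} \md_{\GSpec}(G_+) \simeq \prod_{G^n} \Sp \simeq \fun(G^n, \Sp). \]
These identifications are natural in maps of finite $G$-sets, so by unraveling how the cobar coface/codegeneracy maps in $\cb(A)$ are induced by the structure maps of the Čech nerve of $G/1 \to \ast$, they translate to the face/degeneracy maps of the simplicial set $[n]\mapsto G^n$, which is exactly the nerve $N(BG)_\bullet$.

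Finally, the totalization of $[n]\mapsto \fun(N(BG)_n, \Sp)$ is computed as $\fun(|N(BG)_\bullet|, \Sp) \simeq \fun(BG, \Sp)$, using that $\fun(-,\Sp)$ is a contravariant right adjoint (it sends colimits of spaces to limits of $\infty$-categories) and that the classifying space of $G$ is the realization of its nerve. The symmetric monoidal structure on both sides is the pointwise smash product, and the equivalences used to match the two sides are all symmetric monoidal by construction, so the identification is as symmetric monoidal $\infty$-categories.

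The main obstacle I expect is the verification in the second paragraph that the cosimplicial structure on $\md_{\GSpec}(A^{\otimes \bullet +1})$ truly corresponds, under the orbit-decomposition identification, to the cosimplicial structure $\fun(N(BG)_\bullet, \Sp)$. This requires chasing how the unit insertion $S^0_G \to A$ corresponds to the projection $G \to \ast$ and how the multiplication $A \otimes A \to A$ corresponds to a diagonal on $G$, and then checking coherence of these on each cosimplicial face — essentially checking that the Čech nerve of $G \to \ast$ in $G$-sets, after quotienting by the diagonal $G$-action orbit-by-orbit, recovers the bar construction on $G$. Once the compatibility of simplicial structures is in hand, both the identification of the totalization and its symmetric monoidal enhancement are formal.
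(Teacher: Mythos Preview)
Your proposal is correct and follows essentially the same route as the paper: apply \Cref{Acplimit} to $A=F(G_+,S^0_G)$, identify each level $\md_{\GSpec}(A^{\otimes(n+1)})\simeq\prod_{G^n}\Sp$ using \Cref{restensoring} together with the free orbit decomposition of the diagonal action on $G^{n+1}$, and recognize the resulting totalization as $\fun(BG,\Sp)$ via the bar simplicial model of $BG$. The paper's proof is terser (it writes ``unwinding the definitions'' where you spell out the orbit decomposition and the matching of cosimplicial structure), but the strategy and the ingredients are the same.
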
 
\begin{proof} 
By \Cref{Acplimit} for $\mathcal{C}=\GSpec$ and $A=F(G_+,S^0_G)$, we know that we have an equivalence
\[ \Gbs \simeq \mathrm{Tot} \left( \md_{\GSpec}(F(G_+, S^0_G)) 
\rightrightarrows \md_{\GSpec}(F( (G \times G)_+, S^0_G)) \triplearrows \dots \right). 
\]
However, we know by \Cref{restensoring} that $\md_{\GSpec}(F(G_+, S^0_G)) \simeq \sp$. Similarly, we
obtain that $\md_{\GSpec}(F( (G^n)_+ , S^0_G))$ can be identified with 
$\md_{\sp}( F(G_+^{n-1}, S^0)) \simeq \prod_{G^{n-1}} \sp$. Unwinding the definitions,
we find that $\Gbs$ is identified with a totalization
\[ \Gbs \simeq \mathrm{Tot}\left( \sp \rightrightarrows \prod_G \sp
\triplearrows \dots \right),  \]
which recovers precisely the functor category $\fun(BG, \sp)$ for the standard simplicial
decomposition of $BG$. 
\end{proof} 
Stated more informally, a Borel-equivariant spectrum is determined by its
restriction to $\sp$ (i.e., its underlying spectrum) together with the
induced $G$-action on it.

\begin{remark}
Another way to think of
this result, in view of the equivalence (\Cref{ourdwg}) between torsion and
complete objects, is to observe that $\Gbs$ has a compact generator given by
$G_+$ itself. The endomorphism algebra of $G_+$ (in view of the universal property
of induction) is given by the group algebra of
$G$, $\Sigma^\infty_+ G \in \mathrm{Alg}(\sp)$. Similarly, the induced object
$G_+$ in $\fun(BG, \sp)$ has endomorphisms given by $\Sigma^\infty_+ G$. As a
result, both $\infty$-categories are identified with $\md_{\sp}(
\Sigma^\infty_+ G)$ by Lurie's version of the Schwede-Shipley theorem
\cite[Th. 7.1.2.1]{Lur14}.
\end{remark}

The symmetric monoidal equivalence of \Cref{Borelequiv} shows also that, for a
Borel-equivariant $G$-spectrum, the categorical fixed points can be identified
with the \emph{homotopy fixed points} of the underlying
object of $\fun(BG, \Sp)$. 
In fact, if $X$ is any $G$-spectrum, then $X$ defines an underlying spectrum
$X_u = \Res^G_{\left\{e\right\}} X$ with a $G$-action,
and 
we have a natural map
\[ X^G \to X_u^{hG} \simeq F(EG_+, X)^G.  \]

\begin{prop} 
Suppose $X$ is a $G$-spectrum with underlying spectrum with $G$-action $X_u \in  \fun(BG,
\sp)$. Then the following are equivalent: 
\begin{enumerate}
\item  
$X$ is Borel-equivariant.
\item For each subgroup $H \leq G$, the map 
$X^H \to X_u^{hH}$ is an equivalence of spectra.
\end{enumerate}
\end{prop}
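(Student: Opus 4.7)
The plan is to reduce both implications to the assertion, already proved, that $X$ is Borel-equivariant if and only if the unit map $X \to F(EG_+,X)$ is an equivalence (\Cref{prop:completion}), and to the general fact that a morphism in $\GSpec$ is an equivalence if and only if it induces equivalences on categorical $H$-fixed points for every subgroup $H \leq G$ (this follows from the compact generation of $\GSpec$ by the $\Sigma^\infty_+ G/H$, since $\pi_*^H(-) = \pi_*((-)^H)$).

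The first step is to identify the map $X^H \to X_u^{hH}$ as the $H$-fixed points of the Borel completion map. The $H$-fixed points of $X \to F(EG_+, X)$ yield $X^H \to F(EG_+, X)^H$. Since $\Res^G_H(EG)$ is a free, nonequivariantly contractible $H$-space, it is an admissible model for $EH$, and therefore
\[
F(EG_+, X)^H \simeq F(EH_+, \Res^G_H X)^H.
\]
Under the symmetric monoidal equivalence $(\HSpec)_{\mathrm{Borel}} \simeq \fun(BH, \Sp)$ of \Cref{Borelequiv} — using that $F(EH_+, -)$ is Borel completion for $H$-spectra and that categorical fixed points of a Borel-equivariant $H$-spectrum compute homotopy fixed points of its underlying spectrum with $H$-action — the right-hand side is identified with $X_u^{hH}$. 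This gives the required naturality square, so the map $X^H \to X_u^{hH}$ in the statement coincides with the $H$-fixed points of $X \to F(EG_+, X)$.

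Given this identification, $(1) \Rightarrow (2)$ is immediate: if $X \to F(EG_+, X)$ is an equivalence of $G$-spectra, taking $H$-fixed points yields an equivalence $X^H \to X_u^{hH}$ for every $H \leq G$. Conversely, for $(2) \Rightarrow (1)$, the hypothesis says exactly that $X \to F(EG_+, X)$ becomes an equivalence after applying $(\cdot)^H$ for every $H \leq G$, and by the detection principle above this forces $X \to F(EG_+, X)$ itself to be an equivalence, i.e.\ $X$ is Borel-equivariant.

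The only genuine content lies in the identification $F(EG_+, X)^H \simeq X_u^{hH}$ and its naturality in $X$. The detection of equivalences in $\GSpec$ via categorical fixed points and the characterization of Borel completion as $F(EG_+, -)$ are already in place, so once the identification is verified the proof is essentially formal.
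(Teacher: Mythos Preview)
Your proof is correct and follows essentially the same approach as the paper: both reduce to checking that the Borel-completion map $X \to F(EG_+,X)$ is an equivalence by testing on $H$-fixed points, using the identification $F(EG_+,X)^H \simeq X_u^{hH}$. You spell out the latter identification in slightly more detail than the paper does (via $\Res^G_H(EG)\simeq EH$ and \Cref{Borelequiv}), but the argument is the same.
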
 
In particular, the notion of ``Borel-equivariance'' can be useful for
formulating descent questions. 
\begin{proof} 
This follows from the fact that $X$ is Borel-equivariant if and only if the
Borel-completion map
$X \to F(EG_+, X)$ is an equivalence of $G$-spectra (i.e., induces an
equivalence on $H$-fixed points for each $H \leq G$), and the $H$-fixed points of $F(EG_+, X)$
are given by $X_u^{hH}$ for $H \leq G$.
\end{proof} 

\begin{example}
Given a spectrum $M$ and a finite group $G$, we define the
\emph{Borel-equivariant $G$-spectrum} $\bor{M} \in
\GSpec$  to be 
$F(EG_+, i_* M)$.
By construction, $\bor{M}$ is the genuine $G$-spectrum that 
represents Borel-equivariant $M$-cohomology on $G$-spaces as one sees by
calculating maps in the $\infty$-category $\fun(BG, \sp)$. 
Under the correspondence of \Cref{Borelequiv}, $\bor{M}$ corresponds to the
spectrum $M$ with trivial $G$-action.
\end{example}

As another consequence, we note also that the theory of \emph{modules} over the
Borel-equivariant form of a non-equivariant ring spectrum $R$ is closely related to $\infty$-categories of
the form $\fun(BG, \md(R))$ where $\md(R)$ is the $\infty$-category of \emph{left} $R$-modules. This result connects the analysis of
``representation'' $\infty$-categories such as $\fun(BG, \md(R))$ to the
genuinely equivariant analysis we are carrying out here. 

\begin{cor} 
\label{perfborelequiv}
Let $R\in\mathrm{Alg}(\Sp)$ be an $\e{1}$-algebra. Then the functor
\[ \md_{\GSpec}( \bor{R}) \to \fun(BG, \md(R)),  \]
is fully faithful when restricted to the compact objects. 
\end{cor}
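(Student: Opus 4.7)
The plan is to show that compact $\underline{R}$-modules in $\GSpec$ are automatically Borel-equivariant, and then invoke the equivalence $\Gbs \simeq \fun(BG, \sp)$ of \Cref{Borelequiv} to identify $\md_{\Gbs}(\underline{R})$ with $\fun(BG, \md(R))$.

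First, I would verify that $\Gbs \subset \GSpec$ is a thick $\otimes$-ideal. Closure under arbitrary limits is automatic from the fact that $\Gbs$ is a reflective subcategory; closure under finite colimits, retracts, and suspensions follows in the usual way from the orthogonality characterization of \Cref{prop:characterizeborel} together with the stability of $\GSpec$; closure under tensoring with dualizable objects is \Cref{dualtensorcompl}, applied to the dualizable algebra $A = F(G_+, S^0_G)$. Since $\underline{R} = F(EG_+, i_*R)$ is Borel-equivariant by construction and since each orbit $\Sigma^\infty_+ G/H$ is dualizable in $\GSpec$, the free $\underline{R}$-modules $\underline{R} \wedge \Sigma^\infty_+ G/H$ for $H \leq G$ are all Borel-equivariant. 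These free modules generate the thick subcategory of compact objects in $\md_{\GSpec}(\underline{R})$, so thickness of $\Gbs$ shows that every compact $\underline{R}$-module is Borel-equivariant; that is,
\[ \md_{\GSpec}(\underline{R})^{\omega} \subset \md_{\Gbs}(\underline{R}) \subset \md_{\GSpec}(\underline{R}), \]
and the rightmost inclusion is fully faithful because $\Gbs \hookrightarrow \GSpec$ is.

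Finally, under the symmetric monoidal equivalence $\Gbs \simeq \fun(BG, \sp)$ of \Cref{Borelequiv}, the algebra $\underline{R}$ corresponds to $R$ equipped with the trivial $G$-action. Passing to modules over this correspondence gives a symmetric monoidal equivalence
\[ \md_{\Gbs}(\underline{R}) \simeq \md_{\fun(BG, \sp)}(R_{\mathrm{triv}}) \simeq \fun(BG, \md(R)), \]
where the final identification records that a module over a $G$-equivariantly trivial algebra is precisely a $G$-action on an $R$-module. Composing with the fully faithful inclusion from the previous step, one obtains the asserted fully faithful functor on compact objects. The main obstacle is the bookkeeping step of checking that the composite constructed in this way matches the functor written in the statement; this should follow formally from the naturality of the Borel equivalence and the compatibility of the symmetric monoidal structure on $\Gbs$ with restriction to the underlying spectrum.
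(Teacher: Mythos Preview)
Your proposal is correct and follows essentially the same approach as the paper: the paper's one-sentence proof simply observes that the compact objects in $\md_{\GSpec}(\bor{R})$, being the thick subcategory generated by the $\bor{R} \wedge G/H_+$, are automatically Borel-complete, which is precisely what you have spelled out in more detail using \Cref{dualtensorcompl} and the thickness of $\Gbs$. Your additional step identifying $\md_{\Gbs}(\bor{R})$ with $\fun(BG, \md(R))$ via \Cref{Borelequiv} makes explicit what the paper leaves implicit in the formulation of the functor itself.
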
 
\begin{proof} 
This follows because the compact objects in $\md_{\GSpec}(\bor{R})$ (which
form the thick subcategory generated by $\bor{R} \wedge G/H_+$ for $H
\leq G$) are
automatically Borel-complete themselves. 
\end{proof}

Borel-equivariant spectra will yield most of the examples that we apply the
$\sF$-nilpotence theory to in this paper and the next. As a result, we now
describe several important cases. 
Many deep theorems in algebraic topology state that specific equivariant
spectra are, in fact,  Borel-complete. 
Let $(\mathcal{C}, \otimes, \mathbf{1})$ be a presentable symmetric monoidal, stable
$\infty$-category whose tensor product preserves colimits in each variable. Given a finitely generated
ideal $I = (x_1, \dots, x_n) \subset \pi_0 \hom_{\mathcal{C}}(\mathbf{1}, \mathbf{1})$, we can form
the \emph{$I$-adic completion} of an object $X$ in $\mathcal{C}$ 
following techniques of \cite{DAGXII}: it is the limit of the
cofibers $X/(x_1^k, \dots, x_n^k)$ or, alternatively, the Bousfield localization of $X$ with respect to $\bigotimes_{i = 1}^n\mathbf{1}/x_i$.
For example, given an $\e{\infty}$-algebra $R$ in $\GSpec$ and a finitely
generated ideal $I \subset \pi_0 R^G$, we can form the $I$-adic completion of
$R$. 

\begin{example} 
The Atiyah-Segal completion theorem \cite{AtS69} states that the
Borel-completion of equivariant $K$-theory $KU_G \in \GSpec$ is equivalent to the
completion of $KU_G $ at the augmentation ideal $I \subset \pi_0  i_G^*KU_G
\simeq R(G)$ (the complex representation ring).
\end{example} 
\begin{example} 
The Segal conjecture, proved by Carlsson in \cite{Car84}, states that the
Borel-completion of the sphere spectrum $S^0 \in \GSpec$  is the completion at the
augmentation ideal in $\pi_0 i_G^* (S^0_G)$, which is the Burnside ring. 
\end{example} 

This point of view on completion theorems has been articulated, for
instance, by Greenlees-May in \cite{GrM92b}. 

\subsection{Example: Genuine $C_p$-spectra}
In this subsection (which may be skipped without loss of continuity), we digress to give a
decomposition of the $\infty$-category of $C_p$-spectra using the
material of the previous subsection together with the theory of fracture
squares. 
This decomposition is a well-known folklore result, but we have included it for
expository purposes.

First, 
let $G$ be  a finite group and let $\sF$ be a family of subgroups.
Given any $X \in \GSpec$, we have an arithmetic square
\[ \xymatrix{
X \ar[d] \ar[r] & \widehat{X}_{\sF} \ar[d] \\
X[\sF^{-1}]  \ar[r] &  X_{t\sF} := \left(\widehat{X}_{\sF}\right)[\sF^{-1}],
}
\]
which allows us to recover $X$ from its $\sF$-completion $\widehat{X}_{\sF}:=F(E\sF_+,X)$, 
its $\sF^{-1}$-localization $X[\sF^{-1}]:=X[A_{\sF}^{-1}]$  , and its \emph{$\sF$-Tate construction $X_{t\sF}:= \left(\widehat{X}_{\sF}\right)[\sF^{-1}]$}.
Using \Cref{Afracsquare}, we can obtain a decomposition of the
$\infty$-category $\GSpec$.

Suppose now $G = C_p$ for some prime $p$ and $\sF = \left\{ \left\{ 1 \right\}\right\}$.
In this case, we have two simplifications. First, we know that the $\sF^{-1}$-local 
objects are given by the $\infty$-category of spectra
(\Cref{geometricfixedptsthm}) and that the $\sF$-complete objects are given by
$\fun( BC_p, \sp)$ (by \Cref{Borelequiv}). 
As a result, we deduce: 

\begin{thm}
We have an equivalence of $\infty$-categories:  
\[ \mathrm{Sp}_{C_p} \simeq \fun( \Delta^1, \sp) \times_{\sp}
\fun(BC_p, \sp),  \]
where: 
\begin{enumerate}
\item The functor $\fun( \Delta^1, \sp) \to \sp$ is evaluation at the terminal
vertex $1$. 
\item The functor $\fun( BC_p, \sp) \to \sp$ is the Tate construction. 
\end{enumerate}
\end{thm}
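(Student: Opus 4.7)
The plan is to apply the general fracture square theorem \Cref{Afracsquare} to $\mathcal{C} = \Sp_{C_p}$ and $A = A_{\sF} = F((C_p)_+, S^0_{C_p})$, and then identify each of the three ingredients (the complete subcategory, the local subcategory, and the localization functor between them) with the concrete objects appearing in the statement.

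First, since $\sF = \{\{1\}\}$, the subcategory $(\Sp_{C_p})_{A_{\sF}\text{-cpl}}$ is precisely the subcategory $(\Sp_{C_p})_{\mathrm{Borel}}$ of Borel-equivariant $C_p$-spectra, which by \Cref{Borelequiv} is symmetric monoidally equivalent to $\fun(BC_p, \Sp)$. Second, since $C_p$ has only the trivial proper subgroup, the family $\sF$ coincides with $\mathcal{P}_{C_p}$, so by \Cref{geometricfixedptsthm} the $\sF^{-1}$-localization $\Sp_{C_p}[A_{\sF}^{-1}]$ is symmetric monoidally equivalent to $\Sp$, with the equivalence given by categorical $C_p$-fixed points (whose composition with $A_{\sF}^{-1}$-localization is the geometric fixed points functor $\Phi^{C_p}$). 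Feeding these two identifications into \Cref{Afracsquare} yields
\[ \Sp_{C_p} \simeq \fun(\Delta^1, \Sp) \times_{\Sp} \fun(BC_p, \Sp). \]

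The last thing to verify is that, under these identifications, the functor $\fun(BC_p, \Sp) \to \Sp$ appearing in the fiber product is the Tate construction. Unwinding the definitions, this functor is the composition of the inclusion $(\Sp_{C_p})_{\mathrm{Borel}} \subset \Sp_{C_p}$ with $A_{\sF}^{-1}$-localization and then categorical fixed points. For a spectrum $M$ with $C_p$-action, corresponding under \Cref{Borelequiv} to the Borel-equivariant spectrum $\bor{M} = F(EC_{p+}, i_* M)$, this composition computes
\[ \Phi^{C_p}\bigl(F(EC_{p+}, i_* M)\bigr) \simeq M^{tC_p}, \]
which is a standard model of the Tate construction (e.g.\ \cite[\S II.1]{LMSM86}); this identification is the main point requiring verification.

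The remaining step is routine: one checks that the evaluation-at-$1$ functor $\fun(\Delta^1, \Sp_{C_p}[A_{\sF}^{-1}]) \to \Sp_{C_p}[A_{\sF}^{-1}]$ of \Cref{Afracsquare} transports, under the equivalence $\Sp_{C_p}[A_{\sF}^{-1}] \simeq \Sp$, to the evaluation-at-$1$ functor $\fun(\Delta^1, \Sp) \to \Sp$. Since the equivalence of fracture squares is entirely formal once the three ingredients have been identified, no further work is required; the only genuine input beyond the abstract machinery is the identification of $\Phi^{C_p}$ applied to a Borel-complete object with the Tate construction, which is the substantive content of the statement.
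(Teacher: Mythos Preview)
Your proposal is correct and follows exactly the paper's approach: apply \Cref{Afracsquare} to $\mathcal{C}=\Sp_{C_p}$ with $A=A_{\{\{1\}\}}$, then identify the $A$-complete objects with $\fun(BC_p,\Sp)$ via \Cref{Borelequiv} and the $A^{-1}$-local objects with $\Sp$ via \Cref{geometricfixedptsthm}. The paper is in fact terser than you are---it simply says ``As a result, we deduce'' the theorem---so your additional unwinding of why the functor $\fun(BC_p,\Sp)\to\Sp$ is the Tate construction (namely, $\Phi^{C_p}$ of the Borel-completion) is a welcome elaboration rather than a deviation.
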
 

Stated informally: to give a $C_p$-spectrum is equivalent to giving an object $X \in
\mathrm{Fun}(BC_p, \sp)$, an object $Y \in \sp$, and a map $Y \to
X_{tC_p}$.

\subsection{Decomposition of the $\infty$-category of $\sF$-complete spectra}

Let $G$ be a finite group and let $\sF$ be a family of subgroups. 
We denote by $\mathcal{O}(G)$ the orbit category of $G$, i.e., the category of
finite transitive $G$-sets.
The purpose of this subsection is to prove a generalization of \Cref{Borelequiv}:
We identify the $\infty$-category of $\sF$-complete
objects in $\GSpec$ with a (homotopy) limit over a subcategory of the orbit
category. This gives a generalization of \Cref{Borelequiv} which will, however,
require additional effort to set up. 

First, observe that we have a functor
\[ \mathcal{O}(G)^{op} \to \clg(\sp_G), \quad G/H  \mapsto F( G/H_+, S^0_G) .  \]

\begin{definition} 
We let $\mathcal{O}_{\sF}(G) \subseteq \mathcal{O}(G)$ denote the full subcategory
spanned by the $G$-sets with isotropy in $\sF$, i.e., the $G$-sets
$\left\{G/H\right\}_{H \in \sF}$.
\end{definition}

Let $\cato$ be the $\infty$-category of symmetric monoidal $\infty$-categories
and symmetric monoidal functors. 
We now obtain a functor
\[ \mathcal{O}(G)^{op} \to \cato, \quad G/H \mapsto \md_{\GSpec}( F(G/H_+ , 
S^0_G)) \simeq \HSpec,
\]
where the last equivalence comes from \Cref{restensoring}. 
Note that $\mathcal{O}(G)^{op}$ has an initial object $G/G = \ast$, which is
mapped by the above functor to $\GSpec$. As a result, for any subcategory
$\mathcal{I} \subset \mathcal{O}(G)^{op}$, we obtain a symmetric monoidal functor
\begin{equation} \label{symmmonoidalfunctor} \GSpec \to \varprojlim_{G/H \in
\mathcal{I}} \HSpec.  \end{equation}
\newcommand{\fcompl}{{\sF-\mathrm{compl}}}

We can now state our main result, which gives a decomposition of the
$\infty$-category $(\GSpec)_{\fcompl}$ of $\sF$-complete spectra (generalizing
\Cref{Borelequiv}). 
\begin{thm} 
\label{Fcompldecomp}
The above functor \eqref{symmmonoidalfunctor}
with $\mathcal{I} = \mathcal{O}_{\sF}(G)^{op}$ factors through the
$\sF$-completion of $\GSpec$ and 
gives an equivalence of symmetric monoidal $\infty$-categories
\[ (\GSpec)_{\fcompl} \simeq \varprojlim_{G/H \in \mathcal{O}_{\sF}(G)^{op}}
\HSpec.  \]
\end{thm}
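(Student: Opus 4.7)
The plan is to apply \Cref{Acplimit} with $A = A_{\sF}$ and then reinterpret the resulting totalization in $\cato$ as a limit over $\mathcal{O}_{\sF}(G)^{op}$.

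First I would verify the hypotheses of \Cref{Acplimit}. The object $A_{\sF} = \prod_{H \in \sF} F(G/H_+, S^0_G) \in \CAlg(\GSpec)$ is dualizable: since $G$ is finite, so is $\sF$, and each factor $F(G/H_+, S^0_G) \simeq G/H_+$ is a suspension spectrum of a finite $G$-set and hence dualizable. Thus \Cref{Acplimit} yields a symmetric monoidal equivalence
\[ (\GSpec)_{\fcompl} \simeq \mathrm{Tot}\left(\md_{\GSpec}(A_{\sF}) \rightrightarrows \md_{\GSpec}(A_{\sF}^{\otimes 2}) \triplearrows \cdots \right). \]

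Next I would unwind the right-hand side. A finite product in $\CAlg(\GSpec)$ (being a finite coproduct in the stable $\infty$-category $\GSpec$) gives a product decomposition of module categories, so combined with \Cref{restensoring} and the identification $F(G/H_+, S^0_G) \wedge F(G/K_+, S^0_G) \simeq F((G/H \times G/K)_+, S^0_G)$, we obtain
\[ \md_{\GSpec}(A_{\sF}^{\otimes(n+1)}) \simeq \prod_{(H_0,\ldots,H_n) \in \sF^{n+1}} \; \prod_{G/K \,\subset\, G/H_0 \times \cdots \times G/H_n} \KSpec. \]
Because $\sF$ is closed under subconjugation, every orbit $G/K$ appearing in a product of $G/H_i$'s with $H_i \in \sF$ satisfies $K \in \sF$, so every factor in the totalization has the form $\KSpec$ with $K \in \sF$.

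Finally, I would identify this totalization with $\varprojlim_{\mathcal{O}_{\sF}(G)^{op}} \HSpec$. The target receives a natural symmetric monoidal functor from $\GSpec$, sending $X \in \GSpec$ to the compatible system $\{\res^G_H X\}_{H \in \sF}$; this functor factors through $(\GSpec)_{\fcompl}$ since tensoring with $A_{\sF}$ is conservative on $\sF$-complete objects, and it is compatible with the cobar functor to the totalization above under the orbit-decomposition identification. The content left to verify is that this comparison map is an equivalence of $\infty$-categories. This is the main obstacle: it is a cofinality/reindexing argument showing that the cosimplicial diagram $[n] \mapsto \md_{\GSpec}(A_{\sF}^{\otimes(n+1)})$ computes the same homotopy limit as the canonical cosimplicial diagram associated to the functor $\mathcal{O}_{\sF}(G)^{op} \to \cato, \; G/K \mapsto \KSpec$. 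Concretely, one shows that the indexing category for the cobar construction (pairs of a tuple $(H_0, \dots, H_n) \in \sF^{n+1}$ together with an orbit $G/K$ in the product, equivalently tuples of maps $G/K \to G/H_i$ in $\mathcal{O}_{\sF}(G)$) maps cofinally to the simplex category of chains in $\mathcal{O}_{\sF}(G)^{op}$ after the functor is applied, so the two totalizations agree. Once this identification is established, full faithfulness and essential surjectivity of the comparison functor follow from \Cref{Acplimit}.
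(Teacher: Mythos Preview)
Your overall strategy matches the paper's: apply \Cref{Acplimit} to $A_{\sF}$, then identify the resulting totalization with the limit over $\mathcal{O}_{\sF}(G)^{op}$. The first two steps are fine and agree with the paper. The difference is in how the final reindexing is carried out.

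You describe the last step as a cofinality argument between the cobar indexing and ``the simplex category of chains in $\mathcal{O}_{\sF}(G)^{op}$,'' but this is not quite the right comparison and is left as a sketch rather than a proof. The paper makes this step clean by introducing an intermediate category: let $\mathcal{C}$ be the category of \emph{all} finite $G$-sets with isotropy in $\sF$, which is the free finite-coproduct completion of $\mathcal{O}_{\sF}(G)$, and consider the functor $M\colon \mathcal{C}^{op}\to\cato$, $T\mapsto \md_{\GSpec}(F(T_+,S^0_G))$, which sends coproducts to products. Setting $U=\bigsqcup_{H\in\sF}G/H$, the cobar diagram is exactly $M$ applied to the \v{C}ech nerve $U^{\bullet+1}$. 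The paper then proves two short lemmas: (i) the \v{C}ech nerve $\Delta^{op}\to\mathcal{C}$ is cofinal because every object of $\mathcal{C}$ maps to $U$ (\Cref{cofinal}), giving $\mathrm{Tot}(M\circ f)\simeq\varprojlim_{\mathcal{C}^{op}}M$; and (ii) since $M$ takes coproducts to products, restricting to $\mathcal{O}_{\sF}(G)^{op}\subset\mathcal{C}^{op}$ does not change the limit, by a left Kan extension argument (\Cref{prodlimitlemma}). This two-step factorization through $\mathcal{C}$ replaces your direct cofinality claim and is what you should supply to complete the argument.
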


\Cref{Acplimit} already gives a decomposition of the $\infty$-category of
$\sF$-complete (i.e., $A_{\sF}$-complete) $G$-spectra; however, it is 
 indexed over $\Delta$. In order to deduce \Cref{Fcompldecomp}, we shall need
 some general preliminaries on cofinality.\footnote{We will use the convention,
 following \cite{Lur09}, that \emph{cofinality} of a functor refers to the invariance of
 \emph{co}limits.}
These arguments are not new and appear, for instance, in the proof of
\cite[Prop.~5.7]{DAGVII}, which is closely related. 
Note also that this recovers \Cref{Borelequiv} as the special case $\sF =
\left\{ \left\{ 1 \right\}\right\}$. 

Let $\mathcal{C}$ be an $\infty$-category and $X \in \mathcal{C}$. Suppose that
the products $X^n, n \geq 1$ exist in $\mathcal{C}$. In this case, we can form
a simplicial object $X^{\bullet + 1} \in \fun(\Delta^{op}, \mathcal{C})$, 
\[ X^{\bullet + 1}  = \left( \dots X \times X \times X \triplearrows X \times
X \rightrightarrows X \right).  \]
To construct this object, we adjoin a terminal object $\ast$ to $\mathcal{C}$; in this
case the above simplicial 
object is the \v{C}ech nerve of $X \to \ast$.

\begin{prop} 
\label{cofinal}
Let $\mathcal{C}$ be an $\infty$-category and let $X \in \mathcal{C}$ be an
object such that the products $X^n$ exist for $n \geq 1$. Suppose that every object $Y \in \mathcal{C}$ admits a map $Y \to X$. 
Then the functor $X^{\bullet + 1} : \Delta^{op} \to \mathcal{C}$ is
cofinal.  
\end{prop}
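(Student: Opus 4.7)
The plan is to apply the cofinality criterion of \cite[Thm.~4.1.3.1]{Lur09}, which reduces the statement to showing that, for every $Y \in \mathcal{C}$, the simplicial set $\Delta^{op} \times_{\mathcal{C}} \mathcal{C}_{Y/}$ is weakly contractible.

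To verify this I would first observe that the projection $\mathcal{C}_{Y/} \to \mathcal{C}$ is a left fibration, straightening to the functor $\hom_{\mathcal{C}}(Y, -) \colon \mathcal{C} \to \mathcal{S}$. Pulling back along $X^{\bullet+1}$ produces a left fibration over $\Delta^{op}$, and by functoriality of straightening its associated simplicial space sends $[n]$ to $\hom_{\mathcal{C}}(Y, X^{n+1}) \simeq \hom_{\mathcal{C}}(Y, X)^{n+1}$, where the simplicial structure comes from the fact that $X^{\bullet+1}$ is the \v{C}ech nerve of $X$ over a (formally adjoined) terminal object. Writing $S := \hom_{\mathcal{C}}(Y, X)$, this simplicial object is visibly the \v{C}ech nerve $S^{\bullet+1}$ of the map $S \to \ast$ in $\mathcal{S}$.

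Next, via straightening/unstraightening (in the covariant model structure) the total space of a left fibration over $\Delta^{op}$ is weakly equivalent to the colimit in $\mathcal{S}$ of its classifying functor, so the fiber product is equivalent to $\varinjlim_{\Delta^{op}} S^{\bullet+1}$. In the $\infty$-topos $\mathcal{S}$, the colimit of the \v{C}ech nerve of a morphism computes its image; for a map $f\colon S \to \ast$ this image is contractible precisely when $f$ is an effective epimorphism, i.e.\ when $S$ has at least one connected component. The hypothesis that $Y$ admits a map to $X$ ensures $S \neq \emptyset$, and cofinality follows.

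The one place that requires genuine $\infty$-categorical care is the simplicially-coherent identification of the pulled-back left fibration over $\Delta^{op}$ with the diagram $[n] \mapsto \hom_{\mathcal{C}}(Y, X^{n+1})$ in spaces; this is a routine but not completely formal consequence of the functoriality of straightening and of the universal property of $X^{\bullet+1}$ as a \v{C}ech nerve. The remaining step, interpreting the colimit of a \v{C}ech nerve as the $(-1)$-image of its augmentation, is a standard property of $\infty$-topoi.
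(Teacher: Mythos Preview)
Your proof is correct and follows essentially the same route as the paper: both invoke the $\infty$-categorical Quillen Theorem~A, identify the relevant fiber product with the geometric realization $\left|\hom_{\mathcal{C}}(Y,X)^{\bullet+1}\right|$ via the Grothendieck construction, and conclude from nonemptiness of $\hom_{\mathcal{C}}(Y,X)$. The only cosmetic difference is that you phrase the final step in terms of effective epimorphisms and images in an $\infty$-topos, whereas the paper simply asserts contractibility of the \v{C}ech nerve of a nonempty space.
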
 
\begin{proof} 
Let $F\colon \mathcal{A} \to \mathcal{B}$ be a functor of $\infty$-categories.
The $\infty$-categorical version of Quillen's Theorem A  \cite[Thm.\ 4.1.3.1]{Lur09} (due to Joyal) states that  $F$ is cofinal if and only if the fiber product $\mathcal{A}
\times_{\mathcal{B}} \mathcal{B}_{B/}$ is contractible for each $B \in
\mathcal{B}$. 
Recall that the left fibration $\mathcal{B}_{B/} \to \mathcal{B}$
classifies the corepresentable functor $f_B = \hom_{\mathcal{B}}(B, \cdot)
\colon \mathcal{B} \to \mathcal{S}$, so the
colimit of $f_B \circ F $ is given by the homotopy type of $\mathcal{A}
\times_{\mathcal{B}} \mathcal{B}_{B/}$ 
in view of the computability of colimits
 in $\mathcal{S}$ via the Grothendieck construction \cite[Cor.~3.3.4.6]{Lur09}. It follows
from this that $F$ is cofinal if and only if for every corepresentable functor
$f\colon \mathcal{B} \to \mathcal{S}$, the colimit of $f \circ F\colon \mathcal{A} \to
\mathcal{S}$ is contractible. 

Now, let $Y \in \mathcal{C}$ be arbitrary. In order to prove cofinality, we need to show that the geometric
realization $\left| \hom_{\mathcal{C}}(Y, X^{\bullet + 1})\right|$ is weakly
contractible. However, this geometric realization can be identified with 
the geometric realization
$\left| \hom_{\mathcal{C}}(Y, X)^{\bullet + 1}\right|$, which is contractible
as $\hom_{\mathcal{C}}(Y, X)$ is nonempty by assumption. 
\end{proof}

We now review some further $\infty$-categorical preliminaries on colimits.
Compare \cite[Remark 5.3.5.9]{Lur09}.  
\begin{cons}
If $\mathcal{C}'$ is an $\infty$-category, then there exists an $\infty$-category
$\mathcal{C}$ containing $\mathcal{C}'$ as a full subcategory such that
$\mathcal{C}$ admits finite coproducts and is initial with respect to this
property. 
For an $\infty$-category $\mathcal{D}$ with finite coproducts, one has an
equivalence
\[ \fun( \mathcal{C}', \mathcal{D}) \simeq \fun_{\sqcup}( \mathcal{C},
\mathcal{D}),  \]
where $\fun_{\sqcup}(\mathcal{C}, \mathcal{D})$ denotes the subcategory spanned
by those functors preserving finite coproducts.
This equivalence is given by left Kan extension.

As a result, the objects of $\mathcal{C}$ are given by formal finite coproducts
of objects in $\mathcal{C}'$. 
The $\infty$-category $\mathcal{C}$ can be
explicitly constructed as the smallest subcategory of the presheaf
$\infty$-category $\mathcal{P}(\mathcal{C}')$ containing $\mathcal{C}'$ and
closed under finite coproducts.
\label{fincoprod}
\end{cons}

\begin{lemma} 
\label{prodlimitlemma}
Suppose $\mathcal{C}$ is an $\infty$-category with finite coproducts and $f\colon
\mathcal{C} \to \mathcal{D}$ is a functor, where $\mathcal{D}$ has all colimits. 
Suppose $f$ preserves finite coproducts. 
Let $\mathcal{C}' \subset \mathcal{C}$ be a full subcategory such that
$\mathcal{C}$ is obtained by freely adjoining finite coproducts 
to $\mathcal{C}'$ as in \Cref{fincoprod}. Then the natural map in $\mathcal{D}$
\[ \varinjlim_{\mathcal{C}'} f \to \varinjlim_{\mathcal{C}} f   \]
is an equivalence.
\end{lemma}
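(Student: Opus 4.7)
My plan is to reduce the claim to a purely formal property of left Kan extensions.

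The first step is to identify $f\colon \mathcal{C} \to \mathcal{D}$ with the left Kan extension of its restriction $f|_{\mathcal{C}'}$ along the inclusion $j\colon \mathcal{C}' \hookrightarrow \mathcal{C}$. By the universal property recorded in \Cref{fincoprod}, the restriction functor
\[ j^*\colon \fun_{\sqcup}(\mathcal{C}, \mathcal{D}) \to \fun(\mathcal{C}', \mathcal{D}) \]
is an equivalence, whose inverse is left Kan extension along $j$. Since $f$ preserves finite coproducts by hypothesis, the counit $\mathrm{Lan}_j(f|_{\mathcal{C}'}) \to f$ is therefore an equivalence in $\fun(\mathcal{C}, \mathcal{D})$.

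The second step is to invoke the general $\infty$-categorical fact that for any functor $\phi\colon \mathcal{A} \to \mathcal{B}$ of small $\infty$-categories and any $g\colon \mathcal{A} \to \mathcal{D}$ (where $\mathcal{D}$ is cocomplete) one has a natural equivalence
\[ \varinjlim_{\mathcal{B}} \mathrm{Lan}_\phi g \simeq \varinjlim_{\mathcal{A}} g. \]
Combined with the first step, this produces the desired equivalence $\varinjlim_{\mathcal{C}'} f|_{\mathcal{C}'} \simeq \varinjlim_{\mathcal{C}} f$ and one checks that under this identification it agrees with the canonical comparison map.

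The only thing worth verifying is the general fact, which is formal: for any $d \in \mathcal{D}$ and letting $\Delta_d$ denote the constant functor at $d$, Yoneda together with the adjunctions $(\varinjlim, \Delta)$ and $(\mathrm{Lan}_\phi, \phi^*)$ give
\[ \hom_{\mathcal{D}}\bigl(\varinjlim_{\mathcal{B}} \mathrm{Lan}_\phi g,\, d\bigr) \simeq \hom_{\fun(\mathcal{B}, \mathcal{D})}(\mathrm{Lan}_\phi g, \Delta_d) \simeq \hom_{\fun(\mathcal{A}, \mathcal{D})}(g, \phi^*\Delta_d) \simeq \hom_{\mathcal{D}}\bigl(\varinjlim_{\mathcal{A}} g,\, d\bigr), \]
using that $\phi^* \Delta_d = \Delta_d$. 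There is no real obstacle here; the entire argument is a packaging of standard universal properties, with the only content-bearing input being the universal property of \Cref{fincoprod} that characterizes $\mathcal{C}$ as the free finite-coproduct completion of $\mathcal{C}'$.
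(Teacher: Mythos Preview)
Your proof is correct and follows the same approach as the paper, which simply states that $f$ is the left Kan extension of $f|_{\mathcal{C}'}$ and that this forces the map on colimits to be an equivalence. You have merely supplied the details the paper omits.
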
 
\begin{proof} 
It follows that $f$ is the left Kan extension of $f|_{\mathcal{C}'}$, which
forces the map on colimits to be an equivalence. 
\end{proof} 

\begin{proof}[Proof of \Cref{Fcompldecomp}]
We take $\mathcal{C}$ to be the category of all finite $G$-sets all of whose
isotropy groups lie in $\sF$, so that $\mathcal{C}$ is obtained by freely
adjoining finite coproducts to $\mathcal{O}_{\sF}(G)$. 
We now consider the functor
\[ M\colon \mathcal{C}^{op} \to \cato, \quad T \mapsto \md_{\GSpec}(F(T_+,
S^0_G)).  \]
This functor sends finite coproducts in $\mathcal{C}$ to products in $\cato$. 
Let $U \in \mathcal{C}$ be the $G$-set $\bigsqcup_{H \in \sF} G/H$.
Observe that any $G$-set in $\mathcal{C}$ admits   a map to $U$.
We have a functor
\(  \Delta^{op} \to \mathcal{C} \) given by the simplicial object $\dots
\triplearrows U
\times U \rightrightarrows  U $, which is 
is cofinal in view of
\Cref{cofinal}.
Therefore, dualizing the cofinality statement, we find that
\[  ( \GSpec)_{\fcompl} \simeq \varprojlim_{\Delta} M \circ f \simeq
\varprojlim_{\mathcal{C}} M, \]
where the first equivalence is \Cref{Acplimit} (in fact, the cosimplicial
diagram $M \circ f$ is precisely the cobar construction) and the second
equivalence follows by cofinality. 
Finally, we use \Cref{prodlimitlemma} to identify $\varprojlim_{\mathcal{C^{\mathit{op}}}} M
$ with $\varprojlim_{\mathcal{O}_{\sF}(G)^{op}} M|_{\mathcal{O}_{\sF}(G)^{op}}$.
\end{proof}

It will also be convenient to have a slight refinement of 
\Cref{Fcompldecomp} based on a further cofinality argument.
For this, we consider a collection $\mathcal{A}$ of subgroups in $\sF$ such that
every subgroup in $\sF$ is contained in  an element of 
$\mathcal{A}$. 
We assume that $\mathcal{A}$ is closed under 
conjugation and intersections. 
As before, we let $\mathcal{O}_{\mathcal{A}}(G) $ be the subcategory of the orbit category of $G$
spanned by the $G$-sets $\left\{G/H\right\}_{H \in \mathcal{A}}$. 
We have an inclusion
\[ \mathcal{O}_{\mathcal{A}}(G) \subset \mathcal{O}_{\sF}(G).  \]

\begin{prop} 
\label{furthercofinalcollection}
Let $\mathcal{A}$ be a collection of subgroups in $\sF$ such that
every subgroup in $\sF$ is contained in  an element of 
$\mathcal{A}$. 
We assume that $\mathcal{A}$ is closed under 
conjugation and intersections. Then the inclusion $\mathcal{O}_{\mathcal{A}}(G) \to
\mathcal{O}_{\sF}(G)$ is cofinal.
\end{prop}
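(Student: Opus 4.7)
The plan is to verify Joyal's $\infty$-categorical form of Quillen's Theorem~A \cite[Thm.~4.1.3.1]{Lur09}: it suffices to check that for each object $G/K$ of $\mathcal{O}_{\sF}(G)$ (with $K \in \sF$), the slice
\[ \mathcal{U}_K := \mathcal{O}_{\mathcal{A}}(G) \times_{\mathcal{O}_{\sF}(G)} \left(\mathcal{O}_{\sF}(G)\right)_{G/K/} \]
has weakly contractible nerve. Since every category in sight is an ordinary $1$-category, the verification reduces to a classical combinatorial computation.

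The core of the argument will be to identify $\mathcal{U}_K$, up to equivalence of categories, with the poset
\[ \mathcal{A}_{\geq K} := \{\, A \in \mathcal{A} : K \leq A \,\}, \]
ordered by inclusion. First, I would observe that an object $(G/H,f)$ of $\mathcal{U}_K$ consists of some $H \in \mathcal{A}$ together with a coset $gH = f(eK)$ satisfying $g^{-1}Kg \leq H$. Using closure of $\mathcal{A}$ under conjugation, the subgroup $H^{\ast} := gHg^{-1}$ again lies in $\mathcal{A}$ and now contains $K$, and the $G$-map $xH \mapsto xg^{-1}H^{\ast}$ is an isomorphism $G/H \xrightarrow{\sim} G/H^{\ast}$ that carries $f$ to the canonical projection $G/K \to G/H^{\ast}$. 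Hence the full subcategory $\mathcal{U}_K^0 \subseteq \mathcal{U}_K$ spanned by pairs $(G/H, \mathrm{can})$ with $K \leq H \in \mathcal{A}$ and $\mathrm{can}$ the canonical quotient is essentially surjective. Next I would compute hom-sets in $\mathcal{U}_K^0$: a $G$-map $\psi \colon G/H \to G/H'$ is determined by $\psi(eH) = hH'$, and compatibility with the canonical projections from $G/K$ forces $hH' = eH'$, i.e.\ $h \in H'$; since $h \in H'$ means $hH' = H'$, such a $\psi$ exists (and is then unique) if and only if $H \leq H'$. This exhibits $\mathcal{U}_K^0$ as the poset $\mathcal{A}_{\geq K}$, and so $\mathcal{U}_K^0 \hookrightarrow \mathcal{U}_K$ is an equivalence.

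It then remains to verify that $N(\mathcal{A}_{\geq K})$ is weakly contractible. The hypothesis that every subgroup in $\sF$ is contained in some element of $\mathcal{A}$ gives $\mathcal{A}_{\geq K} \neq \emptyset$, and closure of $\mathcal{A}$ under intersections implies that $\mathcal{A}_{\geq K}$ is closed under pairwise intersection, making it a nonempty codirected poset. Any nonempty codirected poset has weakly contractible nerve, so $N(\mathcal{U}_K) \simeq N(\mathcal{A}_{\geq K})$ is contractible, finishing the verification of Theorem~A. The only subtlety is the coset bookkeeping that allows one to replace an arbitrary object of $\mathcal{U}_K$ by one of the form $(G/H, \mathrm{can})$; this step crucially uses conjugation-closure of $\mathcal{A}$, after which the contractibility of the poset is formal.
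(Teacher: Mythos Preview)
Your proof is correct and follows essentially the same route as the paper: both apply Quillen's Theorem~A and identify the relevant slice category with the poset of subgroups in $\mathcal{A}$ containing the given $K \in \sF$, using conjugation-closure to normalize objects and intersection-closure for contractibility. The only cosmetic difference is that the paper observes this finite poset has a minimal element (the intersection of all its members), whereas you invoke the more general fact that a nonempty codirected poset has contractible nerve; both conclusions are valid here.
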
 
\begin{proof} 
This follows from \cite[Cor.\ 4.1.3.3]{Lur09b}. 
In fact, we need to show that for any $G/H \in \mathcal{O}_{\sF}(G)$, the
category $\mathcal{O}_{\mathcal{A}}(G)_{(G/H)/}$ has weakly contractible nerve. 
In fact, the category 
$\mathcal{O}_{\mathcal{A}}(G)_{(G/H)/}$ is equivalent to the
opposite of the \emph{poset} $\mathcal{P}$ of subgroups of $\mathcal{A}$ that contain 
$H$. 
To see this, observe that an object in $\mathcal{O}_{\mathcal{A}}(G)_{(G/H)/}$
is given by a map of $G$-sets $G/H \to G/K$ for some $K \in \mathcal{A}$,
which is given by multiplication by some $g \in G$. By
conjugating, we observe that this object is isomorphic to a map $G/H \to G/K'$ 
given by multiplication by $1$, so that $K' \subset H$. Thus, the objects up to
isomorphism can
be put in correspondence with $\mathcal{P}$; one checks that the morphisms can
as well. 
The hypotheses imply that $\mathcal{P}$ has a minimal element and is therefore
weakly contractible.
\end{proof} 
Combining with \Cref{Fcompldecomp}, we then obtain: 
\begin{cor} 
\label{completelimitorbit}
Suppose $\mathcal{A}$ is as above.
We then obtain an equivalence of symmetric monoidal $\infty$-categories
\[ (\GSpec)_{\fcompl} \simeq \varprojlim_{G/H \in
\mathcal{O}_{\mathcal{A}}(G)^{op}} \HSpec. \]
\end{cor}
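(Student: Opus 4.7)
The plan is to combine the two preceding results directly: \Cref{Fcompldecomp} identifies $(\GSpec)_{\fcompl}$ with the limit of the functor $M \colon G/H \mapsto \HSpec$ over $\mathcal{O}_{\sF}(G)^{op}$, and \Cref{furthercofinalcollection} tells us that $\mathcal{O}_{\mathcal{A}}(G) \hookrightarrow \mathcal{O}_{\sF}(G)$ is cofinal.

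The main step is to pass from cofinality (which is about colimits) to the corresponding statement for limits. Namely, by dualizing, the inclusion of opposite categories $\mathcal{O}_{\mathcal{A}}(G)^{op} \hookrightarrow \mathcal{O}_{\sF}(G)^{op}$ is \emph{coinitial} (in Lurie's terminology of \cite[\S 4.1.1]{Lur09}), and hence restriction along it preserves limits of any diagram valued in an $\infty$-category admitting the relevant limits. Applying this to the functor $M\colon \mathcal{O}_{\sF}(G)^{op} \to \cato$ sending $G/H$ to $\HSpec \simeq \md_{\GSpec}(F(G/H_+, S^0_G))$, we obtain a natural equivalence
\[
\varprojlim_{G/H \in \mathcal{O}_{\sF}(G)^{op}} \HSpec \xrightarrow{\ \simeq\ } \varprojlim_{G/H \in \mathcal{O}_{\mathcal{A}}(G)^{op}} \HSpec
\]
given by restriction; this equivalence is canonically symmetric monoidal, since limits in $\cato$ are computed in the underlying $\infty$-category of $\infty$-categories and the restriction map on limits is the one induced by the universal property. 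Combining with \Cref{Fcompldecomp} yields the desired equivalence.

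The only potential subtlety is verifying that the cofinality result of \Cref{furthercofinalcollection} does indeed transport to a limit-preservation result in the symmetric monoidal setting; this is immediate from the fact that the forgetful functor $\cato \to \mathrm{Cat}_\infty$ preserves and reflects limits, so one need only invoke the standard fact that coinitial (equivalently, cofinal on opposites) functors preserve limits in any $\infty$-category with the requisite limits. Since the hard content (both the decomposition and the cofinality) has already been established, this corollary is essentially a formal consequence, and there is no substantial obstacle to overcome.
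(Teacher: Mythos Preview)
Your proposal is correct and follows exactly the approach indicated in the paper, which simply states that the corollary is obtained by combining \Cref{Fcompldecomp} with \Cref{furthercofinalcollection}. Your additional remarks about passing from cofinality to the dual statement on limits via opposite categories are accurate and supply detail that the paper leaves implicit.
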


\begin{example} 
Suppose $G$ is a $p$-group and $\sF$ is the family of proper subgroups. Then,
one can take $\mathcal{A}$ to be the collection of proper subgroups of $G$
which contain the Frattini subgroup. 
\end{example}

We note an important special case of \Cref{completelimitorbit}, which is
deduced by taking $\mathcal{A} = \left\{H\right\}$ for $H \trianglelefteq G$ normal:

\begin{cor} 
Suppose $H \trianglelefteq G$ is a normal subgroup. Then there is a natural
$G/H$-action on the symmetric monoidal $\infty$-category $\gsp{H}$ together
with a symmetric monoidal functor
\[ \GSpec \to ( \gsp{H})^{h G/H}  \]
which exhibits $( \gsp{H})^{h G/H}$ as the $\sF$-completion of $\GSpec$ for
$\sF$ the family of subgroups of $G$ that are contained in $H$.
\end{cor}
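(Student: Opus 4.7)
The plan is to invoke Corollary~\ref{completelimitorbit} with the one-element collection $\mathcal{A} = \{H\}$. First I would verify the hypotheses of that corollary: every subgroup in $\sF$ is, by definition, contained in $H$, so it lies in an element of $\mathcal{A}$; $\mathcal{A}$ is closed under intersections trivially; and it is closed under $G$-conjugation precisely because $H \trianglelefteq G$ is normal.

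Next I would identify the indexing category. The subcategory $\mathcal{O}_{\mathcal{A}}(G) \subseteq \mathcal{O}(G)$ has a single object $G/H$, so it is a one-object groupoid determined by its automorphism group. A $G$-equivariant self-map of $G/H$ is determined by the image $gH$ of $eH$, subject to the constraint $g^{-1}Hg \subseteq H$; by normality of $H$ this holds for every $g \in G$, and composition then identifies $\mathrm{Aut}_{\mathcal{O}(G)}(G/H)$ with the group $G/H$ (the ``Weyl group'' of $H$ in $G$). Thus $\mathcal{O}_{\mathcal{A}}(G) \simeq B(G/H)$, and consequently $\mathcal{O}_{\mathcal{A}}(G)^{\mathit{op}} \simeq B(G/H)$ as well (using inversion to flip the direction of arrows).

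With this identification in hand, the functor $G/K \mapsto \md_{\GSpec}(F(G/K_+, S^0_G)) \simeq \KSpec$ restricts on $\mathcal{O}_{\mathcal{A}}(G)^{\mathit{op}}$ to a functor $B(G/H) \to \cato$ whose value on the unique object is $\HSpec$. Such a functor is by definition a $G/H$-action on the symmetric monoidal $\infty$-category $\HSpec$ (coming concretely from the Weyl-group action on $\HSpec$ via the equivalence $\md_{\GSpec}(F(G/H_+,S^0_G)) \simeq \HSpec$ of \Cref{restensoring}), and the homotopy limit of such a diagram is precisely the homotopy fixed points $(\HSpec)^{hG/H}$. Combining this with Corollary~\ref{completelimitorbit} yields the symmetric monoidal equivalence $(\GSpec)_{\fcompl} \simeq (\HSpec)^{hG/H}$, and the structural functor $\GSpec \to (\HSpec)^{hG/H}$ is the one supplied by \eqref{symmmonoidalfunctor}.

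The only real content beyond applying the previous corollary is the identification of the endomorphism group of $G/H$ in the orbit category with $G/H$, and the verification that the induced action on $\HSpec$ through $\md_{\GSpec}(F(G/H_+, S^0_G))$ coincides with the familiar Weyl-group action; both are standard once the conventions are fixed. I do not anticipate any substantial obstacle.
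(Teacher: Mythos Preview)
Your proposal is correct and follows exactly the approach the paper takes: the corollary is stated as a direct specialization of Corollary~\ref{completelimitorbit} with $\mathcal{A} = \{H\}$, and you have simply spelled out the verification of its hypotheses and the identification $\mathcal{O}_{\mathcal{A}}(G) \simeq B(G/H)$ in more detail than the paper does.
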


\begin{remark} 
Let $G$ be a finite group. In this case, one can give an inductive
decomposition of $\GSpec$ as a homotopy limit of $\infty$-categories of the
form $\fun(BG', \sp)$ (for various finite groups
$G'$) using \Cref{Fcompldecomp} and the arithmetic square \eqref{fracsquare}. 
\end{remark}

\subsection{$\sF$-nilpotence}

We keep the notation $A_{\sF}$ from \Cref{family}.

\begin{definition} 
\label{Fnildef}
Given a family $\sF$ of subgroups of $G$, 
we will let $\sFNil \subset \GSpec$ 
denote the subcategory of
$A_{\sF}$-nilpotent objects, or equivalently the thick $\otimes$-ideal
generated by  $\left\{(G/H)_+\right\}_{H \in \sF}$.
We will say that a $G$-spectrum $X$ is \emph{$\sF$-nilpotent} if it belongs to
$\sFNil$.  In this case, we will refer to the integer $\exp_{\sF}(X):=\exp_{A_{\sF}}(X)$
as the \emph{$\sF$-exponent} of $X$.
\end{definition}

 Clearly, $\sF$-nilpotent $G$-spectra are both $\sF$-torsion and
$\sF$-complete, i.e., if $X \in \sFNil$, then
\[ E\sF_+ \wedge X\simeq X \simeq F(E \sF_+, X).  \]
As we discuss in the sequel \cite{MNN15i}, $\sF$-nilpotence is equivalent to
$\sF$-completeness together with the very rapid convergence of the associated
homotopy limit spectral sequence based on a cellular
decomposition of $E\sF_+$.

We now discuss some of the first properties
of $\sF$-nilpotent spectra.
Combining \Cref{compositenilpotent} with \Cref{restensoring}, we find the
following criterion for $\sFNil$: 

\begin{prop} 
An object $X \in \GSpec$ belongs to $\sFNil$ if and only if there exists an
integer $N \in \mathbb{Z}_{\geq 0}$ such that whenever
\[ Y_1 \to Y_2 \to \dots \to Y_N \to X \]
are maps in $\GSpec$ whose restriction to $\HSpec$ is nullhomotopic for each $H
\in \sF$, then the composition $Y_1 \to \dots \to X$ is nullhomotopic (in
$\GSpec$). 
If this is the case, the minimal such $N$ is $\exp_{\sF}(X)$.
\end{prop}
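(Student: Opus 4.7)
The plan is to deduce this directly from \Cref{compositenilpotent} applied to $\mathcal{C}=\GSpec$ and $A = A_{\sF} = \prod_{H\in\sF} F(G/H_+, S^0_G)$, together with the base-change description of restriction provided by \Cref{restensoring}. Note that $\sF$ is a finite set of subgroups (since $G$ is finite), so $A_{\sF}$ is a finite product.

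First, \Cref{compositenilpotent} tells us that $X$ is $A_{\sF}$-nilpotent (equivalently $\sF$-nilpotent by \Cref{Fnildef}) if and only if there exists $N$ with the property that for every sequence
\[ Y_1 \xrightarrow{\phi_1} Y_2 \xrightarrow{\phi_2} \cdots \xrightarrow{\phi_{N-1}} Y_N \xrightarrow{\phi_N} X \]
in $\GSpec$ such that each $\phi_i$ becomes nullhomotopic after smashing with $A_{\sF}$, the composition is nullhomotopic; moreover the minimal such $N$ equals $\exp_{A_{\sF}}(X) = \exp_{\sF}(X)$.

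Next I would translate the condition ``$\phi_i \wedge 1_{A_{\sF}}$ is nullhomotopic'' into the stated condition on restrictions. Since $A_{\sF}$ is a finite product, smashing with $A_{\sF}$ decomposes as a finite product (hence biproduct, in this stable setting) of the smash products with the factors $F(G/H_+, S^0_G)$ for $H\in\sF$. Therefore $\phi_i\wedge 1_{A_{\sF}}$ is nullhomotopic if and only if $\phi_i \wedge 1_{F(G/H_+,S^0_G)}$ is nullhomotopic for each $H \in \sF$. By \Cref{restensoring}, the symmetric monoidal equivalence $\HSpec \simeq \md_{\GSpec}(F(G/H_+,S^0_G))$ identifies the restriction functor $\Res^G_H$ with extension of scalars along $S^0_G \to F(G/H_+,S^0_G)$, i.e., with the functor $(-)\wedge F(G/H_+,S^0_G)$. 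In particular, a morphism in $\GSpec$ has nullhomotopic restriction to $\HSpec$ if and only if it becomes null after smashing with $F(G/H_+,S^0_G)$.

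Combining these observations, the condition of \Cref{compositenilpotent} on $\phi_i$ is exactly equivalent to the requirement that $\Res^G_H \phi_i$ be nullhomotopic for every $H\in\sF$, and this gives the desired characterization together with the identification of the optimal $N$ as $\exp_{\sF}(X)$. There is no real obstacle here: the argument is essentially bookkeeping, the only point requiring mild care being the observation that in a stable $\infty$-category nullity of a map after smashing with a finite product $\prod_i B_i$ is detected factor-by-factor, which is immediate from the biproduct decomposition. Thus the statement follows from \Cref{compositenilpotent} and \Cref{restensoring} without further input.
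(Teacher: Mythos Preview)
Your proof is correct and takes essentially the same approach as the paper, which simply states that the result follows by combining \Cref{compositenilpotent} with \Cref{restensoring}. You have spelled out in detail the only step requiring comment, namely that nullity after smashing with the finite product $A_{\sF}$ is detected factorwise, and then used \Cref{restensoring} to identify smashing with $F(G/H_+,S^0_G)$ with restriction to $H$.
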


Next, we show that $\sF$-nilpotence can be descended under restriction and
ascended under induction.
\begin{prop} 
\label{nilpandresind}
Suppose $H \leq G$ and let  $\sF$ be a family of subgroups of $G$. 
Let $\sF_H$ be the family of those
subgroups of $H$ which belong to $\sF$. 
\begin{enumerate}
\item  
If $X \in \GSpec$ is $\sF$-nilpotent, then $\Res^G_H X \in \HSpec$ is
$\sF_H$-nilpotent.
\item
If $Y \in \HSpec$ is $\sF_H$-nilpotent, then $\Coind^G_H X\simeq \Ind_H^G X \in \GSpec$ is
$\sF^\prime$-nilpotent for any family $\sF^\prime$ containing $\sF_H$. In particular $\CoInd_H^G X$ is $\sF$-nilpotent.
\end{enumerate}
\end{prop}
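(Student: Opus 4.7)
The plan is to derive both parts from the axiomatic result \Cref{Anilplaxmonoidal} on preservation of nilpotence under exact lax symmetric monoidal functors, together with the elementary fact that an $A'$-nilpotent object is $A$-nilpotent whenever $A'$ lies in the thick $\otimes$-ideal generated by $A$. For (1) we use $\Res^G_H$, which is strong symmetric monoidal and exact. For (2) we use $\CoInd_H^G$, which is lax symmetric monoidal as a right adjoint to the symmetric monoidal functor $\Res^G_H$, and is exact since by the Wirthm\"uller isomorphism \eqref{eq:wirthmuller} (valid for finite $G$) it coincides with the left adjoint $\Ind_H^G$.

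For (1), \Cref{Anilplaxmonoidal} gives that $\Res^G_H X$ is $\Res^G_H A_{\sF}$-nilpotent with $\exp_{\Res^G_H A_{\sF}}(\Res^G_H X)\leq \exp_{\sF}(X)$, so it suffices to check that $\Res^G_H A_{\sF}$ and $A_{\sF_H}$ generate the same thick $\otimes$-ideal in $\HSpec$. The double coset formula $\Res^G_H(G/K)\cong\bigsqcup_{g\in H\backslash G/K} H/(H\cap gKg^{-1})$ exhibits $\Res^G_H A_{\sF}$ as a finite product of orbit spectra $H/L_+$ with $L=H\cap gKg^{-1}$; each such $L$ lies in $\sF_H$ since $L\leq H$ and $L$ is subconjugate in $G$ to $K\in\sF$. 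Conversely, every $L\in\sF_H$ appears by taking $K=L$ and $g=e$. Hence each of $\Res^G_H A_{\sF}$, $A_{\sF_H}$ is a retract of a finite product of factors of the other, so the two algebras generate the same thick $\otimes$-ideal and $\Res^G_H X$ is $\sF_H$-nilpotent, with exponent at most $\exp_{\sF}(X)$.

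For (2), since $\CoInd_H^G$ preserves products (being a right adjoint) and $\CoInd_H^G\circ\CoInd_L^H\simeq\CoInd_L^G$ (composition of right adjoints), combining with \eqref{GHplus} twice gives
\[
\CoInd_H^G A_{\sF_H}\;\simeq\;\prod_{L\in\sF_H}\CoInd_H^G F(H/L_+, S^0_H)\;\simeq\;\prod_{L\in\sF_H} F(G/L_+, S^0_G).
\]
Because $\sF_H\subseteq\sF'$, this product is a direct factor (and hence a retract) of $A_{\sF'}=\prod_{K\in\sF'}F(G/K_+, S^0_G)$ via projection, so it lies in $\sF'^{\textrm{Nil}}$. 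Applying \Cref{Anilplaxmonoidal} to $\CoInd_H^G$, we conclude that $\CoInd_H^G Y\simeq \Ind_H^G Y$ is $\sF'$-nilpotent, with $\exp_{\sF'}(\CoInd_H^G Y)\leq\exp_{\sF_H}(Y)$.

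No real obstacle is anticipated; the proof reduces entirely to \Cref{Anilplaxmonoidal} once the algebras $\Res^G_H A_{\sF}$ and $\CoInd_H^G A_{\sF_H}$ have been made explicit. The small point worth emphasizing is that in part (2) the lax symmetric monoidal structure required by \Cref{Anilplaxmonoidal} is the one on $\CoInd_H^G$ coming from right adjointness; Wirthm\"uller ensures that this lax structure lives on the same underlying functor as $\Ind_H^G$.
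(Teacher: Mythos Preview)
Your proof is correct and follows exactly the same approach as the paper: both parts are deduced from \Cref{Anilplaxmonoidal}. The paper's proof is the single sentence ``Both assertions follow by applying \Cref{Anilplaxmonoidal}'', leaving implicit precisely the identifications (via the double coset formula for $\Res^G_H A_{\sF}$, and via transitivity of coinduction for $\CoInd_H^G A_{\sF_H}$) that you spell out; your expanded version is a faithful unpacking of what the authors intend.
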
 
\begin{proof} 
Both assertions follow by applying \Cref{Anilplaxmonoidal}.
\end{proof}

\begin{prop}
\label{fnilintersect}
Let $\sF, \sF'$ be two families of subgroups of $G$. Then a
$G$-spectrum is $\left(\sF \cap \sF'\right)$-nilpotent if and only if 
it is both $\sF$-nilpotent and $\sF'$-nilpotent. 
\end{prop}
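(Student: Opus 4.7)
The plan is to reduce \Cref{fnilintersect} to the already-established \Cref{nilpAB} for a tensor product of algebras, by showing that the thick $\otimes$-ideal generated by $A_{\sF \cap \sF'}$ agrees with the one generated by $A_{\sF} \otimes A_{\sF'}$.

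For the easy direction (namely that $(\sF \cap \sF')$-nilpotence implies both $\sF$- and $\sF'$-nilpotence), I would argue as follows. Since $G$ is finite, each of the families $\sF, \sF', \sF \cap \sF'$ is finite, and the algebra object $A_{\sF}$ is a finite product in $\CAlg(\GSpec)$. For every $H \in \sF \cap \sF'$ we have $H \in \sF$, so the factor $F(G/H_+, S^0_G)$ appearing in $A_{\sF \cap \sF'}$ also appears as a factor of $A_{\sF}$; projection to these factors exhibits $A_{\sF \cap \sF'}$ as a retract of $A_{\sF}$. Hence $A_{\sF\cap\sF'}$ lies in the thick subcategory generated by $A_{\sF}$, and the containment of thick $\otimes$-ideals follows. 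The same argument works for $\sF'$.

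The nontrivial direction is to show that if $X$ is both $\sF$-nilpotent and $\sF'$-nilpotent, then $X$ is $(\sF \cap \sF')$-nilpotent. By \Cref{nilpAB} applied to $A = A_{\sF}$ and $B = A_{\sF'}$, the hypothesis tells us that $X$ is $(A_{\sF} \otimes A_{\sF'})$-nilpotent. It therefore suffices to show that every object lying in the thick $\otimes$-ideal generated by $A_{\sF} \otimes A_{\sF'}$ is $(\sF \cap \sF')$-nilpotent; equivalently, that $A_{\sF} \otimes A_{\sF'}$ itself lies in the thick $\otimes$-ideal generated by $A_{\sF \cap \sF'}$. Since $\sF$ and $\sF'$ are finite and the tensor product distributes over finite products, this reduces to showing, for each pair $H \in \sF, K \in \sF'$, that $F(G/H_+, S^0_G) \wedge F(G/K_+, S^0_G)$ lies in the thick $\otimes$-ideal generated by $A_{\sF \cap \sF'}$.

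For this, I use that $(G/H)_+$ is dualizable with self-dual $F(G/H_+, S^0_G) \simeq \Ind_H^G S^0_H \simeq (G/H)_+$ (by the Wirthm\"uller isomorphism in \Cref{prop:ind-coind}, since $G$ is finite), so
\[ F(G/H_+, S^0_G) \wedge F(G/K_+, S^0_G) \simeq \Sigma^\infty_+ (G/H \times G/K). \]
Now apply the double coset formula for the $G$-set $G/H \times G/K$:
\[ G/H \times G/K \cong \bigsqcup_{[g] \in H \backslash G / K} G/(H \cap gKg^{-1}). \]
For each double coset representative $g$, the subgroup $H \cap gKg^{-1}$ is contained in $H \in \sF$ and in $gKg^{-1}$, which lies in $\sF'$ by the family axiom; hence $H \cap gKg^{-1} \in \sF \cap \sF'$. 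Taking suspension spectra of this decomposition (and using that $(G/L)_+ \simeq F(G/L_+, S^0_G)$ for finite $G$) exhibits $F(G/H_+, S^0_G) \wedge F(G/K_+, S^0_G)$ as a finite wedge of factors of $A_{\sF \cap \sF'}$, completing the argument. The only mild subtlety, which is pure bookkeeping, is verifying that these wedge summands actually appear as retracts of iterated applications of $A_{\sF \cap \sF'}$; this is immediate since $A_{\sF \cap \sF'}$ is a product indexed by the full family $\sF \cap \sF'$.
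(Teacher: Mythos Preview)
Your proof is correct and follows essentially the same strategy as the paper: both reduce to \Cref{nilpAB} and then argue that $A_{\sF}\wedge A_{\sF'}$ and $A_{\sF\cap\sF'}$ generate the same thick $\otimes$-ideal. The only difference is packaging: the paper simply observes that $A_{\sF}\wedge A_{\sF'}$ is an $A_{\sF\cap\sF'}$-module (hence $A_{\sF\cap\sF'}$-nilpotent), whereas you make this explicit via the double coset decomposition of $G/H\times G/K$ --- which is exactly what underlies the module structure the paper invokes.
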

\begin{proof} 
This follows from \Cref{nilpAB}. While it is not true that $A_{\sF} \wedge
A_{\sF'} \simeq A_{\sF \cap \sF'}$, one sees easily that 
$A_{\sF} \wedge
A_{\sF'}$ admits the structure of a module over $A_{\sF \cap \sF'}$, and that a
$G$-spectrum is nilpotent for $A_{\sF \cap \sF'}$ if and only if it is $A_{\sF}
\wedge A_{\sF'}$-nilpotent. 
\end{proof} 

We next show that all $\sF$-nilpotence questions can be reduced to the case
where the family $\sF$ is the 
family of \emph{proper} subgroups. 
\begin{prop}
\label{reducetopropersubgroups}
A $G$-spectrum
$X \in \GSpec$ 
is $\sF$-nilpotent if and only if for every subgroup $H \leq G$ with $H
\notin \sF$, the restriction $\Res^G_H X \in \HSpec$ is nilpotent with respect
to the family of proper subgroups of $H$.
\end{prop}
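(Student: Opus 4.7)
The plan is to prove both directions separately, handling the converse by induction on $|G|$.

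For the forward implication I will apply \Cref{nilpandresind}(1): if $X \in \sFNil$ and $H \notin \sF$, then $\Res^G_H X$ is $\sF_H$-nilpotent, where $\sF_H = \{L \leq H : L \in \sF\}$. Since $H \notin \sF$ and $\sF$ is closed under subconjugation, every $L \in \sF_H$ is strictly contained in $H$, so $\sF_H \subseteq \mathcal{P}_H$; hence the generators of $A_{\sF_H}$-nilpotence sit inside the thick $\otimes$-ideal generated by $\{H/L_+\}_{L \in \mathcal{P}_H}$, giving $\Res^G_H X \in \mathcal{P}_H^{\mathrm{Nil}}$.

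For the reverse implication I induct on $|G|$. If $G \in \sF$ (in particular in the base case $|G|=1$) then $\sF$ contains every subgroup of $G$, so $S^0_G$ is itself a defining generator of $\sFNil$ and the conclusion is immediate. Otherwise $G \notin \sF$, and applying the hypothesis to $H = G$ shows that $X$ is already $\mathcal{P}_G$-nilpotent. Because $\sFNil$ is a thick $\otimes$-ideal, \Cref{tensorideal:desc} with $\mathcal{I} = \sFNil$ reduces the problem to verifying $A_{\mathcal{P}_G} \wedge X \in \sFNil$. Using finiteness of $\mathcal{P}_G$ and the Wirthm\"uller identification $F(G/K_+, S^0_G) \simeq G/K_+$, this object is equivalent to the finite wedge $\bigvee_{K \in \mathcal{P}_G} G/K_+ \wedge X$, so it suffices to treat each factor.

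For $K \in \sF$, $G/K_+$ is one of the defining generators of $\sFNil$, so $G/K_+ \wedge X \in \sFNil$ by $\otimes$-ideal closure. For $K \in \mathcal{P}_G \setminus \sF$, the projection formula \eqref{eq:frob2} combined with \eqref{ind:sphere} yields $G/K_+ \wedge X \simeq \Ind_K^G \Res^G_K X$. The restriction $\Res^G_K X$ satisfies the hypothesis of the theorem for the pair $(K, \sF_K)$: any $L \leq K$ with $L \notin \sF_K$ also fails to lie in $\sF$, so $\Res^K_L \Res^G_K X = \Res^G_L X$ is $\mathcal{P}_L$-nilpotent by assumption on $X$. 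Since $|K| < |G|$, the inductive hypothesis gives that $\Res^G_K X$ is $\sF_K$-nilpotent in $\mathrm{Sp}_K$, and \Cref{nilpandresind}(2) applied to the inclusion $\sF_K \subseteq \sF$ then promotes this to $\Ind_K^G \Res^G_K X \in \sFNil$, completing the induction.

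I do not anticipate a substantive obstacle; the argument is essentially formal once one has the right bookkeeping. The only points requiring a little care are checking that the hypothesis transfers cleanly from $(X, G, \sF)$ to $(\Res^G_K X, K, \sF_K)$ under restriction, and justifying the wedge decomposition of $A_{\mathcal{P}_G} \wedge X$ using stability and the finiteness of $G$.
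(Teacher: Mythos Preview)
Your proof is correct and follows essentially the same approach as the paper's: both use induction on $|G|$, apply \Cref{nilpandresind} to handle each $\Ind_K^G\Res^G_K X \simeq G/K_+ \wedge X$ for proper $K$, and then invoke \Cref{tensorideal:desc} with the $\mathcal{P}_G$-nilpotence of $X$ to conclude. The only cosmetic difference is that you split the proper subgroups into the cases $K\in\sF$ and $K\notin\sF$, whereas the paper treats all proper $K$ uniformly (the case $K\in\sF$ being trivially $\sF_K$-nilpotent since then $\sF_K$ contains $K$).
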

\begin{proof} 
The ``only if'' direction follows from \Cref{nilpandresind}. Therefore, it suffices to show
that if $\Res^G_H X \in \HSpec$ is nilpotent for the family of proper subgroups
for each $H \notin \sF$, then $X$ is $\sF$-nilpotent. 

Without loss of generality, $G \notin \sF$.
For each $H \leq G$, let $\sF_H$ denote 
the family of subgroups of $H$ which belong to $\sF$. 
Then, by induction on $|G|$, we may assume that for every $H \lneq G$, 
we have that the $H$-spectrum $\Res^G_H X$
is $\sF_H$-nilpotent. Inducing, it follows that $\Ind_H^G \Res^G_H X \simeq X
\wedge G/H_+$ is $\sF$-nilpotent for each $H \lneq G$
(\Cref{nilpandresind}). 
In particular, if $A = \prod_{H  \lneq G} F(G/H_+, S^0_G)$, we find that $X \wedge A$
is $\sF$-nilpotent. But since $X$ is $A$-nilpotent by hypothesis (as $X$ is
nilpotent for the family of proper subgroups), 
we conclude that $X$ is $\sF$-nilpotent by \Cref{tensorideal:desc}. 
\end{proof} 

Using this, we can give a criterion for when a \emph{$G$-ring spectrum}
is nilpotent for a family of subgroups.

\begin{thm} 
\label{geofixedpointscontractible}
Let $R \in \GSpec$ be a $G$-ring spectrum (up to homotopy). Then $R \in \sFNil$ if and only if for all $H \notin \sF$, 
the geometric fixed point spectrum $\Phi^H R$ is contractible.
\end{thm}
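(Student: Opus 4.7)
The plan is to prove both implications with the forward direction being nearly formal and the reverse direction proceeding by reducing to the family of proper subgroups of every ``bad'' subgroup and then invoking the ring-structure upgrade from torsion to nilpotence.

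For the forward direction, suppose $R \in \sFNil$ and fix $H \notin \sF$. The geometric fixed point functor $\Phi^H \colon \GSpec \to \Sp$ is symmetric monoidal and exact, so by \Cref{Anilplaxmonoidal}, $\Phi^H R$ lies in the thick $\otimes$-ideal generated by $\{\Phi^H(G/K_+)\}_{K \in \sF}$. Since $\Phi^H$ sends suspension spectra to suspension spectra of $H$-fixed points, $\Phi^H(G/K_+) \simeq \Sigma^\infty_+ (G/K)^H$. A coset $gK$ is $H$-fixed if and only if $g^{-1}Hg \subseteq K$, so $(G/K)^H \neq \emptyset$ would force $H$ to be subconjugate to $K \in \sF$, contradicting $H \notin \sF$ and the fact that $\sF$ is closed under subconjugacy. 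Thus $(G/K)^H = \emptyset$, each $\Phi^H(G/K_+)$ is contractible, and $\Phi^H R \simeq *$.

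For the reverse direction, assume $\Phi^H R \simeq *$ for all $H \notin \sF$. By \Cref{reducetopropersubgroups}, it suffices to show that for every $H \leq G$ with $H \notin \sF$, the restriction $\Res^G_H R \in \HSpec$ is nilpotent for the family $\mathcal{P}_H$ of proper subgroups of $H$. Now $R$ being a $G$-ring spectrum up to homotopy implies $\Res^G_H R$ is an $H$-ring spectrum up to homotopy, and our hypothesis combined with $\Phi^H(\Res^G_H R) \simeq \Phi^H R$ gives $\Phi^H(\Res^G_H R) \simeq *$. Applying \Cref{geometricfixedptsthm} in $\HSpec$, the vanishing of $\Phi^H$ on $\Res^G_H R$ is precisely the statement that $(\Res^G_H R)[A_{\mathcal{P}_H}^{-1}] \simeq *$; equivalently, $\Res^G_H R$ is $A_{\mathcal{P}_H}$-torsion. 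Since $\HSpec$ and $A_{\mathcal{P}_H}$ satisfy \Cref{hypothesis} (the unit is compact, $A_{\mathcal{P}_H}$ is dualizable, and the orbits furnish dualizable generators), and since $\Res^G_H R$ carries a unital multiplication in the homotopy category, \Cref{ringobjectnilp} upgrades torsion to nilpotence: $\Res^G_H R$ is $A_{\mathcal{P}_H}$-nilpotent.

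The decisive step, and the only place where the ring structure enters in an essential way, is the invocation of \Cref{ringobjectnilp} to turn the purely Bousfield-theoretic information $\Phi^H R \simeq *$ (i.e.\ $A_{\mathcal{P}_H}$-torsion) into the thick-tensor-ideal information of $A_{\mathcal{P}_H}$-nilpotence. Without a unital multiplication this implication fails, and indeed torsion objects need not be nilpotent in general. Everything else in the argument is bookkeeping: the forward direction is a formal consequence of the symmetric monoidal exactness of $\Phi^H$ and the emptiness of $(G/K)^H$, and the reverse direction organizes the subgroup-by-subgroup information using \Cref{reducetopropersubgroups} and \Cref{geometricfixedptsthm}.
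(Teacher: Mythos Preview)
Your proof is correct and follows essentially the same route as the paper. The paper's argument is terser: after invoking \Cref{reducetopropersubgroups} to reduce to the case $\sF = \mathcal{P}_G$, it handles both implications at once via the chain of equivalences $R \in \mathcal{P}_G^{\mathrm{Nil}} \iff R$ is $\mathcal{P}_G$-torsion (by \Cref{ringobjectnilp}) $\iff \Phi^G R \simeq *$. Your forward direction is a minor variation---you bypass the torsion reformulation and argue directly that $\Phi^H$, being symmetric monoidal and exact, sends the thick $\otimes$-ideal generated by $\{G/K_+\}_{K \in \sF}$ into one generated by contractible objects; this is perfectly fine and arguably more transparent for that half.
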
 
\begin{proof} 
By \Cref{reducetopropersubgroups}, we can assume $\sF$ is the family of all proper subgroups
of $G$. In this case, we know by \Cref{ringobjectnilp} that $R \in \sFNil$ if and only if $R$ is
$\sF$-torsion, which happens if and only if the $\sF^{-1}$-localization $\Phi^G R $ is contractible. 
\end{proof}

Given a $G$-ring spectrum which is nilpotent for a family of subgroups, any module
spectrum over it has the same nilpotence property. As a result, we can obtain a
decomposition of the module $\infty$-category: 

\begin{thm} 
\label{decompFnilpmodules}
Suppose $R$ is an $\e{n}$-algebra in $\GSpec$ which is $\sF$-nilpotent. 
Let $\mathcal{A}  \subset \sF$ be a collection of subgroups of $G$ closed under
conjugation and intersection, and such that any subgroup of $\sF$ is
contained in a subgroup belonging to $\mathcal{A}$. 
Then
there is an equivalence of $\e{n-1}$-monoidal $\infty$-categories
\[ \md_{\GSpec}(R) \simeq \varprojlim_{G/H \in
\mathcal{O}_{\mathcal{A}}(G)^{op}} \md_{\HSpec}( \Res^G_H R).  \]
\end{thm}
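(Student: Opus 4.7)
My plan is to reduce the theorem to the descent decomposition of $\sF$-complete spectra, \Cref{completelimitorbit}, by observing that under the hypothesis that $R \in \sFNil$, every $R$-module is itself $\sF$-nilpotent (and in particular $\sF$-complete) in $\GSpec$. Indeed, for any $M \in \md_{\GSpec}(R)$ the action map $\mu\colon R \wedge M \to M$ splits the unit map $M \simeq S^0_G \wedge M \to R \wedge M$ in $\GSpec$, exhibiting $M$ as a retract of $R \wedge M$. Since $\sFNil$ is a thick $\otimes$-ideal containing $R$, it contains $R \wedge M$ and hence its retract $M$.

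Next, I would invoke the symmetric monoidal $A_{\sF}$-completion functor $L_{A_{\sF}}\colon \GSpec \to (\GSpec)_{\fcompl}$, which fixes $R$ since $R$ is already $\sF$-complete. The induced symmetric monoidal functor
\[ \md_{\GSpec}(R) \longrightarrow \md_{(\GSpec)_{\fcompl}}(R) \]
is an equivalence: the right-hand side is the full subcategory of the left-hand side spanned by those modules whose underlying $G$-spectrum is $\sF$-complete, and by the previous paragraph every $R$-module automatically lies there.

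Now \Cref{completelimitorbit} provides a symmetric monoidal equivalence
\[ (\GSpec)_{\fcompl} \simeq \varprojlim_{G/H \in \mathcal{O}_{\mathcal{A}}(G)^{op}} \HSpec, \]
carrying $R$ to the compatible family $\{\Res^G_H R\}_{H \in \mathcal{A}}$, the transition maps being induced by restriction functors via \Cref{restensoring}. Combining this with the general principle that module $\infty$-categories over a chosen algebra commute with limits of (presentable) symmetric monoidal $\infty$-categories, which is part of the operadic machinery of \cite[\S 3.4, \S 4.5]{Lur14}, yields
\[ \md_{\GSpec}(R) \simeq \varprojlim_{G/H \in \mathcal{O}_{\mathcal{A}}(G)^{op}} \md_{\HSpec}(\Res^G_H R). \]
The $\e{n-1}$-monoidal enhancement follows by regarding the $\e{n}$-structure on $R$ as an $\e{n-1}$-algebra object in $\e{1}$-algebras (Dunn-Lurie additivity, \cite[Thm.\ 5.1.2.2]{Lur14}); the resulting $\e{n-1}$-monoidal structures on both sides are preserved by the equivalences above.

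The main obstacle is the third step, where one must verify that passing to modules over a fixed algebra is compatible with limits of symmetric monoidal $\infty$-categories and also respects the $\e{n-1}$-monoidal refinement. This is essentially a bookkeeping exercise with the presentable operadic framework of \cite{Lur14}, but has to be carried out with some care; the remaining steps, by contrast, are formal consequences of thick $\otimes$-ideal properties and the symmetric monoidal nature of completion.
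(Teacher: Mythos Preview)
Your proposal is correct and follows essentially the same strategy as the paper: reduce to \Cref{completelimitorbit}, use that passage to modules over a fixed algebra commutes with limits of symmetric monoidal $\infty$-categories (the paper cites \cite[\S 4.2]{Lur14} for the relevant operadic input), and observe that every $R$-module is automatically $\sF$-complete since $R \in \sFNil$. The paper presents these ingredients in a slightly different order and is a bit more explicit about the limit-compatibility step, but the substance is the same.
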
 

\begin{proof} 
For any $\infty$-operad $\mathcal{O}$, the association $\mathcal{C}
\mapsto \mathrm{Alg}_{\mathcal{O}}(\mathcal{C})$ sends homotopy limits in
symmetric monoidal $\infty$-categories
$\mathcal{C}$ to homotopy limits of $\infty$-categories.

For the convenience of the reader, we give a brief explanation of this fact. 
Recall that \cite[\S 2.1]{Lur14} to every symmetric monoidal $\infty$-category
$\mathcal{C}$ one has an $\infty$-category $\mathcal{C}^{\otimes}$ equipped
with a map $q\colon \mathcal{C}^{\otimes} \to N( \mathrm{Fin}_*)$, where
$\mathrm{Fin}_*$ is the category of pointed finite sets. In addition, the
$\infty$-operad $\mathcal{O}$ determines an $\infty$-category
$\mathcal{O}^{\otimes}$ and a functor $p\colon  \mathcal{O}^{\otimes} \to N(
\mathrm{Fin}_*)$. Given a diagram indexed over an $\infty$-category
$\mathcal{I}$ of symmetric monoidal $\infty$-categories $\mathcal{C}_i, i \in
\mathcal{I}$, we have  a symmetric monoidal $\infty$-category 
$\varprojlim_{i \in \mathcal{I}} \mathcal{C}_i$ such
that $(\varprojlim_{i \in \mathcal{I}}
\mathcal{C}_i)^{\otimes} = 
\varprojlim_{i \in \mathcal{I}} (\mathcal{C}_i^{\otimes})
$. Finally, $\mathrm{Alg}_{\mathcal{O}}(\mathcal{C})$ is a full
subcategory of $\fun_{N( \mathrm{Fin}_*)}( \mathcal{O}^{\otimes},
\mathcal{C}^{\otimes})$. This construction sends homotopy limits in
$\mathcal{C}$ to homotopy limits of $\infty$-categories (as $\mathcal{C}
\mapsto \mathcal{C}^{\otimes}$ does), and one checks that the condition that describes 
$\mathrm{Alg}_{\mathcal{O}}(\mathcal{C})$ as a full subcategory of 
$\fun_{N( \mathrm{Fin}_*)}( \mathcal{O}^{\otimes},
\mathcal{C}^{\otimes})$ is compatible with homotopy limits too.

Given a system of symmetric monoidal $\infty$-categories $\mathcal{C}_i$ indexed by 
an $\infty$-category
$\mathcal{I}$ and an algebra object $A = \left\{A_i\right\} \in \mathrm{Alg}(\varprojlim
\mathcal{C}_i)$, we obtain
a decomposition of $\infty$-categories
\[ \md_{\left(\varprojlim_{i \in \mathcal{I}} \mathcal{C}_i\right)}(A) \simeq
\varprojlim_{i \in \mathcal{I}} \md_{\mathcal{C}_i}(A_i).\]
This follows similarly using the $\infty$-operads controlling modules \cite[\S
4.2]{Lur14}.

Therefore, setting $\mathcal{C}=\Sp_\sF$, $\mathcal{C}_{G/H}=\Sp_H$, and $A_{G/H}=\Res_H^G R$ as $G/H$ ranges over $\mathcal{O}_\mathcal{A}(G)^{op}$, \Cref{completelimitorbit} gives the desired decomposition for the
$\infty$-category of $R$-modules in $\GSpec$ which are 
$\sF$-complete, i.e.,
\[ 
\md_{(\GSpec)_{\sF}}(R) \simeq \varprojlim_{G/H \in
\mathcal{O}_{\mathcal{A}}(G)^{op}} \md_{\HSpec}( \Res^G_H R).
\]
Here $(\GSpec)_{\sF}$ is the $\infty$-category of $\sF$-complete $G$-spectra. 
However, every $R$-module is automatically $\sF$-complete since
$R$ is $\sF$-nilpotent. This gives the desired claim.  
\end{proof}

\part{Unipotence for equivariant module spectra}

\section{$U(n)$-unipotence and the flag variety}
\label{unipsec}

In this section, we prove several  results
 on actions of compact Lie groups on modules over a
complex-oriented ring spectrum.
Our main results state that actions of the unitary group are determined by their homotopy fixed
points. For example, to give a $KU$-module equipped with a $U(n)$-action is
equivalent to giving a $F(BU(n)_+, KU)$-module which is complete with respect to the
augmentation ideal in $\pi_0 F(BU(n)_+, KU)$. We will use these techniques to prove
that the Borel-equivariant forms of such theories are nilpotent for the family of abelian
subgroups. 

For our nilpotence statements, the strategy of our argument, which goes back to
ideas of Quillen \cite{Qui71b} and is used prominently by Hopkins-Kuhn-Ravenel
\cite{HKR00}, is that of \emph{complex-oriented descent.}
Let $G$ be a finite group, and embed $G \leq U(n)$. The flag variety
$U(n)/T$ defines a $G$-space with abelian stabilizers and which
thus has a cell decomposition in terms of the orbits $G/A_+, A \leq G$
abelian. We will show
that in equivariant stable homotopy theory, over a base such as 
Borel-equivariant $MU$, the  flag variety actually splits up as a  sum of
copies of the unit. This is easily seen to imply the desired nilpotence statement.
 
\subsection{Unipotence}
We begin with some generalities on symmetric monoidal stable
$\infty$-categories.
Note that we do \emph{not} assume additional hypotheses such as the compactness
of the unit here. 

\begin{definition} 
Let $(\mathcal{C} , \otimes, \mathbf{1})$ be a presentable symmetric monoidal, stable
$\infty$-category where the tensor product commutes with colimits in each variable. We say that
$\mathcal{C}$ is \emph{weakly unipotent} if the unit $\mathbf{1}$ generates
$\mathcal{C}$ as a localizing subcategory. 
\end{definition}

\begin{example}\label{ex:morita}
The module $\infty$-category $\md(R)$ for an $\e{\infty}$-ring $R$ is
weakly unipotent. In
fact, by the symmetric monoidal version of Schwede-Shipley theory \cite[Prop.
7.1.2.7]{Lur14}, it is the \emph{basic} example of a weakly 
unipotent $\infty$-category: more precisely, any weakly unipotent
(symmetric monoidal, stable, etc.) $\infty$-category where the unit is {\em compact} is equivalent to $\md(R)$ for $R$
the $\e{\infty}$-ring of endomorphisms of the unit. 
\end{example} 

\begin{example} 
Consider $\fun(BG, \md(R))$ for $G$ a finite group and $R$ an
$\e{\infty}$-ring. This is generally not weakly unipotent: unless (for some
prime $p$) $G$ is a
$p$-group and  $p $ is
nilpotent in $\pi_0R$, the induced object
$F(G_+, R) \simeq R \wedge G_+$ cannot belong to the localizing subcategory generated by the unit $R$ for
purely algebraic reasons. 

In fact, if there exists a prime number $q \mid |G|$ such that $q$ is not nilpotent in $R$, 
let $G_q \leq G$ be a $q$-Sylow subgroup. Given any $M \in \fun(BG, \md(R))$
that belongs to the localizing subcategory generated by the unit, one sees
by considering long exact sequences that $\pi_*(M)[q^{-1}]$ 
must have trivial $G_q$-action (equivalently, $\sum_{g \in G_q} g$ is an
isomorphism on $\pi_*(M)[q^{-1}]$). However, this is not the case for the
induced object $R \wedge G_+$.  
\end{example} 

The compactness of the unit is crucial in \Cref{ex:morita}, and we do not know how to 
classify weakly unipotent symmetric monoidal
$\infty$-categories $\mathcal{C}$ in general. 
As a result, the following definition (\Cref{def:unipotent}) will play more of a role for us. 
Recall first that if $\mathcal{C}$ is as above, and $R =
\mathrm{End}_{\mathcal{C}}(\mathbf{1})$ is the $\e{\infty}$-ring of endomorphisms of the unit, 
then one has a basic adjunction 
\begin{equation} \label{symmmonoidaladj} (\otimes_R \mathbf{1},
\hom_{\mathcal{C}}(\mathbf{1}, \cdot)) \colon \md(R) \rightleftarrows \mathcal{C},
\end{equation}
where the left adjoint $\md(R) \to \mathcal{C}$ is determined by the condition
that it sends the unit to the unit (in fact, it canonically becomes a
symmetric monoidal functor). 
This adjunction is not an equivalence in general, but it restricts to an
equivalence between perfect $R$-modules and the thick subcategory of
$\mathcal{C}$ generated by the unit. 
Recall \cite[Def.~7.2.4.1]{Lur14} that the $\infty$-category of perfect
$R$-modules is the thick subcategory of $\md(R)$ generated by the unit. 

\begin{prop} 
\label{1generate}
$\mathcal{C}$ is weakly unipotent if and only if
$\hom_{\mathcal{C}}(\mathbf{1}, \cdot)\colon \mathcal{C} \to \md(R)$ is conservative.
\end{prop}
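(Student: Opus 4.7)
The plan is to establish both directions by using the basic fact that the subcategory of objects $Y \in \mathcal{C}$ satisfying $\hom_{\mathcal{C}}(Y,X) \simeq 0$ (for a fixed $X$) is automatically localizing, together with a right-adjoint-to-inclusion argument on the subcategory generated by $\mathbf{1}$.

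For the forward direction, suppose $\mathcal{C}$ is weakly unipotent and let $X \in \mathcal{C}$ satisfy $\hom_{\mathcal{C}}(\mathbf{1},X) \simeq 0$ in $\md(R)$; equivalently, the mapping spectrum is contractible. I would consider the full subcategory $\mathcal{A} \subseteq \mathcal{C}$ spanned by those $Y$ with $\hom_{\mathcal{C}}(Y,X) \simeq 0$. Since $\hom_{\mathcal{C}}(-,X)$ converts colimits in the first variable to limits and is exact, $\mathcal{A}$ is a localizing subcategory that contains $\mathbf{1}$. Weak unipotence then forces $\mathcal{A} = \mathcal{C}$, so in particular $\hom_{\mathcal{C}}(X,X) \simeq 0$ and hence $X \simeq 0$, which is precisely conservativity.

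For the converse, suppose $\hom_{\mathcal{C}}(\mathbf{1},-)$ is conservative and let $\mathcal{D} \subseteq \mathcal{C}$ denote the localizing subcategory generated by $\mathbf{1}$. The inclusion $i\colon \mathcal{D} \hookrightarrow \mathcal{C}$ preserves colimits by construction; since $\mathcal{D}$ is presentable (being a localizing subcategory of a presentable stable $\infty$-category generated by a single object, cf.\ \cite[Prop.~1.4.4.11]{Lur14}), the adjoint functor theorem supplies a right adjoint $i^R\colon \mathcal{C}\to\mathcal{D}$. For any $X \in \mathcal{C}$, form the cofiber sequence $i i^R X \to X \to C$. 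By the adjunction, $\hom_{\mathcal{C}}(Y, ii^R X) \xrightarrow{\sim} \hom_{\mathcal{C}}(Y, X)$ for every $Y \in \mathcal{D}$, so $\hom_{\mathcal{C}}(Y, C) \simeq 0$ for all such $Y$; applying this to $Y = \mathbf{1}$ gives $\hom_{\mathcal{C}}(\mathbf{1}, C) \simeq 0$. Conservativity then yields $C \simeq 0$, i.e.\ $X \in \mathcal{D}$, whence $\mathcal{D} = \mathcal{C}$.

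The main technical point to verify carefully is the presentability of the localizing subcategory generated by $\mathbf{1}$, which is what permits the construction of the right adjoint $i^R$ despite the fact that $\mathbf{1}$ is not assumed to be compact in $\mathcal{C}$. Once this is in hand, the argument is otherwise formal and does not rely on any additional structure beyond the presentable stable symmetric monoidal assumption.
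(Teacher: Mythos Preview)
Your proof is correct and follows essentially the same approach as the paper: the paper deduces the proposition from a more general lemma (\Cref{generateifcons}) about an arbitrary set of generators $\{X_\alpha\}$, but the argument for that lemma is identical to yours---the localizing-subcategory trick with $\hom_{\mathcal{C}}(-,X)$ for one direction, and presentability of the localizing subcategory plus the right-adjoint/counit-cofiber argument for the other. The only cosmetic difference is that the paper works with the fiber rather than the cofiber of the counit map (equivalent in the stable setting) and cites \cite[Cor.~1.4.4.2]{Lur14} rather than Prop.~1.4.4.11 for presentability.
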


This follows from the following more general lemma. 
\begin{lemma} 
\label{generateifcons}
Let $\mathcal{C}$ be a presentable stable $\infty$-category. Consider a set
$\left\{X_\alpha\right\}_{\alpha \in A}$ of objects in $\mathcal{C}$. 
Then the following are equivalent: 
\begin{enumerate}
\item Given $Y \in \mathcal{C}$, $Y$ is contractible if and only if
$\hom_{\mathcal{C}}(X_\alpha, Y) \in \sp$ is contractible for each $\alpha \in A$. 
\item
The
smallest localizing subcategory containing the $\left\{X_\alpha\right\}_{\alpha
\in A}$ is all
of $\mathcal{C}$.
\end{enumerate}
\end{lemma}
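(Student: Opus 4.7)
The plan is to prove the two implications separately, relying on standard facts about localizing subcategories in presentable stable $\infty$-categories.

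For $(2) \Rightarrow (1)$, I would fix an object $Y \in \mathcal{C}$ such that $\hom_{\mathcal{C}}(X_\alpha, Y)$ is contractible for each $\alpha$, and consider the full subcategory
\[
\mathcal{D}_Y = \{ Z \in \mathcal{C} \mid \hom_{\mathcal{C}}(Z, Y) \simeq \ast \}.
\]
Since $\hom_{\mathcal{C}}(-, Y)$ sends colimits to limits and is exact, $\mathcal{D}_Y$ is a localizing subcategory of $\mathcal{C}$. By hypothesis it contains every $X_\alpha$, and by $(2)$ it must therefore be all of $\mathcal{C}$. Taking $Z = Y$ gives that $Y \simeq 0$. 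The converse direction within $(1)$ is trivial.

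For $(1) \Rightarrow (2)$, let $\mathcal{L} \subseteq \mathcal{C}$ denote the smallest localizing subcategory containing $\{X_\alpha\}$. Because $\mathcal{L}$ is generated by a set of objects inside a presentable stable $\infty$-category, it is itself presentable (this is \cite[Cor.~1.4.4.2]{Lur14}), and the inclusion $\iota \colon \mathcal{L} \hookrightarrow \mathcal{C}$ preserves all colimits. By the adjoint functor theorem \cite[Cor.~5.5.2.9]{Lur09}, $\iota$ admits a right adjoint $R$, so the inclusion is a colocalization. For any $Y \in \mathcal{C}$, the counit provides a fiber sequence
\[
\iota R(Y) \longrightarrow Y \longrightarrow Y''
\]
in which $Y''$ lies in the right orthogonal complement $\mathcal{L}^{\perp}$, i.e.\ $\hom_{\mathcal{C}}(Z, Y'') \simeq \ast$ for every $Z \in \mathcal{L}$. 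In particular $\hom_{\mathcal{C}}(X_\alpha, Y'') \simeq \ast$ for every $\alpha$, so by $(1)$ we conclude $Y'' \simeq 0$, whence $Y \simeq \iota R(Y) \in \mathcal{L}$. Thus $\mathcal{L} = \mathcal{C}$.

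The only step that requires any care is the existence of the right adjoint to $\iota$; this is where presentability of $\mathcal{L}$ (guaranteed by the cited result from \cite{Lur14}) and of $\mathcal{C}$ combine with the adjoint functor theorem. Once that is in hand, the orthogonal decomposition immediately converts the detection hypothesis $(1)$ into the generation statement $(2)$.
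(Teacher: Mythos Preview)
Your proof is correct and follows essentially the same approach as the paper's own proof: both directions use the same ideas, and for $(1)\Rightarrow(2)$ you invoke the same references (\cite[Cor.~1.4.4.2]{Lur14} for presentability of $\mathcal{L}$ and \cite[Cor.~5.5.2.9]{Lur09} for the right adjoint) and then kill the orthogonal piece via hypothesis $(1)$. The only cosmetic difference is that the paper works with the \emph{fiber} of the counit $\iota R(Y)\to Y$ rather than its cofiber $Y''$, which is immaterial in a stable $\infty$-category.
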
 
\begin{proof} 
Assume the second condition, 
and let $Y \in \mathcal{C}$ be an
object such that 
$\hom_{\mathcal{C}}(X_\alpha, Y)$ is contractible for each $\alpha \in A$.
Consider now the collection of $X \in \mathcal{C}$ such that
$\hom_{\mathcal{C}}(X, Y)$ is contractible; it is seen to be a localizing
subcategory, and since it contains the $\left\{X_\alpha\right\}$, it contains all of
$\mathcal{C}$. In particular, $\hom_{\mathcal{C}}(Y, Y)$ is contractible, which
implies that $Y$ is contractible.

Conversely, assume the first condition, i.e. that the $\hom_{\mathcal{C}}(X_\alpha, \cdot)$  ($\alpha\in A$) are
jointly conservative.
The localizing subcategory $\mathcal{C}' \subset \mathcal{C}$ generated by the
$\{X_\alpha\}_{\alpha \in A}$ is presentable \cite[Cor.\ 1.4.4.2]{Lur14} (note that $\mathcal{C}$ is
itself presentable, so the hypotheses of that corollary are met), so that 
the inclusion $\mathcal{C}' \to \mathcal{C}$ has a right adjoint $B$ by the
adjoint functor theorem 
\cite[Cor.\ 5.5.2.9]{Lur09}. It follows that if $X \in \mathcal{C}$, then 
one has a natural fiber sequence
\[ F(X) \to B(X) \to X,  \]
where $B(X) \to X$ is the counit of the adjunction and $F(X)$ is defined to
be the fiber. One sees that for
any $Y \in \mathcal{C}'$, the spectrum $\hom_{\mathcal{C}}(Y, F(X))$ is
contractible. Taking in particular $Y = X_\alpha$ for $\alpha \in A$, we find that $F(X)$ is
contractible by hypothesis and that $B(X) \to X$ is an equivalence, so $X$
belongs to the localizing subcategory generated by the
$\left\{X_\alpha\right\}$. 
\end{proof}

\begin{definition} 
\label{def:unipotent}
$\mathcal{C}$ is 
\emph{unipotent}
if the adjunction
\eqref{symmmonoidaladj}
is a localization, i.e., if $\hom_{\mathcal{C}}( \mathbf{1}, \cdot)$ is fully
faithful.
\end{definition}

\begin{remark} 
We do not know whether there exists a symmetric monoidal,
presentable stable $\infty$-category 
$\mathcal{C}$ which is weakly unipotent but not unipotent. 
\end{remark} 

More generally, one can ask the following question. Let $\mathcal{D}$ be a
presentable stable $\infty$-category and let $X \in \mathcal{D}$ be a
generator, i.e., an object such that $\hom_{\mathcal{D}}(X, \cdot)\colon \mathcal{D}
\to \sp$ is
conservative. By \Cref{generateifcons}, this is equivalent to
supposing that the localizing subcategory generated by $X$ is all of
$\mathcal{D}$. 
In this case, one obtains an adjunction
\begin{equation} \md( \mathrm{End}_{\mathcal{D}}(X)) \rightleftarrows
\mathcal{D} , \label{stableadj} \end{equation}
where the right adjoint is conservative.

\begin{question} 
\label{GPstable}
If $X$ generates $\mathcal{D}$ as  a localizing subcategory, is \eqref{stableadj}
a localization? 
\end{question} 

The answer to the abelian analog of Question~\ref{GPstable} (in the Grothendieck case) is affirmative in
view of the Gabriel-Popescu theorem \cite{GP64}. 
However, in general the answer to Question~\ref{GPstable} can be no. 
\begin{example} 
Let $\mathcal{C} = D(\mathbb{Z}_p)$ be the derived $\infty$-category of
modules over the $p$-adic integers $\mathbb{Z}_p$. We claim that the object $X =
\mathbb{Q}_p \oplus \mathbb{F}_p$ generates $\mathcal{C}$. In fact, the cofiber
sequence
\[ \mathbb{Z}_p  \to \mathbb{Q}_p \to \mathbb{Q}_p/\mathbb{Z}_p  \]
shows easily that the localizing subcategory generated by $X$ contains
$\mathbb{Z}_p$, since the localizing subcategory generated by $\mathbb{F}_p$
contains $\mathbb{Q}_p/\mathbb{Z}_p$. However, we claim that the map
\begin{equation} \hom_{\mathcal{C}}(X, \mathbb{Z}_p) \otimes_{\hom_{\mathcal{C}}(X, X)} X
\to \mathbb{Z}_p  \label{notanequiv}
\end{equation} 
is not an equivalence, so the associated adjunction \eqref{stableadj} is not a
localization. 

Indeed, if \eqref{notanequiv} were an equivalence, then writing 
$\hom_{\mathcal{C}}(X, \mathbb{Z}_p) $ as a filtered colimit (over some
filtered $\infty$-category $\mathcal{J}$) of perfect
$\hom_{\mathcal{C}}(X, X)$-modules, one would conclude that 
$\mathbb{Z}_p = \varinjlim_{\mathcal{J}} Y_j$, where each $Y_j$ belongs to the
thick subcategory of $\mathcal{C}$ generated by $X$. Since $\mathbb{Z}_p \in
\mathcal{C}$ is compact, it follows that $\mathbb{Z}_p$ is a retract of some
$Y_j$. However, the functor $\mathcal{C} \mapsto \mathcal{C}$ given by $X
\mapsto (\widehat{X}_p)[1/p]$ annihilates $X$ (and thus anything in the
thick subcategory that $X$ generates) but does not annihilate
$\mathbb{Z}_p$, a contradiction.
\end{example}

We now state and prove the basic criterion we will use throughout to prove that
$\infty$-categories are unipotent.

\begin{prop}[Unipotence criterion] 
\label{unipcriterion}
Let $\mathcal{C}$ be a presentable, stable, symmetric monodical $\infty$-category where 
the tensor commutes with colimits in each variable, as above. Suppose $\mathcal{C}$ contains an algebra object
$A \in \mathrm{Alg}(\mathcal{C})$ with the following properties:
\begin{enumerate}
\item $A$ is compact and dualizable in $\mathcal{C}$. 
\item $\mathbb{D}A$ generates $\mathcal{C}$ as a localizing subcategory. 
\item $A$ belongs to the thick subcategory generated by the unit.
\end{enumerate}
Then $\mathcal{C}$ is unipotent. More precisely, if $R =
\mathrm{End}_{\mathcal{C}}(\mathbf{1})$, then the natural adjunction
\eqref{symmmonoidaladj} exhibits $\mathcal{C}$ as the completion of $\md(R)$
at $\hom_{\mathcal{C}}(\mathbf{1}, A) \in \mathrm{Alg}(\md(R))$.
\end{prop}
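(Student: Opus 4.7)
My plan is to analyze the symmetric monoidal adjunction
\[
(L, R') : \md(R) \rightleftarrows \mathcal{C}, \qquad L(M) = \mathbf{1} \otimes_R M, \qquad R'(X) = \hom_\mathcal{C}(\mathbf{1}, X),
\]
and identify $\mathcal{C}$ with the $A'$-completion of $\md(R)$ for $A' := R'(A) = \hom_\mathcal{C}(\mathbf{1}, A)$.

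First I would assemble basic structural observations. Since $A$ lies in the thick subcategory generated by $\mathbf{1}$ and both $L, R'$ are exact, the locus $\mathcal{E} \subseteq \mathcal{C}$ on which the counit $LR' \to \mathrm{id}$ is an equivalence is a thick subcategory containing $\mathbf{1}$, hence containing $A$; in particular $L(A') \simeq A$ canonically, and $A' \in \mathrm{Perf}(R)$ is dualizable. An analogous thick-subcategory argument applied to the lax monoidal structure map $R'(X) \otimes_R R'(Y) \to R'(X \otimes Y)$ shows it is an equivalence for all pairs in the thick subcategory of $\mathbf{1}$, giving $R'(A^{\otimes n}) \simeq (A')^{\otimes_R n}$. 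Next, since $\mathbb{D}A$ generates $\mathcal{C}$ as a localizing subcategory and $A$ is dualizable, tensoring with $A$ is conservative on $\mathcal{C}$ (as in \Cref{everythingAcomplete}), so every $X \in \mathcal{C}$ is $A$-complete. From this I conclude $R'$ factors through $\md(R)_{A'\text{-cpl}}$: if $M \otimes_R A' \simeq 0$, then $L(M) \otimes A \simeq L(M \otimes_R A') \simeq 0$ by symmetric monoidality of $L$ and $L(A') \simeq A$, forcing $L(M) \simeq 0$, and hence $\hom_{\md(R)}(M, R'X) \simeq \hom_\mathcal{C}(L(M), X) \simeq 0$ for all $X$. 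The adjunction thus restricts to $\bar{L} \dashv \bar{R}' : \md(R)_{A'\text{-cpl}} \rightleftarrows \mathcal{C}$.

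To show this restricted adjunction is an equivalence I would proceed by descent. The underlying $\infty$-categorical monadicity argument of \Cref{Acplimit} (via \cite[Cor.~4.7.6.3]{Lur14}, which does not require commutativity of $A$) presents both $\mathcal{C} = \mathcal{C}_{A\text{-cpl}}$ and $\md(R)_{A'\text{-cpl}}$ as totalizations of cobar diagrams $\{\md_\mathcal{C}(A^{\otimes n+1})\}_n$ and $\{\md_{\md(R)}((A')^{\otimes_R n+1})\}_n$ respectively. Using $L((A')^{\otimes_R n+1}) \simeq A^{\otimes n+1}$, the symmetric monoidal functor $L$ induces a map of cosimplicial diagrams, and it suffices to show each level $L_n : \md_{\md(R)}((A')^{\otimes_R n+1}) \to \md_\mathcal{C}(A^{\otimes n+1})$ is an equivalence. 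Both $\infty$-categories are compactly generated by the free rank-one module whose spectrum-valued endomorphism ring is $(A')^{\otimes_R n+1}$; on the right this identification uses $R'(A^{\otimes n+1}) \simeq (A')^{\otimes_R n+1}$ from the first paragraph together with the compactness of $A$ (which ensures $A^{\otimes n+1}$ is a compact generator of $\md_\mathcal{C}(A^{\otimes n+1})$). The Schwede--Shipley theorem then yields the levelwise equivalence.

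The main obstacle will be this final descent step: I must verify that the compact-generator hypothesis of Schwede--Shipley is genuinely available at each cosimplicial level, and that the levelwise equivalences assemble compatibly across the totalization. The three hypotheses on $A$ play complementary roles here: dualizability underpins the $A$-completion/cobar description, compactness delivers the Schwede--Shipley equivalence at each level, and membership of $A$ in the thick subcategory generated by $\mathbf{1}$ yields both $L(A') \simeq A$ and the projection-formula identifications $R'(A^{\otimes n+1}) \simeq (A')^{\otimes_R n+1}$, which together produce a well-defined map between the two totalization diagrams.
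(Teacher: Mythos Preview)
Your setup paragraph is correct and parallels the paper: $L(A') \simeq A$, $A'$ is perfect, the lax-monoidal comparison maps are equivalences on the thick subcategory of $\mathbf{1}$, and the adjunction factors through $A'$-completion. The gap is in the descent step. You want to present both sides as totalizations of cosimplicial diagrams $[n] \mapsto \md_\mathcal{C}(A^{\otimes(n+1)})$ and $[n] \mapsto \md_{\md(R)}((A')^{\otimes_R(n+1)})$, but for merely associative $A$ the assignment $[n] \mapsto A^{\otimes(n+1)}$ is \emph{not} a cosimplicial object in $\mathrm{Alg}(\mathcal{C})$: while the coface maps (unit insertions) are algebra homomorphisms, the codegeneracies multiply adjacent tensor factors, and $(a \otimes a')(b \otimes b') \mapsto (ab)(a'b')$ being multiplicative forces $a'b = ba'$. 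The paper flags exactly this obstruction in the footnote preceding \Cref{Acplimit}, which is why that theorem is stated only for $A \in \mathrm{CAlg}(\mathcal{C})$. Your parenthetical that \cite[Cor.~4.7.6.3]{Lur14} ``does not require commutativity of $A$'' misses the point: that corollary takes as input a cosimplicial $\infty$-category, and you have not produced one. One could try to salvage the argument semi-cosimplicially or via bimodule categories, but that is genuine additional work you have not supplied.

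The paper's argument is shorter and avoids the issue. After the factorization through completion, it observes that $\mathbb{D}A'$ is a compact generator of $\md(R)_{A'\text{-cpl}}$ (by \Cref{cptgencompl}, applied in $\md(R)$ where the unit is compact), that its image $\mathbb{D}A$ under $\bar L$ is a compact generator of $\mathcal{C}$ (hypothesis~2), and that $\bar L$ is fully faithful on the thick subcategory generated by $\mathbb{D}A'$ (since $\mathbb{D}A'$ lies in the thick subcategory of the unit, where the original adjunction \eqref{symmmonoidaladj} is already an equivalence). A single invocation of Schwede--Shipley then identifies both $\infty$-categories with $\md(\mathrm{End}_R(\mathbb{D}A'))$. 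This uses the three hypotheses in the same roles you identified, but replaces your levelwise totalization comparison with one compact-generator argument and needs no commutativity hypothesis on $A$.
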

 
\begin{proof} 
Let $A_R  := \hom_{\mathcal{C}}(\mathbf{1}, A) $. Since the adjunction
\eqref{symmmonoidaladj}
establishes an equivalence between perfect $R$-modules and the thick
subcategory of $\mathcal{C}$ generated by the unit, it follows using hypothesis 3, that $A_R  \in
\md(R)$ is a perfect (equivalently, dualizable) algebra object. 
Moreover, we have $A \simeq A_R \otimes_R \mathbf{1} \in \mathcal{C}$.

We claim that the adjunction 
\eqref{symmmonoidaladj} factors through the $A_R$-completion of $\md(R)$. To
see this, it suffices to show that if $M 
\in \md(R)$ is $A_R$-acyclic, then $M \otimes_R \mathbf{1} \in \mathcal{C}$ is
contractible. But we know that $(M \otimes_R A_R) \otimes_R \mathbf{1} \in
\mathcal{C}$ is contractible, so the equivalent object $(M \otimes_R \mathbf{1}) \otimes A  \in
\mathcal{C}$ is too. Thus, $M \otimes_R \mathbf{1} \in \mathcal{C}$ is
contractible since the second assumption implies that tensoring with $A$ is
conservative on $\mathcal{C}$. 

Therefore, by the universal property of the $A_R$-completion (as a Bousfield
localization), we get a new adjunction
\begin{equation} \label{newsadjunction} \md(R)_{A_R-\mathrm{cpl}} \rightleftarrows
\mathcal{C},  \end{equation}
which we claim is an inverse equivalence. To see this, we observe that
$\mathbb{D}A_R$ is
a compact generator for 
$ \md(R)_{A_R-\mathrm{cpl}}$ by \Cref{cptgencompl}. Its image, $\mathbb{D}A$, is a compact
generator for $\mathcal{C}$ by assumption. 
However, the left adjoint of the adjunction \eqref{newsadjunction}
is fully faithful 
on the thick subcategory generated by $\mathbb{D}A_R$ (as the left adjoint in 
\eqref{symmmonoidaladj} is fully faithful on the thick subcategory generated by
the unit). 

Therefore, the left adjoint carries the compact
generator $\mathbb{D}A_R$ to a compact generator of $\mathcal{C}$, and is fully faithful on
the thick subcategory generated by $\mathbb{D}A_R$.  
It follows that the adjunction \eqref{newsadjunction} is an equivalence as
desired: both $\infty$-categories are equivalent to $\md( \mathrm{End}_R(
\mathbb{D}A_R))$. 
\end{proof}

We will also need the following criterion for unipotence. 
Although this criterion requires more hypotheses than \Cref{unipcriterion}, these additional
hypotheses will easily be verified in the case of interest. The main benefit to
the next criterion is that $A$ is not assumed to belong to the thick
subcategory generated by the unit: instead, this is deduced from the
assumptions. 
In our main application, the last hypothesis will translate into the relevance of the Eilenberg-Moore
spectral sequence.

\begin{prop}[Second unipotence criterion] 
\label{unipcriterion2}
Let $\mathcal{C}$ be as above, and let $R =
\mathrm{End}_{\mathcal{C}}(\mathbf{1}) \in \clg(\sp)$. Suppose $\mathcal{C}$ contains an algebra object
$A \in \mathrm{Alg}(\mathcal{C})$ with the following properties:
\begin{enumerate}
\item $A$ is compact and dualizable in $\mathcal{C}$. 
\item $\mathbb{D}A$ is compact and generates $\mathcal{C}$ as a localizing subcategory.
\item The $\infty$-category $\md_{\mathcal{C}}(A)$ is generated as a localizing
subcategory by the $A$-module $A$ itself, and $A$ is a compact object in 
$\md_{\mathcal{C}}(A)$. 
\item 
The natural map of $R$-module spectra
\begin{equation} \hom_{\mathcal{C}}( \mathbf{1}, A) \otimes_{R} \hom_{\mathcal{C}}( \mathbf{1}, A) \to
\hom_{\mathcal{C}}(\mathbf{1}, A \otimes A)\end{equation}
is an equivalence. 
\end{enumerate}
Then the conclusion of \Cref{unipcriterion} holds.\end{prop}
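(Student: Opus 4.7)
We adapt the proof of \Cref{unipcriterion} via a comonadic descent strategy; hypotheses (3) and (4) substitute for the absent assumption that $A$ belongs to the thick subcategory generated by $\mathbf{1}$.

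First, by (1) and (2) together with \Cref{everythingAcomplete}, every object of $\mathcal{C}$ is $A$-complete, so $\mathcal{C} = \Acp$, and the problem reduces to identifying $\mathcal{C}$ with $\md(R)_{A_R-\mathrm{cpl}}$. By the $\infty$-categorical Barr-Beck-Lurie theorem, the adjunction $(-\otimes A, \mathrm{forget})\colon \mathcal{C} \rightleftarrows \md_{\mathcal{C}}(A)$ is comonadic: $-\otimes A$ is conservative (if $X \otimes A \simeq 0$ then $\hom_{\mathcal{C}}(\mathbb{D}A, X) \simeq \hom_{\mathcal{C}}(\mathbf{1}, A \otimes X) \simeq 0$, forcing $X \simeq 0$ by (2)), and it preserves all limits by the dualizability of $A$. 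Thus $\mathcal{C} \simeq \mathrm{CoMod}_T(\md_{\mathcal{C}}(A))$ for the comonad $T = A \otimes -$.

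Next, hypothesis (3) and the stable Schwede-Shipley theorem give an equivalence $\md_{\mathcal{C}}(A) \simeq \md(A_R)$ via $\hom_{\mathcal{C}}(\mathbf{1}, -)$, carrying the compact generator $A$ to $A_R$ (whose $\e{1}$-algebra structure as $\mathrm{End}_{\md_{\mathcal{C}}(A)}(A)^{op}$ agrees with that induced by the lax monoidal structure on $\hom_{\mathcal{C}}(\mathbf{1},-)$). Under this equivalence, $T$ transports to a colimit-preserving comonad $T'$ on $\md(A_R)$; the heart of the argument is to identify $T'$ with the base-change comonad $A_R \otimes_R -$. Both are classified (by the Morita theory of colimit-preserving endofunctors of $\md(A_R)$) by their value at $A_R$ as a bicomodule, and hypothesis (4) provides precisely the equivalence $T'(A_R) = \hom_{\mathcal{C}}(\mathbf{1}, A \otimes A) \simeq A_R \otimes_R A_R$, with comultiplications matching because they are induced, on both sides, by the algebra structure of $A$. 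Hence $T' \simeq A_R \otimes_R -$ as comonads.

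Finally, one identifies $\mathrm{CoMod}_{A_R \otimes_R -}(\md(A_R))$ with $\md(R)_{A_R-\mathrm{cpl}}$: this is the associative-algebra analog of \Cref{Acplimit}, and follows from the comonadicity of the base-change adjunction $(-\otimes_R A_R, U)\colon \md(R)_{A_R-\mathrm{cpl}} \rightleftarrows \md(A_R)$ (conservativity on the $A_R$-complete subcategory is immediate, since an object that is both $A_R$-acyclic and $A_R$-complete must be contractible). Concatenating yields $\mathcal{C} \simeq \md(R)_{A_R-\mathrm{cpl}}$, and one verifies this agrees with the natural adjunction $(L, R)$. The main obstacle is the comonad identification in the previous paragraph, where one must check that (4) is compatible with the full coalgebra structure and not merely the underlying endofunctor; the descent step here is an associative variant of the commutative totalization already established in \Cref{Acplimit}.
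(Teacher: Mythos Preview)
Your strategy is genuinely different from the paper's and is conceptually natural, but it carries two soft spots that the paper's route avoids.

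The paper does not go through comonadic descent. Instead, it first extends hypothesis~(4) to all $A$-nilpotent objects: using that $A$ is a compact generator of $\md_{\mathcal{C}}(A)$ (hypothesis~(3)), both sides of
\[
\hom_{\mathcal{C}}(\mathbf{1}, M) \otimes_R \hom_{\mathcal{C}}(\mathbf{1}, N) \to \hom_{\mathcal{C}}(\mathbf{1}, M \otimes N)
\]
commute with colimits in $A$-module variables, so the single case $M=N=A$ propagates. This already yields that $A_R = \hom_{\mathcal{C}}(\mathbf{1}, A)$ is dualizable in $\md(R)$. Now let $\mathcal{D}\subset\mathcal{C}$ be the thick subcategory generated by the unit and the $A\otimes X$ for $X$ dualizable; the restricted adjunction $\md^\omega(R)\rightleftarrows \mathcal{D}$ has a \emph{strictly} symmetric monoidal right adjoint, and an elementary duality lemma (\Cref{whenequivdual}) forces it to be an equivalence. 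In particular $A$ lies in the thick subcategory generated by $\mathbf{1}$, and one simply invokes \Cref{unipcriterion}. This is shorter and yields, as a byproduct, the concrete statement that $A$ is in the thick subcategory of the unit.

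Your argument, by contrast, needs two ingredients you have not fully secured. First, the comonad identification: hypothesis~(4) gives only an equivalence of underlying $R$-modules $T'(A_R)\simeq A_R\otimes_R A_R$; promoting this to an equivalence of $(A_R,A_R)$-bimodules with matching comultiplication is exactly the point you flag, and it is not formal from~(4) alone. Second, your final descent step (the associative analog of \Cref{Acplimit}) requires that $-\otimes_R A_R$ preserve the relevant totalizations, which in the paper's proof of \Cref{Acplimit} uses dualizability of the algebra object; you have not established that $A_R$ is perfect over $R$, and in fact the paper deduces this \emph{from} the extension of~(4) above. Both gaps can likely be closed, but doing so essentially reproduces the paper's key step (that $\hom_{\mathcal{C}}(\mathbf{1},-)$ is monoidal on the $A$-nilpotent part), at which point reducing to \Cref{unipcriterion} is more direct than the comonadic bookkeeping.
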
 
\begin{proof} 
We claim that the natural map
\begin{equation} 
\label{1mapAnilp}
\hom_{\mathcal{C}}(\mathbf{1}, M) \otimes_R
\hom_{\mathcal{C}}(\mathbf{1}, N) \to \hom_{\mathcal{C}}( \mathbf{1}, M \otimes
N)  \end{equation} 
is an equivalence for any $M, N \in \mathcal{C}$ which are $A$-nilpotent.
It suffices to prove this for $M , N\in \md_{\mathcal{C}}(A)$ 
in view of \Cref{thicksubcatmodulesisnilpotent}.
But for $M, N \in \md_{\mathcal{C}}(A)$, 
both sides of \eqref{1mapAnilp} commute with arbitrary colimits in $M, N$ by
the assumption that $A$ is compact in $\md_{\mathcal{C}}(A)$. It thus suffices (since $\md_{\mathcal{C}}(A)$ is
generated as a localizing subcategory by $A$) to see that 
\eqref{1mapAnilp} is an equivalence for $M, N = A$, which we have assumed as
part of the hypotheses.
The natural equivalence in \eqref{1mapAnilp} implies the $R$-module
$\hom_{\mathcal{C}}(\mathbf{1}, A)$
is dualizable (i.e., perfect) in $\md(R)$ since $A$ is dualizable in $\mathcal{C}$. More generally, if $X$ is any
dualizable object in $\mathcal{C}$ which is $A$-nilpotent, then
$\hom_{\mathcal{C}}(\mathbf{1}, X) \in \md(R)$ is dualizable.

Let $\mathcal{D} \subset \mathcal{C}$ denote the thick subcategory generated by
the unit and each $A  \otimes X$ for $X \in \mathcal{C}$ a dualizable object. 
Observe that $\mathcal{D}$ is closed under duality as $\mathbb{D} A $ is a
retract of $A \otimes \mathbb{D} A$ (see \Cref{dualisamodule}).
Moreover, the natural map \eqref{1mapAnilp} is an equivalence if $M, N \in
\mathcal{D}$.

Let $\md^\omega(R)$ denote the $\infty$-category of perfect $R$-modules. 
As a result, we can restrict the adjunction $\md(R) \rightleftarrows
\mathcal{C}$ to a new adjunction
\[ \md^\omega(R) \rightleftarrows \mathcal{D}.  \]
The right adjoint in this adjunction is strictly symmetric monoidal, so by
\Cref{whenequivdual} below, we can conclude that $\mathcal{D} \simeq \md^\omega(R)$
and that $\mathcal{D}\subseteq\mathcal{C}$ is in fact the thick subcategory generated by the unit.
In particular, $A \in \mathcal{C}$ therefore belongs to the thick subcategory generated by the
unit. We can now apply 
\Cref{unipcriterion} to conclude.
\end{proof}

\begin{lemma} 
\label{whenequivdual}
Let $\mathcal{D}_1, \mathcal{D}_2$ be symmetric monoidal $\infty$-categories.
Suppose every object of $\mathcal{D}_1, \mathcal{D}_2$
is dualizable. Suppose we have a symmetric monoidal functor $F\colon \mathcal{D}_1 \to
\mathcal{D}_2$ with a \emph{strictly} symmetric monoidal right adjoint $H$. Then the adjunction $(F, H)$ is
an inverse equivalence of symmetric monoidal $\infty$-categories. 
\end{lemma}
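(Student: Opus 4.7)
The plan is to prove that both the unit $\eta \colon \mathrm{id}_{\mathcal{D}_1} \to HF$ and the counit $\epsilon \colon FH \to \mathrm{id}_{\mathcal{D}_2}$ of the adjunction are equivalences, which gives the desired inverse equivalence of symmetric monoidal $\infty$-categories. The essential input, and where the strong (rather than merely lax) symmetric monoidal structure on $H$ is used, is that the composite endofunctors $HF$ and $FH$ are canonically symmetric monoidal and that $\eta$ and $\epsilon$ are \emph{monoidal} natural transformations (so in particular $\eta_{\mathbf{1}} \simeq \mathrm{id}_{\mathbf{1}}$, $\epsilon_{\mathbf{1}} \simeq \mathrm{id}_{\mathbf{1}}$, and the relevant squares involving the monoidal coherence maps commute). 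This is the standard fact that a symmetric monoidal adjunction whose right adjoint is strong monoidal has monoidal unit and counit; the $\infty$-categorical version can be extracted from the $\infty$-operadic formalism in \cite[\S 7.3]{Lur14}.

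Granting this, the theorem reduces to the following key lemma: if $\mathcal{D}$ is a rigid symmetric monoidal $\infty$-category (every object dualizable), $T \colon \mathcal{D} \to \mathcal{D}$ is symmetric monoidal, and $\alpha \colon T \to \mathrm{id}_{\mathcal{D}}$ (or $\mathrm{id}_{\mathcal{D}} \to T$) is a monoidal natural transformation, then $\alpha$ is a natural equivalence. Applying this to $(T, \alpha) = (HF, \eta)$ inside $\mathcal{D}_1$ and $(T, \alpha) = (FH, \epsilon)$ inside $\mathcal{D}_2$ finishes the proof.

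For the key lemma, fix a dualizable $X \in \mathcal{D}$. Since $T$ is symmetric monoidal it preserves duality data, giving a canonical equivalence $T(\mathbb{D}X) \simeq \mathbb{D}(TX)$ under which $T(\mathrm{ev}_X) = \mathrm{ev}_{TX}$ and $T(\mathrm{coev}_X) = \mathrm{coev}_{TX}$. Monoidality of $\alpha$ yields $\alpha_{X \otimes \mathbb{D}X} \simeq \alpha_X \otimes \alpha_{\mathbb{D}X}$ and $\alpha_{\mathbf{1}} \simeq \mathrm{id}_{\mathbf{1}}$, so naturality of $\alpha$ applied to $\mathrm{ev}_X$ and $\mathrm{coev}_X$ gives the identities
\begin{align*}
\mathrm{ev}_X \circ (\alpha_X \otimes \alpha_{\mathbb{D}X}) &= \mathrm{ev}_{TX}, \\
(\alpha_X \otimes \alpha_{\mathbb{D}X}) \circ \mathrm{coev}_{TX} &= \mathrm{coev}_X.
\end{align*}
Taking adjuncts under the duality $TX \dashv T\mathbb{D}X \simeq \mathbb{D}(TX)$, and (for the second equation) applying the result with $X$ replaced by $\mathbb{D}X$ and using $\mathbb{D}\mathbb{D}X \simeq X$, these collapse respectively to
$$\mathbb{D}(\alpha_X) \circ \alpha_{\mathbb{D}X} = \mathrm{id}_{T\mathbb{D}X} \quad \text{and} \quad \alpha_{\mathbb{D}X} \circ \mathbb{D}(\alpha_X) = \mathrm{id}_{\mathbb{D}X}.$$
Hence $\alpha_{\mathbb{D}X}$ is an equivalence with inverse $\mathbb{D}(\alpha_X)$. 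Since every object of $\mathcal{D}$ is equivalent to its own double dual, $\alpha$ itself is a natural equivalence.

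The principal obstacle is the preliminary step: verifying carefully that $\eta$ and $\epsilon$ are genuine monoidal natural transformations in the $\infty$-categorical setting. Once this compatibility is in hand, the rigidity calculation above is formal. The strong monoidality of $H$ is essential throughout: for a generic monoidal adjunction whose right adjoint is only lax monoidal, one does not expect $\eta$ or $\epsilon$ to be equivalences, and indeed the conclusion of the key lemma fails without it.
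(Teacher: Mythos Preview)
Your proof is correct but takes a different route from the paper. You argue that the unit and counit are monoidal natural transformations and then invoke a rigidity lemma: any monoidal natural transformation between a symmetric monoidal endofunctor and the identity on a rigid category is invertible. The paper instead shows directly that $H$ is fully faithful via the chain
\[
\hom_{\mathcal{D}_2}(X,Y)\simeq\hom_{\mathcal{D}_2}(\mathbf{1},\mathbb{D}X\otimes Y)\simeq\hom_{\mathcal{D}_1}(\mathbf{1},H(\mathbb{D}X\otimes Y))\simeq\hom_{\mathcal{D}_1}(\mathbf{1},\mathbb{D}H(X)\otimes H(Y))\simeq\hom_{\mathcal{D}_1}(H(X),H(Y)),
\]
using only that $F(\mathbf{1})\simeq\mathbf{1}$ (for the adjunction step) and that $H$ is strong monoidal (so it commutes with $\otimes$ and $\mathbb{D}$). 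A symmetric argument shows $F$ is fully faithful, and an adjunction with both adjoints fully faithful is an equivalence.

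Your approach isolates a reusable principle (monoidal transformations between the identity and a monoidal endofunctor are invertible in the rigid setting), which is conceptually pleasant. The paper's approach buys something practical: it sidesteps entirely the issue you flag as the ``principal obstacle,'' namely verifying in the $\infty$-categorical setting that $\eta$ and $\epsilon$ are monoidal. The paper's computation needs only the strong monoidality of $H$ and the bare adjunction identity $\hom_{\mathcal{D}_1}(\mathbf{1},H(-))\simeq\hom_{\mathcal{D}_2}(\mathbf{1},-)$, so it is shorter and requires less operadic bookkeeping.
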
 
\begin{proof} 
We first show that $H$ is a fully faithful functor. To see this, we fix
$X, Y \in \mathcal{D}_2$ and use
\begin{align*} 
\hom_{\mathcal{D}_2}(X, Y) &  \simeq \hom_{\mathcal{D}_2}( \mathbf{1}, \mathbb{D}
X \otimes Y) \\
& \simeq \hom_{\mathcal{D}_1}( \mathbf{1}, H( \mathbb{D} X \otimes Y) ) \\
& \simeq \hom_{\mathcal{D}_1}( \mathbf{1}, H( \mathbb{D} X) \otimes H( Y)) \\
& \simeq \hom_{\mathcal{D}_1}( \mathbf{1}, \mathbb{D}H(  X) \otimes H( Y)) \\
& \simeq \hom_{\mathcal{D}_1}( H(X), H(Y)),
\end{align*}    
as desired. 
Dualizing this argument, we can also conclude that $F$ is fully faithful.
Therefore, the adjunction is an inverse equivalence. 
\end{proof}

The preceding lemma is presumably well-known to category theorists. 
We will also need a converse
of sorts to these results:
\begin{cor} 
\label{thickcompact}
Let $\mathcal{C}$ be as above with $R = \mathrm{End}_{\mathcal{C}}(\mathbf{1})$.
Suppose 
that $\mathcal{C}$ is unipotent. Then any compact object of $\mathcal{C}$ belongs to the thick
subcategory generated by the unit. In particular, if $X, Y \in \mathcal{C}$ are
two compact objects, then the natural map
\[ \hom_{\mathcal{C}}( \mathbf{1}, X)  \otimes_R \hom_{\mathcal{C}}( \mathbf{1},
Y) \to \hom_{\mathcal{C}}( \mathbf{1}, X \otimes Y)     \]
is an equivalence.
\end{cor}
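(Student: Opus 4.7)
The plan is as follows. Unipotence means precisely that the right adjoint $\hom_{\mathcal{C}}(\mathbf{1}, \cdot)\colon \mathcal{C} \to \md(R)$ is fully faithful, equivalently that the counit $L \circ \hom_{\mathcal{C}}(\mathbf{1},\cdot) \to \mathrm{id}_{\mathcal{C}}$ is an equivalence, where $L = (-) \otimes_R \mathbf{1}$ denotes the left adjoint of \eqref{symmmonoidaladj}. In particular, every object of $\mathcal{C}$ lies in the essential image of $L$.

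For the first assertion, let $X \in \mathcal{C}$ be compact. Choose $M \in \md(R)$ with $L(M) \simeq X$. Since $\md(R)$ is compactly generated by the perfect $R$-modules (i.e., the thick subcategory generated by $R$), we may write $M \simeq \varinjlim_\alpha M_\alpha$ as a filtered colimit of perfect $R$-modules. Because $L$ preserves colimits, $X \simeq \varinjlim_\alpha L(M_\alpha)$, and each $L(M_\alpha)$ lies in the thick subcategory of $\mathcal{C}$ generated by $L(R) = \mathbf{1}$ (since $L$ is exact and perfect $R$-modules form the thick subcategory generated by $R$). Compactness of $X$ forces the identity of $X$ to factor through some $L(M_\alpha)$, exhibiting $X$ as a retract of $L(M_\alpha)$, and hence an object of the thick subcategory generated by $\mathbf{1}$.

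For the second assertion, consider the two functors
\[
F_1(X,Y) = \hom_{\mathcal{C}}(\mathbf{1}, X) \otimes_R \hom_{\mathcal{C}}(\mathbf{1}, Y), \qquad F_2(X,Y) = \hom_{\mathcal{C}}(\mathbf{1}, X \otimes Y),
\]
together with the natural transformation $F_1 \to F_2$. Both $F_1$ and $F_2$ are exact in each variable (using that $\hom_{\mathcal{C}}(\mathbf{1}, \cdot)$ is exact, that $\otimes_R$ is exact in each variable on $\md(R)$, and that $\otimes$ is exact in each variable on $\mathcal{C}$). The map is tautologically an equivalence when $X = Y = \mathbf{1}$. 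Since the collection of pairs $(X,Y)$ for which the map is an equivalence is closed under cofibers, shifts, and retracts in each variable, it contains all pairs in the thick subcategory of $\mathcal{C}$ generated by $\mathbf{1}$. By the first part this includes all pairs of compact objects, completing the proof.

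The argument is almost entirely formal once one unwinds the definition of unipotence, so there is no real obstacle; the only thing to be careful about is the essential surjectivity of $L$, which relies precisely on the counit being an equivalence, i.e., on full faithfulness of the right adjoint.
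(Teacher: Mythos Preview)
Your proof is correct and follows essentially the same approach as the paper: both use that unipotence makes the counit $\hom_{\mathcal{C}}(\mathbf{1},X)\otimes_R \mathbf{1} \to X$ an equivalence, write the $R$-module as a filtered colimit of perfects, and use compactness to extract a retract; the second assertion is then deduced by the same thick-subcategory-in-each-variable argument. The only cosmetic difference is that the paper names the module $M$ explicitly as $\hom_{\mathcal{C}}(\mathbf{1},X)$ rather than invoking essential surjectivity of $L$.
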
 
\begin{proof} 
The second assertion clearly follows 
from the first since it is true for $X=Y = \mathbf{1}$ and those  pairs $(X,Y)$ satisfying the assertion form a thick subcategory in each variable. 

Now suppose $X\in \mathcal{C}$. Then since $\mathcal{C}$ is unipotent, we know that the natural map
\[ \hom_{\mathcal{C}}( \mathbf{1}, X) \otimes_R \mathbf{1} \to X  \]
is an equivalence.

Now, by the theory of $\mathrm{Ind}$-objects in $\infty$-categories \cite[\S 5.3]{Lur09}, we can write the $R$-module 
$ \hom_{\mathcal{C}}( \mathbf{1}, X)$ as a filtered colimit of perfect
$R$-modules. That is, there exists a filtered $\infty$-category $\mathcal{I}$
and a functor $f\colon \mathcal{I} \to \md(R)$ such that:
\begin{enumerate}
\item For each $i \in \mathcal{I}$, $f(i) \in \md(R)$ is a perfect $R$-module. 
\item $\hom_{\mathcal{C}}( \mathbf{1}, X)$ is identified with
$\varinjlim_{ i \in \mathcal{I}} f(i)$.
\end{enumerate}
Therefore, we find that 
\[ X \simeq  \varinjlim_{ i \in \mathcal{I}} \left(  f(i) \otimes_R \mathbf{1}\right), \]
where each 
$f(i) \otimes_R \mathbf{1} \in \mathcal{C}$ belongs to the thick subcategory
generated by the unit.
When $X$ is compact, it follows that $X$ is a retract of $
f(i) \otimes_R \mathbf{1}$ for some $i$, proving the claim. 
\end{proof}

\subsection{The Eilenberg-Moore spectral sequence}

We now connect the abstract discussion of unipotence above to a very classical question
when $\mathcal{C} = \mathrm{Fun}(X, \md(R))$ for $X$ a
connected space and
$R$ an $\e{\infty}$-ring, so that $\mathcal{C}$ parametrizes (by definition)
local systems of $R$-modules on $X$. 

\begin{definition}[{Cf. \cite[\S 1.1]{DAGXIII}}] 
Choose a basepoint $\ast \in X$, and consider the pullback square
\begin{equation} \label{pullbacksquare} \xymatrix{
\Omega X \ar[d] \ar[r] &  \ast \ar[d] \\
\ast \ar[r] &  X,
}\end{equation}
and the induced square of $\e{\infty}$-rings
\begin{equation} \label{pushoutcalg} \xymatrix{
F(X_+, R) \ar[d] \ar[r] &  R \ar[d]  \\
R \ar[r] &  F({\Omega X }_+, R).
}\end{equation}
We say that the \emph{$R$-based Eilenberg-Moore spectral sequence
(EMSS) is relevant for
$X$} if \eqref{pushoutcalg} is a pushout of $\e{\infty}$-rings, i.e., if the 
induced morphism
\begin{equation} \label{emmap} R \otimes_{F(X_+, R)} R \to F( \Omega X_+ , R) 
\end{equation}
is an equivalence. If so, we obtain a strongly convergent $\mathrm{Tor}$-spectral sequence
\begin{equation} \label{EMSS} E^2_{p,q} = \mathrm{Tor}_{p,q
}^{\pi_*(F(X_+, R))}(\pi_*(R), \pi_*(R)) \implies \pi_*(
F(\Omega X_+, R)), \end{equation}
which we call the \emph{$R$-based Eilenberg-Moore spectral sequence (EMSS).}
\end{definition} 

If the $R$-based EMSS is relevant, the 
spectral sequence \eqref{EMSS} reduces to the classical
(cohomological) Eilenberg-Moore spectral sequence in case $R = Hk$ for $k$ a field and if $X$
has finitely generated homology in each degree.

\begin{cons}
We now give another interpretation of the $R$-based EMSS. Observe that the
pullback square
\eqref{pullbacksquare}
can be interpreted as a square in $\mathcal{S}_{/X}$, the $\infty$-category of
spaces over $X$. 
Recall that there is an equivalence
of $\infty$-categories
\[ \mathcal{S}_{/X} \simeq \mathrm{Fun}(X, \mathcal{S}) \simeq \mathrm{Fun}(X^{op}, \mathcal{S}),  \]
by the Grothendieck construction \cite[\S 2.1]{Lur09}. 

Here, since $X$ is a connected space, $\mathrm{Fun}(X, \mathcal{S})\simeq \mathrm{Fun}(B\Omega X,\mathcal{S})$ can be
identified with the $\infty$-category of spaces equipped
with an action of $\Omega X$ (where, as before, we implicitly choose a basepoint of $X$).
In particular, when one works in $\mathrm{Fun}(X, \mathcal{S})$, one has the
pullback square
\begin{equation} \label{prodsq} \xymatrix{
\Omega X \times \Omega X \ar[d] \ar[r] &  \Omega X \ar[d] \\
\Omega X \ar[r] &  \ast,
}\end{equation}
where $\Omega X$ is given the action of $\Omega X $ by 
left multiplication; this corresponds to $\ast \in \mathcal{S}_{/X}$ in view
of the fiber sequence $\Omega X \to \ast \to X$. This pullback square in $\mathrm{Fun}(X, \mathcal{S})$
corresponds via the Grothendieck construction to the cartesian square
\eqref{pullbacksquare} in $\mathcal{S}_{/X}$. 

Consider now the functor $\mathcal{S}^{op} \to \clg_{R/}$, $Y \mapsto F(Y_+, R)$. 
We apply it to 
\eqref{prodsq}.
We obtain a commutative algebra object $A \in \mathrm{Fun}(X, \md(R))$ given by
$F(\Omega X_+ , R)$ with the natural $\Omega X$-action. 
In particular, we obtain a 
square in $\clg(\mathrm{Fun}(X, \md(R)))$,
\begin{equation} \label{cocartsq1} \xymatrix{
F(\ast_+, R) \ar[d] \ar[r] &  F( \Omega X_+ , R) \ar[d] \\
F(\Omega X _+, R) \ar[r] &  F( (\Omega X \times \Omega X) _+, R). 
}\end{equation}
When we apply the lax symmetric monoidal functor 
$\hom_{\mathcal{C}}(\mathbf{1}, \cdot)\colon \fun(X, \md(R)) \to \md(F(X_+, R))$ to \eqref{cocartsq1}, 
we obtain \eqref{pushoutcalg}, in view of the correspondence between 
\eqref{prodsq} 
and \eqref{pullbacksquare}.
 \end{cons}

Suppose now $\Omega X$ has the homotopy type of a finite cell complex. Then
$F((\Omega X \times \Omega X) _+, R) \simeq A \otimes A \in \mathrm{Fun}(X,
\md(R))$, i.e., \eqref{cocartsq1} is a pushout of commutative algebra objects in $\mathcal{C}$.
We thus obtain: 
\begin{prop} 
\label{whendoesEMSSconverge}
Suppose $\Omega X $ has the homotopy type of a finite cell complex. Let $A \in
\mathcal{C} = \mathrm{Fun}(X, \md(R))$ be the commutative algebra object
$A=F(\Omega X_+,R) $. Then
the $R$-based EMSS is relevant for $X$ if and only if the natural map
of $F(X_+, R)$-modules
$$\hom_{\mathcal{C}}(\mathbf{1}, A) \otimes_{F(X_+, R)}
\hom_{\mathcal{C}}(\mathbf{1}, A) \to
\hom_{\mathcal{C}}(\mathbf{1}, A \otimes A),$$
is an equivalence.
\end{prop}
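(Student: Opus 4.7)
The plan is to deduce both directions at once by observing that the square \eqref{pushoutcalg} is obtained by applying the lax symmetric monoidal functor $\hom_{\mathcal{C}}(\mathbf{1},\cdot)$ to \eqref{cocartsq1}, and that \eqref{cocartsq1} is itself a pushout in $\clg(\mathcal{C})$ under the finiteness hypothesis on $\Omega X$. Once both facts are in hand, EMSS relevance (the pushout property of \eqref{pushoutcalg}) becomes equivalent to preservation of that pushout under $\hom_{\mathcal{C}}(\mathbf{1},\cdot)$, which is governed exactly by the comparison map appearing in the statement.

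The first step is to verify that \eqref{cocartsq1} is a pushout in $\clg(\mathcal{C})$. Equivalently, the canonical map $A \otimes A \to F((\Omega X \times \Omega X)_+, R)$ must be an equivalence in $\mathcal{C}$. This expresses the fact that the lax symmetric monoidal functor $Y \mapsto F(Y_+, R)$ from $\fun(X, \mathcal{S}_*)^{op}$ to $\mathcal{C}$ is strictly symmetric monoidal on objects whose underlying pointed space is dualizable. Since $\Omega X$ has the homotopy type of a finite cell complex, $\Omega X_+$ is dualizable in $\fun(X, \mathcal{S}_*)$, and the equivalence can be checked by reduction along cell attachments to the case of a point, where both sides are manifestly $R$. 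I expect this bookkeeping of $\Omega X$-actions through the dualizability argument to be the main (if modest) technical obstacle.

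Next, applying $\hom_{\mathcal{C}}(\mathbf{1},\cdot)$ to \eqref{cocartsq1} produces exactly \eqref{pushoutcalg}, as described in the construction preceding the statement; in particular the identification
\begin{equation*}
\hom_{\mathcal{C}}(\mathbf{1}, A \otimes A) \simeq \hom_{\mathcal{C}}(\mathbf{1}, F((\Omega X \times \Omega X)_+, R)) \simeq F(\Omega X_+, R)
\end{equation*}
follows from the previous step together with the fact that \eqref{cocartsq1} is obtained from the pullback square \eqref{prodsq} by applying $F(\cdot_+, R)$. Because $\hom_{\mathcal{C}}(\mathbf{1}, \cdot)$ is lax symmetric monoidal, there is a canonical comparison map
\begin{equation*}
\hom_{\mathcal{C}}(\mathbf{1}, A) \otimes_{F(X_+, R)} \hom_{\mathcal{C}}(\mathbf{1}, A) \longrightarrow \hom_{\mathcal{C}}(\mathbf{1}, A \otimes A),
\end{equation*}
and under the identifications $\hom_{\mathcal{C}}(\mathbf{1}, A) \simeq R$ and the display above, this is precisely the map $R \otimes_{F(X_+, R)} R \to F(\Omega X_+, R)$ whose being an equivalence defines EMSS relevance.

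The conclusion is then immediate: since \eqref{cocartsq1} is a pushout of commutative algebra objects in $\mathcal{C}$, the square \eqref{pushoutcalg} is a pushout in $\clg(\md(R))$ if and only if the comparison map displayed above is an equivalence, which is exactly the condition stated in the proposition. Everything beyond the first step is formal manipulation of the constructions already given.
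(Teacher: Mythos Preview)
Your proposal is correct and follows essentially the same approach as the paper: both argue that \eqref{cocartsq1} is a pushout in $\clg(\mathcal{C})$ by the finiteness hypothesis on $\Omega X$, that applying $\hom_{\mathcal{C}}(\mathbf{1},\cdot)$ recovers \eqref{pushoutcalg}, and that EMSS relevance is therefore equivalent to the stated comparison map being an equivalence. You have simply filled in more detail on the dualizability step than the paper's terse one-line justification.
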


\begin{proof} 
The square in \eqref{cocartsq1} is a pushout in $\clg(\mathrm{Fun}(X, \md(R))$ since $\Omega X $ has the homotopy type of a finite cell complex. By applying the lax symmetric monoidal functor
\[\hom_{\mathcal{C}}(\mathbf{1}, \cdot) \colon \fun(X, \md(R)) \to 
\md( F(X_+, R))\] to this pushout we obtain \eqref{pushoutcalg}. Hence the $R$-based EMSS is relevant for $X$ if and only if $\hom_{\mathcal{C}}(\mathbf{1}, \cdot)$ takes this particular pushout to a pushout. 
\end{proof}

We are now ready for the main result of this subsection.

\begin{thm} 
\label{mainEMSS}
Let $G$ be a compact Lie group and $R$ an $\mathbb{E}_\infty$-ring. Then the
$R$-based EMSS is relevant for $BG$ if
and only if the symmetric monoidal $\infty$-category
$\mathcal{C} = \mathrm{Fun}(BG, \md(R))$ is unipotent.
In this case, the $F(BG_+, R)$-module $R$ is perfect, and 
$\mathrm{Fun}(BG, \md(R))$ is identified with the symmetric monoidal $\infty$-category
of $R$-complete $F(BG_+, R)$-modules.
\end{thm}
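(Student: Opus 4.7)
My plan is to prove the theorem by applying the second unipotence criterion, \Cref{unipcriterion2}, to $\mathcal{C} = \fun(BG, \md(R))$ with the commutative algebra object $A = F(G_+, R)$ carrying its natural $G$-action, as in the construction preceding \eqref{cocartsq1}. First I record the basic identifications: $\mathrm{End}_{\mathcal{C}}(\mathbf{1}) \simeq F(BG_+, R)$, while the forget--coinduction adjunction $\mathrm{ev} \colon \mathcal{C} \rightleftarrows \md(R) \colon F(G_+, -)$ yields $\hom_{\mathcal{C}}(\mathbf{1}, A) \simeq \hom_{\md(R)}(\mathrm{ev}(\mathbf{1}), R) \simeq R$, endowed with its $F(BG_+, R)$-module structure via the augmentation $F(BG_+, R) \to R$ arising from $\ast \to BG$. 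Since $G$ is a compact Lie group, $G_+$ is a finite cell complex, so $A$ is dualizable in $\mathcal{C}$, and by the Wirthm\"uller equivalence $A$ is a (twisted) shift of the induced compact generator $G_+ \wedge R$; both $A$ and $\mathbb{D}A$ are therefore compact.

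To verify hypotheses (1)--(3) of \Cref{unipcriterion2}, the generation hypothesis on $\mathbb{D}A$ follows from the identification $\hom_{\mathcal{C}}(\mathbb{D}A, X) \simeq \hom_{\mathcal{C}}(\mathbf{1}, F(G_+, R) \otimes X) \simeq \mathrm{ev}(X)$, together with conservativity of $\mathrm{ev}$. For hypothesis (3), I apply \Cref{overadjunctionequiv} to the adjunction $(\mathrm{ev}, F(G_+, -))$: the left adjoint $\mathrm{ev}$ is strong symmetric monoidal, the projection formula $F(G_+, Y) \otimes X \simeq F(G_+, Y \otimes \mathrm{ev}(X))$ holds because $G_+$ is dualizable, and the right adjoint $F(G_+, -) \simeq \mathbb{D}G_+ \otimes (-)$ is colimit-preserving and conservative by the same token. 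The resulting equivalence $\md_{\mathcal{C}}(A) \simeq \md(R)$ sends the free $A$-module $A$ to $R$, which is a compact generator of $\md(R)$.

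Hypothesis (4) --- that the natural map $\hom_{\mathcal{C}}(\mathbf{1}, A) \otimes_{F(BG_+, R)} \hom_{\mathcal{C}}(\mathbf{1}, A) \to \hom_{\mathcal{C}}(\mathbf{1}, A \otimes A)$ is an equivalence --- is, by \Cref{whendoesEMSSconverge} (applicable since $\Omega BG \simeq G$ is a finite CW complex), literally the assertion that the $R$-based EMSS for $BG$ is relevant. Thus, if the EMSS is relevant, \Cref{unipcriterion2} applies and its conclusion identifies $\mathcal{C}$ with the completion of $\md(F(BG_+, R))$ at $\hom_{\mathcal{C}}(\mathbf{1}, A) = R$; moreover, the proof of \Cref{unipcriterion2} shows that $A$ lies in the thick subcategory generated by the unit, whence $R$ is perfect over $F(BG_+, R)$, giving the final sentence of the theorem. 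Conversely, if $\mathcal{C}$ is unipotent, then \Cref{thickcompact} applied to the pair of compact objects $(A, A)$ immediately yields the equivalence demanded by (4), and \Cref{whendoesEMSSconverge} again gives EMSS relevance. The main technical point is hypothesis (3): once one cleanly identifies $\md_{\mathcal{C}}(A) \simeq \md(R)$ via \Cref{overadjunctionequiv}, the remaining implications are essentially bookkeeping between the EMSS formalism and the abstract unipotence criteria.
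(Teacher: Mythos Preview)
Your proposal is correct and follows essentially the same route as the paper: the same algebra object $A = F(G_+, R)$, the same use of \Cref{overadjunctionequiv} to identify $\md_{\mathcal{C}}(A) \simeq \md(R)$ and thereby verify hypothesis (3) of \Cref{unipcriterion2}, the same invocation of \Cref{whendoesEMSSconverge} to translate hypothesis (4) into EMSS relevance, and the same appeal to \Cref{thickcompact} for the converse direction. The paper phrases the duality between $A$ and the induced object via equivariant Atiyah duality rather than the Wirthm\"uller isomorphism, but this is cosmetic.
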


\begin{proof} 
As above, we consider the algebra object $A = F(G_+, R) \in \mathcal{C}$.
Using equivariant Atiyah duality (see \cite[Th. III.5.1]{LMSM86} for the
genuinely equivariant result), one sees that  $A$
is some suspension of the induced object $ R \wedge G_+ = \mathbb{D}A$. Since the induced object $R \wedge G_+$ is a compact generator for
$\mathcal{C}$, it follows  
that $A$ is a compact generator as well. 

Suppose $\mathcal{C}$ is unipotent. 
Then we apply \Cref{thickcompact} to conclude that $A$ belongs to the thick
subcategory generated by the unit and that 
$\hom_{\mathcal{C}}(\mathbf{1}, A) \otimes_{F(X_+, R)}
\hom_{\mathcal{C}}(\mathbf{1}, A) \to
\hom_{\mathcal{C}}(\mathbf{1}, A \otimes A)$
is an equivalence. 
It follows by \Cref{whendoesEMSSconverge} that the $R$-based EMSS is relevant for
$BG$. The remaining assertions now follow from \Cref{unipcriterion} applied
to $A$. 

Conversely, if 
$\hom_{\mathcal{C}}(\mathbf{1}, A) \otimes_{F(X_+, R)}
\hom_{\mathcal{C}}(\mathbf{1}, A) \to
\hom_{\mathcal{C}}(\mathbf{1}, A \otimes A)$
is an equivalence, we want to apply \Cref{unipcriterion2} to conclude that
$\mathcal{C}$ is unipotent. In order to do this, we need to analyze $A$-modules
in $\mathcal{C}$. 
To do this, 
consider the inclusion $\ast \to BG$ and the induced adjunction
\[ \mathcal{C} = \fun(BG, \md(R))  \rightleftarrows \md(R), \]
where the left adjoint restricts to a basepoint and the right adjoint sends $M
\in \md(R)$ to the coinduced object $F(G_+, M)$. 
In particular, the right adjoint carries the unit of $\md(R)$ to $A \in
\mathcal{C}$.
Using 
\Cref{overadjunctionequiv}, it follows that we have an equivalence
of symmetric monoidal $\infty$-categories
$\md_{\mathcal{C}}(A) \simeq \md(R)$, so that $\md_{\mathcal{C}}(A)$ has $A$ as
compact generator. Therefore, we have all the ingredients to apply
\Cref{unipcriterion2} and conclude the argument. 
\end{proof}

\begin{cor} 
\label{ascendEMSS}
Suppose the $R$-based EMSS is relevant for $BG$ for $G$ a compact Lie group.
Then it is relevant for $R'$ if $R'$ is any $\e{\infty}$-ring such that there
exists a map of $\e{1}$-rings $R \to R'$.
\end{cor}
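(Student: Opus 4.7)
The strategy is to translate relevance of the Eilenberg--Moore spectral sequence into unipotence of $\fun(BG,\md(-))$ via \Cref{mainEMSS}, and then transfer unipotence along base change $R\to R'$. By \Cref{mainEMSS}, the hypothesis gives that $\fun(BG,\md(R))$ is unipotent, and I want to conclude the same for $\fun(BG,\md(R'))$; the conclusion will then follow from \Cref{mainEMSS} applied in the other direction.

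I would check the three conditions of the unipotence criterion \Cref{unipcriterion} for the algebra object $A':=F(G_+,R')\in\fun(BG,\md(R'))$. Conditions (1) and (2)---that $A'$ is compact and dualizable, and that $\mathbb{D}A'\simeq R'\wedge G_+$ is a compact generator---follow from equivariant Atiyah duality exactly as in the proof of \Cref{mainEMSS}, with no hypothesis on $R$ required. The content of the corollary lies entirely in condition (3), namely that $A'$ belongs to the thick subcategory of $\fun(BG,\md(R'))$ generated by the unit.

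For condition (3), I would exploit the map of $\e{1}$-rings $R\to R'$. This induces an exact functor $\md(R)\to\md(R')$, $M\mapsto M\otimes_R R'$, and applying $\fun(BG,-)$ pointwise yields an exact functor
\[ \Phi\colon\fun(BG,\md(R))\longrightarrow\fun(BG,\md(R')). \]
The functor $\Phi$ sends the unit $R$ to the unit $R'$, and it sends $A:=F(G_+,R)$ to $A'$. By hypothesis and \Cref{mainEMSS}, combined with \Cref{thickcompact}, $A$ belongs to the thick subcategory of $\fun(BG,\md(R))$ generated by $R$; exactness of $\Phi$ then places $A'$ in the thick subcategory generated by $R'$, verifying condition (3). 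With all three conditions in hand, \Cref{unipcriterion} delivers unipotence of $\fun(BG,\md(R'))$, and \Cref{mainEMSS} finishes the proof.

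The main point deserving care is the identification $\Phi(A)\simeq A'$, i.e., that $F(G_+,R)\otimes_R R'\simeq F(G_+,R')$ in $\fun(BG,\md(R'))$. This rests on $\Sigma^\infty G_+$ being a dualizable spectrum (since $G$ is a compact Lie group, hence a finite CW complex), so that function spectra out of it commute with the base change $(-)\otimes_R R'$; and on the observation that the $G$-action on $A$ and $A'$ is induced by the $G$-action on $G$ itself, and is therefore preserved verbatim by $\Phi$, which need not be symmetric monoidal. Once these routine verifications are made, no further obstacle remains.
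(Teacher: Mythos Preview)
Your proof is correct and follows essentially the same route as the paper: translate relevance into unipotence via \Cref{mainEMSS}, use \Cref{thickcompact} to place $F(G_+,R)$ in the thick subcategory generated by the unit, base-change along $R\to R'$ to obtain the same for $F(G_+,R')$, and then invoke \Cref{unipcriterion} and \Cref{mainEMSS} again. You are somewhat more explicit than the paper about checking the auxiliary hypotheses of \Cref{unipcriterion} and about why $\Phi(A)\simeq A'$, but the argument is the same.
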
 
\begin{proof} 
We use \Cref{mainEMSS}. As the EMSS is relevant for $BG$, it follows that
$\mathcal{C} = \fun(BG, \md(R))$ is unipotent, and the coinduced object $F(G_+, R)$ belongs to the thick subcategory generated by the unit by
\Cref{thickcompact}. By base-change to $R'$, we find that 
$F(BG_+, R') \in 
\fun(BG, \md(R'))$ belongs to the thick subcategory generated by $R'$ with
trivial $G$-action. 
Now we can apply \Cref{unipcriterion} to $\mathcal{C}' = \fun(BG, \md(R'))$ to obtain that 
$\mathcal{C}'$ is unipotent too, so that the $R'$-based EMSS is relevant for $BG$. 
\end{proof}

\begin{prop} 
\label{descendEMSS}
Let $R \to R'$ be a descendable  morphism of $\e{\infty}$-rings (i.e., $R
$ is descendable as a commutative algebra in $\md(R)$, cf.\  \Cref{descent2}). 
Suppose the map $F(BG_+, R) \otimes_{R} R' \to F(BG_+, R')$ is an equivalence. 
Then the $R$-based EMSS is relevant for $BG$ if and only if the $R'$-based EMSS
is relevant.
\end{prop}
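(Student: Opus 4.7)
The forward direction is immediate from \Cref{ascendEMSS} applied to the underlying morphism of $\e{1}$-rings. For the converse, my plan is to exploit the characterization of EMSS-relevance as an equivalence of a \emph{single} comparison map, together with conservativity of base change along a descendable morphism.

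Set $S = F(BG_+, R)$ and $S' = F(BG_+, R')$. The $R$-based (resp.\ $R'$-based) EMSS for $BG$ is relevant precisely when the natural map $\psi_R\colon R \otimes_S R \to F(G_+, R)$ (resp.\ $\psi_{R'}\colon R' \otimes_{S'} R' \to F(G_+, R')$) is an equivalence. Since $G$ is a compact Lie group, $\Sigma^\infty_+ G$ is dualizable in $\Sp$, so $F(G_+, \cdot)$ commutes with extension of scalars; in particular, one has a canonical equivalence $F(G_+, R) \otimes_R R' \simeq F(G_+, R')$. Combined with the hypothesis $S' \simeq S \otimes_R R'$ and the standard base-change identity $(R \otimes_S R) \otimes_R R' \simeq R' \otimes_{S'} R'$ for pushouts of $\e{\infty}$-rings, I will canonically identify $\psi_{R'}$ with $\psi_R \otimes_R R'$.

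The final input is that a descendable morphism $R \to R'$ makes the base-change functor $(-) \otimes_R R'\colon \md(R) \to \md(R')$ conservative. Indeed, by definition $R$ belongs to the thick $\otimes$-ideal generated by $R'$ in $\md(R)$; applying \Cref{tensorideal:desc} to the thick $\otimes$-ideal consisting of those $M$ with $M \otimes_R R' \simeq 0$ shows this ideal is zero. Hence the assumption that $\psi_R \otimes_R R'$ is an equivalence forces $\psi_R$ itself to be an equivalence, completing the converse.

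The main obstacle is really just bookkeeping: verifying that the identifications $F(G_+, R) \otimes_R R' \simeq F(G_+, R')$ and $(R \otimes_S R) \otimes_R R' \simeq R' \otimes_{S'} R'$ are canonical and, in particular, respect the comparison maps $\psi_R, \psi_{R'}$. This amounts to applying the symmetric monoidal base-change functor $(-) \otimes_R R'$ to the defining pushout square \eqref{pushoutcalg} of $\e{\infty}$-rings, which presents no difficulty.
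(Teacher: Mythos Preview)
Your proof is correct and follows essentially the same route as the paper's: the forward direction via \Cref{ascendEMSS}, and the converse by identifying $\psi_{R'}$ with $\psi_R \otimes_R R'$ (using the hypothesis on $F(BG_+,-)$ and the dualizability of $G_+$) and then invoking conservativity of base change along a descendable morphism. The paper's version is more terse---it simply says ``since $R\to R'$ is descendable, it suffices to show that the base-change to $R'$ is an equivalence'' and ``by hypothesis (and the fact that $G$ is a compact Lie group), this is precisely the map $R'\otimes_{F(BG_+,R')}R'\to F(G_+,R')$''---but the content is the same, and your added justifications (dualizability of $\Sigma^\infty_+ G$, the pushout base-change identity, the derivation of conservativity from \Cref{tensorideal:desc}) are all appropriate.
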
 
\begin{proof} The ``only if'' implication is given by \Cref{ascendEMSS}. For the converse,
we want to show that the natural map 
$R \otimes_{F(BG_+, R)} R \to F(G_+, R)$ is an equivalence. To do so, since $R
\to R'$ is descendable, it suffices to show that the base-change to $R'$ is an
equivalence. But by hypothesis (and the fact that $G$ is a compact Lie group),
this is precisely the map $R' \otimes_{F(BG_+, R')} R' \to F(G_+, R')$.
\end{proof} 

\begin{cor} 
Let $R \to R'$ be a descendable morphism of $\e{\infty}$-rings such that $R'$
is a perfect $R$-module, and let $G$
be a compact Lie group. Then the $R$-based EMSS is relevant for $BG$ if and only
if the $R'$-based one is. 
\end{cor}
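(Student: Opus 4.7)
The plan is to reduce the corollary to the immediately preceding proposition, which already gives the ``if and only if'' conclusion under the additional hypothesis that the base-change map
\[ F(BG_+, R) \otimes_R R' \to F(BG_+, R') \]
is an equivalence. Thus the only thing left to check is that this base-change map is an equivalence whenever $R'$ is a perfect (equivalently dualizable) $R$-module.

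The key observation is that for a perfect $R$-module $R'$, the functor $(-)\otimes_R R' \colon \md(R) \to \md(R)$ admits an $R$-linear left adjoint, namely $(-)\otimes_R \mathbb{D}_R R'$, and therefore preserves arbitrary limits (not merely colimits). On the other hand, $F(BG_+, R)$ is computed in $\md(R)$ as the limit $\varprojlim_{BG} R$ of the constant local system with value $R$ over the space $BG$. So I would argue
\[ F(BG_+, R) \otimes_R R' \;\simeq\; \Bigl(\varprojlim_{BG} R\Bigr) \otimes_R R' \;\simeq\; \varprojlim_{BG} \bigl(R \otimes_R R'\bigr) \;\simeq\; \varprojlim_{BG} R' \;\simeq\; F(BG_+, R'), \]
where the second equivalence uses limit-preservation of tensoring with a dualizable object.

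With this, \Cref{descendEMSS} applies directly and gives the corollary. The main (and only nontrivial) point is the limit-preservation statement; this is a standard consequence of dualizability and is already implicit in several of the formal manipulations earlier in the paper (e.g., in the proof of \Cref{Acompl:tot}, where dualizability of $A$ is used precisely to commute tensor with totalization). I do not anticipate any genuine obstacles.
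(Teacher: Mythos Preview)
Your proposal is correct and follows exactly the paper's approach: the paper's proof is the one-line observation that perfectness of $R'$ makes $F(BG_+, R)\otimes_R R' \to F(BG_+, R')$ an equivalence, after which \Cref{descendEMSS} applies. You have simply spelled out why that base-change map is an equivalence (tensoring with a dualizable object preserves limits), which the paper leaves implicit.
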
 
\begin{proof} 
In fact, since $R'$ is a perfect $R$-module, the map $F(BG_+, R) \otimes_R R'
\to F(BG_+, R')$ is an equivalence, so we can apply \Cref{descendEMSS}.
\end{proof} 

The relevance of the EMSS, especially over $H \mathbb{Z}$ and $H
\mathbb{F}_p$, has been treated classically in
numerous sources, e.g., \cite{Dwy74, Dwy75a}, and is discussed for complex
$K$-theory in \cite{JO99}. 
A more recent development is the ambidexterity theory of Hopkins-Lurie
\cite{ambidexterity}. 
For example, in \cite[Th. 5.4.3]{ambidexterity}, they show (as a special case) that for $G$ a $p$-group the
$\infty$-category of $K(n)$-local modules over Morava $E$-theory with
$G$-action (at the prime
$p$) is unipotent; the analogous assertion about the EMSS is earlier work of
Bauer \cite{bauer}.

\subsection{The categorical argument}
In \Cref{mainEMSS}, we saw that the unipotence of $\infty$-categories of the
form $\fun(BG, \md(R))$, for $G$ a compact Lie group and $R$ an $\e{\infty}$-ring, is equivalent to the
relevance of the $R$-based EMSS for the space $BG$. 

The purpose of this subsection is to obtain a basic and easily checked
sufficient criterion for relevance
of the EMSS. 
For a given compact connected Lie group $G$, this criterion will always be applicable to $\e{\infty}$-rings
such that $\pi_*(R)$ is torsion-free away from a finite number of primes
(compare \Cref{exteriorcoh} below). 
Therefore, we will be able to prove that several such $\infty$-categories are
unipotent.

In the next subsection, we shall 
give a slightly different (and more geometric) variant of the following
argument. 
We have included both arguments in this paper. The present argument seems to
be more widely applicable. However, the geometric one generalizes better to the
genuinely equivariant setting. 

\begin{prop} 
\label{intconverge}
Suppose  $G$ is a compact, connected Lie group. Then the $\mathbb{Z}$-based
EMSS is relevant for $BG$
(and thus, by \Cref{ascendEMSS}, the $R$-based EMSS is relevant for $BG$ if $R$ is any discrete
$\e{\infty}$-ring). 
\end{prop}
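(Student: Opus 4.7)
The plan is to apply \Cref{mainEMSS}, which reduces the relevance of the integral EMSS to showing that $\mathcal{C} := \fun(BG,\md(\mathbb{Z}))$ is unipotent, equivalently that the map
\[
\mu\colon \mathbb{Z}\otimes^L_{F(BG_+,\mathbb{Z})}\mathbb{Z} \longrightarrow F(G_+,\mathbb{Z})
\]
is an equivalence of spectra. I would choose a maximal torus $T\leq G$ and reduce the problem to the torus case via the fibration $G/T\to BT\to BG$, exploiting the classical theorem of Borel that $H^\ast(G/T;\mathbb{Z})$ is torsion-free and concentrated in even degrees.

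For $G=T=(S^1)^n$, the result is immediate: $\pi_{-\ast}F(BT_+,\mathbb{Z})\cong\mathbb{Z}[c_1,\ldots,c_n]$ (polynomial, generators in degree $-2$), $\pi_{-\ast}F(T_+,\mathbb{Z})\cong\Lambda_{\mathbb{Z}}(y_1,\ldots,y_n)$ (exterior, generators in degree $-1$), and $\mu$ identifies with the integral Koszul resolution of $\mathbb{Z}$, which is an equivalence. For general $G$, I would first establish the intermediate equivalence
\[
\mathbb{Z}\otimes^L_{F(BG_+,\mathbb{Z})} F(BT_+,\mathbb{Z}) \xrightarrow{\simeq} F((G/T)_+,\mathbb{Z})
\]
(i.e.\ relevance of the EMSS for the fibration $G/T\to BT\to BG$), and then compute
\[
\mathbb{Z}\otimes^L_{F(BG_+,\mathbb{Z})}\mathbb{Z} \simeq \mathbb{Z}\otimes^L_{F(BT_+,\mathbb{Z})} F((G/T)_+,\mathbb{Z}),
\]
recognising the right-hand side as the EMSS for the principal bundle $T\to G\to G/T$ via the classifying map $G/T\to BT$. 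This should converge to $F(G_+,\mathbb{Z})$ thanks to the torsion-free even cohomology of $G/T$.

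The main obstacle is rigorously lifting these classical spectral sequence convergence statements to equivalences of spectra, rather than mere graded-group isomorphisms. A cleaner alternative route is to work prime-by-prime: by an arithmetic-square argument (both sides of $\mu$ have homotopy groups of degreewise finite type), it suffices to check that $\mu\otimes\mathbb{Q}$ and $\mu\otimes\mathbb{F}_p$ are equivalences. Rationally, $H^\ast(BG;\mathbb{Q})$ is polynomial by Cartan--Chevalley, and the Koszul argument used for the torus applies directly. At each prime $p$, since $BG$ is simply connected (as $G$ is connected) with finite-type $\mathbb{F}_p$-cohomology, the classical convergence theorems of Smith and Dwyer for the Eilenberg--Moore spectral sequence of the path--loop fibration $G\to EG\to BG$ furnish the desired equivalence, after which the integral result follows by assembly.
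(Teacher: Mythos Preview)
Your prime-by-prime alternative is sound, but both of your approaches are far more elaborate than what is needed, and they obscure the point. The paper's argument is a two-line Postnikov-tower observation that bypasses the EMSS entirely on the analytic side: one applies the unipotence criterion (\Cref{unipcriterion}) directly, so it suffices to show that the induced object $\mathbb{Z}\wedge G_+ \in \fun(BG,\md(\mathbb{Z}))$ lies in the thick subcategory generated by the unit. Since $G$ is a finite complex, $\mathbb{Z}\wedge G_+$ has a finite Postnikov filtration whose associated graded pieces are (shifts of) the Eilenberg--MacLane objects on the finitely generated groups $H_i(G;\mathbb{Z})$. Because $G$ is \emph{connected}, the $G$-action on each $H_i(G;\mathbb{Z})$ is trivial, so each graded piece is a finite extension of copies of the unit. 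That is the whole proof; no torus, no Dwyer convergence, no arithmetic square.

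Your maximal-torus route also has a genuine gap as written: the intermediate equivalence $\mathbb{Z}\otimes^L_{F(BG_+,\mathbb{Z})} F(BT_+,\mathbb{Z}) \simeq F((G/T)_+,\mathbb{Z})$ is not something you can simply assert. When $H^*(BG;\mathbb{Z})$ has torsion (e.g.\ $G=\mathrm{Spin}(n)$ for $n\geq 7$, or $G_2$), the module $H^*(BT;\mathbb{Z})$ is not free over $H^*(BG;\mathbb{Z})$, and the relevance of that fibration's EMSS is no easier than the original problem. Your second approach via Dwyer's convergence theorem over $\mathbb{F}_p$ does work, but it imports a substantial external result to prove something that follows from connectedness alone.
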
 
\begin{proof} 
Using \Cref{unipcriterion} with $\mathcal{C}=\fun(BG,\md(\mathbb{Z}))$ and
$A=F(G_+,\mathbb{Z})$,
it suffices to show that the induced object $G_+ \wedge \mathbb{Z} \in \fun(BG,
\md(\mathbb{Z}))$ belongs to the thick subcategory generated by the unit. This
follows easily by working up the (finite) Postnikov decomposition of 
$G_+ \wedge \mathbb{Z}$: each of the successive cofibers has trivial $G$-action, because 
$G$ is connected, and finitely generated homotopy. 
\end{proof}

\begin{remark} 
Using a similar argument, combined with the fact that nontrivial
representations of $p$-groups in characteristic $p$ always have nontrivial fixed points, one can argue that if $\pi_0 (G)$ is a $p$-group,
then the $\mathbb{F}_p$-based EMSS is relevant for $BG$. 
As a result, we have an equivalence of symmetric monoidal $\infty$-categories
between $\fun(BG, \md( \mathbb{F}_p))$ and complete modules over
$F(BG_+, \mathbb{F}_p)$ (cf. \Cref{mainEMSS}). 
\end{remark} 

Here is our main result:

\begin{thm}\label{thm:relevancevsunipotence}
\label{polyEMSS}
Let $R$ be an $\e{\infty}$-ring  and let $G$ be  a compact, connected
Lie group. Suppose 
$H^*(BG; \pi_0 R)\simeq H^*(BG; \mathbb{Z})\otimes_{\mathbb{Z}}\pi_0 R$ and
this is a polynomial ring over $\pi_0 R$. Suppose moreover that the
cohomological $R$-based AHSS for $BG$ degenerates (e.g., $\pi_*(R)$ is
torsion-free).
Then the $R$-based EMSS for $BG$ is relevant, so that $\mathrm{Fun}(BG, \md(R))$
is unipotent and equivalent to the symmetric monoidal $\infty$-category of
$R$-complete $F(BG_+, R)$-modules.
\end{thm}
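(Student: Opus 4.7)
The plan is to apply \Cref{mainEMSS} and reduce the conclusion to showing that the $R$-based EMSS for $BG$ is relevant, i.e., that the natural map of $R$-algebras
\[ \phi\colon R \otimes_{F(BG_+, R)} R \longrightarrow F(G_+, R) \]
is an equivalence of spectra. Given relevance, both unipotence of $\fun(BG,\md(R))$ and its identification with $R$-complete $F(BG_+, R)$-modules follow from \Cref{mainEMSS}.

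To check that $\phi$ is an equivalence I would compute $\pi_*$ of both sides. For the source: the degenerating $R$-based AHSS for $BG$, combined with the polynomial hypothesis, identifies $\pi_*F(BG_+, R)$ with a polynomial $\pi_*R$-algebra on generators $y_1,\ldots,y_r$ in even homological degrees $|y_i|=-2n_i$, where $2n_i$ are the cohomological degrees of the polynomial generators of $H^*(BG;\mathbb{Z})$. The Tor spectral sequence for $\pi_*(R\otimes_{F(BG_+, R)} R)$ then collapses via the Koszul resolution of $\pi_* R$ over $\pi_* R[y_1,\ldots,y_r]$, producing the exterior algebra $\Lambda_{\pi_*R}(\tilde y_1,\ldots,\tilde y_r)$ with $|\tilde y_i| = -2n_i+1$; there is no room for nontrivial multiplicative extensions since the $\tilde y_i$ lie in distinct odd total degrees. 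For the target: a classical result of Borel, applicable because $H^*(BG;\mathbb{Z})$ is polynomial on even generators, gives $H^*(G;\mathbb{Z})\cong \Lambda_{\mathbb{Z}}(x_1,\ldots,x_r)$ with $|x_i|=2n_i-1$. In particular $H^*(G;\mathbb{Z})$ is torsion-free, which implies that the $R$-based AHSS for $G$ degenerates as well and yields $\pi_* F(G_+, R)\cong \Lambda_{\pi_*R}(x_1,\ldots,x_r)$ in matching odd degrees.

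Both sides of $\phi$ are thus exterior $\pi_*R$-algebras on $r$ generators in the same odd degrees, so it suffices to verify that $\phi$ is an isomorphism on indecomposables. This reduces to the classical compatibility between the Tor (bar) differential and the transgression in the Serre spectral sequence for the path-loop fibration $G\to EG\to BG$: the Koszul generator $\tilde y_i$ corresponds to the suspension of the polynomial generator $y_i$, and the exterior generator $x_i$ is precisely the transgression of $y_i$ in the Serre spectral sequence. The main obstacle is this final identification of generators; the remaining steps are consequences of the AHSS degeneration hypothesis, the polynomial structure, and standard spectral-sequence manipulations.
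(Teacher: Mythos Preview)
Your approach is genuinely different from the paper's and, as written, has real gaps.

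\textbf{The gaps.} First, you assert that $H^*(BG;\mathbb{Z})$ is polynomial and hence (by Borel) that $H^*(G;\mathbb{Z})$ is exterior and torsion-free. But the hypothesis only says $H^*(BG;\mathbb{Z})\otimes_{\mathbb{Z}}\pi_0 R$ is polynomial; for instance $G=SO(3)$ with $\pi_0 R=\mathbb{Z}[1/2]$ satisfies the hypotheses while $H^*(BSO(3);\mathbb{Z})$ has $2$-torsion. Second, even granting torsion-freeness of $H^*(G;\mathbb{Z})$, you have not justified that the $R$-based AHSS for $G$ degenerates: the hypothesis concerns only the AHSS for $BG$, and torsion-freeness of the cohomology of the base does not by itself kill AHSS differentials unless $\pi_*R$ is torsion-free (which is only the parenthetical example, not the general hypothesis). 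Third, even if both sides of $\phi$ are abstractly exterior algebras on generators in matching degrees, your final step---that $\phi$ matches indecomposables via ``classical compatibility between the Tor differential and transgression''---is where all the content lies, and you have not supplied it. The claim about ``distinct odd total degrees'' ruling out multiplicative extensions is also not correct in general (repeated degrees occur already for $G=U(n)\times U(n)$).

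\textbf{The paper's route and why it sidesteps these issues.} Rather than computing $\pi_*$ of both sides of $\phi$, the paper works inside $\fun(BG,\md(R))$. It lifts the polynomial generators to classes $y_i\in\widetilde{R}^*(BG)$ (using only AHSS degeneration for $BG$), observes that each $y_i$ becomes null after mapping to $F(G_+,R)$, and thereby constructs an explicit map $\bigotimes_i R/y_i \to F(G_+,R)$ of objects in $\fun(BG,\md(R))$. To check this is an equivalence it base-changes along $R\to\pi_0 R$ (after first reducing to connective $R$); over the discrete ring $\pi_0 R$ one already knows unipotence by the Postnikov argument of \Cref{intconverge}, so one can check the map on homotopy fixed points, where it becomes the tautological statement $F(BG_+,\pi_0 R)/(x_1,\ldots,x_r)\simeq \pi_0 R$. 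This gives $F(G_+,R)$ in the thick subcategory of the unit and \Cref{unipcriterion} finishes. In particular, the paper never needs the AHSS for $G$, never invokes Borel's theorem, and never has to identify $\phi$ on indecomposables.
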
 


\begin{proof}[Proof of \Cref{polyEMSS}] 
Without loss of generality, we may assume that $R$ is connective by
\Cref{ascendEMSS} and replacing $R$ by $\tau_{\geq 0} R$ if necessary.
Choose classes $x_1, \dots, x_r \in H^*( BG; \pi_0 R) $
which form a system of polynomial generators, so that $H^*(BG; \pi_0 R) \simeq
\pi_0 R[x_1, \dots, x_r]$. Let $k_i = |x_i|$.
Choose lifts $y_1, \dots, y_r \in \widetilde{R}^*(BG)$, which we can by the
degeneration of the AHSS. 
For each $i$, $y_i$ classifies a self-map
$ \Sigma^{-k_i}R \to R$    in the $\infty$-category $ \fun(BG, \md(R)).$ 

Consider the coinduced object $F(G_+, R) \in \clg( \fun(BG, \md(R)))$. 
One has a unit map 
\( R \to F(G_+, R).  \)
Observe also that the homotopy fixed points of $F(G_+, R)$ are given by $R$
itself. 
As a result, for each $i$, the composite map
\[ \Sigma^{-k_i} R \stackrel{y_i}{\to}  R \to F(G_+, R) \]
is nullhomotopic; this follows because $y_i$ restricts to zero in $\pi_*(R)
= \pi_* F(\ast_+, R)$. In particular, for each $i$ we obtain maps
in $\fun(BG, \md(R))$,
\[ R/y_i \to F(G_+, R).  \]
On homotopy fixed points, these classify maps
of $F(BG_+, R)$-modules
\[ F(BG_+, R)/y_i \to R  \]
that extend the map $F(BG_+, R) \to R$ given by evaluating at a point.
Using the $\e{\infty}$-structure on $F(G_+, R)$, we obtain a map
\begin{equation} \label{tensorcofib} \bigotimes_{i=1}^r R/y_i \to F(G_+, R).
\end{equation} 
We claim that \eqref{tensorcofib} is an equivalence. 
In order to see this, it suffices (as the underlying $R$-modules of both
objects are bounded below) to base change along the map $R \to \pi_0 R$,
so that we may assume that $R$ is discrete. In this case, it suffices to see
that 
the map
\eqref{tensorcofib} induces an equivalence on homotopy fixed points 
by \Cref{intconverge}. But this is the claim that
we have an equivalence of $F(BG_+, \pi_0 R)$-modules
\[ F(BG_+, \pi_0 R) /(x_1, \dots, x_r) \simeq \pi_0 R,  \]
which we have assumed. 
As a result, it follows that the coinduced object $F(G_+, R) \in \fun(BG,
\md(R))$ belongs to the thick subcategory generated by the unit, so we may
apply \Cref{unipcriterion} and conclude unipotence. 
\end{proof}

We now obtain a basic result for unipotence for actions of the classical
compact Lie groups
(with $2$ inverted for the $SO(n)$ family).
A classic textbook reference for the calculations
of the cohomology of the relevant classifying spaces is \cite{Mis74}. 

\begin{thm} 
\label{basicunipex}
Suppose $R$ is an $\e{\infty}$-ring with $\pi_*(R)$ torsion-free. 
Then $\fun(BG, \md(R))$ is unipotent if $G $ is a product of copies of $ U(n), SU(n), Sp(n)$ for some $n$.
If $2$ is invertible in $R$, then one can also include factors of $ SO(n),
Spin(n)$. 
In particular, we have an equivalence
of symmetric monoidal $\infty$-categories
\[ \fun(BG, \md(R))  \simeq \md( F(BG_+, R))_{\mathrm{cpl}}, \]
where on the right we consider $F(BG_+, R)$-modules which are complete with respect
to  the $F(BG_+, R)$-module $R$.
\end{thm}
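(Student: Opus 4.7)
The plan is to reduce this theorem to a direct application of \Cref{polyEMSS} by verifying its two hypotheses for the listed classes of compact connected Lie groups. Since $\pi_*(R)$ is torsion-free, the hard work lies entirely in identifying the integral cohomology of $BG$.

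First, I would reduce to the case of a single factor. If $G = G_1 \times \cdots \times G_k$, then $BG \simeq BG_1 \times \cdots \times BG_k$, and assuming that each $H^*(BG_i; \mathbb{Z})$ is torsion-free and polynomial (possibly after inverting $2$), the K\"unneth formula gives that $H^*(BG; \mathbb{Z})$ is the tensor product of these polynomial rings, hence again torsion-free and polynomial. Moreover, the $R$-based AHSS for a product is the tensor product of the individual AHSSs (using torsion-freeness), so degeneration reduces to the case of a single factor.

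Next, I would invoke the classical integral cohomology computations (see for instance \cite{Mis74}): $H^*(BU(n); \mathbb{Z}) = \mathbb{Z}[c_1, \dots, c_n]$ with $|c_i| = 2i$; $H^*(BSU(n); \mathbb{Z}) = \mathbb{Z}[c_2, \dots, c_n]$; $H^*(BSp(n); \mathbb{Z}) = \mathbb{Z}[q_1, \dots, q_n]$ with $|q_i| = 4i$; and $H^*(BSO(n); \mathbb{Z}[1/2])$, $H^*(BSpin(n); \mathbb{Z}[1/2])$ are polynomial on even-degree generators (Pontryagin classes, plus an Euler class in the even case). In every listed case, the cohomology (after inverting $2$ when necessary) is a polynomial $\mathbb{Z}$-algebra (or $\mathbb{Z}[1/2]$-algebra) on generators in \emph{even} degrees, and is torsion-free.

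From these inputs the two hypotheses of \Cref{polyEMSS} fall out. Since $\pi_*(R)$ is torsion-free (and $2$ is invertible in $R$ in the $SO(n)$, $Spin(n)$ case), the universal coefficient formula yields $H^*(BG; \pi_0 R) \simeq H^*(BG; \mathbb{Z}) \otimes_{\mathbb{Z}} \pi_0 R$, which remains a polynomial ring over $\pi_0 R$. For the AHSS, all the generators sit in even degrees, so $H^*(BG; \pi_*R)$ is concentrated in even total degree; since the $d_r$ differentials shift parity (they raise cohomological degree by $r$ while lowering the spectrum-filtration by $r-1$, net odd), they all vanish, giving degeneration on the $E_2$-page. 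Thus \Cref{polyEMSS} applies and yields both unipotence of $\fun(BG, \md(R))$ and the displayed symmetric monoidal equivalence with $R$-complete $F(BG_+, R)$-modules. The only subtle step is the verification that the AHSS degenerates in the $SO(n)$, $Spin(n)$ cases, which is genuinely false without inverting $2$ because of the $2$-torsion in $H^*(BSO(n); \mathbb{Z})$ --- this is precisely why the hypothesis is needed.
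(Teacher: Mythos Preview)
Your approach is exactly the one the paper takes: the theorem is stated immediately after \Cref{polyEMSS} with only a reference to \cite{Mis74} for the classical cohomology computations, so the implicit proof is precisely ``verify the hypotheses of \Cref{polyEMSS} using the known polynomial descriptions of $H^*(BG;\mathbb{Z})$ (or $H^*(BG;\mathbb{Z}[1/2])$).'' Your reduction to single factors via K\"unneth and your list of cohomology rings are correct and match the intent.

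One small slip: your parity argument for AHSS degeneration is not valid under the stated hypotheses. You claim $H^*(BG;\pi_*R)$ is concentrated in even total degree, but this requires $\pi_*(R)$ to be concentrated in even degrees, whereas the theorem only assumes $\pi_*(R)$ is torsion-free. The correct argument (which is why \Cref{polyEMSS} includes the parenthetical ``e.g., $\pi_*(R)$ is torsion-free'') is that the AHSS differentials are torsion-valued, so they vanish when $E_2 = H^*(BG;\pi_*R)$ is torsion-free; this holds here because both $H^*(BG;\mathbb{Z})$ (or $H^*(BG;\mathbb{Z}[1/2])$) and $\pi_*(R)$ are torsion-free. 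You do not actually need to argue for degeneration at all, since \Cref{polyEMSS} already supplies it under the torsion-free hypothesis.
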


We also include a counterexample to show the necessity of inverting 2 in the
presence of $SO(n)$-factors.

\begin{example} 
The $\infty$-category $\fun(BSO(3), \md(KU))$ is \emph{not} unipotent. 
When one works 2-adically, $KU^*(BSO(3))$ is a power series ring on one
variable, but $KU^*(SO(3))$ has 2-torsion, and the EMSS does not converge
(compare
the remark at the end of \cite[\S 4]{JO99}). 
\end{example}

On the other hand, we shall see that $\fun(BG, \md(KU))$
is unipotent if $G$ has no torsion
in $\pi_1$. So, there are other examples of unipotence not covered by 
\Cref{basicunipex} (such as $G = \mathrm{Spin}(n), n \geq 4$).

Finally, we note that we can recover results of Greenlees-Shipley \cite{GS11}. 
\begin{example} 
Let $G$ be a connected compact Lie group. Then (by \Cref{intconverge}) for $R =
\mathbb{Q}$, the $\infty$-category $\fun(BG, \md(\mathbb{Q}))$ is unipotent. Therefore, we
find that $\fun(BG, \md(\mathbb{Q}))$ is equivalent, as a symmetric monoidal
$\infty$-category, to modules over $F(BG_+, \mathbb{Q})$
which are complete with respect to the augmentation ideal. This is closely
related to the main result of \cite{GS11}. 

Strictly speaking, Greenlees-Shipley work in the genuine equivariant setting in \cite{GS11};
however, they work with \emph{free} $G$-spectra, so that it is equivalent to
work in $\fun(BG, \md(\mathbb{Q}))$. Moreover, they use torsion instead of
complete $F(BG_+, \mathbb{Q})$-modules.
Note also that $F(BG_+, \mathbb{Q})$ is equivalent to a free
$\e{\infty}$-ring over $\mathbb{Q}$ on a finite number of generators, since
$H^*(BG; \mathbb{Q})$ is a polynomial ring. 
\end{example} 

Fix a compact connected Lie group $G$. In order to make the assumptions of
\Cref{thm:relevancevsunipotence} more explicit, we now determine the minimal 
integer $n$ with respect to divisibility such that $H^*(BG;\mathbb{Z}[1/n])$ is a polynomial algebra.
To formulate the result, recall that $G$ contains a maximal semi-simple subgroup $G_{ss}\subset G$, and 
that $G$ is homeomorphic to $G_{ss}\times T$ for a torus $T\subset G$. The group $G_{ss}$ in turn is uniquely a finite
product of simple groups, the simply-connected covers of which are simply connected, simple Lie groups; these are classified
by their Lie algebras. We refer to the finite list of Lie algebras thus associated with $G$ as the types occurring in $G$.

\begin{thm}\label{exteriorcoh}
Let $G$ be a compact connected Lie group and $n\ge 1$ an integer. Then the following are equivalent:

\begin{enumerate}
\item The $\mathbb{Z}[1/n]$-algebra $H^*(BG,\mathbb{Z}[1/n])$ is polynomial.
\item The $\mathbb{Z}[1/n]$-algebra $H^*(G,\mathbb{Z}[1/n])$ is exterior.
\item The integer $n$ is divisible by each of the following primes $p$:
\begin{itemize}
\item Each $p$ which occurs as the order of an element of $\pi_1(G)$. 
\item The prime $p=2$ if $G$ contains a factor of type $\mathrm{Spin}(N)$ for some $N\ge 7$, $G_2$, $F_4$, $E_6$, $E_7$ or $E_8$.
\item The prime $p=3$ if $G$ contains a factor of type $F_4$, $E_6$, $E_7$ or $E_8$.
\item The prime $p=5$ if $G$ contains a factor of type $E_8$.
\end{itemize}
\end{enumerate}
\end{thm}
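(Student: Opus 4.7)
The equivalence of (1) and (2) is a classical application of the Serre spectral sequence for the path-loop fibration $G \to PBG \to BG$. Writing $R := \mathbb{Z}[1/n]$, if $H^*(BG;R)$ is a polynomial ring, its generators $x_i$ necessarily lie in even degrees, and in the spectral sequence they must be hit by transgressions from odd-degree classes $y_i \in H^*(G;R)$; a counting argument using $H^*(PBG;R) = R$ then forces $H^*(G;R) \cong \Lambda_R(y_i)$. Conversely, exterior generators of $H^*(G;R)$ transgress, and Borel's theorem reconstructs a polynomial structure on $H^*(BG;R)$ from them.

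For the equivalence with (3), I would reduce to the simply-connected semisimple case in two steps. Let $\widetilde{G}_{ss}$ denote the simply-connected cover of $G_{ss}$, and set $\widehat{G} := \widetilde{G}_{ss}\times T$, so that $\widehat{G}\to G$ is a covering with structure group $\pi := \pi_1(G)$, giving a fibration $B\widehat{G}\to BG\to B^2\pi$. When $n$ is divisible by every prime order occurring in $\pi$, the target has trivial reduced $R$-cohomology (as $\pi$ is annihilated in $R$), so the associated Serre spectral sequence collapses to yield $H^*(BG;R)\cong H^*(B\widehat{G};R)$. In the opposite direction, if some prime $p\mid|\pi|$ is not inverted in $R$, a Bockstein on a primitive class pulled back from $B^2\pi$ produces $p$-torsion in $H^*(BG;R)$, obstructing polynomiality. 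Künneth then gives $H^*(B\widehat{G};R) \cong H^*(B\widetilde{G}_{ss};R)\otimes_R H^*(BT;R)$, and $H^*(BT;R)$ is always polynomial, so everything reduces to a product of simply-connected, compact, simple Lie groups.

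For such a simple group $K$, I would invoke the classical Borel--Araki computations of the torsion primes of $H^*(BK;\mathbb{Z})$: none in types $A_n$ and $C_n$; $p=2$ in types $B_n\ (n\geq 3)$, $D_n\ (n\geq 4)$, $G_2, F_4, E_6, E_7, E_8$; $p=3$ in $F_4, E_6, E_7, E_8$; and $p=5$ in $E_8$. Translating back via the exceptional isomorphisms $\mathrm{Spin}(3)\cong SU(2)$, $\mathrm{Spin}(5)\cong Sp(2)$, $\mathrm{Spin}(6)\cong SU(4)$, the $B_n, D_n$ cases are precisely those coming from factors $\mathrm{Spin}(N)$ with $N\geq 7$, yielding exactly the list in (3). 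When $n$ inverts the appropriate torsion primes, torsion-freeness of $H^*(BK;R)$ together with the rational polynomiality $H^*(BK;\mathbb{Q})\cong\mathbb{Q}[x_1,\ldots,x_r]$ of Borel's theorem implies polynomiality over $R$; when it does not, the torsion classes obstruct polynomiality.

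The main obstacle is this final case-by-case input: the assertion that exactly the listed primes occur as torsion primes for the exceptional simple Lie groups, which I would not re-derive but import wholesale from the literature (Borel's original papers, Kac, and the detailed accounts in Mimura--Toda). The rest of the argument is a clean spectral-sequence assembly on top of this classical classification.
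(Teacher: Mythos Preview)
Your approach differs from the paper's, which does not prove (1)$\Leftrightarrow$(2) directly. Instead the paper reduces (1) and (2) separately to (3) by citing Borel: for compact connected $G$, the algebra $H^*(G;\mathbb{Z}[1/n])$ is exterior iff it is torsion-free, and $H^*(BG;\mathbb{Z}[1/n])$ is polynomial iff it is torsion-free; the torsion primes are then read off from Borel's tables. The reduction to the simply-connected semisimple case is carried out for condition (2), at the level of $H^*(G)$, using the \emph{topological} homeomorphism $G\simeq G_{ss}\times T$ together with the identification of the torsion subgroup of $\pi_1(G)$ with $\pi_1(G_{ss})$.

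Your reduction at the level of $BG$ has a genuine gap. First, $\widehat{G}=\widetilde{G}_{ss}\times T\to G$ is not a covering with deck group $\pi_1(G)$: the latter has a free summand $\mathbb{Z}^{\dim T}$, so $B^2\pi_1(G)$ never has vanishing reduced $R$-cohomology. Worse, even replacing $\pi_1(G)$ by the actual finite kernel $F$ of the group homomorphism $\widetilde{G}_{ss}\times Z(G)^0\to G$ does not work, because the primes dividing $|F|$ need not be torsion primes of $\pi_1(G)$. Take $G=U(2)$: here $G_{ss}=SU(2)$ is already simply connected, $Z(G)^0$ is the scalar $U(1)$, and the kernel of $SU(2)\times U(1)\to U(2)$ is $\mathbb{Z}/2$; yet $\pi_1(U(2))=\mathbb{Z}$ is torsion-free and $H^*(BU(2);\mathbb{Z})$ is already polynomial. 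Your fibration argument, taken literally, would wrongly force $2\mid n$. The underlying issue is that the splitting $G\simeq G_{ss}\times T$ is only topological, not a group isomorphism, so it does not directly yield a decomposition of $BG$; this is precisely why the paper performs the reduction on $H^*(G)$ rather than on $H^*(BG)$.
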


\begin{example} The conditions in \Cref{exteriorcoh} depend on $G$ only through $G_{ss}$. The algebra $H^*(BG;\mathbb{Z})$ itself is polynomial
if and only if the semi-simple part of $G$ is simply connected and contains none of the types listed; this holds for example for $G=U(N)$.
The minimal $n$ such that $H^*(BSO(N);\mathbb{Z}[1/n])$ is polynomial is $n=2$:
The cover $\mathrm{Spin}(N)\to SO(N)$ is simple and simply connected, and
the map has degree $2$.
\end{example}

\begin{proof}[Proof of \Cref{exteriorcoh}]
We first show the equivalence of 2. and 3.
Using the obvious generalization from $\mathbb{Z}$- to $\mathbb{Z}[1/n]$-coefficients of
\cite[Proposition 1.2]{borel} for $X=G$, we see that $H^*(G;\mathbb{Z}[1/n])$ is exterior if and only if it is torsion-free,
i.e., if and only if $n$ is divisible by all primes $p$ such that $H^*(G;\mathbb{Z})$ has non-trivial $p$-torsion. As $G\simeq G_{ss}\times T$, these
are exactly the torsion primes of $H^*(G_{ss};\mathbb{Z})$. By \cite[Lemme 3.3]{borel}, passage from $G_{ss}$ to its simply connected cover
exactly picks up those $p$ which divide the (finite) order of $\pi_1(G_{ss})$.
Finally, for the semi-simple, simply connected case, the torsion primes
are listed in \cite[Th\'eor\`eme 2.5]{borel}.
To see that 1. is equivalent to 3., we observe that by \cite[Th\'eor\`eme 4.5]{borel}, condition 3. 
in \Cref{exteriorcoh} is also equivalent to $H^*(BG,\mathbb{Z}[1/n])$
being torsion free. By the natural adaptation of \cite[Th. 19.1]{Bor53} to
$\mathbb{Z}[1/n]$-coefficients, we find that $H^*(BG ;\mathbb{Z}[1/n])$ is a
polynomial algebra as desired in precisely these cases. 
\end{proof}

\subsection{The geometric argument}

Let $E$ be an $\e{\infty}$-ring which is complex-orientable as an
$\e{1}$-ring; as shown in \Cref{prop:orientations} this condition is often satisfied in practice. In this subsection, we shall describe actions of compact Lie groups $G$ on
$E$-modules where $G$ is a product of unitary groups.  
Rather than going through the EMSS as in the previous subsection, we shall use
complex-orientability instead and \Cref{unipcriterion}. 
The use of complex-orientability also appears in \cite{GS14}, and our methods
are closely related to theirs. However, our results will be
strictly contained in \Cref{basicunipex}.

\begin{prop}\label{prop:orientations}
	Suppose that $E$ is an $\e{2}$-ring and $\pi_*E$ is concentrated in even degrees. Then there is a morphism $MU\to E$ of $\e{1}$-rings. 
\end{prop}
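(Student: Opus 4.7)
The plan is to proceed in three stages: first establish complex-orientability of $E$ as a homotopy ring spectrum, then appeal to the Thom-spectrum universal property of $MU$ to translate the $\e{1}$-lifting problem into a question about maps of $\e{1}$-spaces, and finally construct the required $\e{1}$-lift out of the orientation.

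First, I would verify complex-orientability by examining the Atiyah-Hirzebruch spectral sequence for $E^*(\mathbb{CP}^\infty)$, whose $E_2$-page is $H^p(\mathbb{CP}^\infty;\pi_{-q}E)$. Since both $H^*(\mathbb{CP}^\infty;\mathbb{Z})$ and $\pi_*E$ are concentrated in even degrees, the entire $E_2$-page sits in even total degree; hence every differential has zero target or source and the spectral sequence collapses without extension ambiguity. The restriction map $\tilde E^2(\mathbb{CP}^\infty) \to \tilde E^2(S^2) \cong \pi_0 E$ is therefore surjective, and I can pick a lift $x$ of the canonical generator, giving a classical complex orientation of $E$.

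Second, I would translate the problem of producing an $\e{1}$-ring map $MU \to E$ into a lifting problem using the Thom-spectrum description of $MU$. Namely, $MU$ is the Thom spectrum of the canonical $\e{\infty}$-map $\xi\colon BU \to BGL_1(S)$, and the universal property of Thom spectra at the $\e{1}$-level (as developed by Ando-Blumberg-Gepner-Hopkins-Rezk, and by Lurie in \cite{Lur14}) identifies $\mathrm{Map}_{\mathrm{Alg}_{\e{1}}}(MU,E)$ with the space of $\e{1}$-maps $BU \to BGL_1(E)$ lifting $\xi$. This is exactly where the $\e{2}$-hypothesis enters: the space $GL_1(E)$ acquires its (grouplike) $\e{2}$-structure from the multiplication on $E$, so $BGL_1(E)$ is naturally an $\e{1}$-space and the notion of an $\e{1}$-lift is meaningful.

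Third, I would construct such a lift from the orientation. The class $x$ furnishes a pointed map $\mathbb{CP}^\infty = BU(1) \to BGL_1(E)$ lifting the restriction of $\xi$ to $BU(1)$, obtained by interpreting $x$ as a trivialization of the Thom class of the tautological line bundle in $E$-cohomology. To extend this to an $\e{1}$-map on all of $BU$, I would use that the tensor-product $\e{1}$-structure on $\coprod_{n\ge 0} BU(n)$ is generated, after group completion, by $BU(1)$; consequently an $\e{1}$-lift on the generator determines one on the whole $\e{1}$-space (up to contractible choice), since multiplicativity of complex orientations is automatic and there are no higher $\e{1}$-coherence obstructions beyond those already absorbed in the orientation data.

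The main obstacle is this last step: matching the $\e{1}$-coherence of the source with the $\e{1}$-deloopability of $GL_1(E)$ provided by the $\e{2}$-structure. One can either phrase this formally through the universal property of $BU$ as the group-completion of the $\e{1}$-space $\coprod BU(n)$ (with respect to the tensor structure), or run a direct obstruction-theoretic argument where successive obstructions to $\e{1}$-extension lie in odd-degree cohomology groups of $BU$ with coefficients in $\pi_*E$ and vanish for parity reasons. Either route relies essentially on the hypothesis that $E$ is $\e{2}$, not merely $\e{1}$, since only then does $BGL_1(E)$ carry the $\e{1}$-structure indispensable for the Thom-spectrum universal property to be non-trivial.
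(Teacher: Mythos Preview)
Your main line of argument in Step~3 has a genuine gap. The $\e{\infty}$-structure on $BU$ relevant to the Thom spectrum $MU$ is the \emph{Whitney sum} structure, not the tensor product; the $J$-homomorphism $BU \to BGL_1(S)$ is multiplicative for direct sums of vector bundles. More seriously, $BU$ is \emph{not} freely generated as an $\e{1}$-space by $BU(1)$ under either structure: the free grouplike $\e{1}$-space on $BU(1)$ is $\Omega\Sigma BU(1)_+$, not $BU$. Consequently, the assertion that ``an $\e{1}$-lift on the generator determines one on the whole $\e{1}$-space (up to contractible choice)'' is false --- there \emph{are} further obstructions to extending an $\e{1}$-null-homotopy from $BU(1)$ to all of $BU$, and your argument gives no mechanism for killing them. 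The claim that ``multiplicativity of complex orientations is automatic'' conflates homotopy ring maps $MU \to E$ (which are indeed equivalent to complex orientations) with $\e{1}$-ring maps, which require strictly more; this extra content is exactly what the proposition is asserting.

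Your fallback suggestion --- a direct obstruction-theoretic argument with obstructions in odd-degree cohomology of $BU$ with coefficients in $\pi_*E$ --- is in fact the paper's actual proof, and it bypasses Steps~1 and~3 entirely. The paper reduces (via the obstruction theory of Ando--Blumberg--Gepner--Hopkins--Rezk) to showing that the composite $BU \xrightarrow{J} BGL_1 S \to BGL_1 E$ is null-homotopic as a based map. The $\e{2}$-hypothesis enters to guarantee that $BGL_1 E$ is a loop space and hence simple, so that elementary obstruction theory applies. Since $BU$ admits a cell structure with only even-dimensional cells, the obstructions to a based null-homotopy lie in $\widetilde{H}^{2n+2}(BU;\pi_{2n+2}BGL_1 E) \cong \widetilde{H}^{2n+2}(BU;\pi_{2n+1}E) = 0$, and one builds the null-homotopy skeleton by skeleton. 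No preliminary construction of an orientation on $BU(1)$ is needed.
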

\begin{proof}
	By passing to connective covers we can assume $E$ is connective. Using the obstruction theory of \cite{ABGHR14}, it suffices to show that the composite $f\colon BU\stackrel{J}{\to} BGL_1 S\to BGL_1 E$ of based spaces is null-homotopic.  For this purpose, we fix a cell structure on $BU$ using only even dimensional cells and inductively extend a null-homotopy over the skeleta of $BU$. Since $E$ is $\e{2}$, $BGL_1 E$ is a loop space and hence simple so we can apply elementary obstruction theory \cite[\S 18.5]{May99}.  The relevant obstructions lie in $\wt{H}^{2n+2}(BU; \pi_{2n+2} BGL_1 E)\cong \wt{H}^{2n+2}(BU;\pi_{2n+1}E)=0$ for $n\geq 0$. This builds a compatible sequence of based null-homotopies $H_n\colon BU_{2n} \wedge I_+\to BGL_1 E$ and taking colimits gives the desired null-homotopy of $f$.
\end{proof}

For simplicity, we begin with the case of $G = U(1)$.
Choose a complex orientation $x\colon \Sigma^{-2} BU(1) \to E$.
Observe that $\pi_*(F(BU(1)_+,E)) \simeq \pi_*(E)[[x]] :=
\varprojlim\pi_*(E)[x]/x^n
$ where $x \in
\pi_{-2}(F(BU(1)_+, E))$ is a class that
maps to zero under the map $F(BU(1)_+, E) \to E$ given by evaluation at a point. 

We will now give a geometric proof of unipotence in the case of $U(1)$-actions.

\begin{thm} 
\label{circleunipotent} 
The $\infty$-category $\fun(BU(1), \md(E))$ is unipotent. 
The functor of homotopy fixed points is fully faithful and embeds 
$\fun( BU(1), \md(E)) $ as the subcategory of $x$-complete objects in 
$\md( F(BU(1)_+, E) )$.
\end{thm}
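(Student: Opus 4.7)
The plan is to invoke the unipotence criterion (\Cref{unipcriterion}) applied to the algebra object $A := F(U(1)_+, E) \in \mathrm{Alg}(\fun(BU(1), \md(E)))$. Observe first that $A$ is the image of $E$ under the right adjoint $\CoInd$ of the restriction functor $\fun(BU(1), \md(E)) \to \md(E)$ evaluating at the basepoint $\ast \in BU(1)$. It is therefore dualizable, with $\mathbb{D}A$ a suspension of the induced object $U(1)_+ \wedge E$; this induced object is compact and generates $\fun(BU(1), \md(E))$ as a localizing subcategory by the standard argument (the right adjoint to induction is conservative), so conditions (1) and (2) of \Cref{unipcriterion} are satisfied.

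The heart of the argument is to verify condition (3): that $A$ belongs to the thick subcategory generated by the unit $E$. The plan is to exhibit a cofiber sequence in $\fun(BU(1), \md(E))$ of the form
\[ \Sigma^{-2} E \xrightarrow{x} E \longrightarrow A, \]
where the complex orientation class $x \in \pi_{-2} F(BU(1)_+, E)$ is interpreted as an element of $\pi_0 \hom_{\fun(BU(1), \md(E))}(\Sigma^{-2} E, E)$. The unit map $E \to A$ (unit of the adjunction $(\mathrm{res}, \CoInd)$) composed with $x$ corresponds, by adjunction, to the restriction of $x$ to the basepoint, which is zero since $x$ lives in reduced cohomology of $BU(1)$. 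This gives a map $E/x \to A$, and to verify it is an equivalence I would apply the conservative restriction functor to $\ast \in BU(1)$: on underlying $E$-modules $x$ becomes null, so both $E/x$ and $A$ become abstractly equivalent to $E \oplus \Sigma^{-1}E$, and I would trace the construction to verify that the induced map is an equivalence.

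With condition (3) in hand, \Cref{unipcriterion} gives both unipotence of $\mathcal{C} := \fun(BU(1), \md(E))$ and an identification of $\mathcal{C}$ with the completion of $\md(R)$ at $A_R := \hom_{\mathcal{C}}(\mathbf{1}, A)$, where $R = \mathrm{End}_{\mathcal{C}}(E) \simeq F(BU(1)_+, E)$. The homotopy fixed points $A_R \simeq E$ are identified with $R/x$: indeed, applying homotopy fixed points to $\Sigma^{-2} E \xrightarrow{x} E \to A$ yields the cofiber sequence $\Sigma^{-2} R \xrightarrow{x} R \to E$ of $R$-modules. Hence $A_R$-completion of $R$-modules coincides with $x$-adic completion, establishing the final assertion.

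The main obstacle is the equivalence $E/x \simeq A$: producing the map is formal from the vanishing of $x|_{\ast}$, but verifying that the resulting map is an equivalence requires careful bookkeeping at the level of underlying $E$-modules, since at this stage in the argument we do not yet know that $E$ generates $\mathcal{C}$ and so cannot simply test on homotopy fixed points.
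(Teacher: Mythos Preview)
Your overall strategy coincides with the paper's: apply \Cref{unipcriterion} to $A = F(U(1)_+, E)$, with the crux being to show $A$ lies in the thick subcategory generated by the unit, and then identify $A_R$-completion with $x$-completion. Conditions (1), (2), and the final identification of the completion are handled just as in the paper.

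The difference, and the genuine gap in your proposal, is precisely at the point you flag as ``the main obstacle.'' You construct a map $E/x \to A$ from the canonical nullhomotopy of $x|_\ast$ and propose to check it is an equivalence on underlying $E$-modules. But knowing that both restrict to $E \oplus \Sigma^{-1}E$ only shows your map is upper-triangular with a unit in the $(1,1)$-entry; you still need the $(2,2)$-entry (a map $\Sigma^{-1}E \to \Sigma^{-1}E$) to be a unit in $\pi_0(E)$, and nothing in your construction makes this transparent. Tracing it out amounts to identifying the nullhomotopy with the Thom/Euler class data, which is essentially the orientation-theoretic input you are trying to avoid.

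The paper sidesteps this verification entirely by starting from the geometric Euler cofiber sequence
\[
S(V)_+ \to S^0 \to S^V
\]
for $V$ the standard character of $U(1)$, smashing with $E$ to get $E \wedge U(1)_+ \to E \to E \wedge S^V$, and then invoking the theory of orientations \cite{ABGHR14} to trivialize $E \wedge S^V \simeq \Sigma^2 E$ \emph{as an object of} $\fun(BU(1), \md(E))$ (via the nullhomotopy of $BU(1) \to BU \xrightarrow{J} BGL_1(S^0) \to BGL_1(MU)$). This produces the cofiber sequence directly rather than building a candidate map and then checking it. Dualizing gives $A$ in the thick subcategory of the unit, and \Cref{unipcriterion} finishes the proof. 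Your intended cofiber sequence $\Sigma^{-2}E \xrightarrow{x} E \to A$ is exactly the dual of the paper's, but the paper's route to it is what closes the gap.
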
 

\begin{proof} 
Let $V$ be the standard one-dimensional complex representation of $U(1)$. 
Consider the Euler cofiber sequence
of pointed spaces with an $U(1)$-action
\begin{equation} \label{standard} 
S(V)_+ \to S^0 \to S^V.
\end{equation}

Here $S(V)$ denotes the unit sphere in $V$ and $S^V$ denotes the one-point
compactification of $V$. 
Note that $S(V)_+$ is \emph{induced} from the trivial group: it is just $U(1)_+$
with the action by translation.  
After smashing with $E$, we get a cofiber sequence in $\fun(BU(1), \md(E))$
given by 
\begin{equation} \label{keycof} E \wedge U(1)_+ \to E \to E \wedge S^V.  \end{equation}

Now, we use the $\e{1}$-complex orientation of $E$ to give the equivalence
\begin{equation} \label{untwist} E \wedge
S^V \simeq \Sigma^2 E  \in \fun(BU(1), \md(E)) ,\end{equation}
in view of the theory of orientations 
of \cite{ABGHR14}. This argument is crucial. 
To see \eqref{untwist}, 
it suffices
to take $E = MU$, and in this case one knows that $MU \wedge \Sigma^{-2}S^V \in
\fun(BU(1), \md(MU))$ factors through $\fun( BU(1), B GL_1(MU))$ via the composition
\[ BU(1) \to BU  \stackrel{J}{\to} BGL_1(S^0) \to BGL_1(MU),  \]
for $J$ the complex $J$-homomorphism. 
The composition is nullhomotopic, which implies that the local system
of $MU$-modules
$\Sigma^{-2} S^V \wedge MU$
over $BU(1)$ is trivial; this is the claim of
\eqref{untwist}. 

Finally, in view of \eqref{keycof} and
\eqref{untwist}, we find that the induced object $E \wedge
U(1)_+$ belongs to the thick subcategory generated by the unit. 
Dualizing, we find that the coinduced object $A = F(U(1)_+, E)$ belongs to the
thick subcategory generated by the unit.
Now, applying \Cref{unipcriterion}, we conclude that $\fun(BU(1), \md(E))$ is
unipotent and is equivalent to the $\infty$-category of modules over $F(BU(1)_+, E)$
which are complete with respect to the $F(BU(1)_+,E)$-module $E$.
Since $ E \simeq F(BU(1)_+, E)/x$, this completes the proof. 
\end{proof}

We now give the analog for any of the unitary or special unitary groups. 
\begin{thm} 
If $G = U(n), SU(n)$, then the $\infty$-category 
$\fun(BG, \md(E))$ is unipotent and equivalent (via homotopy fixed points) to
the $\infty$-category of $E$-complete $E^{BG}$-modules.
\end{thm}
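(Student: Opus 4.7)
The plan is to mimic the proof of \Cref{circleunipotent} and proceed by induction on $n$, with base case $n=1$ handled by \Cref{circleunipotent} (for $G=U(1)$) or trivially (for $G=SU(1)$). Let $G$ denote $U(n)$ or $SU(n)$, and let $G' \subset G$ denote $U(n-1)$ or $SU(n-1)$ respectively. The strategy is to show that the induced object $E \wedge G_+ \in \fun(BG, \md(E))$ lies in the thick subcategory generated by the unit; equivariant Atiyah duality then yields the analogous statement for the coinduced object $F(G_+, E)$, and \Cref{unipcriterion} applied to $A = F(G_+, E)$ concludes the argument. Indeed, $A$ is compact and dualizable (by Atiyah duality, since $G$ is a closed manifold), its dual $E \wedge G_+$ is a compact generator of $\fun(BG, \md(E))$ (as the representing object for restriction to $\md(E)$, which is conservative), and we will have shown $A$ lies in the thick subcategory generated by the unit. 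The resulting identification from \Cref{unipcriterion} exhibits $\fun(BG, \md(E))$ as the completion of $\md(E^{BG})$ at $\hom(\mathbf{1}, A) \simeq E$, which is the $\infty$-category of $E$-complete $E^{BG}$-modules, the desired conclusion.

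The geometric ingredient, as in the proof of \Cref{circleunipotent}, is the Euler cofiber sequence associated to the standard complex representation $V = \mathbb{C}^n$ of $G$,
\[ E \wedge S(V)_+ \to E \to E \wedge S^V, \]
living in $\fun(BG, \md(E))$. The $\e{1}$-complex orientation of $E$ trivializes $E \wedge S^V \simeq \Sigma^{2n} E$: the local system of $E$-modules $E \wedge S^V$ over $BG$ is classified by the composite $BG \to BU \xrightarrow{J} BGL_1(S) \to BGL_1(E)$, which is nullhomotopic by the $MU$-factorization argument used in the proof of \Cref{circleunipotent}. Since $G$ acts transitively on the unit sphere of $V$ with stabilizer $G'$, we have an isomorphism $S(V) \simeq G/G'$ of $G$-spaces, and the cofiber sequence shows that $E \wedge (G/G')_+$ lies in the thick subcategory of $\fun(BG, \md(E))$ generated by the unit.

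The inductive step is then formal. Let $\mathrm{Ind} \colon \fun(BG', \md(E)) \to \fun(BG, \md(E))$ denote the left adjoint of restriction. This functor is exact, and using the explicit model $\mathrm{Ind}(M) \simeq G_+ \wedge_{G'} M$ one sees that it carries the unit $E$ to $E \wedge (G/G')_+$, and carries the free object $E \wedge G'_+ \in \fun(BG', \md(E))$ to the corresponding free object $E \wedge G_+ \in \fun(BG, \md(E))$. Consequently, $\mathrm{Ind}$ sends the thick subcategory of $\fun(BG', \md(E))$ generated by the unit into the thick subcategory of $\fun(BG, \md(E))$ generated by $E \wedge (G/G')_+$. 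By the inductive hypothesis $E \wedge G'_+$ lies in the former; hence $E \wedge G_+$ lies in the latter, and by the previous paragraph this is contained in the thick subcategory of $\fun(BG, \md(E))$ generated by the unit, completing the induction.

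The only genuinely non-formal step is the trivialization $E \wedge S^V \simeq \Sigma^{2n} E$ via complex orientability, which is where the hypothesis on $E$ enters; everything else is a combination of Atiyah duality and the induction-restriction adjunction. The argument for $SU(n)$ is identical to the one for $U(n)$: the standard representation restricts to a faithful complex $n$-dimensional representation of $SU(n)$, and $SU(n)$ still acts transitively on the unit vectors of $\mathbb{C}^n$ with stabilizer $SU(n-1)$, so $SU(n)/SU(n-1) \simeq S^{2n-1}$ and the cofiber sequence takes the same form.
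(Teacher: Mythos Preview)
Your proof is correct and follows essentially the same route as the paper: induct on $n$, use the Euler cofiber sequence for the standard representation together with the complex-orientation trivialization $E \wedge S^V \simeq \Sigma^{2n}E$ to place $E \wedge (G/G')_+$ in the thick subcategory of the unit, then induce the inductive hypothesis from $G'$ up to $G$ and apply \Cref{unipcriterion}. The paper writes out only the $U(n)$ case explicitly; your remark that the $SU(n)$ argument is identical (since $SU(n)/SU(n-1) \simeq S^{2n-1}$ as well) is the intended extension.
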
 
\begin{proof}  
We consider the case $G = U(n)$.
We will use the criterion of \Cref{unipcriterion}. It suffices to show that the
induced object $E \wedge U(n)_+$ belongs to the thick subcategory generated by the unit.

Here we can work by induction on
$n$. Suppose the induced object $E\wedge U(n-1)_+ \in \fun(BU(n-1),
\md(E))$ belongs to the thick subcategory generated by the unit. Let $V$ be the standard representation of $U(n)$ on $\mathbb{C}^n$. Now we have a cofiber sequence
as in \eqref{standard}, in pointed $U(n)$-spaces. It reads
\[ U(n)/U(n-1)_+ \to S^0 \to S^V . \]
Smashing with $E$, we get a cofiber sequence
\begin{equation} \label{keycof2} E \wedge  U(n)/U(n-1)_+  \to E  \to
\Sigma^{2n}E ,
\end{equation}
where we used the same ``untwisting'' argument as in \Cref{circleunipotent} to
identify $E \wedge S^V$ with $\Sigma^{2n} E$.

Now, by the inductive hypothesis, the induced object $E\wedge U(n-1)_+ \in \fun( BU(n-1), \md(E))$
belongs to the thick subcategory generated by the unit. Inducing
upwards to $U(n)$, it follows that the induced object $E \wedge U(n)_+ \in \fun( BU(n), \md(E))$
belongs to the thick subcategory generated by  $ E \wedge U(n)/U(n-1)_+$. 
However, \eqref{keycof2} shows that $ E \wedge U(n)/U(n-1)_+$ belongs to the
thick subcategory generated by the  unit in $\fun(BU(n), \md(E))$. Therefore, by transitivity, the induced 
object $E \wedge U(n)_+$ belongs to the thick subcategory generated by the
unit, and we can apply \Cref{unipcriterion} to conclude the proof. 
\end{proof}

\subsection{The flag variety}

In this subsection, we include the principal applications of our general
categorical machinery to
nilpotence results. 

Let $T \subset U(n)$ be a maximal torus, and let $F \simeq U(n)/T$ be the \emph{flag variety} of $\mathbb{C}^n$. 
Observe that $F$ has an action of $U(n)$, as a topological space. 
Therefore,
$F_+ \in \fun( BU(n), \sp)$.
Our goal is to show 
that this action can actually be trivialized over a complex-oriented base. 
These ideas go back to \cite{Qui71b, HKR00}.

\begin{prop} 
\label{trivflagvar}
Let $E$ be an $\e{\infty}$-ring which admits an $\e{1}$-complex orientation. Then 
we have an equivalence 
$E \wedge F_+ \simeq \bigoplus_{i=1}^{n!} \Sigma^{k(i)} E$
of objects in $\fun(BU(n), \md(E))$, where the $k(i)$ are even integers. 
\end{prop}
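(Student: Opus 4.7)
The plan is to realize $F = U(n)/T$ as an iterated $U(n)$-equivariant projective bundle, and then to apply a Borel-equivariant projective bundle theorem at each stage.

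Consider the $U(n)$-equivariant tower of partial flag varieties
\[ F = \mathrm{Fl}(1, 2, \ldots, n) \xrightarrow{\pi_1} \mathrm{Fl}(2, 3, \ldots, n) \xrightarrow{\pi_2} \cdots \xrightarrow{\pi_{n-1}} \mathrm{Fl}(n) = \ast, \]
where $\pi_j$ forgets the $j$-dimensional subspace. The fiber of $\pi_j$ over a partial flag $F_{j+1} \subset \cdots \subset F_n$ is the set of lines in $F_{j+1}$, i.e., $\mathbb{P}(F_{j+1})$, so $\pi_j$ is canonically identified with the projectivization of the $U(n)$-equivariant tautological rank-$(j+1)$ subbundle over $\mathrm{Fl}(j+1, \ldots, n)$.

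Next I will establish a Borel-equivariant projective bundle theorem: for $G$ a compact Lie group, $X$ a finite $G$-CW complex, $V \to X$ a $G$-equivariant rank-$r$ complex vector bundle, and $E$ a complex-orientable $\e\infty$-ring, there is an equivalence
\[ F(\mathbb{P}(V)_+, E) \simeq \bigoplus_{i=0}^{r-1} \Sigma^{-2i} F(X_+, E) \qquad \text{in } \fun(BG, \md(E)). \]
The tautological line bundle on $\mathbb{P}(V)$ is $G$-equivariant, so it extends to a line bundle on the Borel construction $\mathbb{P}(V)_{hG}$, and applying the complex orientation of $E$ produces a class $c \in E^2(\mathbb{P}(V)_{hG})$, equivalently a morphism $E \wedge \mathbb{P}(V)_+ \to \Sigma^2 E$ in $\fun(BG, \md(E))$. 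Combining the powers $c^i$ with the $F(X_+, E)$-algebra structure on $F(\mathbb{P}(V)_+, E)$ induced by the pullback $\pi^*$ assembles into the desired map of $F(X_+, E)$-modules. That this map is an equivalence can be checked after applying the forgetful functor $\fun(BG, \md(E)) \to \md(E)$, which is conservative since $BG$ has only one object; the resulting non-equivariant claim is the classical projective bundle formula in complex-oriented cohomology.

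Iterating the projective bundle theorem up the tower, starting from $F(\mathrm{Fl}(n)_+, E) = E$, yields an equivalence
\[ F(F_+, E) \simeq \bigoplus_{\mathbf{a}} \Sigma^{-2|\mathbf{a}|} E \qquad \text{in } \fun(BU(n), \md(E)), \]
where the sum runs over tuples $\mathbf{a} = (a_1, \ldots, a_{n-1})$ with $0 \le a_j \le j$ and $|\mathbf{a}| = \sum_j a_j$. The total number of summands is $\prod_{j=1}^{n-1}(j+1) = n!$, and all shifts are even. Since $F$ is a finite CW complex, $E \wedge F_+ \in \fun(BU(n), \md(E))$ is dualizable with dual $F(F_+, E)$; dualizing the displayed decomposition then produces the desired equivalence $E \wedge F_+ \simeq \bigoplus_{i=1}^{n!} \Sigma^{k(i)} E$ with even $k(i)$. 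The main obstacle is the Borel-equivariant projective bundle theorem, specifically verifying that the candidate map is a well-defined morphism in $\fun(BG, \md(E))$; this is straightforward given the $G$-equivariance of the tautological line bundle and the functoriality of the complex orientation under the Borel construction.
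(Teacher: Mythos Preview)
Your proof is correct, but it takes a different route from the paper's written proof. The paper argues via the unipotence of $\fun(BU(n), \md(E))$ established earlier (\Cref{basicunipex}): since $\mathbb{D}F_+$ is coinduced from $\fun(BT,\sp)$, one computes $\hom_{\mathcal{C}}(\mathbf{1}, \mathbb{D}F_+ \wedge E) \simeq F(BT_+, E)$, and this is a free $F(BU(n)_+, E)$-module of rank $n!$ on even generators by the standard splitting principle; unipotence then transports this freeness back to $\fun(BU(n),\md(E))$.

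Your argument is instead the direct one via an iterated equivariant projective bundle formula. In fact, the paper explicitly notes in the paragraph immediately following the proof that \Cref{trivflagvar} ``can be seen directly'' from the complex-oriented cohomology of a flag bundle (citing \cite[Prop.~2.4]{HKR00} and Recollection~\ref{rec:free}), leaving the details to the reader; your proof supplies exactly those details. The trade-off is clear: the paper's written proof leverages the unipotence machinery already built, so it is one line once that is in place, while yours is self-contained and does not depend on the (nontrivial) unipotence theorem---only on the conservativity of the forgetful functor $\fun(BG,\md(E)) \to \md(E)$ and the classical projective bundle formula. Both construct the same map; they differ only in how the equivalence is verified.
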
 
\begin{proof} 
It suffices to prove this with $F_+$ replaced by its Spanier-Whitehead dual 
$\mathbb{D}F_+$. Since $F \simeq U(n)/T$, the
Spanier-Whitehead dual $\mathbb{D}F_+$ is the coinduction of the unit from
$\fun( B T, \sp)$ to $\fun(BU(n), \sp)$. 
Let $\mathcal{C}  = \fun(BU(n), \md(E))$.
Now, 
\[ \hom_{\mathcal{C}}( \mathbf{1}, \mathbb{D} F_+ \wedge E) \simeq F(BT_+,E) \in
\md( F(BU(n)_+, E)) \]
and this is a free $F(BU(n)_+,E)$-module of rank $n!$ 
as $E^*(BT)$ is a free $E^{*}(BU(n))$-module of rank $n!$ with generators in
even degrees by the general
theory of complex-oriented ring spectra.
By unipotence of $\fun(BU(n), \md(E))$ (\Cref{basicunipex}), this is enough to 
prove the claim. 
\end{proof}

Although we included the general unipotence results for their own interest, 
\Cref{trivflagvar} can be seen directly. Consider the $\infty$-category
$\md^\omega( \underline{E})$, where $\underline{E} \in \GSpec$ is the
Borel-equivariant form of $E$. It suffices to prove that $\underline{E} \wedge
\mathbb{D}F_+$ is a free module over $\underline{E}$; this amounts to checking 
that the homotopy groups of $\underline{E} \wedge \mathbb{D}F_+$ are free over
the homotopy groups of $\underline{E}$, for each subgroup $H \subset G$ 
(cf.\ Recollection~\ref{rec:free} below). This, however, is the formula for the
complex-oriented cohomology of a flag bundle (cf.\ \cite[Prop. 2.4]{HKR00}). We
leave the details to the reader.  

\begin{thm} 
\label{complexorBorequivabelian}
Let $E$ be an $\e{\infty}$-ring which admits an $\e{1}$-complex orientation. 
Let $G$ be any compact Lie group. Then the thick subcategory of  $\fun(BG,
\md(E))$ generated by the $E \wedge G/A_+$, as $A \leq G$ ranges over the abelian
subgroups, contains the unit $E  \in \fun(BG, \md(E))$. 
\end{thm}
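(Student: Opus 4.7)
The plan is to use complex-oriented descent via the flag variety, following the classical Quillen strategy as already signposted in the introduction of this section.

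First, I would embed $G$ into the unitary group $U(n)$ for some $n$, which is possible since $G$ is a compact Lie group. Let $T \leq U(n)$ be a maximal torus and consider the flag variety $F = U(n)/T$, equipped with its restricted $G$-action. The crucial geometric fact is that the stabilizers of $G$ on $F$ are all abelian: the stabilizer at a coset $gT$ is $G \cap gTg^{-1}$, which is a closed subgroup of the abelian group $gTg^{-1}$. Choosing a $G$-CW structure on $F$ (this exists for any smooth $G$-manifold), one sees that $\Sigma^\infty_+ F \in \fun(BG, \sp)$ belongs to the thick subcategory generated by the $\Sigma^\infty_+ G/A$ as $A$ ranges over abelian subgroups of $G$. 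Smashing with the unit $E \in \fun(BG, \md(E))$ (obtained by restriction of scalars along $G \hookrightarrow U(n)$), we find that $E \wedge F_+$ lies in the thick subcategory generated by $E \wedge G/A_+$ for $A \leq G$ abelian.

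The second ingredient is the trivialization afforded by \Cref{trivflagvar}: over $\fun(BU(n), \md(E))$ we have an equivalence $E \wedge F_+ \simeq \bigoplus_{i=1}^{n!} \Sigma^{k(i)} E$. Pulling this equivalence back along the restriction functor $\fun(BU(n), \md(E)) \to \fun(BG, \md(E))$ yields the same decomposition in $\fun(BG, \md(E))$. In particular, the unit $E$ is a direct summand (in particular a retract) of $E \wedge F_+$ in $\fun(BG, \md(E))$.

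Combining the two steps, $E$ is a retract of an object lying in the thick subcategory generated by $\left\{E \wedge G/A_+\right\}_{A \leq G \text{ abelian}}$, and since thick subcategories are closed under retracts, $E$ itself lies in this thick subcategory, as desired. There is no real obstacle here beyond assembling the inputs; the only mild technical point is verifying that $F$ admits a $G$-CW structure with abelian isotropy (which is standard for a smooth compact $G$-manifold with abelian stabilizers) and that \Cref{trivflagvar}, stated for $U(n)$-equivariance, restricts to the $G$-equivariant statement, which is automatic from functoriality of $\fun(B(-), \md(E))$ in group homomorphisms.
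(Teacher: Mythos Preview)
Your proposal is correct and follows essentially the same argument as the paper: embed $G \leq U(n)$, use \Cref{trivflagvar} to see that the unit is a retract of $E \wedge F_+$ after restricting to $G$, and use a finite $G$-cell decomposition of the flag variety (the paper cites Illman's equivariant triangulation theorem) with abelian isotropy to place $E \wedge F_+$ in the desired thick subcategory. The only cosmetic difference is the order in which you present the two ingredients.
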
 
\begin{proof} 
Embed $G \leq U(n)$ for some $n$ and consider the flag variety $F$. As an
$E$-module with $U(n)$-action, we saw in
\Cref{trivflagvar} 
that the unit is a retract of $E \wedge F_+$ in $\fun( BU(n),
\md(E))$. 
Therefore, if we restrict to $G$ and consider $E \wedge F_+$ as an
object in $\fun(BG, \md(E))$, it contains the unit as a retract. But $F$, as a space with $G$-action,
has abelian stabilizers and thus 
admits a finite cell decomposition with cells of the form $G/A \times D^n$ by
the equivariant triangulation theorem \cite{Il83}. In particular, $E \wedge F_+ \in
\fun(BG, \md(E))$ belongs to the \emph{stable} subcategory generated by the $G/A_+$
as $A \leq G$ ranges over the abelian subgroups. This proves the result.  
\end{proof} 

Here again there is a variant for the orthogonal groups. 
Let $T \subset SO(n)$ be a maximal torus, and let $F' = SO(n)/T$ be the real flag
variety. 
One has: 
\begin{thm} 
Let $E$ be an $\e{\infty}$-ring such that $2$ is invertible in $\pi_0(E)$ and
$\pi_*(E)$ is torsion-free. Then, as an object in $\fun( BSO(n), \md(E))$, the
flag variety $F'_+ \wedge E$ is equivalent to a direct sum of copies of shifts
of the unit.
\end{thm}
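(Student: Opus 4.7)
The plan is to mimic the proof of the complex case (\Cref{trivflagvar}) nearly verbatim, with the hypothesis ``$2$ invertible and $\pi_*(E)$ torsion-free'' playing the role of complex-orientability. First, by \Cref{basicunipex}, the hypotheses ensure that $\mathcal{C} := \fun(BSO(n), \md(E))$ is unipotent, with homotopy fixed points exhibiting the subcategory of compact objects as the $\infty$-category of perfect $R$-modules for $R = F(BSO(n)_+, E)$. It thus suffices to compute $\hom_{\mathcal{C}}(\mathbf{1}, F'_+\wedge E)$ as an $R$-module and show it is finitely generated free with generators in even degrees, for then by \Cref{thickcompact} (or equivalently the conclusion of \Cref{unipcriterion}), the object $F'_+\wedge E$ must itself be a finite direct sum of even shifts of $\mathbf{1} = E$.

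Next, I would identify the homotopy fixed points. Since $F' = SO(n)/T$ and Spanier--Whitehead duality identifies $\mathbb{D}F'_+$ with the coinduction of the unit along $BT \to BSO(n)$ (equivalently, the Borel construction $ESO(n) \times_{SO(n)} F' \simeq BT$), we obtain
\[ \hom_{\mathcal{C}}(\mathbf{1}, F'_+\wedge E) \simeq F(BT_+, E) \]
as a module over $R = F(BSO(n)_+, E)$.

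The key computation is then to show that $F(BT_+, E)$ is a finite free $R$-module with generators in even degrees. Because $2$ is invertible in $\pi_0(E)$, $H^*(BSO(n); \mathbb{Z}[1/2])$ is a polynomial ring on Pontryagin classes (plus the Euler class when $n$ is even), concentrated in even degrees; likewise $H^*(BT; \mathbb{Z})$ is polynomial in degree-$2$ generators. Since $\pi_*(E)$ is torsion-free, the AHSS for both $BSO(n)$ and $BT$ collapses, giving $E^*(BSO(n)) \cong H^*(BSO(n); \pi_{-*}E)$ and $E^*(BT)\cong H^*(BT;\pi_{-*}E)$. By the classical Borel/Leray--Hirsch theorem applied to the fibration $SO(n)/T \to BT \to BSO(n)$ (which applies precisely because $H^*(BSO(n); \mathbb{Z}[1/2])$ is polynomial and $H^*(SO(n)/T; \mathbb{Z}[1/2])$ is free on even-degree generators of total rank equal to the order of the Weyl group), $H^*(BT; \mathbb{Z}[1/2])$ is a free $H^*(BSO(n); \mathbb{Z}[1/2])$-module of rank $|W|$ on even-degree generators. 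Tensoring with the torsion-free $\pi_{-*}(E)$ preserves freeness, so $E^*(BT)$ is free over $E^*(BSO(n))$ on finitely many generators in even degrees.

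The main obstacle, if any, is the last step: verifying cleanly that the Leray--Hirsch-type freeness lifts from ordinary cohomology to $E$-cohomology and hence to a free module structure on the spectrum $F(BT_+, E)$ over $F(BSO(n)_+, E)$. This requires showing that a choice of $|W|$ classes in $E^*(BT)$ mapping to a basis on $H^*$-level yields a free resolution at the spectrum level, which follows from a standard Nakayama-style argument using that $F(BSO(n)_+, E)$ is complete with respect to the augmentation ideal and that both spectra are connective (resp.~bounded below) after fixing conventions. Once this is in hand, unipotence produces the desired splitting $F'_+\wedge E \simeq \bigoplus_{i=1}^{|W|} \Sigma^{k(i)}E$ in $\fun(BSO(n), \md(E))$.
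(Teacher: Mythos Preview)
Your proposal is correct and follows essentially the same route as the paper: invoke \Cref{basicunipex} for unipotence of $\fun(BSO(n),\md(E))$, verify that $E^*(BT)$ is free over $E^*(BSO(n))$ (the paper simply asserts this after noting the AHSS degenerates), and conclude exactly as in \Cref{trivflagvar}. One small slip: the identification $\hom_{\mathcal{C}}(\mathbf{1}, F'_+\wedge E)\simeq F(BT_+,E)$ holds for $\mathbb{D}F'_+$ (the coinduced object) rather than for $F'_+$ itself---your own parenthetical about coinduction makes this clear---so, as in the paper's proof of \Cref{trivflagvar}, you should first pass to the dual and then dualize back at the end.
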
 
\begin{proof} 
We know by \Cref{basicunipex} that $\fun(BSO(n), \md(E))$ is equivalent to the
$\infty$-category of complete modules over $F(BSO(n)_+,E)$. The hypotheses imply
that the AHSS for $E^*(BSO(n))$ degenerates (as the differentials are
torsion valued) and we have
that 
\[ E^*(BSO(n)) \simeq \pi_*(E)[u_1, \dots, u_{m}], \quad E^*(BT) \simeq
\pi_*(E)[t_1, \dots, t_m], \]
where $m$ is the rank of $SO(n)$. Moreover, $E^*(BT)$ is a free module over
$E^*(BSO(n))$, so that the same 
reasoning as in \Cref{trivflagvar} can be applied. 
\end{proof} 


\section{Equivariant complex $K$-theory}

In this section, we will study the $\infty$-category of modules over $U(n)$-equivariant $K$-theory
$KU_{U(n)}$ in the $\infty$-category of genuine $U(n)$-spectra. Our main
result will show that the
symmetric monoidal $\infty$-category $\md_{\mathrm{Sp}_{U(n)}}(KU_{U(n)})$ is 
equivalent to the $\infty$-category of modules in spectra over its categorical
fixed points. More generally, we will be able to replace $U(n)$ with any
compact Lie group $G$ with $\pi_1(G)$ torsion-free for this. 
This  is a \emph{unipotence} result for modules over equivariant $K$-theory. 
Note that the unit is compact in the genuine equivariant setting, so the
completeness and convergence issues of the previous section do not arise.

This result 
(which was known to Greenlees-Shipley for $G$ a torus) 
gives a new point of view on the classical question, considered 
by Hodgkin, McLeod, and Snaith, of K\"unneth spectral sequences in equivariant
complex $K$-theory.
In the following section, we will also treat the case of equivariant real $K$-theory
using Galois descent.
For the purposes of nilpotence, it gives (when combined with the equivariant
$K$-theory of the flag variety) an ``explicit'' proof of nilpotence with
respect to the family of abelian groups (as in the previous section). 
In the sequel \cite{MNN15i} to this paper, we shall in fact see that in this result, abelian subgroups can be
replaced by the family of \emph{cyclic} subgroups. 
However, the reduction to the abelian case is in some sense the most important
(and the one that generalizes).

\begin{remark} 
We will generally write $KO_G, KU_G \in \GSpec$ for the $G$-spectra
representing $G$-equivariant $K$-theory. When the group is clear, we will
sometimes simply write $KO, KU$ (especially when we want to describe the
equivariant $K$-theory of a space). 
\end{remark}

\subsection{The case of a torus}

We begin with the (simpler) case of a torus.
We will need to use the existence of $\e{\infty}$-structures on
equivariant real and complex $K$-theory. 
These $\e{\infty}$-structures are established in work of Joachim
\cite{Jo04}, and appear in Schwede's theory of global spectra
\cite{Schwedeglobal}.
These results are also a consequence of forthcoming work of Lurie on elliptic
cohomology announced in \cite{Lur09b}.

We begin with the following special case. Most of the ideas (if not the statement)
appear in \cite{GS14}, and the result was known to Greenlees-Shipley. 

\begin{thm} 
\label{KUtorus}
Let $T$ be a torus. Then the symmetric monoidal $\infty$-category $\md_{\gsp{T}}(KU_T)$ of modules (in
$T$-equivariant spectra $\gsp{T}$) over $KU_T$ is equivalent to $\md( i_T^*
KU_T)$.
\end{thm}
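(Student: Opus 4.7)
The strategy is to apply the unipotence criterion (\Cref{unipcriterion}) to the symmetric monoidal $\infty$-category $\mathcal{C} = \md_{\gsp{T}}(KU_T)$, whose unit is $KU_T$ with endomorphism ring $\mathrm{End}_{\mathcal{C}}(KU_T) \simeq i_T^* KU_T$. I will take $A = \mathbf{1} = KU_T$ in \Cref{unipcriterion}: conditions (1) and (3) are then immediate (the unit is compact in $\md_{\gsp{T}}(KU_T)$ since $S^0_T$ is a compact generator of $\gsp{T}$, it is dualizable, and it vacuously lies in the thick subcategory generated by itself), and the $A_R$-completion in the conclusion of \Cref{unipcriterion} is completion of $\md(i_T^* KU_T)$ along the unit algebra, which is trivial. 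Thus the desired symmetric monoidal equivalence $\md_{\gsp{T}}(KU_T) \simeq \md(i_T^* KU_T)$ will follow at once from the remaining hypothesis: \emph{the unit $KU_T$ generates $\mathcal{C}$ as a localizing subcategory.}

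Since the orbits $\{T/H_+\}$ (for $H \leq T$ closed) form a compact-generating family for $\gsp{T}$, this reduces to showing $KU_T \wedge T/H_+ $ lies in the localizing subcategory generated by $KU_T$ for every closed $H \leq T$. The principal tool is the equivariant complex orientation of $KU_T$: for each character $\chi \in \hat{T}$ with associated one-dimensional complex representation $V_\chi$, there is an ``untwisting'' equivalence $KU_T \wedge S^{V_\chi} \simeq \Sigma^2 KU_T$ (as in the proof of \Cref{circleunipotent}), obtained by factoring the corresponding local system $BT \to BGL_1(KU)$ through a nullhomotopic map. Smashing $KU_T$ with the Euler cofiber sequence $S(V_\chi)_+ \to S^0 \to S^{V_\chi}$ and untwisting gives
\begin{equation*}
KU_T \wedge T/(\ker \chi)_+ \;\simeq\; KU_T \wedge S(V_\chi)_+ \;\longrightarrow\; KU_T \;\xrightarrow{\,e_\chi\,}\; \Sigma^2 KU_T,
\end{equation*}
so that $KU_T \wedge T/(\ker \chi)_+$ lies in the thick subcategory generated by $KU_T$ for every single character $\chi$.

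For a general closed subgroup $H \leq T$, Pontryagin duality for tori expresses $H$ as a finite intersection $H = \bigcap_{i=1}^k \ker \chi_i$ of character kernels. Taking $V = \bigoplus_i V_{\chi_i}$, the equivariant cell filtration of $S(V)$ stratified by stabilizers has strata of the form $T/K$, with $K = \bigcap_{i \in I}\ker\chi_i$ for subsets $I \subseteq \{1,\dots,k\}$. Iterated application of the untwisting equivalences $KU_T \wedge S^{V_{\chi_i}} \simeq \Sigma^2 KU_T$ to this filtration will, after some bookkeeping, exhibit $KU_T \wedge T/H_+$ as an iterated extension of copies of $KU_T$ and hence as an element of the localizing subcategory generated by $KU_T$. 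The main obstacle is carrying out this cell-by-cell argument rigorously --- in particular handling finite, disconnected kernels such as $\mathbb{Z}/n \leq U(1)$ that arise as kernels of single characters, where the strata of $S(V)$ do not form a clean sequence of free $T/K$-orbits. Once verified, \Cref{unipcriterion} applies with $A = \mathbf{1}$ and yields the claimed equivalence of symmetric monoidal $\infty$-categories.
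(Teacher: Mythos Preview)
Your approach is the same as the paper's: reduce to showing that $KU_T$ generates $\md_{\gsp{T}}(KU_T)$ as a localizing subcategory, and do this by combining complex stability $KU_T \wedge S^V \simeq \Sigma^{2\dim_{\mathbb{C}} V} KU_T$ with the Euler cofiber sequences $S(V)_+ \to S^0 \to S^V$. The paper cites \cite[Lem.~4.4]{GS14} for the general torus and only spells out the rank-one case, so your level of detail is comparable.

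Two corrections are in order, however. First, your justification for the untwisting is wrong: the nullhomotopy of $BT \to BGL_1(KU)$ as in \Cref{circleunipotent} takes place in $\fun(BT,\md(KU))$, i.e., in the \emph{Borel}-equivariant world, and says nothing about the genuine $T$-spectrum $KU_T$. The equivalence $KU_T \wedge S^{V_\chi} \simeq \Sigma^2 KU_T$ in $\gsp{T}$ is the equivariant Thom isomorphism (equivariant Bott periodicity), which the paper invokes directly; you should too. Second, the ``obstacle'' you flag is not one. For $T=U(1)$ and $H=\mu_n$, the character $\chi_n(z)=z^n$ has $\ker\chi_n=\mu_n$ and $S(V_{\chi_n})$ is the single orbit $U(1)/\mu_n$, so one cofiber sequence already gives $KU_T\wedge (U(1)/\mu_n)_+\in\mathrm{Thick}(KU_T)$. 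For a general closed $H\leq T$, choose characters with $H=\bigcap_i\ker\chi_i$; the join decomposition $S(\bigoplus_i V_{\chi_i})=S(V_{\chi_1})\ast\cdots\ast S(V_{\chi_k})$ gives a filtration whose associated graded pieces are (suspensions of) $\bigwedge_{i\in I}(T/\ker\chi_i)_+$ for $I\subset\{1,\dots,k\}$, and an easy induction on $|I|$ (using the same Euler sequences tensored against previously handled orbits) shows all of these, and hence $T/H_+\wedge KU_T$, lie in $\mathrm{Thick}(KU_T)$. There is no bookkeeping difficulty with disconnected kernels.
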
 
\begin{proof} 
By the Thom isomorphism, $KU_T$ is what Greenlees-Shipley \cite{GS14} call
\emph{complex-stable}: that is, given a representation sphere $S^V$ (for a
\emph{complex} representation $V$ of the torus), we have an equivalence of
$KU_T$-modules $S^V
\wedge KU_T \simeq KU_T$. By \cite[Lem.~4.4]{GS14}, we are done. 
\end{proof} 

For the reader's convenience, we recall the method of argument in the case that $T = U(1)$. 
Recall that the $\infty$-category of $U(1)$-spectra is generated as a localizing
subcategory by the $U(1)/H_+$ as $H \leq U(1)$ ranges over the closed
subgroups. The only possibilities are $H = U(1)$ (in which case $U(1)/H_+$ is the
unit) or $H = \mu_n$ for some $n$, the group of $n$th roots of unity. In this
case, one considers the one-dimensional complex representation 
$V_n$ of $U(1)$ given by the character $z \mapsto z^n$. 
The unit sphere $S(V_n)_+$ gives precisely $(U(1)/\mu_n)_+$. 
The cofiber sequence $S(V_n)_+ \to S^0\to S^{V_n}$ now shows that the
$\infty$-category of $U(1)$-spectra is generated as a localizing subcategory by the
representation spheres $S^V$ for $V$ a complex representation of $U(1)$. When
one works over $KU_T $, though, the complex stability 
enables us to include only the unit. 

\subsection{The general case}

The purpose of this section is to give the proof of our main unipotence result
for equivariant complex $K$-theory.
\begin{thm} 
\label{KUunip}
Suppose $G$ is a compact, connected Lie group such that $\pi_1(G)$ is torsion-free.
Then
$\md_{\GSpec}(KU_G)$ is canonically equivalent, as a symmetric monoidal
$\infty$-category, to the $\infty$-category of module spectra over the
categorical fixed points $i_G^*KU_G$.
\end{thm}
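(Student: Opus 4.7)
The plan is to apply the unipotence criterion \Cref{unipcriterion} to $\mathcal{C} = \md_{\GSpec}(KU_G)$, whose unit has endomorphism ring $\mathrm{End}_{\mathcal{C}}(\mathbf{1}) \simeq i_G^*KU_G$, with the commutative algebra object $A := F(G/T_+, KU_G) \simeq \CoInd_T^G(KU_T) \in \clg(\mathcal{C})$, for $T \subseteq G$ a maximal torus. A successful application of the criterion will identify $\mathcal{C}$ with the $i_G^*A$-completion of $\md(i_G^*KU_G)$; a subsequent descendability argument will then collapse the completion, producing the desired equivalence with $\md(i_G^*KU_G)$.

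Two of the three hypotheses of \Cref{unipcriterion} should be essentially formal. Since $G/T$ is a compact $G$-manifold, $G/T_+$ is compact and dualizable in $\GSpec$, and the internal hom $A = F(G/T_+, KU_G) \in \mathcal{C}$ therefore inherits both properties. Moreover, since $A$ is dualizable in a module $\infty$-category over the $\mathbb{E}_\infty$-algebra object $KU_G$, it is automatically perfect, i.e., lies in the thick subcategory generated by the unit. The substantive hypothesis is that $\mathbb{D}A$ generates $\mathcal{C}$ as a localizing subcategory. Here one would exploit the complex stability of $KU_G$-modules (Thom isomorphism applied to the complex tangent representation at the identity coset, which is complex since the roots of $G$ occur in conjugate pairs) to identify $\mathbb{D}A \simeq \Sigma^{-2\dim_{\mathbb{C}}(G/T)} \mathrm{Ind}_T^G(KU_T)$. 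Via the $(\mathrm{Ind}_T^G, \Res^G_T)$-adjunction, generation by $\mathbb{D}A$ reduces, together with \Cref{KUtorus}, to the conservativity of restriction $\Res^G_T \colon \mathcal{C} \to \md_{\gsp{T}}(KU_T)$.

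It then remains to show that the $i_G^*A$-completion produced by the criterion is trivial on $\md(i_G^*KU_G)$. This is where the hypothesis that $\pi_1(G)$ is torsion-free becomes essential, via the Pittie–Steinberg theorem: under this hypothesis, $R(T)$ is a free $R(G)$-module of rank $|W|$. Since $\pi_*(i_G^*A) \simeq R(T)[u^{\pm 1}]$ as a module over $\pi_*(i_G^*KU_G) \simeq R(G)[u^{\pm 1}]$, the even periodicity of $KU$ lets one lift a free basis to an honest equivalence $i_G^*A \simeq (i_G^*KU_G)^{\oplus |W|}$ of $i_G^*KU_G$-modules. Consequently $i_G^*A$ is a faithful, perfect, and hence descendable $i_G^*KU_G$-algebra, so the $i_G^*A$-completion is trivial. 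The main obstacle will be the conservativity of $\Res^G_T$ on $KU_G$-modules for connected $G$: since arbitrary closed subgroups of $G$ are not subconjugate to $T$, this cannot be verified by a naive argument and one must use a geometric-fixed-point type reduction, exploiting complex orientability of $KU$ to show that every geometric-fixed-point computation for a $KU_G$-module can be controlled by subgroups of a maximal torus.
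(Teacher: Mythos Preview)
Your argument has a genuine gap where you claim hypothesis (3) of \Cref{unipcriterion} is automatic. You assert that since $A = F(G/T_+, KU_G)$ is dualizable in $\md_{\GSpec}(KU_G)$, it ``is automatically perfect, i.e., lies in the thick subcategory generated by the unit.'' This is false: the implication \emph{dualizable $\Rightarrow$ in the thick subcategory of the unit} holds in $\md(R)$ for $R \in \CAlg(\Sp)$, but $\md_{\GSpec}(KU_G)$ is a module category internal to $\GSpec$, and its dualizable (equivalently compact) objects form the thick subcategory generated by \emph{all} the $KU_G \wedge G/H_+$ for closed $H \leq G$, not by the unit alone. Whether $A$ lies in the thick subcategory of the unit is, via \Cref{thickcompact}, essentially the content of the theorem itself, so invoking it here is circular. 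Verifying (3) honestly requires real input: for instance McLeod's isomorphism $R(T) \otimes_{R(G)} R(T)[\beta_2^{\pm 1}] \simeq KU_G^*(G/T \times G/T)$, which would let you run \Cref{unipcriterion2} or \Cref{whenequivdual} instead (this is the route sketched in the remark following \Cref{flagvarcompactgenerator}). Your Pittie--Steinberg argument for trivializing the completion is correct, but it sits downstream of this gap.

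The paper's main proof is organized quite differently and does not invoke \Cref{unipcriterion} at all. Since the unit of $\md_{\GSpec}(KU_G)$ is compact, it suffices (by \Cref{1generate} and \Cref{ex:morita}) to show that $i_G^*$ is conservative. The key input is \Cref{keylemma}: for every $KU_G$-module $M$, the natural map $R(T) \otimes_{R(G)} \pi_* i_G^* M \to \pi_* i_T^* M$ is an isomorphism; this is a repackaging of the Hodgkin--Snaith--McLeod K\"unneth theorem (\Cref{restorus}) together with Pittie's freeness. One then argues: if $i_G^* M = 0$ then $i_T^* M = 0$, so $\Res^G_T M \simeq 0$ by the torus case (\Cref{KUtorus}); hence $i_T^*(M \wedge X_+) = 0$ for every $G$-space $X$, and faithful flatness of $R(T)$ over $R(G)$ then forces $i_G^*(M \wedge X_+) = 0$ for all $X$, whence $M \simeq 0$. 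The separate question of conservativity of $\Res^G_T$ (your hypothesis (2)) is treated in \Cref{flagvarcompactgenerator} via Atiyah's holomorphic transfer and equivariant index theory on $G/T$, not via geometric fixed points as you suggest; but the main proof of \Cref{KUunip} does not go through that proposition.
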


 To appreciate the possible simplification this result brings about for studying 
 $\md_{\GSpec}(KU_G)$, we briefly remark on the structure of the (non-equivariant)
 $\mathbb{E}_\infty$-ring $A:=i_G^*KU_G$: it is even periodic with $\pi_1(A)=0$ and
 $\pi_0(A) = R(G)$. Landweber exactness shows that as a multiplicative cohomology theory,
 one has $A^*(-)=KU^*(-)\otimes_{\mathbb{Z}} R(G)$. It seems an interesting question to ask if
 the $\mathbb{E}_\infty$-ring $A$ can be built from $KU$ in a similarly transparent fashion. 

\Cref{KUunip} is equivalent, by Morita theory, to the assertion
that
$\md_{\GSpec}(KU_G)$ is generated, as a localizing subcategory, by the unit. 
When $G$ is a product of copies of $U(1)$ (i.e., a torus), we have already seen 
the proof of this (\Cref{KUtorus}). 
The general case proceeds by restriction to a maximal torus.

The key ingredient for the general case is given by:

\begin{lemma} 
\label{restorus}
Let $G$ be a compact connected Lie group with $\pi_1(G)$ torsion-free.
Let $T \subset G$ be a maximal torus and 
let $F = G/T $ be the flag variety of $G$.
Let $X$ be a finite $G$-cell complex. Then 
$KU_G^*(F) \simeq KU_T^*(\ast)$ is a free $R(G)[\beta_2^{\pm 1}] = KU_G^*(\ast)$-module, and 
the canonical map
\begin{equation}\label{kunnth} KU_G^*(X) \otimes_{KU_G^*(\ast)} KU_G^*(F) \to
KU_G^*(X \times
F)\cong KU_T^*(X)  \end{equation}
is an isomorphism.
\end{lemma}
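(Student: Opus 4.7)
The plan is to proceed in four steps: identify $KU_G^*(F)$, invoke Pittie--Steinberg for freeness, identify the target of the K\"unneth map, and then verify the map is an isomorphism.

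First I would compute $KU_G^*(F)$. Since $F = G/T$ with $T \leq G$ a closed subgroup, one has $\Sigma^\infty_+ F \simeq \mathrm{Ind}_T^G S^0_T$, and the $(\mathrm{Ind}_T^G, \Res^G_T)$ adjunction of \Cref{prop:ind-coind} yields
\[ KU_G^*(F) = [\Sigma^\infty_+ F, KU_G]^G_* \cong [S^0_T, \Res^G_T KU_G]^T_* = KU_T^*(\ast) = R(T)[\beta_2^{\pm 1}]. \]
The Pittie--Steinberg theorem, whose hypothesis is precisely that $\pi_1(G)$ be torsion-free, states that $R(T)$ is a free $R(G)$-module of rank $|W|$ (the order of the Weyl group). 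Hence $KU_G^*(F)$ is a free $R(G)[\beta_2^{\pm 1}] = KU_G^*(\ast)$-module of rank $|W|$.

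Second, for the target $KU_G^*(X \times F)$, I would use the $G$-equivariant homeomorphism $X \times G/T \cong G \times_T X$ given by $(x, gT) \mapsto [g, g^{-1}x]$, which intertwines the diagonal $G$-action on the left with left multiplication on the right (where $T$ acts on $X$ via restriction). Thus $\Sigma^\infty_+(X \times F) \simeq \mathrm{Ind}_T^G \Sigma^\infty_+ X$, and the same adjunction gives $KU_G^*(X \times F) \cong KU_T^*(X)$.

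Third, to establish the K\"unneth isomorphism, I would lift a Pittie--Steinberg basis $b_1, \dots, b_{|W|}$ of $R(T)$ over $R(G)$ to classes $\tilde b_i \in KU_G^0(F)$, i.e.\ to maps $\Sigma^\infty_+ F \to KU_G$ in $\GSpec$, which assemble into a $KU_G$-module map
\[ \phi\colon KU_G \wedge F_+ \longrightarrow \bigvee_{i=1}^{|W|} KU_G. \]
Smashing with $\Sigma^\infty_+ X$ and applying $\pi_*^G$ recovers the K\"unneth map in question, so the lemma reduces to proving that $\phi$ is an equivalence in $\GSpec$.

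This last step is the main obstacle. It amounts to checking that $\Res^G_H \phi$ is an equivalence of $\Res^G_H KU_G$-modules for every closed subgroup $H \leq G$. Using the orbit decomposition $\Res^G_H(G/T) = \bigsqcup_{\alpha \in H \backslash G / T} H/(H \cap g_\alpha T g_\alpha^{-1})$ and the calculation $KU_H^*(H/K) = R(K)[\beta_2^{\pm 1}]$ for each isotropy $K = H \cap g_\alpha T g_\alpha^{-1}$, the check reduces to confirming that the base-change of the Pittie--Steinberg basis along $R(G) \to R(H)$ remains compatible with each summand of this decomposition. This is the content of the classical work of Hodgkin, McLeod, and Snaith cited in the introduction, and ultimately rests on the fact that freeness of $R(T)$ over $R(G)$ propagates through the double coset formula in equivariant $K$-theory.
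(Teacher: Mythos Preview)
Your first three steps are correct and cleanly written: the adjunction identifies $KU_G^*(F)\cong R(T)[\beta_2^{\pm1}]$, Pittie--Steinberg gives freeness over $R(G)[\beta_2^{\pm1}]$, and the target is $KU_T^*(X)$ via $X\times G/T\cong G\times_T X$. The problem is entirely in the fourth step.

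The double coset decomposition you invoke,
\[
\Res^G_H(G/T)\;=\;\bigsqcup_{\alpha\in H\backslash G/T} H/(H\cap g_\alpha T g_\alpha^{-1}),
\]
is valid for finite groups but \emph{not} for compact Lie groups: the double coset space $H\backslash G/T$ is typically not discrete, so $\Res^G_H(G/T)$ is not a disjoint union of $H$-orbits. (Already for $H=T$ the $T$-space $G/T$ is connected with isolated fixed points indexed by the Weyl group.) Hence your proposed reduction to a check on summands $R(H\cap g_\alpha T g_\alpha^{-1})$ does not get off the ground. Moreover, even granting some corrected stratification, verifying that $\phi$ is an equivalence on $\pi_*^H$ amounts to proving $KU_H^*(G/T)\cong R(H)\otimes_{R(G)} R(T)[\beta_2^{\pm1}]$, which is exactly the statement of the lemma for $X=G/H$; so the argument is circular unless you have an independent route to this computation. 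For $H=T$ this independent input is precisely McLeod's theorem $R(T)\otimes_{R(G)} R(T)[\beta_2^{\pm1}]\xrightarrow{\sim} KU_T^*(G/T)$, which you gesture at but do not isolate.

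The paper takes a different route that avoids restriction to subgroups altogether. It cites Hodgkin's construction of a K\"unneth spectral sequence
\[
\mathrm{Tor}^{KU_G^*(\ast)}_{*,*}\bigl(KU_G^*(X),\,KU_G^*(F)\bigr)\;\Longrightarrow\; KU_G^*(X\times F),
\]
uses Snaith's criterion (which reduces convergence to the single computation of $KU_T^*(G/T)$), and then McLeod's theorem to verify that criterion when $\pi_1(G)$ is torsion-free. Once the spectral sequence is relevant, Pittie's freeness result collapses it at $E_2$ to give the isomorphism. In other words, the paper quarantines all the hard content into the cited results of Hodgkin--Snaith--McLeod--Pittie, whereas your approach tries to reprove that content via a restriction argument that does not work in the Lie setting. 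Incidentally, the equivalence $KU_G\wedge F_+\simeq\bigoplus KU_G$ you are aiming for is \Cref{flagvarequiv} in the paper, deduced \emph{from} this lemma (via unipotence), not the other way around.
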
 

\begin{proof}
\Cref{restorus} follows by combining work of Hodgkin, Snaith, and McLeod. By 
\cite[Th. 3.6]{Sna72}, the construction in \cite{Hod75} of a K\"unneth spectral
sequence is relevant (i.e., converges to the desired limit) for $\pi_1(G)$ torsion-free if the natural map $R(G) \otimes_{R(T)} R(G) [\beta_2^{\pm
1}] \to KU_T^*(G/T)$ is an isomorphism. The main result of \cite{Mc78} shows
that this is in fact the case if $\pi_1(G)$ is torsion-free, so there is a K\"unneth spectral sequence with an edge map of the form appearing in \eqref{kunnth}. 
This implies that \eqref{kunnth} is an isomorphism once we know that the
representation ring $R(T)$ is free over $R(G)$; this is a theorem of Pittie
\cite[Thm.\ 1]{Pi70}. \end{proof}

The map \eqref{kunnth} can be rewritten as follows. As before, we let $i_G^*$
denote categorical fixed points $i_G^*\colon \gsp{G} \to \mathrm{Sp}$. The
equivariant $K$-theory of a finite $G$-cell complex $X$ is obtained as 
\[ KU_G^*(X) = \pi_{-*} i_G^* ( \mathbb{D}X_+ \wedge KU_G),  \]
where $X_+$ denotes the suspension spectrum of $X$ in $\gsp{G}$ and
$\mathbb{D}$ denotes Spanier-Whitehead duality.
We will need this in the following form. 

\begin{lemma} \label{keylemma}
Let $G$ be a compact connected Lie group with $\pi_1(G)$ torsion-free and let $T\subset G$ and $F=G/T$
be as above. Then for any $M \in \md_{\GSpec}(KU_G)$, the map 
\begin{equation} \label{Tmap} R(T) \otimes_{R(G)} \pi_* i_G^* M \to  \pi_*
i_{T}^* M  \simeq \pi_* i_G^*
\left( M  \wedge \mathbb{D}F_+ \right)\end{equation}
is an isomorphism.
\end{lemma}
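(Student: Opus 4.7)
The plan is to reduce the lemma, via a localizing-subcategory argument, to the case of compact generators of $\md_{\GSpec}(KU_G)$, where \Cref{restorus} directly applies.

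First I would verify that both sides of \eqref{Tmap}, viewed as functors of $M \in \md_{\GSpec}(KU_G)$ with values in $\mathbb{Z}$-graded $R(T)$-modules, are homological and commute with arbitrary direct sums. The right-hand side $\pi_* i_T^* M = \pi_*\, i_G^*(M \wedge \mathbb{D}F_+)$ is clearly homological and coproduct-preserving: the restriction functor $\Res^G_T$ is exact and colimit-preserving, and categorical $T$-fixed points preserve colimits since $\GSpec$ and $\Sp$ are compactly generated with $S^0_T$ compact. For the left-hand side, the functor $\pi_* i_G^* M$ is homological and commutes with direct sums for the same reasons, and tensoring with $R(T)$ over $R(G)$ preserves exactness because, by Pittie's theorem (used already in \Cref{restorus}), $R(T)$ is a \emph{free} $R(G)$-module. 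Consequently, the class $\sC \subset \md_{\GSpec}(KU_G)$ of $M$ for which \eqref{Tmap} is an isomorphism is a localizing subcategory.

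Next I would identify a convenient set of generators. The $\left\{KU_G \wedge G/H_+\right\}_{H \leq G}$ form a set of compact generators of $\md_{\GSpec}(KU_G)$, so it suffices to show $\sC$ contains $KU_G \wedge Y_+$ for every finite $G$-CW complex $Y$; by Atiyah duality in $\GSpec$ \cite[Th.~III.5.1]{LMSM86}, together with the Thom isomorphism for the complex-oriented (in fact complex-stable) ring spectrum $KU_G$ which trivializes representation sphere twists on $KU_G$-modules as in the proof of \Cref{KUtorus}, we may equivalently check $\sC$ on objects of the form $M = KU_G \wedge \mathbb{D}Y_+$, $Y$ a finite $G$-CW complex.

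Finally, for $M = KU_G \wedge \mathbb{D}Y_+$, the identifications
\[
\pi_{-*}\, i_G^* M \cong KU_G^*(Y), \qquad \pi_{-*}\, i_G^*(M \wedge \mathbb{D}F_+) \cong KU_G^*(Y \times F) \cong KU_T^*(Y),
\]
together with the fact that $KU_G^*(F)$ is a free $R(G)[\beta_2^{\pm 1}]$-module identified with $R(T)[\beta_2^{\pm 1}]$ (so that $R(T) \otimes_{R(G)} (-)$ and $KU_G^*(F) \otimes_{KU_G^*(\ast)} (-)$ agree), turn \eqref{Tmap} into exactly the K\"unneth map of \Cref{restorus}, which we have just shown is an isomorphism. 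Thus $\sC$ contains the generators and hence all of $\md_{\GSpec}(KU_G)$.

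The main obstacle is the cleanest one: checking exactness of the left-hand side as a functor of $M$, which is the part genuinely using the torsion-freeness of $\pi_1(G)$ through Pittie's freeness theorem. Once that (and the coproduct-preservation of $\pi_* i_T^*$) is in place, the reduction to \Cref{restorus} is routine.
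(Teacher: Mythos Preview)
Your approach is essentially the same as the paper's: both observe that \eqref{Tmap} is a natural transformation of homology theories in $M$ and then verify it on a generating class using \Cref{restorus}. You are commendably more explicit than the paper about why the left-hand side is exact, correctly invoking Pittie's freeness of $R(T)$ over $R(G)$.

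One small wrinkle: your passage from $KU_G \wedge Y_+$ to $KU_G \wedge \mathbb{D}Y_+$ via Atiyah duality and the Thom isomorphism is unnecessary and not fully justified as stated. For a general orbit $G/H$, equivariant Atiyah duality (or the Wirthm\"uller isomorphism) introduces a twist by the tangent $H$-representation, which need not be complex, so ``complex-stability trivializes representation sphere twists'' does not obviously apply. The paper avoids this entirely by simply noting that the duals $\mathbb{D}X_+$ of finite $G$-cell complexes already generate $\GSpec$ under colimits (duality is an anti-equivalence on compact objects, so the $\mathbb{D}(G/H)_+$ generate just as well as the $G/H_+$). With that observation, your argument goes through without the detour.
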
 

The last map in \eqref{Tmap} is an isomorphism for tautological reasons: $M \wedge \mathbb{D}
F_+$ is the coinduction of the restriction of $M$ to $\gsp{T}$ in view of the projection formula. 

\begin{proof} 
We observe that there is a natural map, since 
the right-hand side is linear over $R(T)$. It is a natural
transformation of homology theories in $\md_{\GSpec}(KU_{G })$, and
\Cref{restorus} implies that it is an isomorphism if $M$ is the
Spanier-Whitehead dual of the suspension
spectrum of a finite $G $-cell complex.
This implies that it is true in general, since the duals of suspension spectra of
finite $G $-cell complexes generate $\gsp{G }$ under colimits.
\end{proof}

\begin{proof}[Proof of \Cref{KUunip}]
Let $G$ be as hypothesized. If $G$ is  a torus, we are already
done. Let $M \in \md_{\GSpec}( KU_G)$. 
Suppose that $i_G^* M = 0$; we want to show that $M$ is itself contractible. If
we can prove this, then we will have proved \Cref{KUunip} because 
as $i_G^*(-)\simeq\mathrm{Hom}_{\md_{\GSpec}(KU_G)}(KU_G,-)$ we will then 
know that the compact unit $\mathbf{1}$ generates $\md_{\GSpec}(KU_G)$ as a localizing
subcategory.

Choose a maximal torus $T \subset G$.
By Lemma~\ref{keylemma}, we find that $\pi_* (i_{T}^* M )= 0$. 
In view of \Cref{KUtorus}, 
this implies that the \emph{restriction} of $M$ to $T$ (i.e., as an object of
$\md_{\gsp{T}}(KU_T)$) is contractible. 
Since restriction is symmetric
monoidal, it follows that for any $G$-space $X$, the restriction of $M \wedge
X_+$ to $\gsp{T}$ is contractible; in particular, for any such $X$, 
\[ i_{T}^* ( M \wedge X_+) = 0.  \]
But by Lemma~\ref{keylemma} again, this implies that
\[  i_G^* ( M \wedge X_+) = 0, \]
and since we had this for any $X$, we find that $M = 0$ itself, as the
Spanier-Whitehead duals of the finite $G$-cell complexes $X$ generate $\gsp{G}$
as a localizing subcategory. 
\end{proof}

\begin{remark} 
Our analysis relied on deep work of Hodgkin, Snaith, and McLeod. In the case when
$G$ is a product of unitary groups (which is the essential case for
nilpotence results), the results needed are much more
elementary. Namely, instead of \Cref{keylemma}, one can use: 
\begin{lemma}[{\cite[Prop.~3.9]{Seg68a}}]
\label{seglemma}
Let $X$ be a finite $G$-cell complex and let $V$ be any complex
$G$-representation. Let $\mathbb{P}(V)$ be the projectivization of $V$
considered as a $G$-space. Then 
$KU_G^*(\mathbb{P}(V))$ is a free $R(G)[\beta_2^{\pm 1}] = KU_G^*(\ast)$-module, and 
the map
\begin{equation}\label{kunnth2} KU_G^*(X) \otimes_{KU_G^*(\ast)} KU_G^*(
\mathbb{P}(V)) \to KU_G^*(X \times
\mathbb{P}(V))  \end{equation}
is an isomorphism.
\end{lemma}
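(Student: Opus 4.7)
The plan is to prove the equivariant projective bundle theorem for complex $K$-theory (which is essentially the content of \cite[Prop.~3.9]{Seg68a}), from which the K\"unneth statement follows formally. My strategy has two parts: (i) exhibit an explicit finite free basis of $KU_G^*(\mathbb{P}(V))$ over $R(G)[\beta^{\pm 1}]$; and (ii) deduce the K\"unneth isomorphism by a cellular induction on $X$.

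For (i), let $n = \dim_{\mathbb{C}} V$ and let $L \to \mathbb{P}(V)$ denote the tautological $G$-equivariant line bundle, with class $\ell = [L] \in KU_G^0(\mathbb{P}(V))$. I claim that $1, \ell, \ldots, \ell^{n-1}$ is an $R(G)[\beta^{\pm 1}]$-basis. To exhibit the defining relation, the tautological inclusion $L \hookrightarrow \underline{V}$ of $G$-equivariant bundles over $\mathbb{P}(V)$ has quotient $Q$ of rank $n-1$; multiplicativity of exterior powers in $K$-theory together with the vanishing $\lambda^k(L) = 0$ for $k \geq 2$ then yield inductively
\[ \lambda^k(Q) = \sum_{i=0}^k (-\ell)^i \lambda^{k-i}(V) \quad \text{in } KU_G^0(\mathbb{P}(V)). \]
Setting $k = n$ and using $\lambda^n(Q) = 0$ produces the monic relation $p_V(\ell) = 0$, where $p_V(x) = \sum_{i=0}^n (-x)^i \lambda^{n-i}(V) \in R(G)[x]$. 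This gives a surjection $R(G)[\beta^{\pm 1}][x]/(p_V) \twoheadrightarrow KU_G^*(\mathbb{P}(V))$.

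To verify that this surjection is an isomorphism, I would first treat the case of a maximal torus $T \subseteq G$: here $V$ splits as a sum of characters $\bigoplus_{i=1}^n \chi_i$, and $\mathbb{P}(V)$ admits an increasing filtration $\mathbb{P}(V_1) \subset \mathbb{P}(V_2) \subset \cdots \subset \mathbb{P}(V_n) = \mathbb{P}(V)$ by $T$-stable subprojective spaces. An induction on $n$ via the resulting cofiber sequences (together with the case $n=1$, where $\mathbb{P}(V) = \mathrm{pt}$) identifies $KU_T^*(\mathbb{P}(V))$ with the expected free module of rank $n$ over $R(T)[\beta^{\pm 1}]$. Pittie's theorem (freeness of $R(T)$ over $R(G)$), together with the base-change identification used in \Cref{restorus}, then upgrades this to the $G$-equivariant statement.

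For (ii), once freeness is established, the source of \eqref{kunnth2} is a finite direct sum of shifts of $KU_G^*(X)$, so it defines a $KU_G^*$-cohomology theory in the variable $X$ which agrees with the target $KU_G^*(X \times \mathbb{P}(V))$ when $X = \mathrm{pt}$. A cellular induction on a $G$-CW structure on $X$ together with the five-lemma then gives the desired isomorphism. The main obstacle is the equivariant freeness in (i) when $V$ is irreducible for a non-abelian $G$: such a $V$ admits no proper $G$-stable subspace, so no direct inductive cofiber sequence of sub-projective spaces is available within $\mathbb{P}(V)$ itself, and one must genuinely reduce to a maximal torus via Pittie's theorem.
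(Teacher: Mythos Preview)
The paper does not itself prove this lemma; it is cited from Segal. More to the point, its role in the surrounding remark is explicitly as the \emph{elementary} replacement for \Cref{restorus} and \Cref{keylemma}, bypassing the Hodgkin--Snaith--McLeod theory that those results rest on. Your argument defeats this purpose: in step~(i) you pass from the maximal torus back up to $G$ by invoking \Cref{restorus} together with Pittie's theorem. Beyond the circularity within the paper's logic, this imposes the hypotheses that $G$ be connected with torsion-free $\pi_1(G)$, whereas the lemma as stated (and Segal's result) holds for an arbitrary compact Lie group.

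There is a second, independent gap. Your cellular induction in (ii) reduces the K\"unneth map to orbits $X=G/H$; by change of groups that case is exactly statement~(i) for the closed subgroup $H$ acting on $V|_H$. You have only argued (i) for $G$ itself, and your torus reduction cannot supply (i) for an arbitrary closed $H\leq G$, which need be neither connected nor have torsion-free $\pi_1$. Segal's original argument is direct and uniform in $G$: after writing down the monic relation $p_V(\ell)=0$ as you do, freeness on $1,\ell,\dots,\ell^{n-1}$ is obtained from the equivariant Thom isomorphism (equivariant Bott periodicity) for the tautological line bundle, with no appeal to a maximal torus.
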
 

One can carry out the above strategy of proof, using \Cref{seglemma} to replace
a copy of $U(n)$ by a copy of $U(n-1) \times U(1)$. 
\end{remark}

Although our analysis relied on classical work on the equivariant K\"unneth
spectral sequence, the main result (\Cref{KUunip}) gives a new interpretation 
of this spectral sequence. Namely, let $G$ be a compact, connected Lie group
with $\pi_1(G)$ torsion-free. If $X$ is a finite $G$-cell complex, then one
has $KU_G^*(X) \simeq \pi_* i_G^* ( KU_G \wedge \mathbb{D}X_+)$. 
In particular, to every such $X$, we can associate the $KU_G$-module $ KU_G \wedge
\mathbb{D}X_+$ and its categorical fixed points $ M_X
:= i_G^*( KU_G \wedge \mathbb{D}X_+)
\in \md( i_G^* KU_G)$. The homotopy groups of the spectrum $M_X$ give
precisely the equivariant $K$-theory of $X$. 

Since we have seen a symmetric monoidal equivalence
$\md_{\GSpec}(KU_G) \simeq \md(i_G^* KU_G)$ (where the latter takes place in the world of
\emph{nonequivariant} spectra), 
it follows that the association $X \mapsto M_X \in \md( i_G^* KU_G)$ is symmetric
monoidal. The K\"unneth spectral sequence can thus be recovered as the
classical $\mathrm{Tor}$-spectral sequence for modules over the
(nonequivariant) $\e{\infty}$-ring spectrum  $i_G^* KU_G$.

Let $G$ be a general compact connected Lie group. In this generality, we do not know how to
describe the 
$\infty$-category $\md_{\GSpec}(KU_G)$. However, we note that: 

\begin{prop} 
\label{flagvarcompactgenerator}
If $G$ is connected, 
the flag variety
$KU_G \wedge (G/T)_+$ is a compact generator of $\md_{\GSpec}( KU_G)$. 
\end{prop}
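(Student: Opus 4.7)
Compactness is immediate: $G/T$ is a compact smooth $G$-manifold with a finite $G$-CW structure, so $(G/T)_+$ is compact in $\GSpec$, and smashing with $KU_G$ preserves compactness in $\md_{\GSpec}(KU_G)$.

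For the generation claim, I invoke \Cref{generateifcons}: it suffices to show that the functor
\[
M \longmapsto \hom_{\md_{\GSpec}(KU_G)}(KU_G \wedge (G/T)_+, M) \simeq \hom_{\GSpec}((G/T)_+, M) \simeq M^T \simeq i_T^* \Res^G_T M
\]
is conservative on $\md_{\GSpec}(KU_G)$. Assuming this mapping spectrum vanishes for all desuspensions, I apply \Cref{KUunip} to the maximal torus $T$ (whose $\pi_1$ is trivially torsion-free): the categorical fixed point functor $i_T^* \colon \md_{\gsp{T}}(KU_T) \to \md(i_T^* KU_T)$ is an equivalence, and in particular conservative. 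Hence $M^T \simeq 0$ forces $\Res^G_T M \simeq 0$ as an object of $\md_{\gsp{T}}(KU_T)$.

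It remains to show that $\Res^G_T M \simeq 0$ implies $M \simeq 0$ for $M \in \md_{\GSpec}(KU_G)$, which I test by checking $\Phi^H M \simeq 0$ for every closed subgroup $H \leq G$. If $H$ is abelian, then $H$ is contained in a maximal torus of $G$; since $G$ is connected all maximal tori are $G$-conjugate, so $H$ is subconjugate to $T$. Conjugation and transitivity of restriction then give $\Res^G_H M \simeq 0$, whence $\Phi^H M \simeq 0$. If $H$ is non-abelian, I rely on the vanishing $\Phi^H KU_G \simeq 0$, which renders $\Phi^H M$ a module over the zero ring and therefore contractible.

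The main obstacle is this last vanishing $\Phi^H KU_G \simeq 0$ for non-abelian $H \leq G$; by \Cref{geofixedpointscontractible} it is equivalent to the $\sF$-nilpotence of $KU_G$ for $\sF$ the family of abelian subgroups. This $\sF$-nilpotence follows from the flag-variety plus complex-orientability strategy of \Cref{complexorBorequivabelian}, now combined with \Cref{KUunip} applied to $U(N)$ (whose $\pi_1 = \mathbb{Z}$ is torsion-free): embedding $H \hookrightarrow G \hookrightarrow U(N)$, the flag variety $U(N)/T_{U(N)}$ becomes equivalent to a finite sum of shifts of the unit in $\md_{\gsp{U(N)}}(KU_{U(N)})$, which after restriction and the equivariant cell decomposition of $U(N)/T_{U(N)}$ with abelian isotropy delivers the required vanishing. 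A more refined statement of this nilpotence, with the family of cyclic subgroups, is established in the sequel \cite{MNN15i}.
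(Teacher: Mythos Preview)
Your argument has a genuine gap: the claim that every closed abelian subgroup of a connected compact Lie group is contained in a maximal torus is false. The standard counterexample is $G = SO(3)$, where the Klein four group
\[
V_4 = \{I,\ \mathrm{diag}(1,-1,-1),\ \mathrm{diag}(-1,1,-1),\ \mathrm{diag}(-1,-1,1)\}
\]
is abelian but not subconjugate to the rank-one maximal torus $T \cong SO(2)$ (any finite subgroup of $T$ is cyclic). Thus in your dichotomy, $H = V_4$ is abelian yet not handled by restriction to $T$, and your appeal to $\Phi^H KU_G \simeq 0$ only covers the \emph{non-abelian} case. Indeed, embedding $G \hookrightarrow U(N)$ and decomposing $U(N)/T_{U(N)}$ as a $G$-CW complex gives cells $G/A$ with $A = G \cap g T_{U(N)} g^{-1}$ abelian, but these $A$ need not be toral in $G$: for $G = SO(3) \subset U(3)$ one finds precisely $A = V_4$ as such an isotropy group. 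So your flag-variety argument yields nilpotence only for the family of abelian subgroups, which is not fine enough here.

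The paper takes a different route that sidesteps this issue entirely. Using Atiyah's holomorphic transfer (via the Thom isomorphism for the complex manifold $G/T$, a Pontryagin--Thom collapse, and the index theorem together with Borel--Weil--Bott), one shows directly that the unit $KU_G$ is a \emph{retract} of $KU_G \wedge (G/T)_+$ in $\md_{\GSpec}(KU_G)$. This immediately gives that $\pi_* i_G^* N \hookrightarrow \pi_* i_T^* N$ is a split injection for every $KU_G$-module $N$. One then imitates the bootstrap in the proof of \Cref{KUunip}: if $i_T^* M = 0$ then $\Res^G_T M = 0$ by \Cref{KUtorus}, hence $i_T^*(M \wedge X_+) = 0$ for every $G$-space $X$, hence $i_G^*(M \wedge X_+) = 0$ by the retract, and so $M = 0$. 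No case analysis on subgroups $H$ is needed. Your approach could be rescued if you had $\Phi^H KU_G \simeq 0$ for all non-\emph{toral} $H$; since non-toral abelian subgroups of a connected group are never topologically cyclic, the cyclic-family nilpotence from the sequel would suffice, but that is a strictly deeper input than what you have invoked.
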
 
The argument presented here shows that the above result is a consequence of
Atiyah's ``holomorphic transfer'' \cite{Ati68a}. The identification of the
holomorphic transfer and a spectrum-level transfer as a consequence of index
theory is discussed in \cite[\S 4.3]{Cos87} for the unitary group.  
See also \cite{Nis78} for a discussion of these transfer maps. 
We have spelled out the details for the convenience of the reader. 
\begin{proof}
The key step is to show that the unit $KU_G$ is a retract of $KU_G \wedge
(G/T)_+$. 
Let $\tau$ denote the tangent bundle of the flag variety $F = G/T$ and let $F^{\tau}$
denote its Thom space, the latter considered as a pointed $G$-space. Since $G/T$ is a complex manifold, we have a Thom
isomorphism
\[ G/T_+ \wedge KU_G \simeq F^{\tau} \wedge KU_G \in \md_{\GSpec}(
KU_G).  \]
It suffices now to show that the unit is a retract of $F^{\tau} \wedge
KU_G$.

To see this, embed the
flag variety $F \subset W$ for $W$ a real $G$-representation.  
As a result, we have an embedding $\tau \subset TW \simeq W
\otimes_{\mathbb{R}} \mathbb{C}$ and a consequent Pontryagin-Thom collapse map
\[ S^{W \otimes_{\mathbb{R}}\mathbb{C}} \to F^{\tau + \nu} , \]
for $\nu$ the normal bundle of $\tau \subset TW$. 
After smashing with $K$-theory, we obtain a map $KU_G \to F^{\nu + \tau} \wedge
KU_G \simeq F^{\tau}  \wedge KU_G$ by the Thom isomorphism. 
We will show that there exists a map $F^{\tau} \to KU_G$ such that the induced
composite $KU_G \to F^{\tau} \wedge KU_G \to KU_G$ is  an equivalence. 
In other words, we will produce a class in $\widetilde{KU}_G^0(F^{\tau})$ whose
pullback to $\widetilde{KU}_G^0( S^{W \otimes_{\mathbb{R}} \mathbb{C}}) \simeq
R(G)$ is a
unit. 

Indeed, the pull-back map 
$\widetilde{KU}_G^0(F^{\tau}) \to \widetilde{KU}_G^0( S^{W \otimes_{\mathbb{R}}
\mathbb{C}})$ is given by the \emph{analytic index} by the Atiyah-Singer index
theorem \cite{ASi}. 
As a result, one has to produce a $G$-equivariant elliptic differential
operator (or complex) on $F$
whose index in $R(G)$ is one-dimensional. We can take the Dolbeaut complex of the complex
manifold $F$. By a special case of the Borel-Weil-Bott theorem, the coherent cohomology $H^*(F, \mathcal{O})$ is
one-dimensional and concentrated in degree zero (with trivial $G$-action). 
It follows that the associated class in $\widetilde{KU}_G^0(F^{\tau})$ (which
is the Thom class of the complex tangent bundle) has the desired property, and
we get the splitting. 

As a result, for any $KU_G$-module $M$, the natural map $\pi_* i_G^* M \to
\pi_* i_T^* M$ is canonically a (split) injection. We now leave it to the
reader to show, imitating the proof of \Cref{KUunip}, that if $M$ is such that $\pi_* i_T^* M = 0$, then $M$ itself is
contractible: in other words, the flag variety is a compact generator. 
\end{proof}

The above argument with the holomorphic transfer underscores the importance of
the flag variety in proving the above statements: in fact, 
the argument in \cite{Sna72} regarding the K\"unneth spectral sequence goes
through the holomorphic transfer too. 

\begin{remark} 
We can translate the proof of the main result of \cite{Sna72} into our
language, too. 
Suppose $\pi_1(G)$ is torsion-free, so that $R(T) \otimes_{R(G)}
R(T)[\beta_2^{\pm 1}] \simeq
KU_G^*(  G/T \times G/T)$ and $R(T)$ is free over $R(G)$ (by \cite{Mc78} and
\cite{Pi70}). 
We apply \Cref{whenequivdual}
now to the thick subcategory of $\md_{\GSpec}(KU_G)$ generated by the unit and the flag
variety (which is self-dual by the Wirthm\"uller isomorphism). It follows
from \Cref{whenequivdual} that
this thick subcategory is generated by the unit. However, 
\Cref{flagvarcompactgenerator} implies that this thick subcategory consists
precisely of the compact objects, so $\md_{\GSpec}(KU_G)$ is unipotent as
desired. 
\end{remark} 

\subsection{The Borel-completion}

We can Borel-complete \Cref{KUunip} to obtain a strengthening of our ``Koszul
duality'' result \Cref{basicunipex} in the case of (nonequivariant) $K$-theory.
We find: 
\begin{thm} 
\label{BEtorfree}
Let $G$ be a compact, connected Lie group with $\pi_1(G)$ torsion-free. Then
the $\infty$-category $\fun(BG, \md(KU))$ is unipotent and equivalent as a
symmetric monoidal $\infty$-category to
$KU$-complete modules over $F(BG_+, KU)$.
\end{thm}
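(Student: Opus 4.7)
The plan is to derive \Cref{BEtorfree} by transferring the unipotence of \Cref{KUunip} from the genuine setting $\md_{\GSpec}(KU_G)$ to the Borel setting $\fun(BG, \md(KU))$, and then invoking the unipotence criterion \Cref{unipcriterion}. First, I would consider the Borel completion map $KU_G \to \underline{KU} := F(EG_+, KU_G)$ of $\e{\infty}$-$G$-spectra. Under the symmetric monoidal equivalence $\Gbs \simeq \fun(BG, \sp)$ from \Cref{Borelequiv}, the Borel-complete ring $\underline{KU}$ corresponds to $KU$ with the trivial $G$-action, and base-change therefore yields a cocontinuous, symmetric monoidal functor
\[
\Phi\colon \md_{\GSpec}(KU_G) \longrightarrow \fun(BG, \md(KU)),
\]
sending the unit to the unit and the induced module $KU_G \wedge G_+$ to $KU \wedge G_+$.

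Next, I would apply \Cref{thickcompact} to the unipotent $\infty$-category $\md_{\GSpec}(KU_G)$ (\Cref{KUunip}) to conclude that every compact object — in particular $KU_G \wedge G_+$ — lies in the thick subcategory generated by the unit. Transporting this along the symmetric monoidal $\Phi$ shows that $KU \wedge G_+ \in \fun(BG, \md(KU))$ lies in the thick subcategory generated by the unit $KU$. Since $G$ is a connected compact Lie group, $G_+$ is dualizable and its dual is a shift of itself (the adjoint representation acts trivially on cohomology as $G$ is connected), so $F(G_+, KU)$ equally lies in the thick subcategory generated by the unit.

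At this point I would apply \Cref{unipcriterion} to $\mathcal{C} = \fun(BG, \md(KU))$ with $A = F(G_+, KU)$: the object $A$ is compact and dualizable, $\mathbb{D}A \simeq KU \wedge G_+$ (up to shift) is a compact generator by induction–restriction adjunction, and $A$ belongs to the thick subcategory generated by the unit by the preceding step. This yields unipotence and identifies $\fun(BG, \md(KU))$ with the $A_R$-completion of $\md(R)$, where $R = \End_{\mathcal{C}}(\mathbf{1}) = F(BG_+, KU)$ and $A_R = \hom_{\mathcal{C}}(\mathbf{1}, A)$. A direct computation using the restriction-coinduction adjunction gives $A_R \simeq F(G_+, KU)^{hG} \simeq KU$ as an $F(BG_+, KU)$-module via the augmentation $F(BG_+, KU) \to KU$ (evaluation at a point), exactly as required.

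The main obstacle is clean bookkeeping with the base-change step: I must justify that $\md_{\GSpec}(KU_G) \to \fun(BG, \md(KU))$ is the symmetric monoidal functor induced by $KU_G \to \underline{KU}$ composed with the Borel-complete identification of module categories — i.e., that the equivalence $\Gbs \simeq \fun(BG, \sp)$ of \Cref{Borelequiv} upgrades to $\md_{\Gbs}(\underline{KU}) \simeq \fun(BG, \md(KU))$ compatibly with base change. Once this is pinned down, the remaining verifications (compactness, dualizability, generation by $\mathbb{D}A$, and the computation of $A_R$) are formal consequences of the structure already developed in \Cref{sec:axiomatic}, \Cref{sec:thick}, and \Cref{unipsec}.
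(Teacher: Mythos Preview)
Your strategy is genuinely different from the paper's, and it would work, but the step you flag as ``clean bookkeeping'' is in fact the substantive content and is not a formality. The equivalence $\md_{\Gbs}(F(EG_+,KU_G)) \simeq \fun(BG,\md(KU))$ holds exactly when the Borel-image of $KU_G$ corresponds, under $\Gbs \simeq \fun(BG,\Sp)$, to $KU$ with \emph{trivial} $G$-action. A priori it is only $KU$ equipped with \emph{some} $G$-action by $\e{\infty}$-ring maps, and you must show this action is trivial. One way is to exhibit an $\e{\infty}$-map $i_* KU \to KU_G$ in $\GSpec$ (making $KU_G$ a $KU$-algebra); Borel-completing then gives a ring map from trivial-action $KU$ to your $\underline{KU}$ which is an underlying equivalence, hence an equivalence in $\fun(BG,\Sp)$. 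That map does exist --- it is part of the global structure of equivariant $K$-theory --- but it is not constructed in the paper, and it requires returning to the model-level construction of $KU_G$. An alternative is to argue that $\mathrm{Aut}_{\e{\infty}}(KU)$ is homotopically discrete and $BG$ is simply connected; this also requires outside input. Either way, this is the crux, not bookkeeping.

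The paper takes a different route that stays entirely inside $\fun(BG,\md(KU))$. It first reduces, via induction from a maximal torus $T$ (where unipotence is already known by \Cref{polyEMSS}), to showing that the flag variety $KU \wedge (G/T)_+$ lies in the thick subcategory of the unit. For this it applies \Cref{whenequivdual} to the thick subcategory generated by the unit and the (self-dual) flag variety, which requires checking that
\[
(KU \wedge F_+)^{hG} \otimes_{F(BG_+,KU)} (KU \wedge F_+)^{hG} \longrightarrow (KU \wedge F_+ \wedge F_+)^{hG}
\]
is an equivalence; this is verified via the Atiyah--Segal completion theorem, Pittie's freeness of $R(T)$ over $R(G)$, and \Cref{restorus}. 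Your route, once the gap above is closed, is more conceptual and makes the dependence on \Cref{KUunip} transparent; the paper's route is more computational but keeps the external input (Atiyah--Segal) explicit and avoids any appeal to how $KU_G$ is built.
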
 
\begin{proof} 
Let $\hat{R}(G), \hat{R}(T)$ be the completions of 
the representation rings $R(G), R(T)$ at the respective augmentation ideals
$I_G \subset R(G), I_T \subset R(T)$. 
By the Atiyah-Segal completion theorem \cite{AtS69}, these give precisely
$\pi_0$ of $F(BG_{+}, KU) $ and $F(BT_+, KU)$. 
Note that the map $R(G) \to R(T)$ exhibits $R(T)$ as a finite module over
$R(G)$ by \cite[Prop.~3.2]{Seg68b}, and that the 
$I_T$-adic topology on $R(T)$ is equivalent to the $I_G$-adic one \cite[Cor.
3.9]{Seg68b}. 
In any event, we find that 
\begin{equation} \label{complbasechange}  \hat{R}(G) \otimes_{R(G)} R(T) \to
\hat{R}(T)  \end{equation}
is an isomorphism.

By \Cref{unipcriterion}, it suffices to show 
that the induced object $KU \wedge G_+\in \fun(BG,\md(KU))$ belongs to the thick subcategory
generated by the unit. 
Inducing upwards from $\fun(BT, \md(KU))$ (where we already know the result by
\Cref{polyEMSS}), we see that it belongs to the thick subcategory generated by $KU \wedge
(G/T)_+$, so it suffices to show that the flag variety $G/T_+$ belongs to the 
thick subcategory generated by the unit. 
Note first that the flag variety is self-dual in $\fun(BG, \md(KU))$ in view
of the Wirthm\"uller isomorphism and complex orientability. 
As a result, \Cref{whenequivdual} shows that 
it suffices to prove that the natural map
\begin{equation} \label{mapFF}(KU \wedge F_+)^{hG} \otimes_{F(BG_+, KU)} ( KU \wedge F_+)^{hG} \to
(KU \wedge F_+ \wedge F_+)^{hG}
\end{equation}
is an equivalence.
Indeed, we can then apply \Cref{whenequivdual} for $\mathcal{C} =
\md^\omega(F(BG_+, KU))$ the $\infty$-category of perfect $F(BG_+, KU)$-modules, and
$\mathcal{D}$ the thick subcategory of $\fun(BG, \md(KU))$ generated by the unit
and $KU \wedge F_+$; the result implies that $\mathcal{C} = \mathcal{D}$.

However, in view of the discussion in \eqref{complbasechange}, it is a consequence of the Atiyah-Segal completion theorem and
\Cref{restorus} that \eqref{mapFF} is an equivalence. 
\end{proof}

\subsection{Applications to nilpotence}

As before, we can obtain:
\begin{cor} 
\label{flagvarequiv}
Let $G$ be a compact, connected Lie group with torsion-free $\pi_1$ and let $T
\subset G$ be a maximal torus. Let $F = G/T$.
Then we have an equivalence
in $\md_{\GSpec}(KU_G)$,
\[ KU_G \wedge  \mathbb{D}F_+ \simeq \bigoplus_m KU_G \]
for some integer $m$.
\end{cor}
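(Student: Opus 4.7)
The plan is to deduce this from the equivalence $\md_{\GSpec}(KU_G) \simeq \md(i_G^* KU_G)$ supplied by Theorem~\ref{KUunip}, transferring the question to a purely module-theoretic one over the non-equivariant $\mathbb{E}_\infty$-ring $A := i_G^* KU_G$. Note that $A$ is even periodic with $\pi_0(A)=R(G)$ and $\pi_1(A)=0$.

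First, I would identify the module $KU_G\wedge \mathbb{D}F_+$ on the right-hand side of the equivalence. Because $F=G/T$, the projection formula (cf.\ the remark following Lemma~\ref{keylemma}) gives a natural equivalence $M\wedge \mathbb{D}F_+\simeq \CoInd_T^G\Res_T^G M$ in $\md_{\GSpec}(KU_G)$; taking $M=KU_G$ and using \eqref{coindfixed}, we find
\[
i_G^*\bigl(KU_G\wedge \mathbb{D}F_+\bigr)\;\simeq\; i_T^*\,KU_T
\]
as $A$-modules. Thus the claim is equivalent to the assertion that $i_T^*KU_T$ is equivalent to a finite free $A$-module.

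Next, I would compute the homotopy groups of $i_T^*KU_T$ as an $A$-module. Both $A$ and $i_T^*KU_T$ are even periodic with vanishing $\pi_1$, and $\pi_0(i_T^*KU_T)=R(T)$. Pittie's theorem \cite{Pi70} (as used in Lemma~\ref{restorus}) tells us that $R(T)$ is a free $R(G)$-module of rank $m:=|W|$ where $W$ is the Weyl group, and the torsion-freeness hypothesis on $\pi_1(G)$ is precisely what is needed for Pittie's result to apply. Lifting a free basis of $R(T)$ over $R(G)$ to $\pi_0(i_T^*KU_T)$ yields a morphism
\[
\bigoplus_{i=1}^{m} A\;\longrightarrow\; i_T^*KU_T
\]
of $A$-modules which is an isomorphism on $\pi_0$, and trivially so on $\pi_1=0$. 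Even periodicity then forces it to be an isomorphism on all $\pi_n$, hence an equivalence.

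Finally, I would transport this splitting back across the symmetric monoidal equivalence of Theorem~\ref{KUunip} to obtain the stated equivalence $KU_G\wedge \mathbb{D}F_+\simeq \bigoplus_m KU_G$ in $\md_{\GSpec}(KU_G)$. No real obstacle arises here: Theorem~\ref{KUunip} does all the heavy lifting, and the only genuine input from equivariant $K$-theory is Pittie's freeness result. It is worth noting that $m=|W|$ matches the rank of $KU_G^*(F)$ over $KU_G^*(*)$ computed in Lemma~\ref{restorus}, which serves as an internal consistency check.
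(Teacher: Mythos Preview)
Your proposal is correct and follows exactly the same line as the paper's proof, just with the details spelled out: the paper's one-sentence argument invokes \Cref{KUunip} together with Pittie's theorem that $R(T)$ is free over $R(G)$, and you have simply made explicit the identification $i_G^*(KU_G\wedge\mathbb{D}F_+)\simeq i_T^*KU_T$ and the lifting-of-basis step via even periodicity.
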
 
\begin{proof} 
This follows from \Cref{KUunip} and the fact (due to \cite{Pi70}) that the $KU_{G}^*(F) \simeq
R(T)[t^{\pm 1}]$ is a
free module over $R(G)[t^{\pm 1}]$.
\end{proof}

Applying \Cref{BEtorfree}, one
obtains: 
\begin{cor} 
Let $G$ be a compact connected Lie group with torsion-free $\pi_1$ and let $F =
G/T$ be the flag variety as before. Then, as an object of $\fun(BG, \md(KU))$,
$F_+ \wedge KU$ is a direct sum of copies of the unit. 
\end{cor}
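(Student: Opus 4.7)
The plan is to apply the symmetric monoidal equivalence of \Cref{BEtorfree}, which identifies $\fun(BG, \md(KU))$ with the $\infty$-category of $KU$-complete modules over $R := F(BG_+, KU)$ via the homotopy fixed point functor $(-)^{hG}$. Under this equivalence the unit $\mathbf{1}$ of $\fun(BG, \md(KU))$ corresponds to $R$ itself, so a direct sum of copies of the unit corresponds to a free $R$-module. It therefore suffices to show that $(F_+ \wedge KU)^{hG}$ is a free $R$-module of finite rank.

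The next step is to identify this spectrum with $F(BT_+, KU)$. As already used in the proof of \Cref{BEtorfree}, the flag variety $F_+ \wedge KU$ is self-dual in $\fun(BG, \md(KU))$, via the Wirthm\"uller isomorphism for the compact Lie group $G$ combined with the complex-orientability of $KU$, which trivializes the twist by the adjoint representation $\mathfrak{g}/\mathfrak{t}$ (and, after Bott periodicity, absorbs the even-degree shift). Self-duality combined with the basic Borel cohomology identity $\hom_{\fun(BG, \md(KU))}(F_+, KU) \simeq F(F_{hG,+}, KU)$ and the equivalence $F_{hG} = (G/T)_{hG} \simeq BT$ yields
\[ (F_+ \wedge KU)^{hG} \simeq \hom_{\fun(BG, \md(KU))}(F_+ \wedge KU, KU) \simeq F(BT_+, KU). \]

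To conclude I will invoke the classical inputs that were already collected in the proof of \Cref{BEtorfree}. By the Atiyah-Segal completion theorem, $\pi_0 R \simeq \hat{R}(G)$ and $\pi_0 F(BT_+, KU) \simeq \hat{R}(T)$, and both spectra have homotopy groups concentrated in even degrees. Pittie's theorem gives that $R(T)$ is free of some finite rank $m$ over $R(G)$, and the base-change identity $\hat{R}(G) \otimes_{R(G)} R(T) \simeq \hat{R}(T)$ -- exactly the identity exploited in the proof of \Cref{BEtorfree} -- promotes this to the completed statement: $\hat{R}(T)$ is free of rank $m$ over $\hat{R}(G)$. A choice of basis lifts to a map of $R$-modules $R^{\oplus m} \to F(BT_+, KU)$ which is an isomorphism on $\pi_0$; by Bott periodicity it is an isomorphism on every $\pi_n$, and hence an equivalence. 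Transporting back through the equivalence of \Cref{BEtorfree} produces the desired decomposition $F_+ \wedge KU \simeq \bigoplus_{i=1}^m \mathbf{1}$ in $\fun(BG, \md(KU))$.

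The substantive work has thus been absorbed into \Cref{BEtorfree} together with the classical results of Pittie and Atiyah-Segal; there is no essentially new obstacle here. The only point to verify is the promotion of the Pittie basis from $\pi_0$ to the spectrum level, which is immediate from even periodicity, and the identification of $(F_+ \wedge KU)^{hG}$ with $F(BT_+, KU)$, which is the same self-duality calculation already carried out in the proof of \Cref{BEtorfree}.
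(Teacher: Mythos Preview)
Your proof is correct and follows exactly the route the paper has in mind: the corollary is stated immediately after the sentence ``Applying \Cref{BEtorfree}, one obtains,'' and your argument is precisely the spelled-out version of that application---identify $(F_+\wedge KU)^{hG}$ with $F(BT_+,KU)$ via self-duality, then invoke Pittie and the completed base-change identity from the proof of \Cref{BEtorfree} to see this is a finite free $F(BG_+,KU)$-module. The parallel with the genuine case (\Cref{flagvarequiv}, proved from \Cref{KUunip} plus Pittie) is exact.
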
 

Finally, as before we can obtain the nilpotence statement.
Again, the full strength of unipotence is not really necessary for this
argument: the freeness of the flag variety is equivalent to the projective
bundle formula in equivariant $K$-theory. 

\begin{cor} 
If $G$ is a finite group, then $KU_G \in \GSpec$ is nilpotent for the family of
abelian subgroups. 
\end{cor}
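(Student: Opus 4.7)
The plan is to imitate the proof of \Cref{complexorBorequivabelian}, but working genuinely equivariantly and using \Cref{flagvarequiv} in place of the Borel-type splitting \Cref{trivflagvar}. First, embed $G\leq U(n)$ for some $n$ via a faithful unitary representation (Peter--Weyl), and let $T\subset U(n)$ be a maximal torus with associated flag variety $F=U(n)/T$.

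The key input is that $\pi_1(U(n))\cong \mathbb{Z}$ is torsion-free, so \Cref{flagvarequiv} applies to the compact connected Lie group $U(n)$ and yields an equivalence
\[ KU_{U(n)}\wedge \mathbb{D}F_+\;\simeq\;\bigoplus_{m} KU_{U(n)}\qquad\text{in}\ \md_{\mathrm{Sp}_{U(n)}}(KU_{U(n)}). \]
I would then restrict along $\Res^{U(n)}_G\colon \gsp{U(n)}\to\GSpec$, which is symmetric monoidal and carries $KU_{U(n)}$ to $KU_G$. Thus in $\md_{\GSpec}(KU_G)$ we obtain an equivalence $KU_G\wedge \mathbb{D}F_+\simeq \bigoplus_m KU_G$, and in particular $KU_G$ is a retract of $KU_G\wedge\mathbb{D}F_+$.

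Next I would analyze $F$ as a $G$-space. For any point $gT\in F$, its $G$-stabilizer is $G\cap gTg^{-1}$, a subgroup of a torus and hence abelian. By the equivariant triangulation theorem \cite{Il83}, $F$ admits a finite $G$-CW structure with cells of the form $G/A\times D^n$ with $A\leq G$ abelian. Consequently $F_+$, and hence its Spanier--Whitehead dual $\mathbb{D}F_+$, belongs to the thick subcategory of $\GSpec$ generated by $\{G/A_+\}_{A\ \mathrm{abelian}}$. Smashing with $KU_G$ places $KU_G\wedge\mathbb{D}F_+$ in the thick $\otimes$-ideal generated by the same orbits, which by \Cref{Fnildef} is precisely $\sF^{\mathrm{Nil}}$ for $\sF$ the family of abelian subgroups. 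Since $\sF^{\mathrm{Nil}}$ is closed under retracts, the retraction provided by the flag-variety splitting forces $KU_G\in\sF^{\mathrm{Nil}}$, which is exactly the desired nilpotence.

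The only real obstacle is \Cref{flagvarequiv} itself, whose proof rests on the full strength of \Cref{KUunip} (or equivalently the deep work of Hodgkin--Snaith--McLeod together with Pittie's freeness theorem). Granting that, every other step is formal: the restriction functor preserves the splitting because it is symmetric monoidal and carries $KU_{U(n)}$ to $KU_G$, and the reduction to abelian stabilizers is the standard geometric observation that intersections of $G$ with conjugates of a torus are abelian.
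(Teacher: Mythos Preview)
Your proof is correct and follows essentially the same approach as the paper: embed $G\leq U(n)$, invoke \Cref{flagvarequiv} to split $KU_{U(n)}\wedge\mathbb{D}F_+$ as a sum of copies of $KU_{U(n)}$, restrict to $G$ (using $\Res^{U(n)}_G KU_{U(n)}\simeq KU_G$), and then use an equivariant triangulation of $F$ with abelian stabilizers to place $KU_G$ (as a retract) in the thick $\otimes$-ideal generated by $\{G/A_+\}$. The paper's proof is terser, simply saying ``conclude the proof as before'' in reference to \Cref{complexorBorequivabelian}, but your spelled-out version matches it step for step.
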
 

\begin{proof} 
Embed $G \leq U(n)$ and consider the action of $G$ on $F = U(n)/T$ for $T
\subset U(n)$ a maximal torus. 
We have $\mathbb{D}F_+ \wedge KU_{U(n)} \simeq \bigoplus_1^{n!} KU_{U(n)}$ 
by \Cref{flagvarequiv}. Restricting down to $G$, we have $\Res^{U(n)}_G
KU_{U(n)} \simeq KU_G$ and we get
\[ \mathbb{D}F_+ \wedge KU_G \simeq \bigoplus_{1}^{n!} KU_G.  \]
Choosing a triangulation of $F$ with abelian stabilizers in $G$, we can
conclude the proof as before. 
\end{proof}

\section{Equivariant real $K$-theory}

Let $G$ be a compact Lie group.
In this section, we will analyze $G$-equivariant \emph{real} $K$-theory. 
Our main goal is to extend the results in the previous section
to $KO_G$-modules in $\GSpec$, as well as to develop a Galois descent picture for
equivariant $K$-theory. In particular, we will obtain an $\sF$-nilpotence result
for $KO_G$ for $G$ finite (for $\sF$ the family of abelian subgroups). 

Our main tool, which we will start with, 
is an 
equivariant version of the equivalence $KO \wedge
\Sigma^{-2} \mathbb{CP}^2 \simeq KU$.  This will enable us to ``descend''
(via thick subcategory arguments) many results for $KU_G$ to $KO_G$.
As a result, we will prove a similar unipotence result for $KO_G$-modules. 
When combined with techniques from \cite{MM15} for $G = U(n)$, we will be able
to prove that the \emph{equivariant} complexification map $KO_G \to KU_G$
is a faithful $C_2$-Galois extension of $\e{\infty}$-rings in $\gsp{U(n)}$ (which
we will then deduce for any compact Lie group $G$). 
The Galois picture was first developed nonequivariantly by Rognes \cite{Rog08}
and has numerous applications.

\subsection{Complexification in equivariant $K$-theory}

The key ingredient in the proof below of the equivariant version of Wood's theorem
(\Cref{equivwood}) is an analysis of the complexification map 
\[ \widetilde{KO}_G^*(\mathbb{CP}^2) \to \widetilde{KU}_G^*(\mathbb{CP}^2).  \]
This \emph{mostly} reduces to a purely non-equivariant calculation, since
$\mathbb{CP}^2$ is regarded here as a space with trivial $G$-action. 
First of all, by \cite[Prop.~2.2]{Seg68a}, we have a natural isomorphism
$\widetilde{KU}_G^*(\mathbb{CP}^2) \simeq R(G) \otimes
\widetilde{KU}^*(\mathbb{CP}^2)$. However, the picture is somewhat more
complicated for equivariant $KO$. 
In this subsection, we discuss the equivariant real and complex $K$-theory of
spaces with trivial $G$-action and give a complete analysis of the (equivariant)
complexification map.

\begin{definition} 
\label{RCH}
Given an irreducible representation $V$ of $G$
over $\mathbb{C}$, recall that there are three possibilities: 
\begin{enumerate}
\item The representation $V$ is not self dual as a $G$-representation over $\mathbb{C}$. 
In this case, the real representation $V|_{\mathbb{R}}$ underlying $V$ is irreducible, and
$\mathrm{End}_{G, \mathbb{R}}(V|_{\mathbb{R}}) \simeq \mathbb{C}$.
\item The underlying real representation $V|_{\mathbb{R}}$ is not irreducible as a real representation. Thus, $V$
contains an $\mathbb{R}$-subspace $V_{\mathbb{R}} \subset V$ which is
$G$-stable. One has $V_{\mathbb{R}} \otimes_{\mathbb{R}} \mathbb{C} \simeq V$.
As a complex representation, we have $V \simeq V^*$.
Moreover, $\mathrm{End}_{G}(V_{\mathbb{R}}) \simeq \mathbb{R}$.
\item The representation $V$ is self-dual as a $G$-representation over $\mathbb{C}$, but $V|_{\mathbb{R}}$ is
irreducible. In this case, $\mathrm{End}_{G, \mathbb{R}}(V|_{\mathbb{R}}) \simeq \mathbb{H}$. 
\end{enumerate}

Given an irreducible representation $W$ of $G$ over $\mathbb{R}$, there are
three corresponding possibilities: 
\begin{enumerate}
\item There is an isomorphism $\mathrm{End}_G(W) \simeq \mathbb{C}$, and $W$ arises as the restriction
of an irreducible complex representation $V$ of type 1.  
If it arises as the restriction of $V$, it equivalently arises as the
restriction of $V^*$. In this case, $W$ is called \emph{complex.}
\item There is an isomorphism $W \simeq V_{\mathbb{R}}$ for an irreducible $V$ of type 2. 
In this case, $\mathrm{End}_G(W) = \mathbb{R}$. In this case, $W$ is called
\emph{real.}
\item The representation $W $ is the restriction of some $V$ of type 3; in this case
$\mathrm{End}_G(W) = \mathbb{H}$ and $W$ is called \emph{quaternionic.}
\end{enumerate}
\end{definition} 

\newcommand{\ROR}{RO^{\mathbb{R}}}
\newcommand{\ROC}{RO^{\mathbb{C}}}
\newcommand{\ROH}{RO^{\mathbb{H}}}

\begin{definition} 
Given a compact Lie group, we let $RO(G)$ denote the Grothendieck group of real
finite-dimensional $G$-representations, as usual. 
Thus, $RO(G)$ is a free abelian group on the isomorphism classes of irreducible
$G$-representations over $\mathbb{R}$.

We let $\ROR(G) \subset RO(G)$ denote the subgroup spanned by the classes of
those representations which are real (in the sense of \Cref{RCH}),
$\ROC(G) \subset RO(G)$ the subgroup spanned by the classes of those
representations which are complex, and $\ROH(G)$ the subgroup spanned by those
that are quaternionic.
We thus obtain a decomposition $RO(G) = \ROR(G) \oplus \ROC(G) \oplus
\ROH(G)$.
\end{definition}

We now need the following result, in which $KSp$ denotes symplectic or quaternionic $K$-theory.
\begin{prop}[{\cite[p.133--134]{Seg68a}}] 
\label{equivKtrivial}
Let $X$ be a finite CW complex 
given trivial $G$-action. Then we have natural isomorphisms

\begin{gather}
KU_G^*(X) \simeq R(G) \otimes KU^*(X), \\
KO^*_G(X) \simeq \ROR(G) \otimes KO^*(X) \oplus 
\ROC(G) \otimes KU^*(X) \oplus \ROH(X) \otimes KSp^*(X).
\end{gather}
\end{prop}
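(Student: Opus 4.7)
The plan is to reduce both statements to a splitting of equivariant vector bundles into isotypic components, combined with a comparison of cohomology theories on a point.

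First, I would set up the natural transformation in the geometric direction. Given an irreducible complex $G$-representation $V$ and a complex (non-equivariant) vector bundle $E \to X$, the bundle $V \otimes_{\mathbb{C}} E$ acquires a $G$-equivariant structure (since $G$ acts trivially on $X$), yielding a map $R(G) \otimes KU^*(X) \to KU_G^*(X)$. Similarly, given an irreducible real $G$-representation $W$, the tensor product $W \otimes_{\mathrm{End}_G(W)} F$ is a real $G$-equivariant bundle on $X$ whenever $F$ is a bundle of $\mathrm{End}_G(W)$-modules; this gives a map from the right-hand side of the $KO$-formula into $KO_G^*(X)$, where the three summands correspond to $\mathrm{End}_G(W) = \mathbb{R}, \mathbb{C}, \mathbb{H}$ respectively (\Cref{RCH}).

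Second, I would verify that both sides are $\mathbb{Z}/2$-graded cohomology theories on finite CW complexes (with trivial $G$-action), and that the natural transformations above are maps of such cohomology theories. The left-hand sides are equivariant $K$-cohomology restricted to spaces with trivial action, hence cohomology theories by Bott periodicity in equivariant $K$-theory. The right-hand sides are finite direct sums of ordinary $K$-theories (tensored with free abelian groups), hence also cohomology theories. Naturality and the Mayer--Vietoris property are immediate from the geometric definitions.

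Third, I would check that these natural transformations are isomorphisms on the point $X = \ast$. For $KU_G$ this is tautological: $KU_G^0(\ast) = R(G)$ and $KU^0(\ast) = \mathbb{Z}$, and higher degrees follow from Bott periodicity on both sides. For $KO_G$, one has $KO_G^0(\ast) = RO(G) = \ROR(G) \oplus \ROC(G) \oplus \ROH(G)$ by definition, while $KO^0(\ast) = \mathbb{Z}$, $KU^0(\ast) = \mathbb{Z}$, $KSp^0(\ast) = \mathbb{Z}$, so the formula holds in degree $0$; for other degrees, one uses Bott periodicity and the classical facts $KO^{-n}(\ast), KU^{-n}(\ast), KSp^{-n}(\ast)$ together with the identification of $W$-type irreducibles (real/complex/quaternionic) to match coefficient groups.

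Finally, a standard five-lemma / Mayer--Vietoris induction on the number of cells of $X$ extends the isomorphism from $\ast$ to all finite CW complexes. The essential content, and the main thing to be careful about, is step three: one must correctly identify the coefficient groups so that a complex-type irreducible $W$ (whose endomorphism ring is $\mathbb{C}$) contributes a $KU^*(X)$ summand, not $KO^*(X)$, and similarly for quaternionic-type giving $KSp^*(X)$. This is precisely the isotypic decomposition of a $G$-equivariant real bundle: the isotypic component of $W$ is $W \otimes_{\mathrm{End}_G(W)} \mathrm{Hom}_G(W, E)$, and $\mathrm{Hom}_G(W, E)$ is naturally a bundle of $\mathrm{End}_G(W)$-modules, hence contributes $K$-theory of the appropriate flavor. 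No further obstacle remains beyond bookkeeping the three types.
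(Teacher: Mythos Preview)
The paper does not give its own proof of this proposition; it is cited from Segal \cite[p.~133--134]{Seg68a}, and the paper only explains how the maps arise (in degree zero, via the tensor products $V \otimes_{\mathbb{C}} \mathcal{W}$, etc., then suspending). Your construction of the natural transformation in step~1 matches this explanation exactly.

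Your overall argument is correct, but slightly roundabout. The isotypic decomposition you describe in your final paragraph is the real content: for any $G$-equivariant real bundle $E$ over a trivial $G$-space $X$, the splitting $E \cong \bigoplus_W W \otimes_{\mathrm{End}_G(W)} \mathrm{Hom}_G(W,E)$ already gives the isomorphism in degree zero for \emph{every} finite $X$, not just for a point. This makes your step~3 (checking on a point) and step~4 (Mayer--Vietoris induction) unnecessary; one passes to other degrees by suspension and periodicity, exactly as the paper indicates. In particular, your step~3 as written is a bit circular: computing $KO_G^n(\ast)$ for $n \neq 0$ independently of the result would itself require something like the isotypic decomposition applied to spheres.
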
 

We note that the first isomorphism is $C_2$-equivariant for the complex
conjugation on all sides: in particular, including the $C_2$-action on $R(G)$.
The second decomposition arises as follows in degree zero (by suspending and using
periodicity, one gets the general case). Given a complex
$G$-representation $V$ and a complex vector bundle $\mathcal{W}$ on $X$, we form the
$G$-equivariant \emph{real} vector bundle $V \otimes_{\mathbb{C}} \mathcal{W}$. 
The other two summands are interpreted similarly.

We will need to describe the complexification map $KO^*_G(X) \to KU^*_G(X)$ with
respect to the above decompositions.
Without loss of generality (up to replacing $X$ by a suspension), we take $\ast = 0$.
\begin{enumerate}
\item On $RO^{\mathbb{R}}(G) \otimes KO^0(X)$, the complexification map  
behaves as follows: given a real $G$-representation $V$ and a real vector
bundle $\mathcal{W} \in KO^0(X)$, the class $[V] \otimes [\mathcal{W}]$ maps to
$[V_{\mathbb{C}}] \otimes [\mathcal{W}_{\mathbb{C}}]$.
\item On $RO^{\mathbb{C}}(G) \otimes KU^0(X)$, the complexification map behaves
as follows: given a complex $G$-representation $V$ and a complex vector
bundle $\mathcal{W} \in KU^0(X)$, the class $[V] \otimes [\mathcal{W}]$ maps to
$[V] \otimes [\mathcal{W}] + [V^*] \otimes [\mathcal{W}^*]$.
This follows from unwinding the definitions: one has to complexify the
$G$-equivariant real vector bundle (which is the restriction of a
$G$-equivariant complex vector bundle) $V \otimes_{\mathbb{C}} \mathcal{W}$. 
\item  On $RO^{\mathbb{H}}(G) \otimes KSp^0(X)$, the complexification map is
the most complicated. 
Given a quaternionic representation $V$ (where we interpret the $\mathbb{H}$
action on the \emph{right}) and a quaternionic vector bundle
$\mathcal{W}$ on $X$, the associated equivariant real vector bundle 
is $V \otimes_{\mathbb{H}} \mathcal{W}$. 

The complexification is therefore the equivariant complex vector bundle
$V_{\mathbb{C}} \otimes_{\mathbb{H} \otimes_{\mathbb{R}} \mathbb{C}}
\mathcal{W}_{\mathbb{C}}$.
Here $V_{\mathbb{C}}$ has a right action of $\mathbb{H}
\otimes_{\mathbb{R}}\mathbb{C} \simeq M_2(\mathbb{C})$ and
$\mathcal{W}_{\mathcal{C}}$ has a left action of $M_2(\mathbb{C})$.
In general, we recall that the category of left (resp. right) 
$M_2(\mathbb{C})$-modules is equivalent to the category of $\mathbb{C}$-vector
spaces, and the $M_2(\mathbb{C})$-linear tensor product between a right and
left $M_2(\mathbb{C})$-module corresponds to the $\mathbb{C}$-linear tensor product
between vector spaces. 

In particular, we can describe the equivariant $\mathbb{C}$-vector bundle 
$V_{\mathbb{C}} \otimes_{\mathbb{H} \otimes_{\mathbb{R}} \mathbb{C}}
\mathcal{W}_{\mathbb{C}}$
as follows. The $M_2(\mathbb{C})$-module with $G$-action $V_{\mathbb{C}}$
corresponds to a complex representation $V'$ of $G$, and the $M_2(\mathbb{C})$-bundle
$\mathcal{W}_{\mathbb{C}}$ over $X$ corresponds 
to a $\mathbb{C}$-vector bundle $\mathcal{W}'$ over $X$. 
It is easy to see that $V'$ satisfies $V' \simeq (V')^*$ and the underlying
real representation $V$ (which is still irreducible) is the underlying real
representation of $V'$. 
In any event, the complexification carries $[V] \otimes [\mathcal{W}] \mapsto
[V'] \otimes [\mathcal{W}']$.
\end{enumerate}

In order to make this useful, we will need to describe more explicitly the map $\phi\colon KSp^0(X) \to
KU^0(X)$ (which sends an $\mathbb{H}$-bundle $\mathcal{W}$ to the complex
vector bundle associated to the $M_2(\mathbb{C})$-bundle
$\mathcal{W}_{\mathbb{C}}$ via the Morita equivalence between
$M_2(\mathbb{C})$ and $\mathbb{C}$). In particular, we will need to know that 
the following diagram is commutative
\begin{equation} \label{complexificationdiag} \xymatrix{
KSp^0(X) \ar[d]^{\simeq}  \ar[r]^\phi & KU^0(X) \ar[d]^{\simeq} \\
KO^4(X)  \ar[r] & KU^4(X). 
}\end{equation}
Here, the vertical arrows are Bott periodicity 
and the bottom horizontal 
map is the usual complexification from real to complex $K$-theory. 

To see the commutativity of this diagram, we use the fact that  the
natural transformation $KSp^0 \to KU^0$ comes from a map of $KO$-module spectra
$KSp \simeq \Sigma^{4} KO \to KU$. 
Since this map, for $X = \ast$, carries the class of the $\mathbb{H}$-module $\mathbb{H}$ to the class of the
$\mathbb{C}$-module $\mathbb{C}^2$, it induces multiplication by 2 in $\pi_0$. 
Therefore, one sees that the induced map of $KO$-module spectra $\Sigma^4 KO
\to KU$ is the complexification map $\Sigma^4 KO \to \Sigma^4 KU$ followed by
Bott periodicity $\Sigma^4 KU \simeq KU$. 
\subsection{The equivariant Wood theorem}\label{sec:wood}

We recall first: 

\begin{thm}[Wood] One has an equivalence of $KO$-module spectra $KU \simeq KO
\wedge \Sigma^{-2} \mathbb{CP}^2$. \label{woodthm}
\end{thm}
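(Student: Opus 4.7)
The plan is to deduce Wood's theorem from the standard cell structure on $\mathbb{CP}^2$ together with the vanishing of $\eta$ in complex $K$-theory.

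First I would recall that $\mathbb{CP}^2$ has a cell structure $S^2 \cup_\eta e^4$, where $\eta\colon S^3 \to S^2$ is the Hopf map. Desuspending twice, this produces a cofiber sequence of spectra
$$S^1 \xrightarrow{\eta} S^0 \to \Sigma^{-2}\mathbb{CP}^2.$$
Smashing with $KO$ gives a cofiber sequence of $KO$-modules
$$\Sigma KO \xrightarrow{\eta} KO \to KO \wedge \Sigma^{-2}\mathbb{CP}^2.$$

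Next I would construct the comparison map. The complexification map $c\colon KO \to KU$ of $\e{\infty}$-rings kills $\eta \in \pi_1(KO)$, because $\pi_1(KU) = 0$. Hence the composite $\Sigma KO \xrightarrow{\eta} KO \xrightarrow{c} KU$ is nullhomotopic, so $c$ extends (in the $\infty$-category of $KO$-modules) to a map
$$\varphi\colon KO \wedge \Sigma^{-2}\mathbb{CP}^2 \longrightarrow KU.$$

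It remains to check that $\varphi$ is an equivalence. Both source and target have homotopy groups of finite type, so it suffices to compute $\pi_*(\varphi)$. On the source, the long exact sequence associated to the cofiber sequence, together with the well-known $\eta$-action on $\pi_*(KO)$ (multiplication by $\eta$ is an isomorphism $\pi_0 KO \to \pi_1 KO$ and $\pi_1 KO \to \pi_2 KO$, and zero out of $\pi_{-1}, \pi_2 KO$), computes $\pi_*(KO \wedge \Sigma^{-2}\mathbb{CP}^2)$ to be $\mathbb{Z}$ concentrated in even degrees, $2$-periodic, matching $\pi_*(KU)$. On $\pi_0$, both sides receive a unit from $KO$, and $\varphi$ preserves it by construction, so the map on $\pi_0$ is the identity $\mathbb{Z} \to \mathbb{Z}$. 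Multiplicativity along the Bott class (which on the source comes from the $KO$-module structure and the generator in $\pi_2$ coming from $\mathbb{CP}^2$) then propagates this to an isomorphism in every degree.

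The only mild obstacle is the final homotopy-group calculation; this is a standard exercise with the Bott-periodic structure of $\pi_*(KO)$ and the $\eta$-multiplication, so no new idea is required. The key conceptual step is simply the observation that $\eta$ vanishes in $KU$, which is what forces $\Sigma^{-2}\mathbb{CP}^2$ to be the ``$\eta$-inverting cell'' for $KO$-modules.
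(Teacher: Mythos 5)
Your approach --- smashing the cofiber sequence $\Sigma KO \xrightarrow{\eta} KO \to KO\wedge\Sigma^{-2}\mathbb{CP}^2$ with $KO$ and using $\eta=0$ in $\pi_1(KU)$ to produce a comparison map $\varphi$ --- is a standard and genuinely different route from the one the paper takes. The paper cites \cite{htmf}, where one \emph{defines} $KO := KU^{hC_2}$ for the complex-conjugation action and then identifies $KU\wedge\Sigma^{-2}\mathbb{CP}^2$, as a spectrum with $C_2$-action, with the coinduced object $F(C_{2+},KU)$; that argument has the advantage of simultaneously giving the Galois-descent picture $KO\simeq KU^{hC_2}$. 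Your route is more elementary and gives a direct comparison map.

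However, the verification that $\varphi$ is an equivalence has real gaps. First, the long exact sequence does \emph{not} by itself compute $\pi_*(KO\wedge\Sigma^{-2}\mathbb{CP}^2)$: in degrees $\equiv 4 \pmod 8$ it only yields a short exact sequence
\[
0 \longrightarrow \pi_4 KO \cong \mathbb{Z} \longrightarrow \pi_4\bigl(KO\wedge\Sigma^{-2}\mathbb{CP}^2\bigr) \longrightarrow \ker\bigl(\eta\colon \pi_2 KO \to \pi_3 KO\bigr) \cong \mathbb{Z}/2 \longrightarrow 0,
\]
and the nontriviality of this extension --- which is exactly what makes the answer $\mathbb{Z}$ rather than $\mathbb{Z}\oplus\mathbb{Z}/2$ --- is not determined by the long exact sequence. (Also a small slip: $\eta\colon\pi_0 KO\to\pi_1 KO$ is the surjection $\mathbb{Z}\twoheadrightarrow\mathbb{Z}/2$, not an isomorphism.) Second, ``multiplicativity along the Bott class'' is not available: the class in $\pi_2(KO\wedge\Sigma^{-2}\mathbb{CP}^2)$ coming from $\mathbb{CP}^1\subset\mathbb{CP}^2$ together with the $KO$-module structure gives a map $\Sigma^2 KO \to KO\wedge\Sigma^{-2}\mathbb{CP}^2$, not a self-map $\Sigma^2(KO\wedge\Sigma^{-2}\mathbb{CP}^2)\to KO\wedge\Sigma^{-2}\mathbb{CP}^2$; multiplying by it would require a ring structure on $KO\wedge\Sigma^{-2}\mathbb{CP}^2$, which is not known without already knowing it is $KU$. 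Relatedly, the null-homotopy used to extend $c$ across the top cell is not unique --- the extensions form a torsor over $\pi_2(KU)\cong\mathbb{Z}$, and different choices give different maps on $\pi_2$ --- so you would in any case need to argue that \emph{some} choice of $\varphi$ is an isomorphism on $\pi_2$ (e.g.\ by computing $\widetilde{KO}^0(\mathbb{CP}^2)\to\widetilde{KU}^0(\mathbb{CP}^2)$ explicitly, as the paper does later in the equivariant setting), which the argument as written does not do.
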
 

A proof of this result (as well as an analog for $TMF$) can be found in \cite{htmf}. 
In \cite{htmf}, the strategy is to take the $C_2$-action on $KU$ given by
complex conjugation and \emph{define} $KO \simeq KU^{hC_2}$. Wood's theorem is
proved by showing that $KU \wedge \Sigma^{-2} \mathbb{CP}^2$, as a spectrum
with $C_2$-action, is the coinduced object $F(C_{2+}, KU)$.

The main goal of this subsection is to prove an equivariant analog of
\Cref{woodthm}: 

\begin{thm} 
\label{equivwood}
Let $G$ be a compact Lie group. 
One has an equivalence of $KO_G$-modules in $\GSpec$, $KO_G \wedge \Sigma^{-2}
\mathbb{CP}^2 \simeq KU_G$.
\end{thm}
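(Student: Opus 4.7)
The plan is to mimic the proof of the nonequivariant Wood theorem (\Cref{woodthm}) by constructing an extension of the complexification map $c\colon KO_G\to KU_G$ along the cofibre $KO_G\to KO_G\wedge \Sigma^{-2}\mathbb{CP}^2$, and then verifying the resulting map is an equivalence by passing to categorical $H$-fixed points for each closed $H\leq G$ and invoking the Segal-style decomposition of \Cref{equivKtrivial}.

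First I would construct the map. Smashing the cofibre sequence $\Sigma S^0\xrightarrow{\eta}S^0\to \Sigma^{-2}\mathbb{CP}^2$ with $KO_G$ yields the cofibre sequence $\Sigma KO_G\xrightarrow{\eta}KO_G\to KO_G\wedge\Sigma^{-2}\mathbb{CP}^2$ in $KO_G$-modules. The obstruction to extending $c$ is the $KO_G$-linear composite $c\circ\eta\colon \Sigma KO_G\to KU_G$, which is determined up to homotopy by its class in $\pi_1^G KU_G$. Since equivariant complex $K$-theory is concentrated in even degrees ($\pi_*^G KU_G\cong R(G)[\beta^{\pm}]$), this group is zero, and so there is an essentially canonical $KO_G$-module map $f\colon KO_G\wedge\Sigma^{-2}\mathbb{CP}^2\to KU_G$ extending $c$.

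Next I would show $f$ is an equivalence by checking $f^H$ on categorical $H$-fixed points for every closed $H\leq G$. Because $\mathbb{CP}^2$ is a finite CW-complex with trivial $G$-action, the adjunction between the inflation $i_*\colon \Sp\to\HSpec$ and categorical fixed points gives $(KO_G\wedge \Sigma^{-2}\mathbb{CP}^2)^H\simeq (KO_G)^H\wedge \Sigma^{-2}\mathbb{CP}^2$. Computing $\pi_*$ via the long exact sequence coming from $\Sigma S^0\xrightarrow{\eta}S^0\to\Sigma^{-2}\mathbb{CP}^2$ and applying \Cref{equivKtrivial}, both $\pi_*((KO_G)^H\wedge\Sigma^{-2}\mathbb{CP}^2)$ and $\pi_*(KU_G)^H\cong R(H)[\beta^{\pm}]$ split into three summands indexed by $\ROR(H),\ROC(H),\ROH(H)$. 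On the $\ROR(H)$-summand, $\eta$ acts as on $\pi_*KO$, and the $\eta$-cofibre reduces to the nonequivariant Wood equivalence $KO\wedge\Sigma^{-2}\mathbb{CP}^2\simeq KU$. On the $\ROC(H)$- and $\ROH(H)$-summands, $\eta$ acts trivially (being already null on $\pi_*KU$ and on $\pi_*KSp$), so the $\eta$-cofibres are $KU\oplus\Sigma^{-2}KU\simeq KU\oplus KU$ and $KSp\oplus\Sigma^{-2}KSp\simeq \Sigma^4 KU\simeq KU$, respectively, using Bott periodicity and the identification of the $KSp\to KU$ complexification in diagram \eqref{complexificationdiag}.

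The main obstacle will be the bookkeeping: matching up these three summands with the corresponding type-1, type-2 and type-3 parts of $R(H)$ through the map $f^H$. Specifically, the $\ROR(H)$-piece should map to the type-$2$ irreducibles (the map $W\mapsto V$ of rank one), the $\ROC(H)$-piece should map onto the type-$1$ pairs (the map $W\mapsto V\oplus V^{\ast}$ of rank two), and the $\ROH(H)$-piece should map diagonally onto twice each type-$3$ irreducible (the map $W\mapsto 2V$). These identifications are precisely the formulas for the equivariant complexification map recorded in the previous subsection. Once one verifies that $f^H$ implements these maps on each summand -- which reduces to identifying the naturality of Segal's splitting under the ring map $c$ -- the equivalence of $\pi_*$ on each summand follows from the nonequivariant Wood theorem together with Bott periodicity, and the theorem is proved.
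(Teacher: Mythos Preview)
Your approach differs genuinely from the paper's and is mostly sound, but there is one error and one subtlety worth flagging.

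\textbf{The paper's approach.} Rather than extending $c$ over the $\eta$-cofibre, the paper works with the dual $KO_G\wedge\mathbb{D}(\mathbb{CP}^2)\to KU_G\wedge\mathbb{D}(\mathbb{CP}^2)$ and exploits the $C_2$-action on $KU_G$. Using classes $x,y\in\widetilde{KU}_G^0(\mathbb{CP}^2)$ interchanged by complex conjugation, one has $KU_G\wedge\mathbb{D}(\mathbb{CP}^2)\simeq KU_G\vee KU_G$; the desired map to $KU_G$ is the projection sending $x\mapsto 1$, $y\mapsto 0$. The key input is \Cref{KUKOGCP2map}: complexification $\widetilde{KO}_H^i(\mathbb{CP}^2)\to\widetilde{KU}_H^i(\mathbb{CP}^2)$ is injective with image exactly the $C_2$-invariants, for every $H\leq G$. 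Since the projection restricted to $C_2$-invariants is visibly an isomorphism onto $\pi_*^H KU_G$, the composite is an equivalence. This builds the map from the $C_2$-structure rather than from an extension problem, and reduces the verification to a single statement about invariants.

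\textbf{The error.} Your claim that $\eta$ is null on $\pi_*KSp$ is false: since $KSp\simeq\Sigma^4 KO$, multiplication by $\eta$ on $\pi_*KSp$ is the $\eta$-action on $\pi_{*-4}KO$, which is nontrivial (e.g.\ $\eta\colon\pi_{-4}KSp\to\pi_{-3}KSp$ is the surjection $\mathbb{Z}\twoheadrightarrow\mathbb{Z}/2$). Consequently the $\eta$-cofibre on the $\ROH(H)$-summand is \emph{not} $KSp\oplus\Sigma^{-2}KSp$. Your final conclusion $KSp\wedge\Sigma^{-2}\mathbb{CP}^2\simeq KU$ is nonetheless correct, but for the right reason: $KSp\wedge\Sigma^{-2}\mathbb{CP}^2\simeq\Sigma^4(KO\wedge\Sigma^{-2}\mathbb{CP}^2)\simeq\Sigma^4 KU\simeq KU$ via nonequivariant Wood and Bott periodicity.

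\textbf{The subtlety.} The extension $f$ of $c$ over the $\eta$-cofibre is not ``essentially canonical'': the choices form a torsor under $[\Sigma^2 KO_G,KU_G]_{KO_G}\cong\pi_2^G KU_G\cong R(G)$, and different choices behave differently on the top cell. Your summand-by-summand verification therefore has to track what the \emph{specific} map $f^H$ does on the $\ker(\eta)$-part of the long exact sequence, not merely exhibit abstract isomorphisms of graded groups. This is precisely the ``bookkeeping'' you defer, and it is where the real content lies. The paper sidesteps this entirely: the $C_2$-projection fixes the map once and for all, and \Cref{KUKOGCP2map} handles all $H$ uniformly.
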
 

We note that the $\mathbb{CP}^2$ that enters here is the ordinary one: that is,
it is treated as a pointed space with trivial $G$-action.
As in \Cref{woodthm}, the $C_2$-action on $KU_G$ will play an important role in
this analysis.
However, unlike in the setting of \Cref{woodthm}, we do \emph{not} want to assume 
an equivalence of the form $KO_G \simeq KU_G^{hC_2}$ in $\GSpec$; we will
instead prove this as a corollary. Our strategy instead is to build on the
known result (\Cref{woodthm}) and analyze directly the map 
\begin{equation}
\label{KOGKUGCP2}
KO_G \wedge \mathbb{D}(\mathbb{CP}^2) \to   KU_G \wedge
\mathbb{D}(\mathbb{CP}^2)  
\end{equation} 
in homotopy. 
It will be convenient to work with the (equivalent, up to a shift) Spanier-Whitehead duals,
since this amounts to understanding the map 
$\widetilde{KO}_G^*(\mathbb{CP}^2) \to \widetilde{KU}_G^*(\mathbb{CP}^2)$.
Our goal is to show that this map is injective with image the $C_2$-invariants
in 
$\widetilde{KU}_G^*(\mathbb{CP}^2)$.

To begin with the proof of \Cref{equivwood}, we
give a description 
of the equivariant real $K$-theory of $\mathbb{CP}^2$.
Our technical tool is the following:

\begin{prop} 
\label{KUKOGCP2map}
Let $G$ be a compact Lie group. Then for any $i$, $\widetilde{KO}^i_G(\mathbb{CP}^2)
\to \widetilde{KU}_G^i(\mathbb{CP}^2)$ is injective with image the
$C_2$-invariants in 
$ \widetilde{KU}_G^i(\mathbb{CP}^2)$.

\end{prop}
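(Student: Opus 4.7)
The plan is to reduce the equivariant claim to a non-equivariant Wood-type statement about $\mathbb{CP}^2$ by using the explicit splitting of \Cref{equivKtrivial} into three summands indexed by the type (real, complex, quaternionic) of an irreducible representation of $G$.

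First I would apply \Cref{equivKtrivial} to write
$$\widetilde{KU}_G^i(\mathbb{CP}^2)\cong R(G)\otimes \widetilde{KU}^i(\mathbb{CP}^2),\qquad C_2\text{ acting diagonally by conjugation},$$
and
$$\widetilde{KO}_G^i(\mathbb{CP}^2)\cong \ROR(G)\otimes \widetilde{KO}^i(\mathbb{CP}^2)\;\oplus\;\ROC(G)\otimes \widetilde{KU}^i(\mathbb{CP}^2)\;\oplus\;\ROH(G)\otimes \widetilde{KSp}^i(\mathbb{CP}^2).$$
I would decompose $R(G)=R^{(1)}(G)\oplus R^{(2)}(G)\oplus R^{(3)}(G)$ by type of complex irreducible. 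The $C_2$-action is trivial on $R^{(2)}\oplus R^{(3)}$ and swaps $[V]\leftrightarrow[V^*]$ on $R^{(1)}$, so the $C_2$-invariants of $\widetilde{KU}_G^i(\mathbb{CP}^2)$ split accordingly. The complexification maps listed in the text respect these splittings, so it suffices to verify the injectivity and the image characterization on each of the three summands.

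The non-equivariant input I would need is the following: $c\colon \widetilde{KO}^i(\mathbb{CP}^2)\to \widetilde{KU}^i(\mathbb{CP}^2)$ is injective with image the $C_2$-fixed points, and similarly for $\widetilde{KSp}^i(\mathbb{CP}^2)\to\widetilde{KU}^i(\mathbb{CP}^2)$. Using Wood's theorem (\Cref{woodthm}) together with $\mathbb{CP}^2=\Sigma^2 C\eta$ and $\mathbb{D}C\eta\simeq\Sigma^{-2}C\eta$, one identifies $\widetilde{KO}^*(\mathbb{CP}^2)\cong KU^*$ and $\widetilde{KU}^*(\mathbb{CP}^2)\cong KU^{*-2}\oplus KU^{*-4}$. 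In degree $0$, writing $u=L-1$ with $L$ the tautological line bundle, the conjugation $L\mapsto L^{-1}$ sends $u\mapsto -u+u^{2}$ and fixes $u^{2}$, so the $C_2$-fixed subgroup is $\mathbb{Z}\{u^{2}\}$; a direct check shows $c(L|_{\mathbb{R}}-2)=u^{2}$, giving the claim in degree $0$. All other degrees follow by Bott periodicity and module structure, and the $\widetilde{KSp}$-version follows from $KSp\simeq\Sigma^{4}KO$ together with $C_2$-equivariance of multiplication by $\beta^{2}$.

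Finally I would match the three equivariant summands. On the real piece, the map is the tensor of the isomorphism $\ROR(G)\xrightarrow{\sim}R^{(2)}(G)$ with the non-equivariant $c$, so its image is $R^{(2)}(G)\otimes\widetilde{KU}^i(\mathbb{CP}^2)^{C_2}=(R^{(2)}(G)\otimes\widetilde{KU}^i(\mathbb{CP}^2))^{C_2}$. On the complex piece, for each unordered pair $\{V,V^*\}$ of non-self-dual irreducibles the summand $\mathbb{Z}\{[V],[V^*]\}\otimes M$ carries $C_2$ as the induced representation, whose invariants are canonically $M$; the assignment $[V|_{\mathbb{R}}]\otimes[\mathcal{W}]\mapsto[V]\otimes[\mathcal{W}]+[V^*]\otimes[\mathcal{W}^*]$ then visibly identifies $\ROC(G)\otimes\widetilde{KU}^i(\mathbb{CP}^2)$ with $(R^{(1)}(G)\otimes\widetilde{KU}^i(\mathbb{CP}^2))^{C_2}$. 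On the quaternionic piece, combining the isomorphism $\ROH(G)\xrightarrow{\sim}R^{(3)}(G)$ (via the Morita description spelled out in the text) with the non-equivariant claim for $KSp\to KU$ from the previous paragraph gives an isomorphism onto $(R^{(3)}(G)\otimes\widetilde{KU}^i(\mathbb{CP}^2))^{C_2}$. Summing these yields the proposition. The main obstacle is the non-equivariant calculation, especially keeping track of the $C_2$-action in the quaternionic case via the Morita equivalence $\mathbb{H}\otimes_{\mathbb{R}}\mathbb{C}\simeq M_2(\mathbb{C})$ and the commutative square \eqref{complexificationdiag}; once that bookkeeping is done, the equivariant assertion falls out directly from the splitting.
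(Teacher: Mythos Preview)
Your proposal is correct and follows essentially the same approach as the paper: decompose both sides via \Cref{equivKtrivial} into real, complex, and quaternionic summands, identify the $C_2$-action on $R(G)\otimes\widetilde{KU}^*(\mathbb{CP}^2)$, and match each summand against the corresponding block of $C_2$-invariants using the explicit formulas for complexification recorded just before the proposition. The only difference is cosmetic: the paper simply quotes the three non-equivariant facts about $\widetilde{KO}^*(\mathbb{CP}^2)$, $\widetilde{KU}^*(\mathbb{CP}^2)$, and the complexification map (including the existence of a basis $x_i,y_i$ swapped by $\psi$), whereas you derive them from Wood's theorem and an explicit calculation with $u=L-1$.
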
 
\begin{proof} 

Consider the space $\mathbb{CP}^2$. 
We denote by $\psi$ the complex conjugation action.
Then we will need to use the following facts from \emph{nonequivariant}
$K$-theory:
\begin{enumerate}
\item $\widetilde{KO}^i(\mathbb{CP}^2) = \mathbb{Z}$  for $i $ even
and vanishes for $i$ odd. For $i$ even, we let $z_i \in
\widetilde{KO}^i(\mathbb{CP}^2)$ be a generator.
\item For $i$ even, $\widetilde{KU}^i(\mathbb{CP}^2) \simeq \mathbb{Z}^2$, generated by
classes $x_i, y_i$ with $\psi x_i = y_i$. For $i$ odd, $\widetilde{KU}^i(\mathbb{CP}^2)
= 0$.
\item  Under complexification, 
the map $\widetilde{KO}^i(\mathbb{CP}^2) \to \widetilde{KU}^i(\mathbb{CP}^2)$
is injective with image precisely the $C_2$-invariants.
That is, (up to multiplying by a sign) complexification maps $z_i \mapsto x_i + y_i$.
\end{enumerate}

Observe first that by 2., we know that  $\widetilde{KU}^*_G(\mathbb{CP}^2) \simeq R(G) \otimes
\widetilde{KU}^*(\mathbb{CP}^2)$ is a coinduced 
$C_2$-representation. As a $C_2$-representation, $R(G)$ is a direct sum of
copies of $\mathbb{Z}$ (one for each irreducible $G$-representation over
$\mathbb{C}$ of
type 1 or 3 in the sense of \Cref{RCH}) and $\mathbb{Z}[C_2]$ (one for each
irreducible representation of type 2). 

Now, we analyze the complexification map 
\[ c\colon  \widetilde{KO}_G^i(\mathbb{CP}^2) \to \widetilde{KU}_G^i(\mathbb{CP}^2),  \]
which is the effect of \eqref{KOGKUGCP2} in homotopy. 
For $i$ odd, both sides vanish, so we may consider only the case $i$ even. 
Using \Cref{equivKtrivial} applied to the suspensions of $\mathbb{CP}^2$, we obtain a decomposition of the left-hand-side
into three pieces that we analyze separately, using the discussion in the
previous subsection. 

\begin{enumerate}
\item On $\ROR(G) \otimes \widetilde{KO}^i(\mathbb{CP}^2)$,  
$c$ is an injection with image given by $R^{\mathrm{real}}(G) \otimes
\mathbb{Z}\left\{x_i + y_i\right\} \subset \widetilde{KU}_G^0( \mathbb{CP}^2)$.	
Here $R^{\mathrm{real}}(G)$ denotes the free abelian group on the complex
irreducible representations of type 2 in \Cref{RCH}. 
The map carries the class of $[V] \otimes z_i$ to $[V_{\mathbb{C}}] \otimes (x_i +
y_i)$.
\item On $\ROC(G) \otimes \widetilde{KU}^i( \mathbb{CP}^2)$, $c$
behaves as follows. If $[V] \in \ROC(G)$ is the class of an irreducible 
 representation of $G$ obtained as the restriction of a complex irreducible $W$, then $c$
acts on the classes $[V] \otimes x_i , [V] \otimes y_i$ as:
 \[ c( [V]\otimes x_i) = [W] \otimes x_i + [W^*] \otimes y_i, \quad 
c( [V] \otimes y_i) = [W] \otimes y_i + [W^*] \otimes x_i.
\]
\item 
Consider finally $RO^{\mathbb{H}}(G) \otimes \widetilde{KSp}^0(\mathbb{CP}^2)$.
Let $V$ be a $\mathbb{C}$-representation whose restriction to $\mathbb{R}$ is
an irreducible quaternionic representation, denoted the same.
Given a generator $w_i \in \widetilde{KSp}^i(\mathbb{CP}^2)$, the associated
class in $\widetilde{KU}^i(\mathbb{CP}^2)$ (obtained by complexification
together with Morita equivalence) is $x_i + y_i$, thanks to
\eqref{complexificationdiag}.
Therefore, we  have
\[ c( [V] \otimes w_i) = [V] \otimes (x_i + y_i).
\]
\end{enumerate}
From this, and the description of $R(G) $ as a $C_2$-representation, the proposition follows. 

\end{proof}

The proof of \Cref{equivwood} will require a little bookkeeping, and we begin with
some recollections on equivariant homotopy groups.

\newtheorem{rec}[thm]{Recollection}

\begin{rec}
\label{rec:free}
Let $G$ be a compact Lie group.
Let $A \in \mathrm{Alg}(\GSpec)$ be an associative algebra in $\GSpec$. 
For each $H \leq G$, we define $\pi_*^H(A) = \pi_* \hom_{\GSpec}(G/H_+,
A) = \pi_* i_H^*A$. Each $\pi_*^H(A)$ is a ring, and as
$H$ varies these rings are equipped with restriction homomorphisms
\[ \res^{H'}_H \colon \pi_*^{H'} A \to \pi_*^H A. \]
Given a
module $M \in \md_{\GSpec}(A)$, we can define the homotopy groups
$\{\pi_*^H(M)\}_{H \leq G}$, which come with restriction homomorphisms of
their own and form a module over
$\left\{\pi_*^H(A)\right\}$.

Note that the maps of $A$-modules $A \to M$ are classified by
the elements of $\pi_0^G(M)$. 
Suppose for instance that there exist elements $x_1, \dots, x_n \in \pi_0^G(M)$
such that for each $H \leq G$, the $\left\{\Res^G_{H} x_i\right\} \subset
\pi_0^H(M)$ form a basis for the $\pi_*^H(A)$-module $\pi_*^H(M)$. In this
case, we get maps
$x_i \colon A \to M$ which yield an equivalence of $A$-modules
\( A^n \simeq M.  \)
\end{rec}

\begin{proof}[Proof of \Cref{equivwood}]
Recall that $\widetilde{KU}_G^0(\mathbb{CP}^2) \simeq
R(G) \otimes_{\mathbb{Z} }\mathbb{Z}\left\{x,y\right\}$ for classes $x,y$ which are interchanged under
complex conjugation. 
We have  an equivalence of $KU_G$-modules 
\begin{equation} \label{CP2free} KU_G \vee KU_G \simeq KU_G \wedge
\mathbb{D}(\mathbb{CP}^2) \end{equation}
classified by the elements $x,y$: the elements $x,y$ produce a map, and it
is an equivalence because the restrictions of $x,y$ to $
\widetilde{KU}_H^*(\mathbb{CP}^2)$ form an $R(H)$-basis for any $H \leq G$. 
As a result, we can consider a map
\[ f \colon KU_G \wedge \mathbb{D}( \mathbb{CP}^2) \to KU_G,  \]
which, on homotopy, sends $x \mapsto 1$ and $y \mapsto 0$. 
For each subgroup $H \leq G$, one sees that the induced map 
\[ \left(\pi_*^H( KU_G \wedge \mathbb{D}( \mathbb{CP}^2))\right)^{C_2} 
\to \pi_*^H( KU_G \wedge \mathbb{D} (\mathbb{CP}^2)) \simeq
\widetilde{KU}^*_H(\mathbb{CP}^2) \to \pi_*^H(KU_G)
\]
is an isomorphism.
It follows easily that the composition
\[ KO_G \wedge \mathbb{D}( \mathbb{CP}^2) \to KU_G \wedge \mathbb{D}
(\mathbb{CP}^2) \simeq KU_G \vee KU_G \stackrel{f}{\to} KU_G  \]
is an equivalence, by comparing with \Cref{KUKOGCP2map}. 
\end{proof}

As a result, we can also obtain the homotopy fixed point relation between real
and complex $K$-theory, equivariantly.

\begin{cor} 
\label{KOKUhfp}
The natural map $KO_G \to KU_G^{h C_2}$ in $\GSpec$ is an equivalence. 
\end{cor}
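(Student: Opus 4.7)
The plan is to deduce the equivalence $KO_G \simeq KU_G^{hC_2}$ from the equivariant Wood theorem (\Cref{equivwood}) together with Rognes's nonequivariant identification $KO \simeq KU^{hC_2}$. The complexification map $c \colon KO_G \to KU_G$ is $C_2$-equivariant for the trivial action on the source and complex conjugation on the target, and therefore factors through a natural map $\alpha \colon KO_G \to KU_G^{hC_2}$ of $\mathbb{E}_\infty$-rings in $\GSpec$. It is this $\alpha$ that we must prove is an equivalence.

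I would first upgrade the Wood equivalence $KU_G \simeq KO_G \wedge \Sigma^{-2}\mathbb{CP}^2$ in $\md_{\GSpec}(KO_G)$ to a $C_2$-equivariant equivalence: equip $\mathbb{CP}^2$ with its complex-conjugation $C_2$-action (so that this is the ``Real'' $\mathbb{CP}^2$ of Atiyah's $KR$-theory) and keep $KO_G$ with trivial action. That the resulting $C_2$-action on $KO_G \wedge \Sigma^{-2}\mathbb{CP}^2$ matches complex conjugation on $KU_G$ is pinned down at the level of homotopy groups by \Cref{KUKOGCP2map}, where we identified the image of the complexification $\widetilde{KO}^*_G(\mathbb{CP}^2)\hookrightarrow \widetilde{KU}^*_G(\mathbb{CP}^2)$ with precisely the $C_2$-invariants (the case-by-case analysis using the decomposition of $R(G)$ into real, complex and quaternionic pieces from \Cref{RCH}).

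With the $C_2$-equivariant enhancement in hand, the proof proceeds by detecting the equivalence $\alpha$ on categorical $H$-fixed points for each closed subgroup $H \leq G$. Since $i_H^* \colon \GSpec \to \Sp$ is a right adjoint, it commutes with the $BC_2$-limit, so it suffices to show that
\[ i_H^* KO_G \longrightarrow \bigl(i_H^* KU_G\bigr)^{hC_2} \]
is an equivalence of spectra for each $H$. The equivariant Wood cofiber sequence $\Sigma KO_G \xrightarrow{\eta} KO_G \to KU_G$ restricts under $i_H^*$ to a cofiber sequence of $KO_H^H$-modules which, combined with the homotopy-group-level description of the $C_2$-action from the previous step (encoded via the $\mathbb{R}/\mathbb{C}/\mathbb{H}$ decomposition of $R(H)$), reduces the comparison to the parallel classical fact in nonequivariant $K$-theory. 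Concretely, the HFPSS $H^s(C_2,\pi_t i_H^* KU_G)\Rightarrow \pi_{t-s}(i_H^* KU_G)^{hC_2}$ then matches, column by column, the structure of the classical HFPSS computing $\pi_*KO$ from $\pi_* KU$, weighted by the $C_2$-types occurring in $R(H)$, and $\alpha$ is seen to realize this isomorphism.

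The main obstacle is upgrading the equivariant Wood equivalence to a genuinely $C_2$-equivariant one in the $\infty$-category $\fun(BC_2,\md_{\GSpec}(KO_G))$, rather than only matching the actions on homotopy groups. An alternative packaging that bypasses this difficulty is to work directly with the HFPSS after restricting to each $H$: the explicit analysis of $R(H)$ as a $C_2$-representation together with the Bott-class sign flip $\beta \mapsto -\beta$ allows one to compute the $E_\infty$-page in each type (real, complex, quaternionic) and match it with the known description of $KO_H^{-*}$ provided by \Cref{equivKtrivial}.
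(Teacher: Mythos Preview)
Your first approach has a real gap that you yourself identify: upgrading the equivariant Wood equivalence to a $C_2$-equivariant one in $\fun(BC_2,\md_{\GSpec}(KO_G))$ is not something that \Cref{KUKOGCP2map} gives you. That proposition only matches complexification with $C_2$-invariants on homotopy groups; it says nothing about lifting the equivalence $KU_G \simeq KO_G \wedge \Sigma^{-2}\mathbb{CP}^2$ to one respecting the $C_2$-action coherently. Your alternative (running the HFPSS for $i_H^*KU_G$ directly, sorting by the real/complex/quaternionic decomposition of $R(H)$, and matching against \Cref{equivKtrivial}) is in principle workable, but you would need to pin down the $d_3$-differentials on the real and quaternionic summands and then carefully identify the edge map with $\alpha$---none of which you actually carry out.

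The paper's argument avoids both difficulties by a single clean trick: rather than upgrading Wood equivariantly or computing differentials, it smashes the map $KO_G \to KU_G^{hC_2}$ with $\mathbb{D}(\mathbb{CP}^2)$ (trivial $C_2$-action) \emph{before} analyzing anything. Since $\eta$ is nilpotent, $\Sigma^\infty\mathbb{CP}^2$ generates all finite spectra as a thick subcategory, so this reduction loses nothing. The payoff is that $\pi_*^H(KU_G \wedge \mathbb{D}(\mathbb{CP}^2)) \simeq \widetilde{KU}_H^*(\mathbb{CP}^2)$ is a \emph{coinduced} $C_2$-module (this is noted in the proof of \Cref{KUKOGCP2map}: the classes $x_i,y_i$ are swapped by $\psi$, so tensoring $R(H)$ with $\mathbb{Z}\{x_i,y_i\}$ yields a free $\mathbb{Z}[C_2]$-module regardless of the $C_2$-action on $R(H)$). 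Hence the HFPSS for $(KU_G \wedge \mathbb{D}(\mathbb{CP}^2))^{hC_2}$ degenerates at $E_2$ with no higher cohomology, and $\pi_*^H$ of the target is just the $C_2$-invariants in $\widetilde{KU}_H^*(\mathbb{CP}^2)$. \Cref{KUKOGCP2map} then says exactly that complexification identifies $\widetilde{KO}_H^*(\mathbb{CP}^2)$ with these invariants, finishing the proof. The moral: smashing with $\mathbb{D}(\mathbb{CP}^2)$ trades the nontrivial $C_2$-cohomology of $R(H)[\beta^{\pm 1}]$ for a trivially collapsing spectral sequence, at the cost only of the nilpotence of $\eta$.
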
 
\begin{proof} 
It suffices to show that the natural map $KO_G \wedge \mathbb{D} (\mathbb{CP}^2)
\to 
(KU_G \wedge \mathbb{D}( \mathbb{CP}^2))^{hC_2}$ is an equivalence, because
the thick subcategory that $\Sigma^\infty \mathbb{CP}^2$ generates is all of
finite spectra by the nilpotence of $\eta$.
This in turn can be checked on $\pi_*^H$ for each subgroup $H \leq G$. 
Now the map 
\(  \pi_*^H(KO_G\wedge \mathbb{D} (\mathbb{CP}^2))
\to 
\pi_*^H(KU_G \wedge \mathbb{D}( \mathbb{CP}^2) )\)
is injective and has image the $C_2$-invariants in the target, by
\Cref{KUKOGCP2map}. However, we have
\[ \pi_*^H(  (KU_G \wedge \mathbb{D} (\mathbb{CP}^2))^{hC_2}) \simeq 
\pi_*^H( KU_G \wedge \mathbb{D} (\mathbb{CP}^2))^{C_2}
\]
because the homotopy fixed point spectral sequence degenerates: the
$C_2$-representation is induced.
\end{proof}

\subsection{Unipotence and nilpotence results}

Using \Cref{equivwood}, we will now prove an analog of \Cref{KUunip} for $KO$.

\begin{thm} 
\label{KOpi1torsionfree}
Suppose $G$ is a compact, connected Lie group such that $\pi_1(G)$ is torsion-free.
Then
$\md_{\GSpec}(KO_G)$ is canonically equivalent, as a symmetric monoidal
$\infty$-category, to the $\infty$-category of module spectra over the
categorical fixed points $i_G^*KO_G$.
\end{thm}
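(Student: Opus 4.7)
The plan is to adapt the argument used in \Cref{KUunip} for complex $K$-theory, using the equivariant Wood theorem \Cref{equivwood} to descend the unipotence statement from $KU_G$ to $KO_G$. As in the complex case, by the symmetric monoidal Schwede-Shipley theorem (\Cref{ex:morita}) applied to the symmetric monoidal $\infty$-category $\md_{\GSpec}(KO_G)$, it suffices to show that the unit $KO_G$ is a compact generator; compactness is automatic since $S^0_G$ is compact in $\GSpec$, and we will show $i_G^* = \hom_{\md_{\GSpec}(KO_G)}(KO_G, \cdot)$ is conservative.

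So fix $M \in \md_{\GSpec}(KO_G)$ with $i_G^* M \simeq 0$; I will argue $M \simeq 0$. First I base-change to equivariant complex $K$-theory: by \Cref{equivwood} one has $KU_G \simeq KO_G \wedge \Sigma^{-2}\mathbb{CP}^2$ as $KO_G$-modules, so
\[ M \wedge_{KO_G} KU_G \simeq M \wedge \Sigma^{-2}\mathbb{CP}^2. \]
Since $\Sigma^{-2}\mathbb{CP}^2$ is a (non-equivariant) finite cell spectrum and $i_G^*$ commutes with finite colimits and with smashing with pullbacks from $\sp$ (by the projection formula for the symmetric monoidal left adjoint $i_*\colon \sp \to \GSpec$ restricted to dualizable spectra), we obtain
\[ i_G^*(M \wedge_{KO_G} KU_G) \simeq i_G^*(M) \wedge \Sigma^{-2}\mathbb{CP}^2 \simeq 0. \]
The hypothesis on $\pi_1(G)$ allows us to apply \Cref{KUunip}: $i_G^*\colon \md_{\GSpec}(KU_G) \to \sp$ is conservative, so $M \wedge_{KO_G} KU_G \simeq 0$.

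The second (and to my mind, main) step is to descend from $KU_G$ back to $KO_G$, i.e.\ to show that $- \wedge_{KO_G} KU_G$ is conservative on $\md_{\GSpec}(KO_G)$. Here I would use the cofiber sequence of $KO_G$-modules
\[ \Sigma KO_G \xrightarrow{\eta} KO_G \to KU_G \]
obtained from the cell structure $S^0 \xrightarrow{\eta} S^{-1} \to \Sigma^{-2}\mathbb{CP}^2$ after smashing with $KO_G$ and applying \Cref{equivwood}. Smashing the cofiber sequence with $M$ over $KO_G$ and using $M \wedge_{KO_G} KU_G \simeq 0$ shows that $\eta\colon \Sigma M \to M$ is an equivalence. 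Since $\eta$ is nilpotent in the stable homotopy of spheres (as used in the proof of \Cref{KOKUhfp}), iterating yields that some power $\eta^k\colon \Sigma^k M \to M$ is simultaneously an equivalence and nullhomotopic, forcing $M \simeq 0$. This completes the proof that $KO_G$ generates $\md_{\GSpec}(KO_G)$ as a localizing subcategory, and the desired symmetric monoidal equivalence with $\md(i_G^* KO_G)$ follows.

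The only delicate point I expect is the verification that the projection formula $i_G^*(M \wedge i_* N) \simeq i_G^*(M) \wedge N$ really does hold for the dualizable spectrum $N = \Sigma^{-2}\mathbb{CP}^2$ with trivial $G$-action; this is standard but deserves to be stated, and reduces via a thick subcategory argument to the trivial case $N = S^0$ together with the exactness of both sides in $N$. Every other ingredient (equivariant Wood, \Cref{KUunip}, nilpotence of $\eta$, symmetric monoidal Schwede-Shipley) is already in place.
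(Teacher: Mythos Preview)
Your argument is correct and follows essentially the same route as the paper: reduce to showing $i_G^*$ is conservative on $\md_{\GSpec}(KO_G)$, smash with $\Sigma^{-2}\mathbb{CP}^2$ (the paper uses $\mathbb{D}(\mathbb{CP}^2)$, which differs only by a shift) to land in $KU_G$-modules via \Cref{equivwood}, apply \Cref{KUunip}, and then use the nilpotence of $\eta$ to conclude. The only cosmetic differences are that the paper dispenses with the projection formula entirely (since $M \wedge \mathbb{D}(\mathbb{CP}^2)$ lies in the thick subcategory generated by $M$, exactness of $i_G^*$ already gives $i_G^*(M \wedge \mathbb{D}(\mathbb{CP}^2)) \simeq 0$), and phrases the final step as ``the thick subcategory generated by $\mathbb{D}(\mathbb{CP}^2)$ contains $S^0$'' rather than unwinding it through the cofiber sequence as you do.
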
 
\begin{proof} 
Suppose $M$ is a $KO_G$-module (in $\GSpec$) whose categorical fixed points are
trivial, i.e., $i_G^*M$ is contractible. We need to show that $M$ is
contractible; by \Cref{generateifcons}, this will suffice for the theorem. 
To see this, we observe that if $i_G^*(M)$ is contractible, then $i_G^*( M \wedge
\mathbb{D}(\mathbb{CP}^2))$ is contractible as well. However, $M \wedge
\mathbb{D}(\mathbb{CP}^2) $ is a
$KU_G$-module by \Cref{equivwood}, so by \Cref{KUunip}, it follows that $M \wedge \mathbb{D}(\mathbb{CP}^2)$ is contractible. Now, the thick subcategory that $\mathbb{D}(\mathbb{CP}^2)$ generates in finite spectra contains the sphere $S^0$ by the
nilpotence of $\eta$, so that $M$ is contractible itself.
\end{proof}

Using similar logic, one easily obtains:
\begin{prop} \label{FnilKOKU} Let $G$ be a finite group and $\sF$ a family of subgroups of $G$. Then $KO_G \in \sFNil$
if and only if $KU_G \in \sFNil$.
In particular, $KO_G$ is nilpotent for the family of abelian subgroups. 
\end{prop}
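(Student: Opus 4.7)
The plan is to leverage the equivariant Wood theorem (\Cref{equivwood}), which gives an equivalence of $KO_G$-modules $KU_G \simeq KO_G \wedge \Sigma^{-2}\mathbb{CP}^2$, combined with the classical nilpotence of $\eta$, which ensures that the thick subcategory of finite spectra generated by $\mathbb{CP}^2$ is all of finite spectra. The symmetry of the situation comes precisely from the fact that $\Sigma^{-2}\mathbb{CP}^2$ is a finite spectrum that generates everything, rather than being invertible in a naive sense.

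For the forward implication, I would suppose $KO_G \in \sFNil$ and observe that $\sFNil$ is by definition a thick $\otimes$-ideal in $\GSpec$. In particular, smashing an $\sF$-nilpotent object with any $G$-spectrum (and in particular with a finite spectrum viewed in $\GSpec$ via $i_*$) preserves $\sF$-nilpotence, so $KO_G \wedge \Sigma^{-2}\mathbb{CP}^2 \in \sFNil$. By \Cref{equivwood} this is exactly $KU_G$, giving $KU_G \in \sFNil$.

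For the reverse implication, I would suppose $KU_G \in \sFNil$ and introduce the class
\[
\mathcal{T} := \{\, Y \in \Sp^{\omega} \,:\, KO_G \wedge Y \in \sFNil \,\}.
\]
Since $\sFNil$ is closed under cofibers, retracts, and finite (de)suspensions, $\mathcal{T}$ is a thick subcategory of finite spectra. By \Cref{equivwood} it contains $\Sigma^{-2}\mathbb{CP}^2$, and the nilpotence of $\eta$ (equivalently, the fact that the thick subcategory of finite spectra generated by $\mathbb{CP}^2$ contains $S^0$) forces $S^0 \in \mathcal{T}$. Hence $KO_G \simeq KO_G \wedge S^0 \in \sFNil$, as desired.

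The final assertion about the family of abelian subgroups is then immediate from the preceding corollary, which establishes the corresponding nilpotence statement for $KU_G$ when $G$ is finite. There is no real obstacle here beyond correctly invoking the thick $\otimes$-ideal structure on $\sFNil$ and the nilpotence of $\eta$; the substantive input has already been made in proving \Cref{equivwood} and the $KU_G$ case.
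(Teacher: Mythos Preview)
Your proof is correct and follows essentially the same approach as the paper, which leaves the argument implicit under the phrase ``using similar logic'' (referring to the proof of \Cref{KOpi1torsionfree}): both directions rest on the equivariant Wood theorem $KU_G \simeq KO_G \wedge \Sigma^{-2}\mathbb{CP}^2$ together with the fact that $\mathbb{CP}^2$ generates all finite spectra as a thick subcategory by the nilpotence of $\eta$. Your thick-subcategory argument for the reverse implication makes explicit exactly what the paper intends.
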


\begin{remark} 
In the sequel \cite{MNN15i} to this paper, we will give another approach to the
$\sF$-nilpotence of $KO_G$ using the \emph{spin orientation}. 
We will actually show that $KO_G$ is nilpotent for the family of \emph{cyclic}
subgroups.
\end{remark}

\subsection{The Galois picture}

Let $G$ be a compact Lie group.
In the theory of structured ring spectra, it is known by work of Rognes
\cite{Rog08} that the complexification map $KO \to KU$ 
is a faithful $C_2$-Galois extension: that is, it behaves like a $C_2$-torsor
in ordinary algebraic geometry. As a consequence, it is for instance possible
to carry out a form of \emph{Galois descent} along $KO \to KU$. 
In this final subsection, we prove that an analogous picture holds equivariantly.
We refer to \cite{galois} for preliminaries on Galois theory in a symmetric
monoidal, stable $\infty$-category.

\begin{thm} 
The natural map $KO_G \to KU_G$, together with the $C_2$-action on $KU_G$,
exhibits $KU_G$ as a faithful $C_2$-Galois extension of $KO_G$ in $\gsp{G}$. 
\end{thm}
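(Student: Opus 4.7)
The plan is to verify directly the three defining conditions for $KO_G\to KU_G$ (with $C_2$ acting by complex conjugation on $KU_G$) to be a faithful $C_2$-Galois extension of $\e{\infty}$-rings in $\gsp{G}$. Condition (i), that the natural map $KO_G\to KU_G^{hC_2}$ is an equivalence, is precisely \Cref{KOKUhfp}. Condition (iii), faithfulness of $KU_G$ as a $KO_G$-module, follows quickly from equivariant Wood together with the nilpotence of $\eta$: if $M\in\md_{\gsp{G}}(KO_G)$ satisfies $M\wedge_{KO_G}KU_G\simeq 0$, then
\[ M\wedge\Sigma^{-2}\mathbb{CP}^2 \simeq M\wedge_{KO_G}\bigl(KO_G\wedge\Sigma^{-2}\mathbb{CP}^2\bigr)\simeq M\wedge_{KO_G}KU_G\simeq 0 \]
by \Cref{equivwood}, and since $\eta$ is nilpotent, the thick $\otimes$-ideal of finite spectra generated by $\Sigma^{-2}\mathbb{CP}^2$ contains $S^0$, forcing $M\simeq 0$.

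The substantive point is to establish condition (ii): that the canonical Galois map
\[ \mu\colon KU_G\wedge_{KO_G} KU_G \longrightarrow F((C_2)_+, KU_G)\simeq KU_G\vee KU_G,\qquad a\otimes b\mapsto (ab,\,a\cdot\psi(b)), \]
is an equivalence in $\gsp{G}$. Since $\mu$ is a morphism of $\e{\infty}$-$KO_G$-algebras, it is enough to check it on underlying $KO_G$-modules. Equivariant Wood \Cref{equivwood} rewrites the source as $KU_G\wedge\Sigma^{-2}\mathbb{CP}^2$, and an Atiyah--Hirzebruch argument (using $\eta=0$ in $\pi_*KU$ together with Bott periodicity) shows that both source and target have $\pi_*^H\cong R(H)[\beta^{\pm 1}]^{\oplus 2}$ for every closed subgroup $H\le G$. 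I would conclude by verifying that $\mu$ induces an isomorphism on each $\pi_*^H$, using the explicit generators $x,y\in\widetilde{KU}_H^0(\mathbb{CP}^2)\cong R(H)\{x,y\}$ with $\psi x=y$ from the proof of \Cref{KUKOGCP2map} and tracing them through the Wood splitting.

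The main obstacle will be precisely this last computation, namely matching the abstract equivalence $KU_G\wedge\Sigma^{-2}\mathbb{CP}^2\simeq KU_G\vee KU_G$ produced by Wood's theorem with the concrete, $C_2$-equivariant Galois map $\mu$. A cleaner alternative is to realize $KO_G\to KU_G$ as the base change of Rognes' faithful $C_2$-Galois extension $KO\to KU$ in $\Sp$ along the canonical $\e{\infty}$-algebra map $i_*(KO)\to KO_G$ (with $i_*\colon\Sp\to\gsp{G}$ the symmetric monoidal functor equipping a spectrum with trivial $G$-action) and invoke stability of faithful Galois extensions under base change, reducing to the identification $KO_G\otimes_{i_*(KO)}i_*(KU)\simeq KU_G$, which follows by combining nonequivariant Wood with \Cref{equivwood}. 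The cost of this route is verifying that the symmetric monoidal left adjoint $i_*$ transports the homotopy-fixed-point condition for $KO\to KU$ to the corresponding condition in $\gsp{G}$, a limit that $i_*$ need not preserve \emph{a priori}; but this can be deduced from \Cref{KOKUhfp} applied to the trivial group.
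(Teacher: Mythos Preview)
Your handling of conditions (i) and (iii) is fine: (i) is exactly \Cref{KOKUhfp}, and faithfulness follows from equivariant Wood plus the nilpotence of $\eta$ just as you say. The gap is condition (ii), and neither of your two routes closes it as written.

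In your first route you correctly identify the source of the Galois map $\mu$ with $KU_G\wedge\Sigma^{-2}\mathbb{CP}^2$, and both source and target are free $KU_G$-modules of rank two. But checking that the \emph{specific} map $\mu$ is an equivalence requires more than knowing both sides are abstractly isomorphic. What makes this go through is a naturality argument you stop short of: the free generators on both sides arise from the nonequivariant classes in $\widetilde{KU}^*(\mathbb{CP}^2)$ via the inclusion $\mathbb{Z}\hookrightarrow R(G)$, and the Wood equivalence of \Cref{equivwood} is constructed compatibly with restriction to subgroups. Hence the $2\times 2$ matrix representing $\mu$ over $R(G)$ is the image of the nonequivariant matrix over $\mathbb{Z}$, which is invertible by Rognes. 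Without making this naturality explicit, ``tracing generators'' is just a promise.

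Your second route has a sharper problem. You need $KO_G\otimes_{i_*(KO)}i_*(KU)\simeq KU_G$ as $C_2$-equivariant $\e{\infty}$-$KO_G$-algebras, and you claim this ``follows by combining nonequivariant Wood with \Cref{equivwood}.'' But Wood is a $KO_G$-\emph{module} statement, not a ring statement, so it only tells you that source and target of the canonical map $\phi$ are abstractly equivalent $KO_G$-modules. To see that $\phi$ itself is an equivalence one cannot simply restrict to the trivial subgroup: there exist nonzero $i_*(KU)$-modules in $\gsp{G}$ with contractible underlying spectrum (e.g., $\widetilde{E}\mathcal{P}_G\wedge i_*(KU)$ for $G$ nontrivial). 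You also implicitly use an $\e{\infty}$-map $i_*(KO)\to KO_G$ compatible with complexification, which requires the global structure on equivariant $K$-theory; the paper acknowledges this exists but does not develop it.

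The paper's argument is structurally different and avoids these issues. It first embeds $G\hookrightarrow U(n)$ and uses that restriction $\Res^{U(n)}_G$ is a symmetric monoidal left adjoint, hence preserves faithful Galois extensions; this reduces to $G=U(n)$. Then \Cref{KOpi1torsionfree} (unipotence for $KO_{U(n)}$) identifies $\md_{\gsp{U(n)}}(KO_{U(n)})$ with $\md(i_{U(n)}^*KO_{U(n)})$ as symmetric monoidal $\infty$-categories, transporting the entire Galois question to ordinary spectra: one must show that $A:=i_{U(n)}^*KU_{U(n)}$ is a faithful $C_2$-Galois extension of $A^{hC_2}$. This is then read off from the affineness machinery of \cite{MM15}, using that $A$ is even periodic with formal group pulled back from $\widehat{\mathbb{G}}_m$ and that the resulting map $\Spec R(U(n))/C_2\to M_{FG}$ is affine. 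The payoff is that condition (ii) never has to be verified by hand in $\gsp{G}$; unipotence moves the problem to a setting where powerful structural tools apply. Your approach, once the naturality gap is filled, is more elementary in that it avoids \cite{MM15}, but it does require carefully tracking the compatibility of the Wood equivalences with restriction and with the Galois map.
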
 
\begin{proof} 
Choose an embedding $G \leq U(n)$. In this case, one obtains a symmetric
monoidal, cocontinuous functor $\res^{U(n)}_G\colon \gsp{U(n)} \to \gsp{G}$ that carries
$KO_{U(n)}, KU_{U(n)} $ to $KO_G, KU_G$. As a result, it suffices to show that 
$KO_{U(n)}\to KU_{U(n)}$ is a faithful $C_2$-Galois extension in
$\gsp{U(n)}$.\footnote{Recall that faithful Galois extensions are preserved by
symmetric monoidal left adjoints.}

In this case, the equivalence
of \Cref{KOpi1torsionfree} shows that it suffices to prove that if $A =
i_{U(n)}^*
KU_{U(n)}$, then the natural map
\begin{equation} \label{Ahc2} A^{hC_2} \to A,  \end{equation}
exhibits $A$ as a faithful $C_2$-Galois extension of $A$ (in the category of non-equivariant spectra).We will prove this using the affineness machinery of \cite{MM15}; one can also argue directly using \Cref{woodthm}. 

Observe that $A$ is an even periodic $\e{\infty}$-ring, with $\pi_0(A) \simeq
R(U(n))$, with a $C_2$-action. 
It follows that we can associate to $A$ a \emph{formal group}  over $R(U(n))$,
given by $\mathrm{Spf} A^0( \mathbb{CP}^\infty) \simeq \mathrm{Spf}
R(U(n))[[x]]$. One sees that the
associated formal group law over $\spec R(U(n))$ is isomorphic to
$\widehat{\mathbb{G}}_m$ (i.e., $A^*(\mathbb{CP}^\infty) =
KU_{U(n)}^*(\mathbb{CP}^\infty) = R(U(n)) \hat{\otimes}
KU^*(\mathbb{CP}^\infty)$, etc.). 
One concludes that the unique map
of schemes
\[ \spec R(U(n)) \to \spec \mathbb{Z}   \]
is such that the 
formal group $\mathrm{Spf} A^0( \mathbb{CP}^\infty)$ over $\spec
R(U(n))$ is pulled back from $\widehat{\mathbb{G}}_m$ over $\spec \mathbb{Z}$. 

Now, $A$ has a $C_2$-action. Thus, $\spec R(U(n))$ has a $C_2$-action from complex conjugation, and the
formal group over $\mathrm{Spf} A^0( \mathbb{CP}^\infty)$ has one too.
In the language of \cite{MM15}, we obtain a diagram
\[ \spec R(U(n))/ C_2 \to  \spec \mathbb{Z} / C_2 \to M_{FG},   \]
for $M_{FG}$ the moduli stack of formal groups. Observe now that the first map 
$\spec R(U(n))/ C_2 \to  \spec \mathbb{Z} / C_2$ is affine (as the
$C_2$-quotient of the affine map $\spec R(U(n)) \to \spec \mathbb{Z}$) and the
map $( \spec \mathbb{Z})/C_2 \to M_{FG}$ is affine. Therefore, the composition 
$\spec R(U(n))/ C_2 \to  \spec \mathbb{Z} / C_2 \to M_{FG}$ is affine. By
\cite[Th. 5.8]{MM15} (see also \cite[\S 2.5]{MM15}), we obtain that 
the map \eqref{Ahc2} exhibits $A$ as a faithful $C_2$-Galois extension of
$A^{hC_2}$. 
\end{proof}

\begin{example} 
We briefly calculate the homotopy fixed point spectral sequence (HFPSS) for $\pi_*^G KO_G
\simeq \pi_*^G (KU_G)^{hC_2}$ as a modification of the (classical)
computation when $G = 1$. 
First of all, we know that $\pi_*^G KU_G \simeq R(G)[\beta_2^{\pm 1}]$ where
$|\beta_2| = 2$.  
The $C_2$-action on $R(G)$ is such that
\[ R(G) =  \bigoplus_V \mathbb{Z} \oplus \bigoplus_{V'} \mathbb{Z} \oplus
\bigoplus_{W} \mathbb{Z}[C_2]. \]
Here $V$ ranges over isomorphism classes of irreducible
$\mathbb{C}$-representations of type 2 (in the sense of \Cref{RCH}), $V'$ ranges over 
isomorphism classes of irreducible $\mathbb{C}$-representations of type 3.
Finally, 
$W$ ranges over isomorphism classes of complex representations of
type 1, up to the action $W \mapsto W^*$. Moreover, the $C_2$-action on the
Bott element is by the sign representation. 

It follows easily that, at the $E_2$-page the HFPSS for $\pi_*^G(KO_G)$ is a
direct sum of copies of the HFPSS for $\pi_*(KO)$, one for each $V$ and $V'$,
together with a sum of copies, one for each $W$, of $\mathbb{Z}[\beta_{2}^{\pm 1}]$ concentrated
on the 0-line. Observe that the last component is necessarily
given by permanent cycles because these classes come from $KO_G^*(\ast)$. 

We now analyze the remaining classes. Recall first that 
the $E_2$-page for $\pi_*(KO)$ is given by $\mathbb{Z}[\beta^{\pm 2},
\eta]/(2\eta)$ where $\beta$ has bidegree $(s,t)=(0,2)$ and $\eta$ has bidegree $(s, t) = (1, 2)$. 
It follows that the $E_2$-page for $\pi_*^G(KO_G)$, when we ignore the
contributions from $W$'s, is given by 
the free module over $\mathbb{Z}[\beta^{\pm 2},\eta]/(2\eta)\cdot [\mathbf{1}]$
\[ E_2^{*,*} = \mathbb{Z}[\beta^{\pm 2},
\eta]/(2\eta)\left\{ [V], [V']\right\} . \]
We conclude that $d_2=0$ holds for degree reasons.
In the spectral sequence for $\pi_*(KO)$, it is well-known that one has the
differential $d_3(\beta^2) = \eta^3$. This differential must happen here, too,
i.e. we have $d_3(\beta^2\cdot [\mathbf{1}])=\eta^3\cdot [\mathbf{1}]$.
The classes $[V]$ survive to $KO_G^0(\ast) = RO(G)$ and are
therefore permanent cycles and by multiplicativity one has $d_3( \beta^2\cdot [V])
= \eta^3\cdot [V]$. However, the classes $[V']$ do not survive to $KO_G^0(\ast)$ and
necessarily support differentials. Since $\beta^2\cdot [V']$ survives to
$KO_G^{-4}(\ast)$ (thanks to \eqref{complexificationdiag}), we find that
$\beta^2\cdot [V']$ is a permanent cycle. Using multiplicativity again, we get $d_3(
[V']) =  \eta^3 \beta^{-2}\cdot [V']$. This determines the entire spectral sequence
as being the direct sum of shifts by $0$ and $4$ of the $\pi_*(KO)$-HFPSS as
well as the degenerate components coming from complex representations. 
\end{example} 

\appendix

\bibliographystyle{alpha}
\bibliography{bib/biblio}

\end{document}